\newif\ifLTEX
\newtheorem{thm}{Theorem}[section]
\newtheorem{lem}[thm]{Lemma}
\newtheorem{prop}[thm]{Proposition}
\theoremstyle{definition}
\newcommand{\R}{\mathbb{R}}
\newcommand{\Z}{\mathbb{Z}}
\newcommand{\M}{\mathrm{Mod}_{0,2n+2}}
\newcommand{\LM}{\mathrm{LMod}_{2n+2}}
\newcommand{\PM}{\mathrm{PMod}_{0,2n+2}}
\newcommand{\Mg}{\mathrm{Mod}_{g}}
\newcommand{\SM}{\mathrm{SMod}_{g;k}}
\newcommand{\Hg}{\mathcal{H}_{g}}
\newcommand{\SH}{\mathcal{SH}_{g;k}}
\newcommand{\SHomH}{\mathrm{SHomeo}_{+}(H_g)}
\newcommand{\SHomS}{\mathrm{SHomeo}_{+}(\Sigma _g)}
\newcommand{\Hil}{\bm{\mathrm{H}}_{2n+2}}
\newcommand{\LH}{\bm{\mathrm{LH}}_{2n+2}}
\newcommand{\PH}{\bm{\mathrm{PH}}_{2n+2}}
\newcommand{\A}{\mathcal{A}}
\newcommand{\B}{\mathcal{B}}
\numberwithin{equation}{section}
\title[Finite presentation for balanced superelliptic handlebody group]{A finite presentation for the balanced superelliptic handlebody group}
\author[S. Hirose]{Susumu Hirose}
\address{
(Susumu Hirose)
Department of Mathematics, Faculty of Science and Technology, Tokyo University of Science, 2641 Yamazaki, Noda-shi, Chiba, 278-8510 Japan
}
\email{hirose\_susumu@ma.noda.tus.ac.jp}
\author[G.~Omori]{Genki Omori}
\address{
(Genki Omori)
Department of Mathematics, Faculty of Science and Technology, Tokyo University of Science, 2641 Yamazaki, Noda-shi, Chiba, 278-8510 Japan
}
\email{omori\_genki@ma.noda.tus.ac.jp}
\author[Y.~Yoshida]{Yuya Yoshida}
\address{
(Yuya Yoshida)
Future Architect, Inc., 1-2-2 Osaki, Shinagawa-ku, Tokyo, 141-0032 Japan
}
\email{mathmathyuya@yahoo.co.jp}
\subjclass[2010]{57S05, 57M07, 57M05, 20F05}
\date{\today}
\begin{document}
\maketitle
\begin{abstract}
The balanced superelliptic handlebody group is the normalizer of the transformation group of the balanced superelliptic covering space in the handlebody group of the total space.  
We give a finite presentation for the balanced superelliptic handlebody group. 
To give this presentation, we construct a finite presentation for the liftable Hilden group. 
\end{abstract}

\section{Introduction}


Let $H_{g}$ be a oriented 3-dimensional handlebody of genus $g\geq 0$ and $B^3=H_0$ a 3-ball. 
For integers $n\geq 1$ and $k\geq 2$ with $g=n(k-1)$, the \textit{balanced superelliptic covering map} $p=p_{g,k}\colon H_g\to B^3$ is a $k$-fold branched covering map with the covering transformation group generated by the \textit{balanced superelliptic rotation} $\zeta =\zeta _{g,k}$ of order $k$ (precisely defined in Section~\ref{section_bscov} and see Figure~\ref{fig_bs_periodic_map}). 
The branch points set $\A \subset B^3$ of $p$ is the disjoint union of $n+1$ proper arcs in $B^3$ and the restriction of $p$ to the preimage $\widetilde{\A }$ of $\A $ is injective (i.e. $\widetilde{\A }$ is the fixed points set of $\zeta $). 
We denote $\Sigma _g=\partial H_g$ and $S^2=\partial B^3$. 
The restriction $p|_{\Sigma _g}\colon \Sigma _g\to S^2$ is also a $k$-fold branched covering with the branch points set $\B =\partial \A $ and we also call $p|_{\Sigma _g}$ the balanced superelliptic covering map. 
When $k=2$, $\zeta |_{\Sigma _g}$ coincides with the hyperelliptic involution, and for $k\geq 3$, the the balanced superelliptic covering space was introduced by Ghaswala and Winarski~\cite{Ghaswala-Winarski2}. 
We often abuse notation and denote simply $p|_{\Sigma _g}=p$ and $\zeta |_{\Sigma _g}=\zeta $.  

The mapping class group $\Mg $ of $\Sigma _g$ is the group of isotopy classes of orientation-preserving self-homeomorphisms on $\Sigma _{g}$. 
Finite presentations for $\Mg $ were given by Hatcher-Thurston~\cite{Hatcher-Thurston}, Wajnryb~\cite{Wajnryb1}, and Matsumoto~\cite{Matsumoto}. 
The \textit{handlebody group} $\mathcal{H}_{g}$ is the group of isotopy classes of orientation-preserving self-homeomorphisms on $H_{g}$. 
Wajnryb~\cite{Wajnryb2} gave a finite presentation for $\Hg $. 
We have the will-defined injective homomorphism $\mathcal{H}_{g}\hookrightarrow \Mg $ by restricting the actions of elements in $\mathcal{H}_{g}$ to $\Sigma _g$ and using the irreducibility of $H_g$.  
By this injective homomorphism, we regard $\Hg$ as the subgroup of $\Mg $ whose elements extend to $H_g$.  
Let $\M $ be the group of isotopy classes of orientation-preserving self-homeomorphisms on $S^2$ fixing $\B $ setwise and $\Hil $ the group of isotopy classes of orientation-preserving self-homeomorphisms on $B^3$ fixing $\A $ setwise. 
The group $\Hil $ is introduced by Hilden~\cite{Hilden} and is called the \textit{Hilden group}. 
Tawn~\cite{Tawn1} gave a finite presentation for $\Hil $. 
By restricting the actions of elements in $\Hil $ to $S^2$, we also have the injective homomorphism $\Hil \hookrightarrow \M $ (see \cite[p.~157]{Brendle-Hatcher} or \cite[p.~484]{Hilden}) and regard $\Hil $ as the subgroup of $\M $ whose elements extend to the homeomorpism on $B^3$ which preserve $\A $ by this injective homomorphism. 

For $g=n(k-1)\geq 1$, an orientation-preserving self-homeomorphism $\varphi $ on $\Sigma _{g}$ or $H_g$ is \textit{symmetric} for $\zeta =\zeta _{g,k}$ if $\varphi \left< \zeta \right> \varphi ^{-1}=\left< \zeta \right> $. 
The \textit{balanced superelliptic mapping class group} (or the \textit{symmetric mapping class group}) $\SM $ is the subgroup of $\Mg $ which consists of elements represented by symmetric homeomorphisms. 
Birman and Hilden~\cite{Birman-Hilden3} showed that $\SM $ coincides with the group of symmetric isotopy classes of symmetric homeomorphisms on $\Sigma _g$. 
We call the intersection $\SH =\SM \cap \Hg $ the \textit{balanced superelliptic handlebody group} (or the \textit{symmetric handlebody group}). 
By Lemma~1.21 in \cite{Iguchi-Hirose-Kin-Koda} and Lemma~\ref{lem_interior_symm-liftability}, 
we show that $\SH $ is also isomorphic to the group of  symmetric isotopy classes of symmetric homeomorphisms on $H_g$. 
Since elements in $\SM $ (resp. in $\SH $) are represented by elements which preserve $\B $ (resp. $\A $) by the definitions, we have the homomorphisms $\theta \colon \SM \to \M $ and $\theta |_{\SH}\colon \SH \to \Hil $ that are introduced by Birman and Hilden~\cite{Birman-Hilden2}. 
They also proved that $\theta (\mathrm{SMod}_{g;2} )=\M $ and Hirose and Kin~\cite{Hirose-Kin} showed that $\theta (\mathcal{SH}_{g;2})=\Hil $. 

A self-homeomorphism $\varphi $ on $\Sigma _{0}$ (resp. on $B^3$) is \textit{liftable} with respect to $p=p_{g,k}$ if there exists a self-homeomorphism $\widetilde{\varphi }$ on $\Sigma _{g}$ (resp. on $H_g$) such that $p\circ \widetilde{\varphi }=\varphi \circ p$, namely, the following diagrams commute: 
\[
\xymatrix{
\Sigma _g \ar[r]^{\widetilde{\varphi }} \ar[d]_p &  \Sigma _{g} \ar[d]^p & H_g \ar[r]^{\widetilde{\varphi }}\ar[d]_{p}  &  H_{g}\ar[d]^{p} \\
\Sigma _{0}  \ar[r]_{\varphi } &\Sigma _{0}, \ar@{}[lu]|{\circlearrowright} & B^3  \ar[r]_{\varphi } &B^3. \ar@{}[lu]|{\circlearrowright}
}
\] 
The \textit{liftable mapping class group} $\mathrm{LMod}_{2n+2;k}$ is the subgroup of $\M $ which consists of elements represented by liftable homeomorphisms on $S^2$ for $p_{g,k}|_{S^2}$, and the \textit{liftable Hilden group} $\bm{\mathrm{LH}}_{2n+2;k}$ is the subgroup of $\Hil $ which consists of elements represented by liftable homeomorphisms on $B^3$ for $p_{g,k}$. 
As a homomorphic image in $\M $, we have $\bm{\mathrm{LH}}_{2n+2;k}=\mathrm{LMod}_{2n+2;k}\cap \Hil $ by Lemma~\ref{liftability_surf_hand}. 
By the definitions, we have the homomorphisms $\theta \colon \SM \to \mathrm{LMod}_{2n+2;k}$ and $\theta |_{\SH }\colon \SH \to \bm{\mathrm{LH}}_{2n+2;k}$. 
Birman and Hilden~\cite{Birman-Hilden2} proved that $\theta $ induces the isomorphism $\mathrm{LMod}_{2n+2;k}\cong \SM /\left< \zeta \right> $, and for the case of the handlebody group, Hirose and Kin~\cite{Hirose-Kin} showed that $\theta |_{\SH }$ induces the isomorphism $\bm{\mathrm{LH}}_{2n+2;2}\cong \mathcal{SH}_{g;2} /\left< \zeta _{g,2}\right> $.  
By Lemma~\ref{lem_exact_SH_handlebody}, for $k\geq 3$, we also have the surjective homomorphism $\theta |_{\SH }\colon \SH \to \bm{\mathrm{LH}}_{2n+2;k}$ which induces the isomorphism $\bm{\mathrm{LH}}_{2n+2;k}\cong \SH /\left< \zeta _{g,k}\right> $. 

When $k=2$, $\mathrm{SMod}_{g,2}$ is called the \textit{hyperelliptic mapping class groups} and coincides with the center of $\zeta =\zeta _{g,2}$ in $\Mg $.  
In this case, $\mathrm{LMod}_{2n+2;2}$ is equal to $\M $ and a finite presentation for $\mathrm{SMod}_{g;2}$ was given by Birman and Hilden~\cite{Birman-Hilden1}. 
For the case of the handlebody group, Theorem~2.11 of~\cite{Hirose-Kin} implies that $\bm{\mathrm{LH}}_{2n+2;2}$ is equal to $\Hil $ and Hirose and Kin gave a finite presentation for $\mathcal{SH}_{g;2}$ in Theorem~A.8 of~\cite{Hirose-Kin}. 

When $k\geq 3$, in Lemma~3.6 of \cite{Ghaswala-Winarski1}, Ghaswala and Winarski gave a necessary and sufficient condition for lifting a homeomorphism on $S^2$ with respect to $p_{g,k}$ (see also Lemma~\ref{lem_GW}). 
By their necessary and sufficient condition, we show that $\mathrm{LMod}_{2n+2;k}$ is a proper subgroup of $\M $ and the liftability of a self-homeomorphism on $(S^2, \B )$ does not depend on $k\geq 3$ (the liftability depends on only the action on $\B $). 
Hence we omit ``$k$'' in the notation of the liftable mapping class group and the liftable Hilden group for $k\geq 3$ (i.e. we express $\mathrm{LMod}_{2n+2;k}=\LM $ and $\bm{\mathrm{LH}}_{2n+2;k}=\LH $ for $k\geq 3$).  
Ghaswala and Winarski~\cite{Ghaswala-Winarski1} constructed a finite presentation for $\LM $. 
After that, Hirose and the first author~\cite{Hirose-Omori} gave a finite presentation for $\LM $ in a different generating set from Ghaswala-Winarski's presentation in~\cite{Ghaswala-Winarski1} and a finite presentation for $\SM $. 
In~\cite{Hirose-Omori}, Hirose and the first author also constructed the finite presentations for the liftable mapping class groups and the balanced superelliptic mapping class groups fixing either one point or one 2-disk in $\Sigma _g$. 

In this paper, we give a finite presentation for $\LH $ and $\SH $ (Theorems~\ref{thm_pres_LH} and \ref{thm_pres_SH}), and calculate their integral first homology groups. 
The integral first homology group $H_1(G)$ of a group $G$ is isomorphic to the abelianization of $G$. 
Put $\Z _l=\Z /l\Z $ for an integer $l\geq 2$. 
The results about the integral first homology groups of $\LH $ and $\SH $ are as follows:

\begin{thm}\label{thm_abel_lmod}
For $n\geq 1$, $H_1(\LH )\cong \Z \oplus \Z _2\oplus \Z _{2}$. 
\end{thm}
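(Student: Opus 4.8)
The plan is to read off $H_1(\LH)$ directly from the finite presentation for $\LH$ obtained in Theorem~\ref{thm_pres_LH}. Since $H_1(\LH)$ is the abelianization of $\LH$, a finite presentation $\LH=\langle X\mid R\rangle$ produces $H_1(\LH)\cong \Z^{X}/M$, where $\Z^{X}$ is the free abelian group on the generating set $X$ and $M$ is the subgroup generated by the exponent-sum vectors of the relators in $R$; so the whole computation reduces to putting the resulting integer relation matrix into Smith normal form. First I would write out, for each defining relator of Theorem~\ref{thm_pres_LH}, its image in $\Z^{X}$.

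The key reduction is that the bulk of the defining relators are of commutator or conjugation type $aba^{-1}=w$, and upon abelianizing these either become trivial or force one generator class to be a $\pm 1$-combination of the others. In particular the families of generators indexed by the $n+1$ branch arcs (equivalently the $2n+2$ branch points), which are cyclically permuted up to conjugation by the liftable lift-of-rotation generators, should all collapse to a single class in $H_1(\LH)$. Carrying out these abelianized Tietze moves cuts the generating set down to a bounded number of classes and, crucially, makes the answer independent of $n$. This $n$-uniform bookkeeping is where the argument needs the most care.

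After the reduction one is left with an abelian group presented by a handful of explicit generators subject to a handful of explicit linear relations, and the residual Smith normal form computation is routine; I expect it to return one free generator together with two relations of the form $2x=0$, hence $\Z\oplus\Z_2\oplus\Z_2$. To confirm the lower bound it is useful to display the three invariants geometrically: a homomorphism $\LH\to\Z$ of ``total twisting'' type along the branch arcs for the free summand, and two homomorphisms $\LH\to\Z_2$ of parity type (one refining the sign of the permutation induced on the branch set, one an orientation-type $\Z_2$-invariant) for the torsion summands. Checking that these are well defined on $\LH$ and jointly surjective shows $H_1(\LH)$ surjects onto $\Z\oplus\Z_2\oplus\Z_2$, matching the upper bound coming from the presentation.

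The main obstacle is organizational rather than conceptual: the generator and relator sets of the presentation in Theorem~\ref{thm_pres_LH} grow with $n$, so the linear algebra has to be done symbolically in $n$. Concretely, one must record how conjugation by the liftable rotation-type generators permutes and twists the generator classes modulo commutators, and verify that after abelianization every such orbit is forced to one class; once that is established the finite leftover matrix computation completes the proof.
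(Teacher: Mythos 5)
Your proposal is correct and is essentially the paper's own argument: the paper computes $H_1(\LH )$ exactly by abelianizing the presentation of Theorem~\ref{thm_pres_LH}, using the conjugation-type relations (2) to collapse the $s_i$, $r_i$, $t_i$ families to single classes $s_1$, $r_1$, $t_1$, and then reducing the abelianized relations (3)--(5) (a Smith-normal-form computation, with the change of variable $X=r(r_1s_1)^{n(n+1)/2}$) to obtain $\Z[s_1]\oplus \Z_2[r_1]\oplus \Z_2[X]$. The extra geometric homomorphisms you mention for the lower bound are unnecessary once the presentation computation is carried out, and the paper does not use them.
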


\if0
\begin{thm}\label{thm_abel_lmod}
For $n\geq 1$, 
\[
H_1(\LH )\cong \left\{ \begin{array}{ll}
 \Z ^2\oplus \Z _2\oplus \Z _{2}&\text{if }  n \text{ is odd},   \\
 \Z ^2\oplus \Z _{2}& \text{otherwise}.
 \end{array} \right.
\]
\end{thm}
\fi

\begin{thm}\label{thm_abel_smod}
For $n\geq 1$ and $k\geq 3$ with $g=n(k-1)$, 
\[
H_1(\SH )\cong \left\{ \begin{array}{ll}
 \Z \oplus \Z _2\oplus \Z _{2}\oplus \Z _{2}&\text{if }  n \text{ is odd and }k \text{ is even},   \\
 \Z \oplus \Z _{2}\oplus \Z _{2}& \text{otherwise}.
 \end{array} \right.
\]
\end{thm}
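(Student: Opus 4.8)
The plan is to read off the abelianization directly from the finite presentation of $\SH$ established in Theorem~\ref{thm_pres_SH}. Since $H_1(G)$ is the abelianization of $G$, a finite presentation $\langle X\mid R\rangle$ of $\SH$ yields the presentation $\langle X\mid R\cup[X,X]\rangle$ of the abelian group $H_1(\SH)$, so the problem becomes the computation of the Smith normal form of the integer matrix of exponent sums of the relators in $R$. To keep the bookkeeping manageable I would first exploit the surjection $\theta|_{\SH}\colon \SH\twoheadrightarrow \LH$ of Lemma~\ref{lem_exact_SH_handlebody}, whose kernel is $\langle\zeta_{g,k}\rangle$. Right-exactness of abelianization gives a surjection $H_1(\SH)\twoheadrightarrow H_1(\LH)\cong\Z\oplus\Z_2\oplus\Z_2$ (Theorem~\ref{thm_abel_lmod}) whose kernel is the cyclic subgroup generated by the class $[\zeta_{g,k}]$; equivalently, the five-term exact sequence of this extension reads
\[
H_2(\LH)\longrightarrow \langle\zeta_{g,k}\rangle\big/[\SH,\langle\zeta_{g,k}\rangle]\longrightarrow H_1(\SH)\longrightarrow H_1(\LH)\longrightarrow 0 .
\]
So everything reduces to (i) the order of $[\zeta_{g,k}]$ in $H_1(\SH)$ and (ii) whether the resulting extension of $\Z\oplus\Z_2\oplus\Z_2$ by that cyclic group splits.

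For step (i) I would use that $\SH$ contains a symmetric mapping class conjugating $\zeta_{g,k}$ to $\zeta_{g,k}^{-1}$ (a reflection-type element), so that $2[\zeta_{g,k}]=0$ in $H_1(\SH)$; in particular, when $k$ is odd the order of $[\zeta_{g,k}]$ divides $\gcd(2,k)=1$, it is trivial, and the sequence collapses to $H_1(\SH)\cong H_1(\LH)\cong\Z\oplus\Z_2\oplus\Z_2$, which is the ``otherwise'' case. When $k$ is even, $[\zeta_{g,k}]$ has order $1$ or $2$, and I expect the parity of $n$ to decide which: among the defining relators of Theorem~\ref{thm_pres_SH} there should be one whose abelianization expresses $[\zeta_{g,k}]$ as a multiple — with a coefficient congruent to $n$ modulo $2$ — of generators that are already torsion of known order in $H_1(\LH)$, so that for $n$ even this relator forces $[\zeta_{g,k}]=0$ while for $n$ odd it only forces $2[\zeta_{g,k}]=0$. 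This is exactly the mechanism by which the extra $\Z_2$ summand appears precisely when $n$ is odd and $k$ is even.

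For step (ii), once $[\zeta_{g,k}]$ is known to have order $2$ (the case $n$ odd, $k$ even), I would check that the extension $0\to\langle[\zeta_{g,k}]\rangle\to H_1(\SH)\to\Z\oplus\Z_2\oplus\Z_2\to 0$ is split, i.e.\ that $[\zeta_{g,k}]$ is not twice a generator of $H_1(\SH)$ nor identified with one of the existing order-$2$ classes; this again is read off from the abelianized relators, most cleanly by exhibiting in the Smith normal form of the relation matrix the elementary divisors $0,2,2,2$. Assembling the cases $k$ odd, $k$ even with $n$ even, and $k$ even with $n$ odd yields the stated dichotomy.

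The main obstacle is the size and uniformity of the computation: the presentation in Theorem~\ref{thm_pres_SH} has generators and relators indexed by the family $(n,k)$, so the relation matrix and its Smith normal form must be analysed once and for all, and the delicate point is isolating the single coefficient whose residues modulo $2$ (in $n$ and in $k$) govern the answer — while simultaneously ruling out a hidden $\Z_4$ (or higher) in favour of the claimed $\Z_2\oplus\Z_2$ (resp.\ $\Z_2\oplus\Z_2\oplus\Z_2$) structure.
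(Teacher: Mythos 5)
Your plan is correct and ultimately rests on the same engine as the paper --- abelianizing the finite presentation of Theorem~\ref{thm_pres_SH} --- but it packages the computation differently, via the extension $1\to\langle\zeta\rangle\to\SH\to\LH\to 1$ of Lemma~\ref{lem_exact_SH_handlebody} together with Theorem~\ref{thm_abel_lmod}. The paper simply collapses the generators using the relations~(2) (getting $\widetilde{s}_i=\widetilde{s}_1$, $\widetilde{r}_i=\widetilde{r}_1$, $\widetilde{r}_1^2=1$, $\widetilde{t}_i=\widetilde{t}_1$) and then manipulates the abelianized relations (3), (4), (5), (6)(c) by an explicit change of basis ($X=\widetilde{t}_1\widetilde{s}_1^{\,n}$, $Y=\widetilde{r}\widetilde{s}_1^{\,n(n+1)/2}$), treating odd and even $k$ separately. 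Your reduction buys a genuinely cleaner treatment of odd $k$: the kernel of $H_1(\SH)\to H_1(\LH)$ is generated by $[\zeta]$, the conjugating element you invoke is precisely $\widetilde{r}$ (relation~(6)(c) says $\widetilde{r}\zeta\widetilde{r}^{-1}=\zeta^{-1}$), so $[\zeta]$ is killed by $\gcd(2,k)=1$ and $H_1(\SH)\cong H_1(\LH)$ with no further work. For even $k$ your argument is only a sketch ("there should be a relator\dots"), but the anticipated mechanism is exactly what the presentation provides: relation~(5) abelianizes to $(n+1)X=0$ with $X=\widetilde{t}_1+n\widetilde{s}_1=[\zeta]-n\widetilde{r}_1$, and together with $2[\zeta]=0$ this forces $[\zeta]=0$ when $n$ is even, while for $n$ odd it leaves an order-two class; at that point your fallback (Smith normal form of the relation matrix, to confirm elementary divisors $0,2,2,2$ and rule out a $\Z_4$) is the same finite computation the paper carries out by hand with its basis $\widetilde{s}_1,\widetilde{r}_1,X,Y$. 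So the two routes differ mainly in organization: yours reuses $H_1(\LH)$ and isolates the role of $[\zeta]$ conceptually, the paper's is self-contained and makes the splitting in the case $n$ odd, $k$ even explicit rather than deferred.
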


The explicit generators of the first homology groups in Theorem~\ref{thm_abel_lmod}~and~\ref{thm_abel_smod} are given in their proofs in Section~\ref{section_abel-lmod} and~\ref{section_abel-smod}. 

Contents of this paper are as follows. 
In Section~\ref{Preliminaries}, we review the definition of the balanced superelliptic covering map $p=p_{g,k}$ and introduce some liftable homeomorphisms on $B^3$ for $p$ and the relations among these liftable homeomorphisms. 
To define these liftable homeomorphisms, we review the \textit{spherical wicket group} in Section~\ref{section_liftable-element}. 
In Section~\ref{section_exact-seq}, we observe the group structure of the group $\LH $ via a group extension (Propositions~\ref{prop_exact_LH}). 
In Section~\ref{section_lmod}, we give a finite presentation for $\LH $ in Theorem~\ref{thm_pres_LH} and its proof, and calculate $H_1(\LH )$ in Section~\ref{section_abel-lmod}. 
To prove Theorem~\ref{thm_pres_LH}, in Section~\ref{section_pure_Hilden}, we give a finite presentation for the pure Hilden group which is obtained from Tawn's finite presentation for the pure Hilden group of one marked disk case in~\cite{Tawn2}. 
Finally, in Section~\ref{section_smod}, we give a finite presentation for $\SH $ (Theorem~\ref{thm_pres_SH}) and calculate $H_1(\SH )$ in Section~\ref{section_abel-smod}. 
In Section~\ref{section_lifts}, we construct explicit lifts of generators for $\LH $ with respect to $p$ which are introduced in Section~\ref{section_liftable-element}. 

\section{Preliminaries}\label{Preliminaries}

\subsection{The balanced superelliptic covering space}\label{section_bscov}

For integers $n\geq 1$, $k\geq 2$, and $g=n(k-1)$, we describe the handlebody $H_{g}$ as follows. 
We take the unit 3-ball $B(1)$ in $\R ^3$ and $n$ mutually disjoint parallel copies $B(2),\ B(3),\ \dots ,\ B(n+1)$ of $B(1)$ by translations along the x-axis such that 
\[
\max \bigl( B(i)\cap (\R \times \{ 0\}\times \{ 0\} )\bigr) <\min \bigl( B(i+1)\cap (\R \times \{ 0\}\times \{ 0\} )\bigr)
\]
in $\R =\R \times \{ 0\}\times \{ 0\} $ for $1\leq i\leq n$ (see Figure~\ref{fig_bs_periodic_map}). 
Let $\zeta$ be the $(-\frac{2\pi }{k})$-rotation of $\R ^3$ on the $x$-axis. 
Then for each $1\leq i\leq n$, we connect $B(i)$ and $B(i+1)$ by $k$ 3-dimensional 1-handles such that the union of the $k$ 3-dimensional 1-handles are preserved by the action of $\zeta $ as in Figure~\ref{fig_bs_periodic_map}. 
Since the union of $B(1)\cup B(2)\cup \cdots \cup B(n+1)$ and the attached $n\times k$ 3-dimensional 1-handles is homeomorphic to $H_{g=n(k-1)}$, we regard this union as $H_g$. 

\begin{figure}[h]
\includegraphics[scale=1.35]{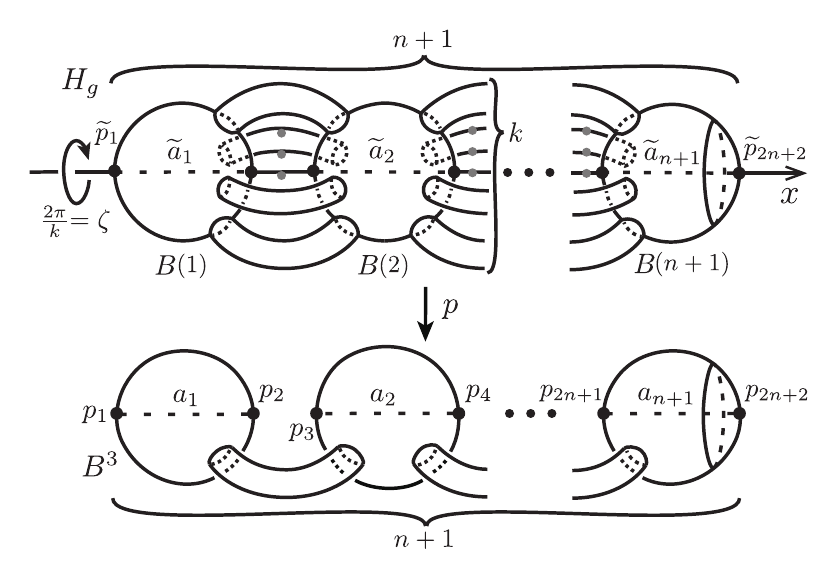}
\caption{The balanced superelliptic covering map $p=p_{g,k}\colon H_g\to B^3$.}\label{fig_bs_periodic_map}
\end{figure}

By the construction above, the action of $\zeta $ on $\R ^3$ induces the action on $H_g$ and the fixed points set of $\zeta $ is $\widetilde{\A }=H_g\cap (\R \times \{ 0\}\times \{ 0\})$. 
We call $\zeta $ the \textit{balanced superelliptic rotation} on $H_g$. 
We can see that the intersection $\widetilde{a}_i=B(i)\cap \widetilde{\A }$ for $1\leq i\leq n+1$ is a proper simple arc in $H_g$ and $\widetilde{\A }=\widetilde{a}_1\sqcup \widetilde{a}_2\sqcup \cdots \sqcup \widetilde{a}_{n+1}$ (see Figure~\ref{fig_bs_periodic_map}). 
The quotient space $H_g/\left< \zeta \right>$ is homeomorphic to $B^3$ and the induced quotient map $p=p_{g,k}\colon H_g\to B^3$ is a branched covering map with the branch points set $\A =p(\widetilde{\A })=p(\widetilde{a}_1)\sqcup p(\widetilde{a}_2)\sqcup \cdots \sqcup p(\widetilde{a}_{n+1})\subset B^3$. 
We call $p$ 
 the \textit{balanced superelliptic covering map}. 
Put 
\begin{itemize}
\item $a_i=p(\widetilde{a}_i)$ \quad for $1\leq i\leq n+1$, 
\item $\widetilde{p}_{2i-1}=\min \widetilde{a}_i$ and $\widetilde{p}_{2i}=\max \widetilde{a}_i$ \quad in $\R =\R \times \{ 0\}\times \{ 0\}$ for $1\leq i\leq n+1$, 
\item $p_i=p(\widetilde{p}_{i})$ \quad for $1\leq i\leq 2n+2$, 
\item $\B =\partial \A =\{ p_1, p_2, \dots , p_{2n+2}\}$, 
\item $\Sigma _g=\partial H_g$, \quad and \quad $S^2=\partial B^3$ (see Figure~\ref{fig_bs_periodic_map}).
\end{itemize}  
Then we also call the restriction $p|_{\Sigma _g}\colon \Sigma _g\to S^2$ the balanced superelliptic covering map and we often denote simply $p|_{\Sigma _g}=p$. 
We note that the branch points set of $p\colon \Sigma _g\to S^2$ coincides with $\B $. 

At the end of this subsection, we will define simple closed curves which are used frequently throughout this paper. 
Let $l_i$ $(1\leq i\leq 2n+1)$ be an oriented simple arc on $S^2$ whose endpoints are $p_i$ and $p_{i+1}$ as in Figure~\ref{fig_path_l}, and let $\widetilde{l}_i^l$ for $1\leq i\leq 2n+1$ and $1\leq l\leq k$ be a lift of $l_i$ with respect to $p$ such that $\zeta (\widetilde{l}_i^l)=\widetilde{l}_i^{l+1}$ for  $1\leq l\leq k-1$ and $\zeta (\widetilde{l}_i^k)=\widetilde{l}_i^{1}$ (see Figure~\ref{fig_isotopy_surface_3-handles}). 
Put $L=l_1\cup l_2\cup \cdots \cup l_{2n+1}\subset S^2$ and $\widetilde{L}=p^{-1}(L)\subset \Sigma _g$. 
Note that the isotopy class of a homeomorphism $\varphi $ on $H_g$ (resp. on $B^3$ relative to $\A $) is determined by the isotopy class of $\varphi (\widetilde{L})$ in $\Sigma _g$ (resp. of $\varphi (L)$ in $S^2$ relative to  $\B $). 
We identify $B^3$ with the 3-manifold with a sphere boundary on the lower side in Figure~\ref{fig_path_l}, and also identify $H_g$ with the 3-manifold with boundary as on the lower side in Figure~\ref{fig_isotopy_surface_3-handles} by some homeomorphisms. 

\begin{figure}[h]
\includegraphics[scale=1.3]{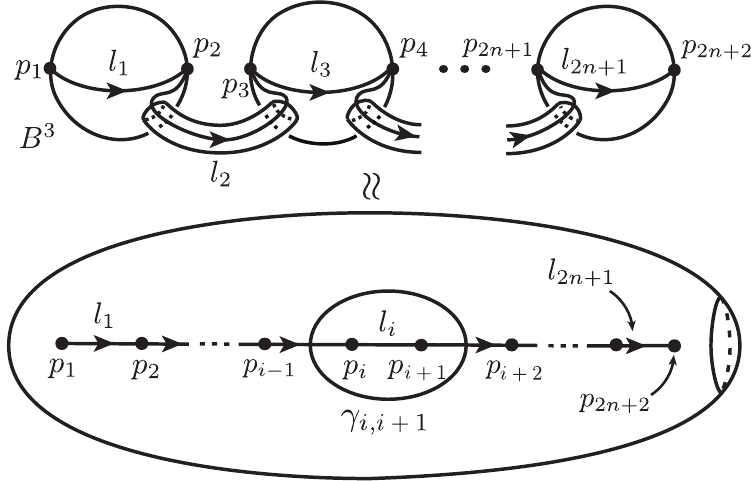}
\caption{Arcs $l_1,\ l_2,\ \dots ,\ l_{2n+1}$ and a simple closed curve $\gamma _{i,i+1}$ on $S^2$ for $1\leq i\leq 2n+1$, and a natural homeomorphism of $B^3$.}\label{fig_path_l}
\end{figure}

\begin{figure}[h]
\includegraphics[scale=0.85]{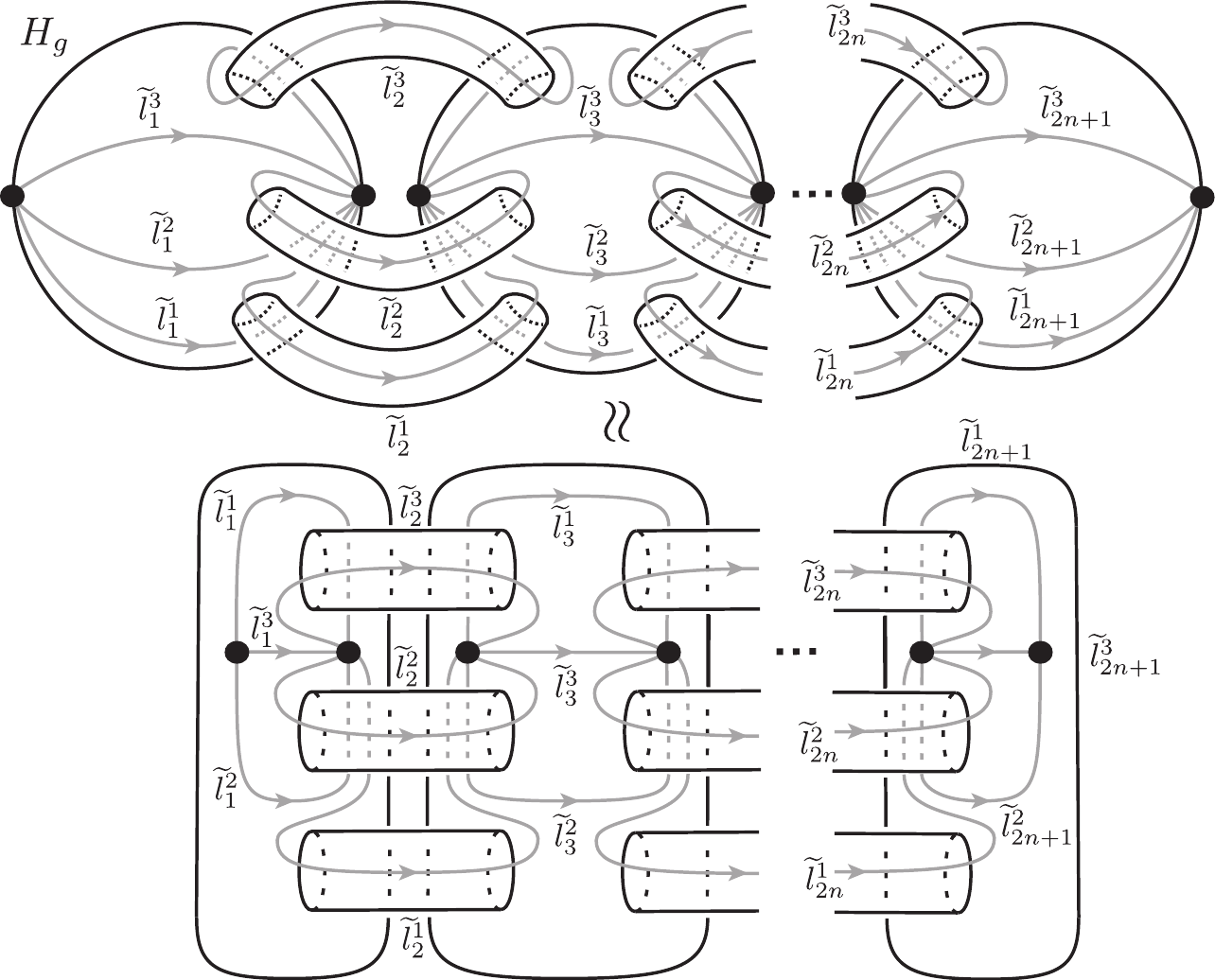}
\caption{A natural homeomorphism of $H_g$ when $k=3$.}\label{fig_isotopy_surface_3-handles}
\end{figure}

Let $\gamma _{i,i+1}$ for $1\leq i\leq 2n+1$ be a simple closed curve on $S^2-\B $ which surrounds the two points $p_i$ and $p_{i+1}$ as in Figure~\ref{fig_path_l} and $\gamma _{i}^l$ for $1\leq i\leq 2n+1$ and $1\leq l\leq k$ a simple closed curve on $\Sigma _g$ which is a lift of $\gamma _{i,i+1}$ and is equipped with $\zeta (\gamma _{i}^l)=\gamma _{i}^{l+1}$ for $1\leq l\leq k-1$ as in Figure~\ref{fig_scc_c_il}. 
We remark that $\gamma _{2i-1}^l$ for $1\leq i\leq n+1$ and $1\leq l\leq k$ bounds a disk in $H_g$. 

\begin{figure}[h]
\includegraphics[scale=0.85]{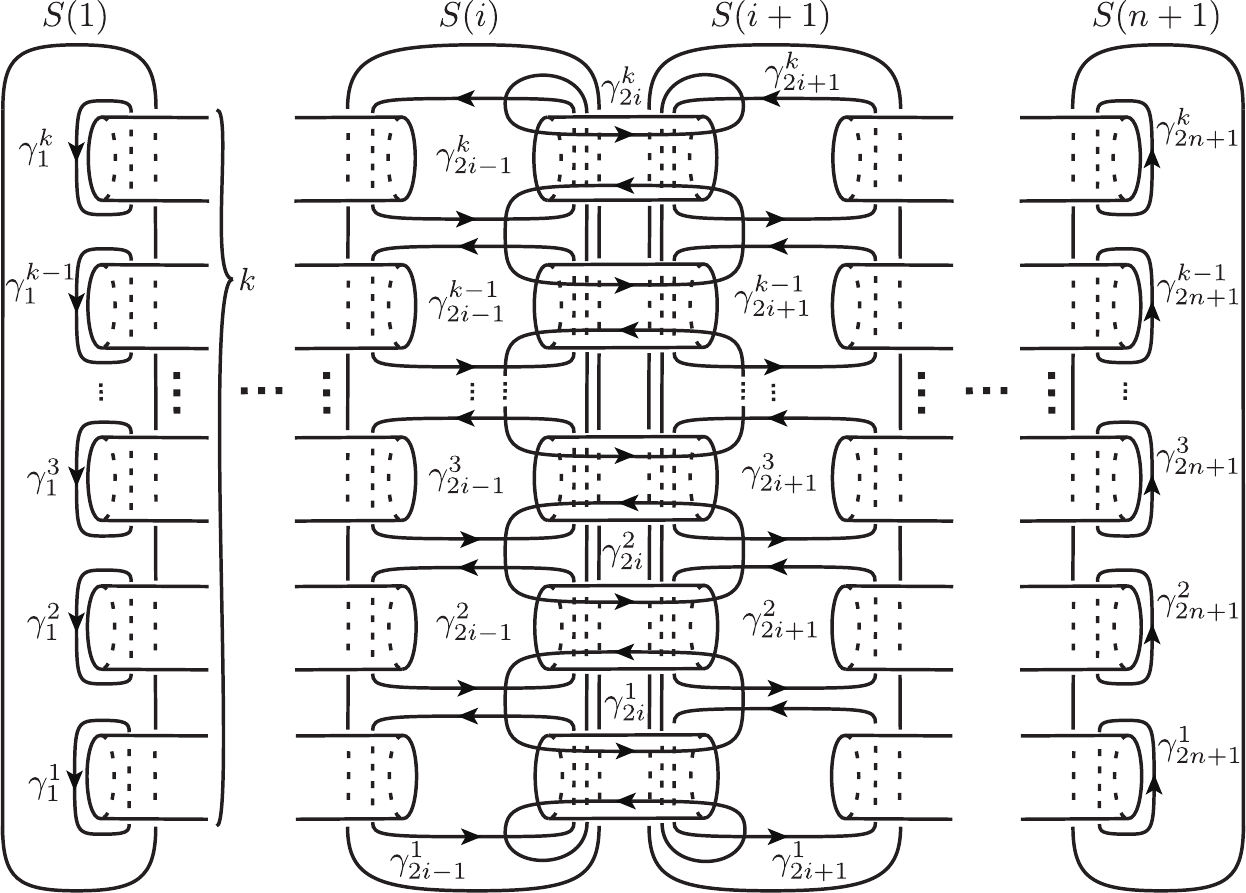}
\caption{Simple closed curves $\gamma _i^l$ on $\Sigma _g$ for $1\leq i\leq 2n+1$ and $1\leq l\leq k$.}\label{fig_scc_c_il}
\end{figure}

\subsection{Notes on the Birman-Hilden correspondence for the balanced superelliptic handlebody group}\label{section_notes_BH-corresp}

In this section, we review the Birman-Hilden correspondence for the balanced superelliptic handlebody group as an analogy of Section~1.8 in \cite{Iguchi-Hirose-Kin-Koda}. 
Let $p=p_{g,k}\colon H_g\to B^3$ be the balanced superelliptic covering map for fixed $g=n(k-1)$ with $k\geq 3$ and $n\geq 1$, and $\zeta $ the balanced superelliptic rotation on $H_g$. 
Then, we denote by $\SHomH $ the group of orientation-preserving symmetric homeomorpoisms on $H_g$ for $\zeta $, and by $\mathrm{Homeo}_+(B^3, \A )$ the group of orientation-preserving self-homeomorpoisms on $B^3$ which preserve $\A $ setwise. 
Remark that $\pi _0(\mathrm{Homeo}_+(B^3, \A ))=\Hil $. 
Since $\SHomH $ preserves $\widetilde{\A }=p^{-1}(\A )$ setwise, the natural homorphism $\SHomH \to \mathrm{Homeo}_+(B^3, \A )$ induces the surjective homomorphism 
\begin{eqnarray}\label{theta_bar}
\bar{\theta }\colon \pi _0(\SHomH )\to \LH 
\end{eqnarray}
with the kernel $\left< \zeta \right>$. 
\if0
which is defined as follows: for $\varphi \in \SHomH $, we define $\hat{\varphi }\colon B^3\to B^3$ by $\hat{\varphi }(x)=p(\varphi (\widetilde{x}))$ for some $\widetilde{x}\in p^{-1}(x)$. 
Then $\bar{\theta } (f)=[\hat{\varphi }]\in \Hil $. 
\fi
Then, by an analogy of Lemma~1.21 in \cite{Iguchi-Hirose-Kin-Koda}, we have the following lemma. 

\begin{lem}\label{lem_interior_symm-liftability}
\begin{enumerate}
\item Let $\varphi $ be a symmetric homeomorphism on $\Sigma _g$ for $\zeta |_{\Sigma _g}$ which extends to $H_g$. 
Then there exists a symmetric homeomorphism $\hat{\varphi }$ on $H_g$ for $\zeta $ such that $\hat{\varphi }|_{\Sigma _g}=\varphi $.  
\item If symmetric homeomorphisms $\varphi _1$ and $\varphi _2$ on $H_g$ are isotopic, then $\varphi _1$ and $\varphi _2$ are symmetrically isotopic with respect to $\zeta $.  
\end{enumerate}
\end{lem}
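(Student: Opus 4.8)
The plan is to follow the proof of Lemma~1.21 in~\cite{Iguchi-Hirose-Kin-Koda}, replacing the hyperelliptic involution ($k=2$) by the balanced superelliptic rotation $\zeta$ of order $k\geq 3$ throughout. The ingredients are: the Birman--Hilden theorem for the surface covering $p\colon\Sigma_g\to S^2$ from~\cite{Birman-Hilden3} (symmetric homeomorphisms of $\Sigma_g$ that are isotopic are symmetrically isotopic); the injectivity of $\Hg\hookrightarrow\Mg$ and of $\Hil\hookrightarrow\M$; Lemma~\ref{liftability_surf_hand}, which detects liftability with respect to $p\colon H_g\to B^3$ on the boundary sphere; the isotopy-lifting property of $p$, extended over the branch arcs $\widetilde{\A}$ as in the classical Birman--Hilden setting; and the equivariant Dehn lemma and loop theorem for the cyclic action $\langle\zeta\rangle$ on $H_g$. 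We use that ``symmetric'' is meant at the level of homeomorphisms, so that if $\varphi$ is symmetric with $\varphi\zeta\varphi^{-1}=\zeta^m$ and $\gcd(m,k)=1$, then $\varphi$ preserves $\mathrm{Fix}(\zeta)$ and descends through $p$.

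For part~(1), suppose $\varphi$ is symmetric on $\Sigma_g$ and extends to $\Phi\in\mathrm{Homeo}_+(H_g)$. Since $\varphi$ preserves $\mathrm{Fix}(\zeta|_{\Sigma_g})$, it descends to $\bar\varphi\in\mathrm{Homeo}_+(S^2,\B)$ with $p\circ\varphi=\bar\varphi\circ p$; thus $\bar\varphi$ is liftable with respect to $p|_{\Sigma_g}$, with $\varphi$ one of its $k$ lifts. The crux is to upgrade $\Phi$ to a symmetric extension, equivalently to show that $\bar\varphi$ extends over $B^3$ preserving $\A$. Fix the $\langle\zeta\rangle$-equivariant complete meridian system $\mathcal D$ of $H_g$ given by the co-cores of the $nk$ one-handles. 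Because $\varphi=\Phi|_{\Sigma_g}$ extends over $H_g$, the $\langle\zeta\rangle$-invariant curve system $\varphi(\partial\mathcal D)$ bounds meridian disks in $H_g$, so by the equivariant Dehn lemma it bounds a $\langle\zeta\rangle$-equivariant complete meridian system $\mathcal D'$ of $H_g$; after equivariant innermost-disk and isotopy adjustments, $\mathcal D'$ may be put in standard position with respect to $\widetilde{\A}$ and with each complementary ball $\langle\zeta\rangle$-invariant. Passing to the quotient, $\bar\varphi$ then carries the quotient system $\overline{\mathcal D}$ to $\overline{\mathcal D'}$, disjoint from $\A$, and cutting $B^3$ along $\overline{\mathcal D'}$ yields balls each containing a single unknotted spanning arc of $\A$; extending $\bar\varphi$ ball by ball (a homeomorphism of a sphere with two marked points extends over a ball containing one unknotted spanning arc) and gluing, we obtain $\bar\Phi\in\mathrm{Homeo}_+(B^3,\A)$ with $\bar\Phi|_{S^2}=\bar\varphi$. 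By Lemma~\ref{liftability_surf_hand}, $\bar\Phi$ is liftable with respect to $p\colon H_g\to B^3$; any lift $\Psi$ satisfies $\Psi\zeta\Psi^{-1}\in\langle\zeta\rangle$, hence is symmetric, and $\Psi|_{\Sigma_g}$, being a lift of $\bar\varphi$, equals $\zeta^j\varphi$ for some $j$. Then $\hat\varphi:=\zeta^{-j}\Psi$ is a symmetric homeomorphism of $H_g$ with $\hat\varphi|_{\Sigma_g}=\varphi$.

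For part~(2), restrict the given isotopy from $\varphi_1$ to $\varphi_2$ to $\Sigma_g$; by~\cite{Birman-Hilden3} there is a symmetric isotopy $\{\psi_t\}$ on $\Sigma_g$ from $\varphi_1|_{\Sigma_g}$ to $\varphi_2|_{\Sigma_g}$, descending to an isotopy $\{\bar\psi_t\}$ of $(S^2,\B)$ between the descents $\bar\varphi_1,\bar\varphi_2$ of $\varphi_1,\varphi_2$. The symmetric homeomorphisms $\varphi_1,\varphi_2$ of $H_g$ descend to $\bar\Phi_1,\bar\Phi_2\in\mathrm{Homeo}_+(B^3,\A)$ with $\bar\Phi_i|_{S^2}=\bar\varphi_i$; since $\bar\Phi_1$ and $\bar\Phi_2$ have equal images $[\bar\varphi_1]=[\bar\varphi_2]$ in $\M$ and $\Hil\hookrightarrow\M$ is injective, there is an isotopy $\{\bar\Psi_t\}$ in $\mathrm{Homeo}_+(B^3,\A)$ from $\bar\Phi_1$ to $\bar\Phi_2$. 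As $\bar\Psi_0$ is liftable and the set of homomorphisms $\pi_1(B^3\setminus\A)\to\Z_k$ is finite, $\{\bar\Psi_t\}$ is liftable for every $t$, so by the isotopy-lifting property (over $\widetilde{\A}$ as well) it lifts to a symmetric isotopy $\{\Psi_t\}$ of $H_g$ with $\Psi_0=\varphi_1$; then $\Psi_1=\zeta^j\varphi_2$ for some $j$. Finally, $\varphi_1$ is isotopic both to $\varphi_2$ and to $\zeta^j\varphi_2$ in $\mathrm{Homeo}_+(H_g)$, so $\zeta^j$ is isotopic to the identity; since $\langle\zeta\rangle$ injects into $\pi_0(\SHomH)$ by the kernel statement in~\eqref{theta_bar}, $j\equiv0\pmod{k}$, whence $\Psi_1=\varphi_2$ and $\{\Psi_t\}$ is the desired symmetric isotopy.

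The main obstacle will be the crux of part~(1): converting the a~priori non-equivariant extension $\Phi$ into a symmetric one, i.e.\ showing that the quotient homeomorphism $\bar\varphi$ extends over the branched ball $(B^3,\A)$. This is where genuinely three-dimensional equivariant topology is needed---the equivariant Dehn lemma, and the control of the equivariant meridian system $\mathcal D'$ near the branch arcs $\widetilde{\A}$ and on the complementary balls---and it is the step that does not follow formally from the surface Birman--Hilden theory that suffices for part~(2).
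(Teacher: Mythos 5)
Your strategy is essentially the paper's. For (1) you pick a $\zeta$-equivariant disk system, apply Edmonds' equivariant Dehn lemma to the image curves, pass to the quotient, extend $\bar\varphi$ ball-by-ball to some $\bar\Phi\in\mathrm{Homeo}_+(B^3,\A)$ and then lift and correct by a deck transformation; the paper does exactly this, using the disks bounded by the curves $\gamma_{2i-1}^l$ instead of the co-cores and invoking the surjectivity of $\bar\theta\colon\pi_0(\SHomH)\to\LH$ at the end. For (2) you restrict to $\Sigma_g$, apply Birman--Hilden, push the boundary isotopy into $(B^3,\A)$ (you via injectivity of $\Hil\hookrightarrow\M$, the paper via Proposition~A.4 of Hirose--Kin) and lift the isotopy, as the paper does. (Both you and the paper also tacitly use that the single arc in each complementary ball is trivial there; this does deserve a word, e.g.\ that a boundary-parallel arc of $\A$ cut off by a disk disjoint from $\A$ stays boundary-parallel in the cut-off ball.)

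Two justifications need repair. First, in (1) you deduce that $\bar\Phi$ is liftable from Lemma~\ref{liftability_surf_hand}; in the paper that lemma is itself derived from Lemma~\ref{lem_interior_symm-liftability}\,(1), so as written this is circular. Argue directly instead: $\bar\Phi|_{S^2}=\bar\varphi$ is liftable for $p|_{\Sigma_g}$, and since $\pi_1(S^2\setminus\B)\to\pi_1(B^3\setminus\A)$ is surjective, the monodromy condition defining the cover of $B^3\setminus\A$ is inherited from the boundary, so $\bar\Phi$ lifts and the lift extends over $\widetilde{\A}$ (this is what the paper's appeal to the surjectivity of $\bar\theta$ is hiding). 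Second, and more seriously, the final step of (2) is circular: to get $j\equiv 0$ you need that $\zeta^j$ is not isotopic to the identity in $\mathrm{Homeo}_+(H_g)$, but you justify this only by the injectivity of $\left<\zeta\right>$ in $\pi_0(\SHomH)$, i.e.\ by the nonexistence of a \emph{symmetric} isotopy --- which is precisely what part (2), applied to the pair $\zeta^j$ and $\mathrm{id}$, is supposed to establish. What is actually needed is that $\zeta^j\neq 1$ in $\Mg$ (hence in $\Hg$) for $1\leq j\leq k-1$; this follows from the Birman--Hilden exact sequence $1\to\left<\zeta\right>\to\SM\to\LM\to 1$, or from the standard fact that a nontrivial finite-order homeomorphism of a closed orientable surface of genus at least two is not isotopic to the identity. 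With these two repairs your argument coincides with the paper's proof.
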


\if0
\begin{lem}\label{lem_interior_symm-liftability}
Let $\varphi $ be an orientation-preserving self-homeomorphism on $B^3$ such that there exists an orientation-preserving self-homeomorphism $\widetilde{\varphi }^\prime $ on $H_g$ whose restriction $\widetilde{\varphi }^\prime |_{\Sigma _g}$ is a lift of $\varphi |_{S^2}$ with respect to $p|_{\Sigma _g}$, i.e. $\widetilde{\varphi }^\prime |_{\Sigma _g}$ is symmetric for $\zeta |_{\Sigma _g}$. 
Then there exists a symmetric homeomorphism $\widetilde{\varphi }$ on $H_g$ for $\zeta $ such that $\widetilde{\varphi }|_{\Sigma }=\widetilde{\varphi }^\prime |_{\Sigma _g}$.   
\end{lem}
\fi

\begin{proof}
(1): 
For $1\leq i\leq n+1$ and $1\leq l\leq k$, we take a proper disk $D_{2i-1}^l$ in $H_g$ such that $\partial D_{2i-1}^l=\gamma _{2i-1}^l$, $\zeta (D_{2i-1}^l)=D_{2i-1}^{l+1}$ for $1\leq l\leq k-1$, and $D_{2i-1}^l$ is disjoint from $\widetilde{\A }$ and $D_{2j-1}^{l^\prime }$ for $(i,l)\not= (j,l^\prime )$. 
Note that the fixed points set of $\zeta $ (resp. $\zeta |_{\Sigma _g}$) is $\widetilde{\A }$ (resp. $\widetilde{\B }=p^{-1}(\B )$). 
Since $\varphi $ is symmetric for $\zeta |_{\Sigma _g}$, the group $\left< \zeta |_{\Sigma _g}\right> $ preserves the set $\widetilde{\B } $ and acts on the set of disjoint simple closed curves $\{ \varphi (\gamma _{2i-1}^1),\dots , \varphi (\gamma _{2i-1}^k)\}$ as a bijection. 
Since $\varphi $ extends to $H_g$, the curve $\varphi (\gamma _{2i-1}^l)$ for $1\leq i\leq n+1$ and $1\leq l\leq k$ is null-homotopic in $H_g$. 
By Edmonds~\cite{Edmonds}, there exist proper disks $\bar{D}_{2i-1}^1$ $(1\leq i\leq n+1)$ such that $\partial \bar{D}_{2i-1}^1=\varphi (\gamma _{2i-1}^1)$, the disk $\bar{D}_{2i-1}^1$ is disjoint from $\widetilde{\A }$, and the disks $\bar{D}_{2i-1}^1, \zeta (\bar{D}_{2i-1}^1), \dots , \zeta ^{k-1}(\bar{D}_{2i-1}^1)$ are mutually disjoint.  

Put $\bar{D}_{2i-1}^l=\zeta ^{l-1}(\bar{D}_{2i-1}^1)$ for $2\leq l\leq k$, and $E_{2i-1}=p(D_{2i-1}^1)(=p(D_{2i-1}^l))$ and $\bar{E}_{2i-1}=p(\bar{D}_{2i-1}^1)(=p(\bar{D}_{2i-1}^l))$ for $1\leq i\leq n+1$. 
Since $\partial \bar{E}_{2i-1}$ is disjoint from $\partial \bar{E}_{2j-1}$ for $i\not =j$, by the irreducibility of $B^3-\A $, we retake $\bar{E}_{2i-1}$ such that $\bar{E}_{1}, \bar{E}_{3}, \dots , \bar{E}_{2n+1}$ are mutually disjoint by using an isotopy in $B^3-\A $ relative to $S^2$. 

Since $\varphi $ preserves $\widetilde{\B }$, $\varphi $ induces the self-homeomorphism $\psi $ on $B^3$ which preserves $\B $. 
By the definition, for each $1\leq i\leq n+1$, the disk $E_{2i-1}$ cuts off from $B^3$ a 3-ball which contains the arc $a_i$. 
Since $\bar{D}_{2i-1}^l$ is disjoint from $\widetilde{\A }$ and $\varphi (\gamma _{2i-1}^1)\cup \cdots \cup \varphi (\gamma _{2i-1}^k)$ cut off a punctured 2-sphere which contains the points $\widetilde{p}_{2i-1}$ and $\widetilde{p}_{2i}$, the disk $\bar{E}_{2i-1}$ also cuts off from $B^3$ a 3-ball which contains the arc $\psi (a_i)$. 
Thus, there exists $\hat{\psi }\in \mathrm{Homeo}_+(B^3, \A )$ such that $\hat{\psi }(E_{2i-1})=\bar{E}_{2i-1}$ and $\hat{\psi }|_{S^2}=\psi $.  
Therefore, by the sujectivity of $\bar{\theta }\colon \pi _0(\SHomH )\to \LH $, there exists a lift $\hat{\varphi }\in \SHomH $ of $\hat{\psi }$ such that $\varphi =\hat{\varphi }|_{\Sigma _g}$. 

(2): Let $\psi _i\in \mathrm{Homeo}_+(B^3, \A )$ for $i=1,2$ be the element which is induced by $\varphi _i$. 
Since $\varphi _1|_{\Sigma _g}$ and $\varphi _2|_{\Sigma _g}$ are isotopic, $\varphi _1|_{\Sigma _g}$ and $\varphi _2|_{\Sigma _g}$ are symmetrically isotopic by Birman and Hilden~\cite{Birman-Hilden3}. 
The isotopy between $\varphi _1|_{\Sigma _g}$ and $\varphi _2|_{\Sigma _g}$ induces the isotopy between $\psi _1|_{S^2}$ and $\psi _2|_{S^2}$. 
By Proposition~A.4 in~\cite{Hirose-Kin}, this isotopy extends to the isotopy between $\psi _1$ and $\psi _2$. 
Then, the isotopy between $\psi _1$ and $\psi _2$ uniquely lifts to the isotopy in $\SHomH $ whose origin is $\varphi _1$. 
The terminal of this isotopy coincides with $\zeta ^l\varphi _2$ for some $0\leq l\leq k-1$.  
Since $\varphi _1$ is isotopic to $\varphi _2$, if $1\leq l\leq k-1$, then $\varphi _1$ is not isotopic to $\zeta ^l\varphi _2$. 
Therefore, we have $l=0$, namely $\varphi _1$ is symmetrically isotopic to $\varphi _2$, and we have completed the proof of Lemma~\ref{lem_interior_symm-liftability}. 
\end{proof}

By Lemma~\ref{lem_interior_symm-liftability}, the group $\pi _0(\SHomH )$ is isomorphic to $\SH $ and we have the surjective homomorphism
\[
\bar{\theta }\colon \SH \to \LH .
\]
We regard $\SH $ and $\Hil$ as the subgroups of $\SM $ and $\M$, respectively, by the natural injections $\SH \hookrightarrow \SM $ and $\Hil \hookrightarrow \M $. 
Since if $\varphi \in \mathrm{Homeo}_+(B^3, \A )$ is liftable with respect to $p$, then $\varphi |_{S^2}$ is also liftable with respect to $p|_{\Sigma _g}$, we have $\LH \subset \LM \cap \Hil $. 
By Lemma~\ref{lem_interior_symm-liftability}~(1), if $\varphi \in \mathrm{Homeo}_+(B^3, \A )$ lifts to a self-homeomolphism $\widetilde{\varphi }$ on $H_g$ 
such that $\widetilde{\varphi }|_{\Sigma _g}$ is symmetric for $\zeta |_{\Sigma _g}$, then $\varphi $ has a lift $\widetilde{\varphi }^\prime \in \SHomH $. 
Thus, we have the following lemma:  

\begin{lem}\label{liftability_surf_hand}
For $n \geq 1$, we have $\LH =\LM \cap \Hil $.
\end{lem}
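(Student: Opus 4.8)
The plan is to prove the two inclusions separately. The inclusion $\LH\subseteq\LM\cap\Hil$ is the observation recorded just above the statement (a lift over $H_g$ of a homeomorphism of $(B^3,\A)$ restricts to a lift over $\Sigma_g$, and $\LH\subseteq\Hil$ by definition), so the work is to prove $\LM\cap\Hil\subseteq\LH$. I would fix $[\varphi]\in\LM\cap\Hil$; since $[\varphi]\in\Hil=\pi_0(\mathrm{Homeo}_+(B^3,\A))$ I may choose a representative $\varphi\in\mathrm{Homeo}_+(B^3,\A)$. Because $[\varphi]\in\LM$, some homeomorphism in the class $[\varphi|_{S^2}]$ is liftable with respect to $p|_{\Sigma_g}$, and since liftability of a self-homeomorphism of $(S^2,\B)$ depends only on its action on $\B$ (Lemma~\ref{lem_GW}), the homeomorphism $\varphi|_{S^2}$ is itself liftable; fix a lift $\psi\in\SHomS$ (it is symmetric, since $\psi\,(\zeta|_{\Sigma_g})\,\psi^{-1}$ covers the identity and hence lies in $\langle\zeta|_{\Sigma_g}\rangle$).

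The heart of the proof is to promote $\psi$ to a lift of $\varphi$ over $H_g$, and I would carry this out on the complements of the branch loci. Write $\widetilde{\B}=p^{-1}(\B)$, let $\epsilon\colon\pi_1(B^3\setminus\A)\to\Z/k\Z$ classify the regular covering $H_g\setminus\widetilde{\A}\to B^3\setminus\A$, and let $j\colon S^2\setminus\B\hookrightarrow B^3\setminus\A$ be the inclusion. Since $\A$ is a disjoint union of unknotted, unlinked, properly embedded arcs, $j_\ast\colon\pi_1(S^2\setminus\B)\to\pi_1(B^3\setminus\A)$ is surjective, and $\Sigma_g\setminus\widetilde{\B}\to S^2\setminus\B$ is the pullback of $H_g\setminus\widetilde{\A}\to B^3\setminus\A$ along $j$, hence is classified by $\epsilon\circ j_\ast$. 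Liftability of $\varphi|_{S^2}$ means that $(\varphi|_{S^2})_\ast$ preserves $\ker(\epsilon\circ j_\ast)=j_\ast^{-1}(\ker\epsilon)$; applying $j_\ast$ and using $j_\ast\circ(\varphi|_{S^2})_\ast=\varphi_\ast\circ j_\ast$ together with the surjectivity of $j_\ast$ then forces $\varphi_\ast(\ker\epsilon)=\ker\epsilon$, so $\varphi|_{B^3\setminus\A}$ lifts to $H_g\setminus\widetilde{\A}$. Because $\varphi$ preserves $\A$, this lift extends over $\widetilde{\A}$ to a homeomorphism $\widehat{\varphi}$ of $H_g$ with $p\circ\widehat{\varphi}=\varphi\circ p$, so $[\varphi]\in\LH$.

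Alternatively one can follow more literally the route indicated just above the statement: using that $\varphi$ preserves $\A$, one checks that $\psi$ carries each disk-bounding curve $\gamma_{2i-1}^{l}$ to a disk-bounding curve in $H_g$ (the curve $(\varphi|_{S^2})(\gamma_{2i-1,2i})$ cuts off from $B^3$ a ball meeting $\A$ in a single arc, so by the irreducibility arguments used in the proof of Lemma~\ref{lem_interior_symm-liftability} its lifts are isotopic to curves among the $\gamma_{2j-1}^{l}$), deduces that $\psi$ extends over $H_g$, and then invokes Lemma~\ref{lem_interior_symm-liftability}(1) to obtain a symmetric extension $\widehat{\psi}\in\SHomH$; this $\widehat{\psi}$ descends to a homeomorphism of $(B^3,\A)$ whose restriction to $S^2$ is $\varphi|_{S^2}$ and hence, since an element of $\Hil$ is determined by its restriction to $S^2$, represents $[\varphi]$, so again $[\varphi]\in\LH$. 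The single genuine obstacle is common to both routes: transferring liftability from the boundary $S^2$ up to $B^3$, equivalently from $\Sigma_g$ up to $H_g$. Concretely this rests on the surjectivity of $\pi_1(S^2\setminus\B)\to\pi_1(B^3\setminus\A)$ — equivalently, on the fact that the curves $\{\gamma_{2i-1}^{l}\}$ already record the handlebody structure of $H_g$ — and that is precisely where the hypothesis that $\A$ is a union of unknotted unlinked proper arcs is used.
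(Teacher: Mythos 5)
Your proposal is correct, and your primary argument takes a genuinely different route from the paper's. The paper obtains $\LM\cap\Hil\subseteq\LH$ essentially along your second, ``alternative'' route: a symmetric lift of $\varphi|_{S^2}$ extends over $H_g$, and Lemma~\ref{lem_interior_symm-liftability}~(1) --- whose proof rests on Edmonds' equivariant Dehn lemma, irreducibility of $B^3-\A$, and the surjectivity of $\bar{\theta}\colon\pi_0(\SHomH)\to\LH$ --- then produces a symmetric extension which descends to a liftable representative of the class. Your main route instead stays entirely in covering-space theory on the complements of the branch loci: liftability of $\varphi|_{S^2}$ is preservation of $\ker(\epsilon\circ j_\ast)$, surjectivity of $j_\ast\colon\pi_1(S^2\setminus\B)\to\pi_1(B^3\setminus\A)$ (valid because $\A$ is the trivial tangle, so the puncture loops map to meridians generating the free group) transfers this to preservation of the normal subgroup $\ker\epsilon$, the unbranched lift exists by the lifting criterion, and it extends across $\widetilde{\A}$ since $\varphi$ preserves $\A$ and $p|_{\widetilde{\A}}$ is injective. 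This is more elementary and self-contained (no equivariant disk systems, no isotopy adjustments), and it lifts the chosen representative $\varphi$ itself, making transparent that every representative of a class in $\LM\cap\Hil$ is liftable; what the paper's route buys is economy, since Lemma~\ref{lem_interior_symm-liftability} is needed anyway (to identify $\SH$ with symmetric isotopy classes and for Lemma~\ref{lem_exact_SH_handlebody}), so the present lemma falls out of it with no extra work. If you were to write up the alternative route, note that what you actually need is only that each lift of $\varphi|_{S^2}(\gamma_{2i-1,2i})$ bounds a disk in $H_g$, which follows because this curve bounds a disk in $B^3\setminus\A$ (the image of $E_{2i-1}$) whose preimage, missing the branch locus, is a disjoint union of disks; the stronger claim that the lifts are isotopic to curves among the $\gamma_{2j-1}^{l}$ is not immediate and not required, and the step ``a complete meridian system goes to meridians, hence the boundary homeomorphism extends over $H_g$'' is precisely the part the paper treats carefully (and equivariantly) in the proof of Lemma~\ref{lem_interior_symm-liftability}~(1).
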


By Birman and Hilden~\cite{Birman-Hilden2}, we have the following exact sequence: 
\begin{eqnarray*}\label{exact_BH}
1\longrightarrow \left< \zeta \right> \longrightarrow \SM \stackrel{\theta }{\longrightarrow }\LM \longrightarrow 1. 
\end{eqnarray*}
By restricting this exact sequence to $\SH $, we have the following lemma: 

\begin{lem}\label{lem_exact_SH_handlebody}
We have the following exact sequence: 
\begin{eqnarray}\label{exact_SH_handlebody}
1\longrightarrow \left< \zeta \right> \longrightarrow \SH \stackrel{\theta }{\longrightarrow }\LH \longrightarrow 1. 
\end{eqnarray}
\end{lem}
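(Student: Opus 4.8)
The plan is to obtain \eqref{exact_SH_handlebody} simply by restricting the Birman--Hilden exact sequence
\(1\to \left<\zeta\right>\to \SM \stackrel{\theta}{\to}\LM\to 1\)
of \cite{Birman-Hilden2} to the subgroup $\SH=\SM\cap \Hg$ of $\SM$. For any short exact sequence of groups and any subgroup $H$ of the middle term one gets, by restriction, the exact sequence $1\to (\ker)\cap H\to H\to (\text{image of }H)\to 1$; so the only real content is to identify $\left<\zeta\right>\cap \SH$ and $\theta(\SH)$.

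For the kernel: the balanced superelliptic rotation $\zeta=\zeta_{g,k}$ is, by its very construction in Section~\ref{section_bscov}, a homeomorphism of $H_g$, hence its isotopy class lies in $\Hg$; it is trivially symmetric for itself, so $\left<\zeta\right>\subseteq \SM\cap \Hg=\SH$. Therefore $\left<\zeta\right>\cap \SH=\left<\zeta\right>$, and (using that $\zeta$ still has order $k$ in $\SM$, which is part of the Birman--Hilden input) the left-hand term of \eqref{exact_SH_handlebody} is genuinely $\left<\zeta\right>$. For the image: I would first check $\theta(\SH)\subseteq \LH$. An $f\in \SH$ is represented, by Lemma~\ref{lem_interior_symm-liftability}, by a symmetric homeomorphism $\varphi$ of $H_g$; then $p$ descends $\varphi$ to a homeomorphism $\hat{\varphi}$ of $B^3=H_g/\left<\zeta\right>$ preserving $\A$, and $\hat{\varphi}|_{S^2}$ represents $\theta(f)$, so $\theta(f)\in \Hil$; since also $\theta(f)\in \LM$, Lemma~\ref{liftability_surf_hand} gives $\theta(f)\in \LM\cap \Hil=\LH$. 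Conversely, the homomorphism $\bar{\theta}\colon \SH\to\LH$ built right after Lemma~\ref{lem_interior_symm-liftability}—which, under the natural identifications, coincides with $\theta|_{\SH}$, both being induced by pushing a symmetric map forward along $p$—was already shown there to be surjective. Hence $\theta(\SH)=\LH$, and the restricted sequence is exactly \eqref{exact_SH_handlebody}.

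The argument is almost entirely formal once the preliminary lemmas are in hand: the only non-routine ingredient is the surjectivity of $\theta|_{\SH}$, i.e.\ that every element of $\LH$ admits a \emph{symmetric} lift to $H_g$, and this is precisely what Lemma~\ref{lem_interior_symm-liftability}(1) (together with the surjectivity of $\bar{\theta}$ recorded just after it) provides. So I expect the main obstacle to be purely bookkeeping—namely making explicit that $\bar{\theta}$ and $\theta|_{\SH}$ are the same map and that restriction of the Birman--Hilden sequence to the subgroup $\SH$ does not change the kernel—rather than any new geometric input.
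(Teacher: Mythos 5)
Your proposal is correct and follows essentially the same route as the paper: the paper also obtains \eqref{exact_SH_handlebody} by restricting the Birman--Hilden sequence $1\to\left<\zeta\right>\to\SM\stackrel{\theta}{\to}\LM\to 1$ to $\SH$, with the identification $\theta(\SH)=\LH$ supplied by Lemma~\ref{liftability_surf_hand} and the surjectivity of $\bar{\theta}\colon\SH\to\LH$ established via Lemma~\ref{lem_interior_symm-liftability}. Your extra bookkeeping (that $\left<\zeta\right>\subseteq\SH$ and that $\bar{\theta}$ agrees with $\theta|_{\SH}$) just makes explicit what the paper leaves implicit.
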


\if0
Let $\theta \colon \SM \to \LM $ be the surjective homomorphism with the kernel $\left< \zeta \right> $ which is obtained from the Birman-Hilden correspondence~\cite{Birman-Hilden2}, namely, $\theta $ is defined as follows: for $f\in \SM $ and a symmetric representative $\varphi \in f$ for $\zeta $, we define $\hat{\varphi }\colon \Sigma _0\to \Sigma _0$ by $\hat{\varphi }(x)=p(\varphi (\widetilde{x}))$ for some $\widetilde{x}\in p^{-1}(x)$. 
Then $\theta (f)=[\hat{\varphi }]\in \LM $. 

Let $\SHomS $ be the group of orientation-preserving symmetric homeomorpoisms on $\Sigma _g$ for $\zeta |_{\Sigma _g}$. 
Since $\SHomS $ preserves $\B =\partial \A $, by Birman and Hilden~\cite{Birman-Hilden2}, we have the natural sujective homomorphism $\theta \colon \pi _0(\SHomS )\to \LM $ whose kernel is $\left< \zeta \right> $. 
Birman and Hilden~\cite{Birman-Hilden3} proved that $\pi _0(\SHomS )$ is isomorphic to $\SM $. 
Hence we have the following exact sequence:

By an argument similar to the proof of Lemma~1.21 in \cite{Iguchi-Hirose-Kin-Koda}, we have the following lemma. 
 
\begin{lem}\label{lem_HIKK}
\begin{enumerate}
\item If an orientation-preserving self-homeomorphism $\varphi $ on $\Sigma _g$ is symmetric for $\zeta =\zeta _{g,k}$ and extends to $H_g$, then there exists a symmetric homeomorphism $\bar{\varphi }$ on $H_g$ for $\zeta $ such that $\bar{\varphi }|_{\Sigma }=\varphi $.   
\item  If symmetric homeomorphisms $\varphi $ and $\varphi ^\prime $ on $\Sigma _g$ for $\zeta $ are isotopic, then $\varphi $ and $\varphi ^\prime $ are symmetrically isotopic for $\zeta $. 
\end{enumerate}
\end{lem}
We will give explicit lifts of generators for $\LH $ in Theorem~\ref{thm_pres_LH} with respect to $\bar{\theta }$. 
\fi

\subsection{The liftable condition for the balanced superelliptic covering map}\label{section_liftable-condition}

In this section, we review Ghaswala-Winarski's necessary and sufficient condition for lifting a homeomorphism on $S^2$ with respect to $p=p_{g,k}$ for $k\geq 3$ in Lemma~3.6 of \cite{Ghaswala-Winarski1}. 

Let $l$ be a simple arc on $S^2$ whose endpoints lie in $\B $. 
A regular neighborhood $\mathcal{N}$ of $l$ in $S^2$ is homeomorphic to a 2-disk. 
Then the half-twist $\sigma [l]$ is a self-homeomorphism on $S^2$ which is described as the result of anticlockwise half-rotation of $l$ in $\mathcal{N}$ as in Figure~\ref{fig_sigma_l}. 
Recall that $l_i$ $(1\leq i\leq 2n+1)$ is an oriented simple arc on $S^2$ whose endpoints are $p_i$ and $p_{i+1}$ as in Figure~\ref{fig_path_l}. 
We define $\sigma _i=\sigma [l_i]$ for $1\leq i\leq 2n+1$. 
As a well-known result, $\M $ is generated by $\sigma _1$, $\sigma _2,\ \dots $, $\sigma _{2n+1}$. 
Let $\mathrm{Map}(\B )$ be the group of self-bijections on $\B $. 
Since $\mathrm{Map}(\B )$ is naturally identified with the symmetric group $S_{2n+2}$ of degree $2n+2$, the action of $\M $ on $\B$ induces the surjective homomorphism
\[
\Psi \colon \M \to S_{2n+2}
\]
given by $\Psi (\sigma _i)=(i\ i+1)$, where for maps or mapping classes $f$ and $g$, the product $gf$ means that $f$ apply first.  

\begin{figure}[h]
\includegraphics[scale=1.1]{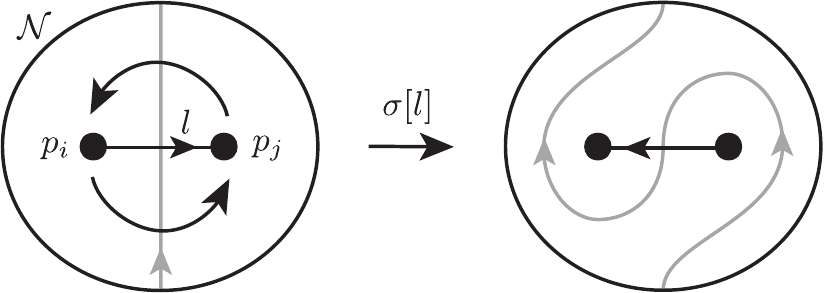}
\caption{The half-twist $\sigma [l]$ along an arc $l$.}\label{fig_sigma_l}
\end{figure}

Put $\B _o=\{ p_1,\ p_3,\ \dots ,\ p_{2n+1}\}$ and $\B _e=\{ p_2,\ p_4,\ \dots ,\ p_{2n+2}\}$. 
An element $\sigma $ in $S_{2n+2}$ is \textit{parity-preserving} if $\sigma (\B _o)=\B _o$, and is \textit{parity-reversing} if $\sigma (\B _o)=\B _e$. 
An element $f$ in $\M $ is \textit{parity-preserving} (resp. \textit{parity-reversing}) if $\Psi (f)$ is \textit{parity-preserving} (resp. \textit{parity-reversing}). 
Let $W_{2n+2}$ be the subgroup of $S_{2n+2}$ which consists of parity-preserving or parity-reversing elements, $S_{n+1}^o$ (resp. $S_{n+1}^e$) the subgroup of $S_{2n+2}$ which consists of elements whose restriction to $\B _e$ (resp. $\B _o$) is the identity map.  
Note that $S_{n+1}^o$ (resp. $S_{n+1}^e$) is a subgroup of $W_{2n+2}$, is isomorphic to $S_{n+1}$, and is generated by transpositions $(1\ 3)$, $(3\ 5),\ \dots $, $(2n-1\ 2n+1)$ (resp. $(2\ 4)$, $(4\ 6),\ \dots $, $(2n\ 2n+2)$). 
Then we have the following exact sequence:
\begin{eqnarray}\label{exact1}
1\longrightarrow S_{n+1}^o\times S_{n+1}^e\longrightarrow W_{2n+2}\stackrel{\pi }{\longrightarrow }\Z _2\longrightarrow 1, 
\end{eqnarray}
where the homomorphism $\pi \colon W_{2n+2}\to \Z _2$ is defined by $\pi (\sigma )=0$ if $\sigma $ is parity-preserving and $\pi (\sigma )=1$ if $\sigma $ is parity-reversing. 
Ghaswala and Winarski~\cite{Ghaswala-Winarski1} proved the following lemma. 

\begin{lem}[Lemma~3.6 in \cite{Ghaswala-Winarski1}]\label{lem_GW}
Let $\LM $ be the liftable mapping class group for the balanced superelliptic covering map $p_{g,k}$ for $n\geq 1$ and $k\geq3$ with $g=n(k-1)$. 
Then we have
\[
\LM =\Psi ^{-1}(W_{2n+2}).
\]
\end{lem}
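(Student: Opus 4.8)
The plan is to combine the standard liftability criterion for cyclic branched covers with the fact that the action of $\M $ on $H_1(S^2\setminus \B ;\Z )$ factors through the permutation homomorphism $\Psi $.

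First I would reduce to a statement about the monodromy. Removing the branch loci, $p=p_{g,k}$ restricts to a regular $\Z _k$-covering $\Sigma _g\setminus p^{-1}(\B )\to S^2\setminus \B $, which is classified by a surjection $\omega \colon \pi _1(S^2\setminus \B )\to \Z _k$ and hence, $\Z _k$ being abelian, by a surjection $\bar{\omega }\colon H_1(S^2\setminus \B ;\Z )\to \Z _k$ obtained from $\omega $ via abelianization. By the construction of the balanced superelliptic covering in Section~\ref{section_bscov}, the rotation numbers at the branch points alternate, so that on the basis $x_1,\dots ,x_{2n+1}$ of $H_1(S^2\setminus \B ;\Z )$ given by small positively oriented loops around $p_1,\dots ,p_{2n+1}$ (with $x_{2n+2}=-(x_1+\dots +x_{2n+1})$) we have $\bar{\omega }(x_i)=1$ for $i$ odd and $\bar{\omega }(x_i)=-1$ for $i$ even. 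Now a branched covering of $S^2$ with branch set $\B $ is determined by its restriction over $S^2\setminus \B $, and a homeomorphism of $S^2\setminus \B $ that lifts to $\Sigma _g\setminus p^{-1}(\B )$ has its lift extend across the branch points (near each puncture it is a homeomorphism of a punctured disk). Therefore, by standard covering space theory, $\varphi \in \M $ is liftable along $p$ if and only if $\varphi _\ast (\ker \bar{\omega })=\ker \bar{\omega }$ as subgroups of $H_1(S^2\setminus \B ;\Z )$.

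Next I would compute the action of $\varphi _\ast $ on homology. Since $\varphi $ is orientation-preserving and permutes the branch points by $\sigma :=\Psi (\varphi )\in S_{2n+2}$, it sends the class of a small positively oriented loop around $p_i$ to that of a small positively oriented loop around $p_{\sigma (i)}$, so $\varphi _\ast (x_i)=x_{\sigma (i)}$ for all $i$; in particular the $H_1$-action of $\M $ factors through $\Psi $ (one may also check this on the generators $\sigma _1,\dots ,\sigma _{2n+1}$, where $\sigma _i$ acts as the transposition $(i\ i+1)$). Two surjections $H_1(S^2\setminus \B ;\Z )\to \Z _k$ have the same kernel precisely when they differ by an automorphism of $\Z _k$, i.e. by multiplication by a unit $u\in \Z _k^{\times }$; hence, by the previous paragraph, $\varphi $ is liftable if and only if there is a unit $u$ with $\bar{\omega }(x_{\sigma (i)})=u\,\bar{\omega }(x_i)$ for every $i$. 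As $\bar{\omega }$ takes only the values $\pm 1$ and, because $n\geq 1$, attains both, the values $\bar{\omega }(x_{\sigma (i)})$ range over $\{1,-1\}$ while $u\,\bar{\omega }(x_i)$ ranges over $\{u,-u\}$; so $\{u,-u\}=\{1,-1\}$ and $u=1$ or $u=-1$. If $u=1$ then $\sigma (i)\equiv i\pmod 2$ for all $i$, i.e. $\sigma (\B _o)=\B _o$, so $\sigma $ is parity-preserving; if $u=-1$ then $\sigma (i)\not\equiv i\pmod 2$ for all $i$, i.e. $\sigma (\B _o)=\B _e$, so $\sigma $ is parity-reversing. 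Thus $\varphi $ is liftable if and only if $\Psi (\varphi )\in W_{2n+2}$, which is the assertion $\LM =\Psi ^{-1}(W_{2n+2})$.

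I expect the main obstacle to be the first step: extracting the precise monodromy $\bar{\omega }$ from the hands-on description of $p_{g,k}$ in Section~\ref{section_bscov} — equivalently, verifying that the local rotation numbers at $p_1,p_3,\dots $ and at $p_2,p_4,\dots $ are $1$ and $k-1$ respectively (up to a global choice of generator of $\Z _k$) — together with the care needed to see that liftability of $\varphi $ over the punctured base really is equivalent to liftability of the branched covering, so that the condition $\varphi \langle \zeta \rangle \varphi ^{-1}=\langle \zeta \rangle $ is not needed directly here. Once the monodromy is fixed, the remaining homological bookkeeping is routine.
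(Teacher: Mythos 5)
Your argument is correct. Note that the paper does not prove Lemma~\ref{lem_GW} at all: it is quoted from Ghaswala--Winarski (Lemma~3.6 of \cite{Ghaswala-Winarski1}), and your proof is essentially their argument — reduce liftability to preservation of $\ker\bigl(\bar{\omega }\colon H_1(S^2\setminus \B )\to \Z _k\bigr)$ via the standard lifting criterion (plus extension of the lift over the branch points), observe that the $H_1$-action factors through $\Psi $, and use that the alternating monodromy values $\pm 1$ force the unit $u$ to be $\pm 1$, giving exactly the parity-preserving/reversing dichotomy; the one step genuinely needing care, the computation of the local monodromies from the construction of $p_{g,k}$ (equivalently, that the lifts of $\gamma _{2i-1,2i}$ are closed, so the monodromies at $p_{2i-1}$ and $p_{2i}$ are inverse), is the one you correctly flag, and your use of $k\geq 3$ (so $1\not =-1$ in $\Z _k$) is exactly where the hypothesis enters.
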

Lemma~\ref{lem_GW} implies that a mapping class $f\in \M $ lifts with respect to $p_{g,k}$ if and only if $f$ is parity-preserving or parity-reversing (in particular, when $k\geq 3$, the liftability of a homeomorphism on $S^2$ does not depend on $k$). 

We regard $\Hil $ as the subgroup of $\M $. 
By Lemmas~\ref{liftability_surf_hand} and \ref{lem_GW}, we have the following lemma: 

\begin{lem}\label{liftable_condition_handlebody}
For $f\in \Hil $, the element $f$ lies in $\LH $ if and only if $\Psi (f)\in W_{2n+2}$. 
\end{lem}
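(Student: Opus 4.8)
The plan is to obtain this statement as a direct consequence of the two results just established. First I would recall that, by Lemma~\ref{liftability_surf_hand}, inside $\M$ we have the identification $\LH =\LM \cap \Hil $. Hence, given an element $f\in \Hil $, the membership $f\in \LH $ is equivalent to the single condition $f\in \LM $, since the condition $f\in \Hil $ is assumed. Then I would apply Lemma~\ref{lem_GW}, which asserts $\LM =\Psi ^{-1}(W_{2n+2})$, to rewrite $f\in \LM $ as $\Psi (f)\in W_{2n+2}$. Chaining these two equivalences yields: $f\in \LH $ if and only if $\Psi (f)\in W_{2n+2}$, which is exactly the claim.

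The one point I would make explicit is that the statement makes sense because all the groups involved are viewed as subgroups of $\M $ via the injective restriction homomorphisms recalled in the introduction, and $\Psi \colon \M \to S_{2n+2}$ is defined on all of $\M $. Thus ``$\Psi (f)$'' is unambiguous whether $f$ is regarded as an element of $\Hil $, of $\LM $, or of $\M $, and the equivalences above are literally equalities of subsets of $\Hil $: $\LH =\LM \cap \Hil =\Psi ^{-1}(W_{2n+2})\cap \Hil $, so $f\in \LH \iff f\in \Hil \text{ and } \Psi (f)\in W_{2n+2}$, which under the standing hypothesis $f\in \Hil $ is the asserted criterion.

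I do not expect any genuine obstacle here: the substantive content is entirely contained in Lemma~\ref{liftability_surf_hand} (which in turn rests on the Birman--Hilden type argument of Lemma~\ref{lem_interior_symm-liftability}) and in Ghaswala--Winarski's liftability criterion of Lemma~\ref{lem_GW}; the present lemma is the bookkeeping statement that records the handlebody analogue of that criterion in terms of the permutation-action homomorphism $\Psi $, and its proof is just the combination of these two inputs.
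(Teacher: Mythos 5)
Your argument is correct and is exactly the paper's own proof: the paper deduces this lemma directly by combining $\LH =\LM \cap \Hil $ (Lemma~\ref{liftability_surf_hand}) with $\LM =\Psi ^{-1}(W_{2n+2})$ (Lemma~\ref{lem_GW}), just as you do. Your added remark about all groups being regarded as subgroups of $\M $ so that $\Psi (f)$ is unambiguous is a harmless clarification of a convention the paper also states.
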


\subsection{The spherical wicket group and the pure subgroups of the braid group and the mapping class group}\label{section_wicket-group}

In this section, we review the spherical wicket group and the pure subgroups of the braid group and the mapping class group to define explicit elements of the liftable Hilden group $\LH $ in the next section. 

Let $SB_{2n+2}$ be the \textit{spherical braid group} of $2n+2$ strands. 
We regard an element in $SB_{2n+2}$ as a $(2n+2)$-tangle in $S^2\times [0,1]$ which consists of $2n+2$ simple proper arcs whose one of the endpoints lies in $\B \times \{ 0\}$ and the other one lies in $\B \times \{ 1\}$, and we also regard $\A $ is a $(n+1)$-tangle in $B^3 $. 
Such $\A $ is called a \textit{wicket}. 
For $b\in SB_{2n+2}$, denote by $^b\! \A$ the $(n+1)$-tangle in $B^3 $ which is obtained from $b$ by attaching a copy of $B^3 $ with $\A $ to $S^2\times \{ 0\}$ such that $p_i\in \partial B^3$ is attached to the end of $i$-th strand in $b$ (see Figure~\ref{fig_wicket}), where we regard $S^2\times [0,1]$ attached the copy of $B^3 $ as $B^3$. 
The \textit{spherical wicket group} $SW_{2n+2}$ is the subgroup of $SB_{2n+2}$ whose element $b$ satisfies the condition that $^b\! \A$ is isotopic to $\A $ relative to $\partial B^3=S^2\times \{ 1\}$. 
Brendle and Hatcher~\cite{Brendle-Hatcher} introduced the group $SW_{2n+2}$ and gave its finite presentation. 

\begin{figure}[h]
\includegraphics[scale=0.5]{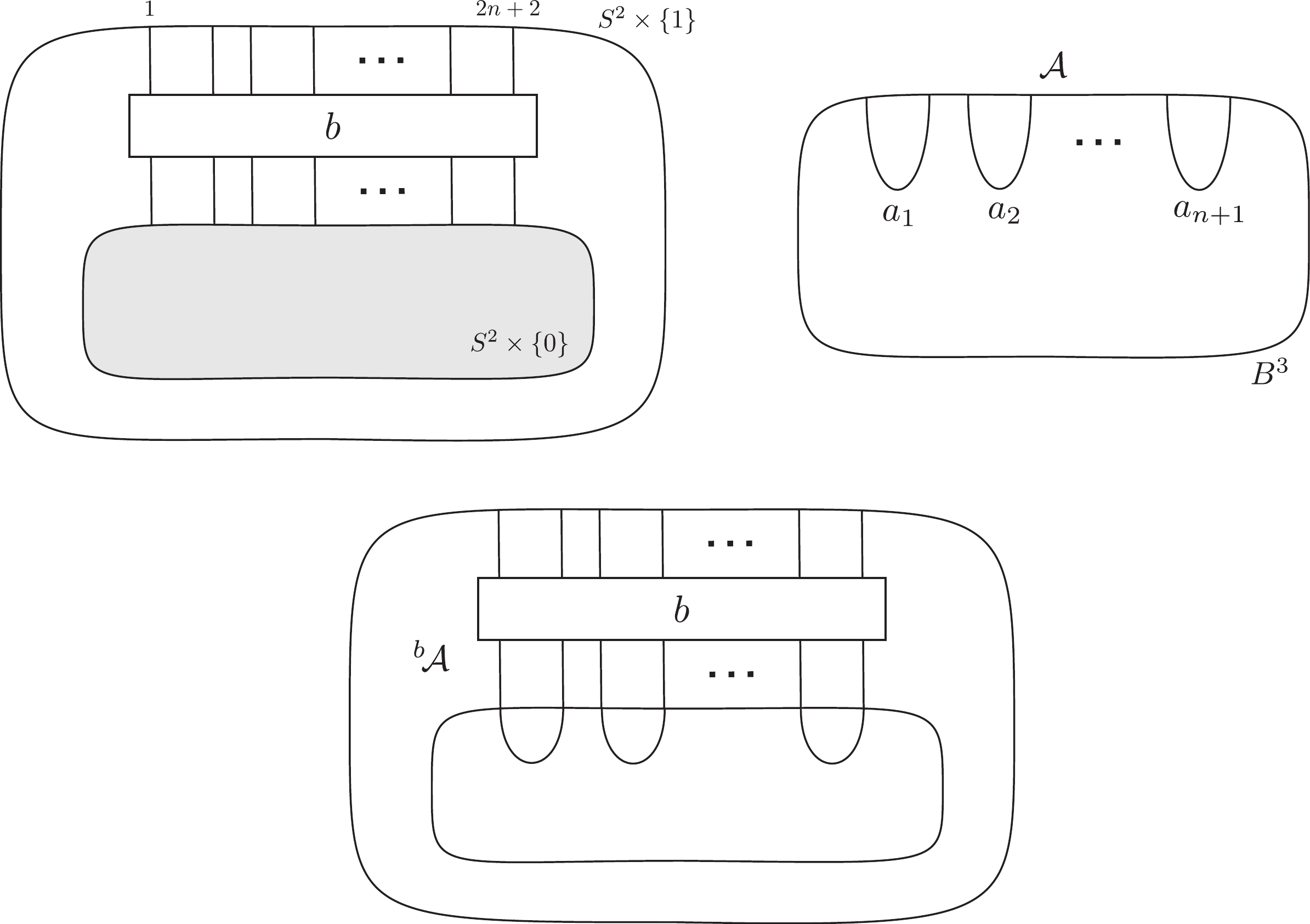}
\caption{A spherical braid $b$ and the tangles $\A $ and $^b\! \A $ in $B^3$.}\label{fig_wicket}
\end{figure}

For $b_1$, $b_2\in SB_{2n+2}$, the product $b_1b_2$ is a braid as on the right-hand side in Figure~\ref{fig_braid_product_def}. 
Then we have the surjective homomorphism 
\[
\Gamma \colon SB_{2n+2}\to \M 
\]
which maps the half-twist about $i$-th and $(i+1)$-st strands as on the left-hand side in Figure~\ref{fig_braid_product_def} to the half-twist $\sigma _i\in \M $.
$\Gamma $ has the kernel with order 2 which is generated by the full twist braid (see for instance Section~9.1.4 in \cite{Farb-Margalit}). 
We abuse notation and denote simply the half-twist in $SB_{2n+2}$ about $i$-th and $(i+1)$-st strands by $\sigma _i$. 
By Theorem~2.6 in \cite{Hirose-Kin}, we have $\Gamma (SW_{2n+2})=\Hil $. 

\begin{figure}[h]
\includegraphics[scale=0.75]{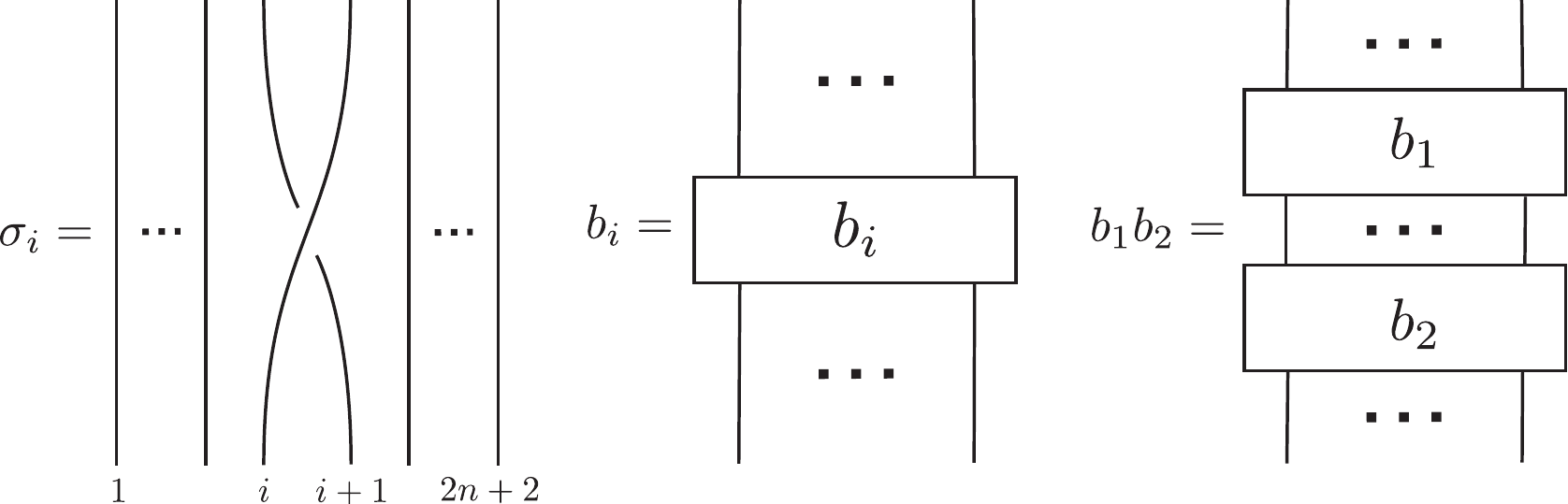}
\caption{The half-twist $\sigma_i\in SB_{2n+2}$ $(1\leq i\leq 2n+1)$ and the product $b_1b_2$ in $SB_{2n+2}$ for $b_i$ $(i=1,\ 2)$.}\label{fig_braid_product_def}
\end{figure}

The \textit{pure mapping class group} $\PM $ is the kernel of the homomorphism $\Psi \colon \M \to S_{2n+2}$. 
Since all elements in $\PM $ is parity-preserving, $\PM $ is a subgroup of $\LM $ and we have the following exact sequence:  
\begin{eqnarray}\label{exact2}
1\longrightarrow \PM \longrightarrow \LM \stackrel{\Psi }{\longrightarrow }W_{2n+2}\longrightarrow 1. 
\end{eqnarray}
Denote by $PSB_{2n+2}$ the kernel of $\Psi \circ \Gamma \colon SB_{2n+2}\to S_{2n+2}$, $PSW_{2n+2}=SW_{2n+2}\cap PSB_{2n+2}$, and $\PH =\Hil \cap \PM $. 
We call the groups $\PH $, $PSB_{2n+2}$, $PSW_{2n+2}$ the \textit{pure Hilden group}, the \textit{pure spherical braid group}, and the \textit{pure spherical wicket group}, respectively. 
By the fact that $\Gamma (SW_{2n+2})=\Hil $ 
and $\Gamma (PSB_{2n+2})=\PM $, we show that $\Gamma (PSW_{2n+2})=\PH $, in particular, we remark that $\PH \subset \LH$.

\subsection{Liftable elements for the balanced superelliptic covering map}\label{section_liftable-element}

In this section, we introduce some liftable homeomorphisms on $B^3$ with respect to $p=p_{g,k}\colon H_g\to B^3$ for $k\geq 3$ as images of elements in $SW_{2n+2}$ by $\Gamma $. 
We often abuse notation and denote a homeomorphism and its isotopy class by the same symbol.  
We review the generators for $SW_{2n+2}$ by Brendle and Hatcher~\cite{Brendle-Hatcher}. 

Let $s_i$ and $r_i$ for $1\leq i\leq n$ be the $2n+2$ strands braids as in Figure~\ref{fig_r_i-s_i}. 
We can show that $s_i$ and $r_i$ for $1\leq i\leq n$ are elements in $SW_{2n+2}$. 
Remark that the relations 
\begin{eqnarray*}
s_i=\sigma _{2i}\sigma _{2i+1}\sigma _{2i-1}\sigma _{2i} \quad  \text{and} \quad r_i=\sigma _{2i}^{-1}\sigma _{2i+1}^{-1}\sigma _{2i-1}\sigma _{2i}
\end{eqnarray*}
for $1\leq i\leq n$ hold in $SB_{2n+2}$. 
Generators of Brendle-Hatcher's finite presentation for $SW_{2n+2}$ in~\cite{Brendle-Hatcher} are $s_i$, $r_i$ for $1\leq i\leq n$, and $\sigma _{2i-1}$ for $1\leq i\leq n+1$. 
We abuse notation and denote simply $\Gamma (b)$ for $b\in SB_{2n+2}$ by $b$ (i.e. we express $s_i, r_i\in \Hil $). 
Since we can check that $\Psi (s_i)=\Psi (r_i)=(2i-1\ 2i+1)(2i\ 2i+2)$ and $s_i$ and $r_i$ are parity-preserving for $1\leq i\leq n$, by Lemma~\ref{liftable_condition_handlebody}, we have the following lemma. 

\begin{figure}[h]
\includegraphics[scale=0.75]{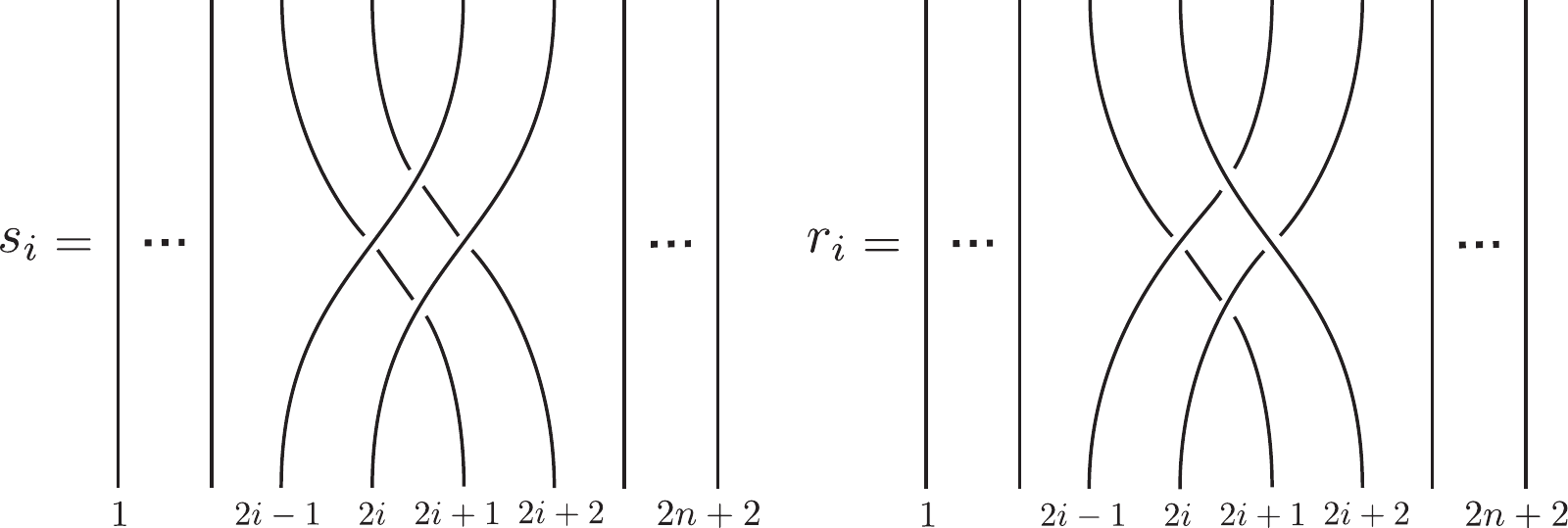}
\caption{The braids $s_i$ and $r_i$ in $SB_{2n+2}$ for $1\leq i\leq n$.}\label{fig_r_i-s_i}
\end{figure}

\begin{lem}\label{lem_s_i-r_i_liftable}
For $1\leq i\leq n$, $s_i$ and $r_i$ are liftable with respect to $p$, namely, $s_i$ and $r_i$ lie in $\LH $. 
\end{lem}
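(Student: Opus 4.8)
The plan is to verify the liftability criterion of Lemma~\ref{liftable_condition_handlebody}, which reduces the assertion to a combinatorial check on the induced permutations. We have already observed that $s_i$ and $r_i$ lie in the spherical wicket group $SW_{2n+2}$ and that $\Gamma(SW_{2n+2})=\Hil$, so $\Gamma(s_i),\Gamma(r_i)\in\Hil$; hence by Lemma~\ref{liftable_condition_handlebody} it suffices to show that $\Psi(s_i)$ and $\Psi(r_i)$ lie in $W_{2n+2}$, that is, that these permutations are parity-preserving or parity-reversing.

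First I would compute $\Psi(s_i)$ and $\Psi(r_i)$ from the relations $s_i=\sigma_{2i}\sigma_{2i+1}\sigma_{2i-1}\sigma_{2i}$ and $r_i=\sigma_{2i}^{-1}\sigma_{2i+1}^{-1}\sigma_{2i-1}\sigma_{2i}$ in $SB_{2n+2}$, using $\Psi(\sigma_j)=(j\ j+1)$ and the fact that $\sigma_j$ and $\sigma_j^{-1}$ induce the same transposition. Both words then map under $\Psi$ to the same product of transpositions, and a short evaluation on the four indices $2i-1,2i,2i+1,2i+2$ (all other points being fixed) yields $\Psi(s_i)=\Psi(r_i)=(2i-1\ 2i+1)(2i\ 2i+2)$. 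Next I would observe that this permutation swaps the two odd indices $2i-1$ and $2i+1$ and the two even indices $2i$ and $2i+2$, so it preserves $\B_o$ and $\B_e$; hence it lies in $S_{n+1}^o\times S_{n+1}^e\subset W_{2n+2}$. Applying Lemma~\ref{liftable_condition_handlebody} to $\Gamma(s_i)$ and $\Gamma(r_i)$ then gives $s_i,r_i\in\LH$, which completes the argument.

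There is no real obstacle here; the proof is essentially immediate once the induced permutations are identified. The only point requiring minor care is bookkeeping the order of composition when expanding the word for $\Psi(s_i)$ under the convention that in a product $gf$ the map $f$ is applied first, and checking that the quoted expressions for $s_i$ and $r_i$ as words in the $\sigma_j$ are consistent with that convention.
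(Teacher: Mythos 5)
Your proposal is correct and follows essentially the same route as the paper: the paper likewise observes that $s_i,r_i\in SW_{2n+2}$ (hence their images under $\Gamma$ lie in $\Hil$), checks $\Psi(s_i)=\Psi(r_i)=(2i-1\ 2i+1)(2i\ 2i+2)$ is parity-preserving, and concludes via Lemma~\ref{liftable_condition_handlebody}. Your explicit computation of the permutation from the words $\sigma_{2i}\sigma_{2i+1}\sigma_{2i-1}\sigma_{2i}$ and $\sigma_{2i}^{-1}\sigma_{2i+1}^{-1}\sigma_{2i-1}\sigma_{2i}$ is exactly the check the paper leaves to the reader.
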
 

For each $1\leq i\leq n+1$, $\sigma _{2i-1}\in SB_{2n+2}$ lies in $SW_{2n+2}$, however, $\sigma _{2i-1}\in \Hil $ is not parity-preserving and not parity-reversing. 
Denote $r=\sigma _1\sigma _3\cdots \sigma _{2n+1}\in \Hil $. 
Since $\Psi (r)=(1\ 2)(3\ 4)\cdots (2n+1\ 2n+2)$, the mapping class $r$ is parity reversing. 
Thus, by Lemma~\ref{liftable_condition_handlebody}, we have the following lemma. 

\begin{lem}\label{lem_r_liftable}
The mapping class $r$ is liftable with respect to $p$, namely, $r$ lies in $\LH $. 
\end{lem}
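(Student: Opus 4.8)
The statement to prove is that $r = \sigma_1\sigma_3\cdots\sigma_{2n+1}\in\Hil$ is liftable with respect to $p=p_{g,k}$, i.e.\ $r\in\LH$. The strategy is to invoke the liftability criterion for elements of the Hilden group that was just established, namely Lemma \ref{liftable_condition_handlebody}: an element $f\in\Hil$ lies in $\LH$ if and only if $\Psi(f)\in W_{2n+2}$, where $\Psi\colon\M\to S_{2n+2}$ is the permutation homomorphism and $W_{2n+2}$ is the subgroup of parity-preserving or parity-reversing permutations. So the entire proof reduces to (i) checking that $\Psi(r)$ is a specific permutation, and (ii) checking that this permutation is parity-reversing, hence lies in $W_{2n+2}$.

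First I would compute $\Psi(r)$. Since $\Psi$ is a homomorphism and $\Psi(\sigma_i)=(i\ \ i+1)$, and since the transpositions $(1\ 2),(3\ 4),\dots,(2n+1\ 2n+2)$ are pairwise disjoint (so they commute and the order of multiplication is irrelevant), we get
\[
\Psi(r)=\Psi(\sigma_1)\Psi(\sigma_3)\cdots\Psi(\sigma_{2n+1})=(1\ 2)(3\ 4)\cdots(2n+1\ 2n+2).
\]
Next, recall $\B_o=\{p_1,p_3,\dots,p_{2n+1}\}$ and $\B_e=\{p_2,p_4,\dots,p_{2n+2}\}$. The permutation $\Psi(r)$ sends each odd index $2i-1$ to the even index $2i$ and vice versa, so it maps the set of odd indices onto the set of even indices; that is, $\Psi(r)(\B_o)=\B_e$, which is exactly the definition of parity-reversing. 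Hence $\Psi(r)\in W_{2n+2}$. This reasoning is essentially already spelled out in the paragraph preceding the lemma statement, so the proof is just a one-line invocation once the setup is in place.

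Applying Lemma \ref{liftable_condition_handlebody} with $f=r$ then gives $r\in\LH$, completing the proof. There is no real obstacle here: the only mild point to be careful about is that $r$ is indeed a well-defined element of $\Hil$ — but this follows because $\sigma_{2i-1}\in SB_{2n+2}$ lies in $SW_{2n+2}$ for each $1\le i\le n+1$ (as recalled just before the lemma) and $\Gamma(SW_{2n+2})=\Hil$, so the product $\sigma_1\sigma_3\cdots\sigma_{2n+1}=\Gamma(\sigma_1\sigma_3\cdots\sigma_{2n+1})$ is a genuine element of the Hilden group. Everything else is bookkeeping with the permutation homomorphism, and the conclusion follows directly from the liftability criterion. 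Note this proof, like Lemma \ref{lem_s_i-r_i_liftable}, does not depend on the value of $k\geq 3$, consistent with the earlier observation that liftability for $k\geq 3$ depends only on the action on $\B$.
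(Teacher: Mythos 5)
Your proposal is correct and follows exactly the paper's argument: the paper also establishes the lemma by computing $\Psi(r)=(1\ 2)(3\ 4)\cdots(2n+1\ 2n+2)$, observing that this permutation is parity-reversing and hence lies in $W_{2n+2}$, and then invoking Lemma~\ref{liftable_condition_handlebody}. Your additional remark that $r\in\Hil$ because each $\sigma_{2i-1}$ lies in $SW_{2n+2}$ and $\Gamma(SW_{2n+2})=\Hil$ is a harmless (and correct) bit of extra care.
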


For a simple closed curve $\gamma $ on $\Sigma _g$ $(g\geq 0)$, we denote by $t_\gamma $ the right-handed Dehn twist along $\gamma $. 
Recall that $\gamma _{i,i+1}$ for $1\leq i\leq 2n+1$ is a simple closed curve on $S^2$ as in Figure~\ref{fig_path_l}. 
Then we define $t_{i}=t_{\gamma _{2i-1,2i}}$ for $1\leq i\leq n+1$. 
Since we have $t_i=\sigma _{2i-1}^2$ and $t_{i}$ preserves $\B$ pointwise (i.e. $t_{i,j}$ lies in $\PH $), by Lemma~\ref{liftable_condition_handlebody}, we have the following lemma. 

\begin{lem}\label{lem_t_i_liftable}
For $1\leq i\leq n+1$, $t_i$ is liftable with respect to $p$, moreover, $t_i$ lies in $\PH $. 
\end{lem}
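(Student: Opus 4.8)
The plan is to reduce the statement to the facts about the spherical wicket group and the homomorphisms $\Gamma$ and $\Psi$ recalled in Sections~\ref{section_wicket-group} and~\ref{section_liftable-element}, exactly in the spirit of the proofs of Lemmas~\ref{lem_s_i-r_i_liftable} and~\ref{lem_r_liftable}. Since $\PH\subset\LH$, it suffices to show $t_i\in\PH$. Fix $1\leq i\leq n+1$. The first step is to record the identity $t_i=\sigma_{2i-1}^2$ in $\M$ (equivalently in $\Hil$): the curve $\gamma_{2i-1,2i}$ is the boundary of a regular neighborhood of the arc $l_{2i-1}$, and the square of the half-twist along an arc is the Dehn twist along the boundary of such a neighborhood, so $t_{\gamma_{2i-1,2i}}=\sigma[l_{2i-1}]^2=\sigma_{2i-1}^2$.

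Next I would verify that $t_i$ lies in $\Hil$. Since $\sigma_{2i-1}\in SB_{2n+2}$ lies in the spherical wicket group $SW_{2n+2}$ (as recalled in Section~\ref{section_liftable-element}) and $SW_{2n+2}$ is a subgroup, we get $\sigma_{2i-1}^2\in SW_{2n+2}$; applying $\Gamma$ and using $\Gamma(SW_{2n+2})=\Hil$ yields $t_i=\Gamma(\sigma_{2i-1}^2)\in\Hil$. Geometrically, this just reflects the fact that $\gamma_{2i-1,2i}$ bounds a disk in $B^3$ cutting off a $3$-ball containing the branch arc $a_i$, so the Dehn twist extends over $B^3$ preserving $\A$.

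Finally I would compute the image under $\Psi$: $\Psi(t_i)=\Psi(\sigma_{2i-1})^2=(2i-1\ 2i)^2=\mathrm{id}$, so $t_i$ fixes $\B$ pointwise; hence $t_i\in\PM$ and therefore $t_i\in\Hil\cap\PM=\PH$. In particular $\Psi(t_i)=\mathrm{id}\in W_{2n+2}$, so Lemma~\ref{liftable_condition_handlebody} (or simply the inclusion $\PH\subset\LH$) gives $t_i\in\LH$, i.e.\ $t_i$ is liftable with respect to $p$. There is essentially no obstacle here; the only point deserving a word of justification is membership in $\Hil$, and that is immediate from $\sigma_{2i-1}\in SW_{2n+2}$.
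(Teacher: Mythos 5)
Your proposal is correct and follows essentially the same route as the paper: the identity $t_i=\sigma_{2i-1}^2$, membership in $\Hil$ via $\sigma_{2i-1}\in SW_{2n+2}$ and $\Gamma(SW_{2n+2})=\Hil$, and the fact that $t_i$ fixes $\B$ pointwise so that $t_i\in\PH\subset\LH$ by Lemma~\ref{liftable_condition_handlebody}. Your write-up simply spells out the steps the paper leaves implicit.
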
 

For $1\leq i,j\leq n+1$ and $i\not =j$, let $p_{i,j}=p_{j,i}$, $x_{i,j}=x_{j,i}$, and $y_{i,j}=y_{j,i}$ be elements in $SB_{2n+2}$ as in Figure~\ref{fig_p_ij-x_ij-y_ij} (or its image by $\Gamma$). 
We can see that $^{p_{i,j}}\! \A $, $^{x_{i,j}}\! \A $, and $^{y_{i,j}}\! \A $ are isotopic to $\A $ relative to $\partial B^3$ and lie in $PSB_{2n+2}$. 
Thus, by Lemma~\ref{liftable_condition_handlebody}, we have the following lemma. 

\begin{figure}[h]
\includegraphics[scale=0.65]{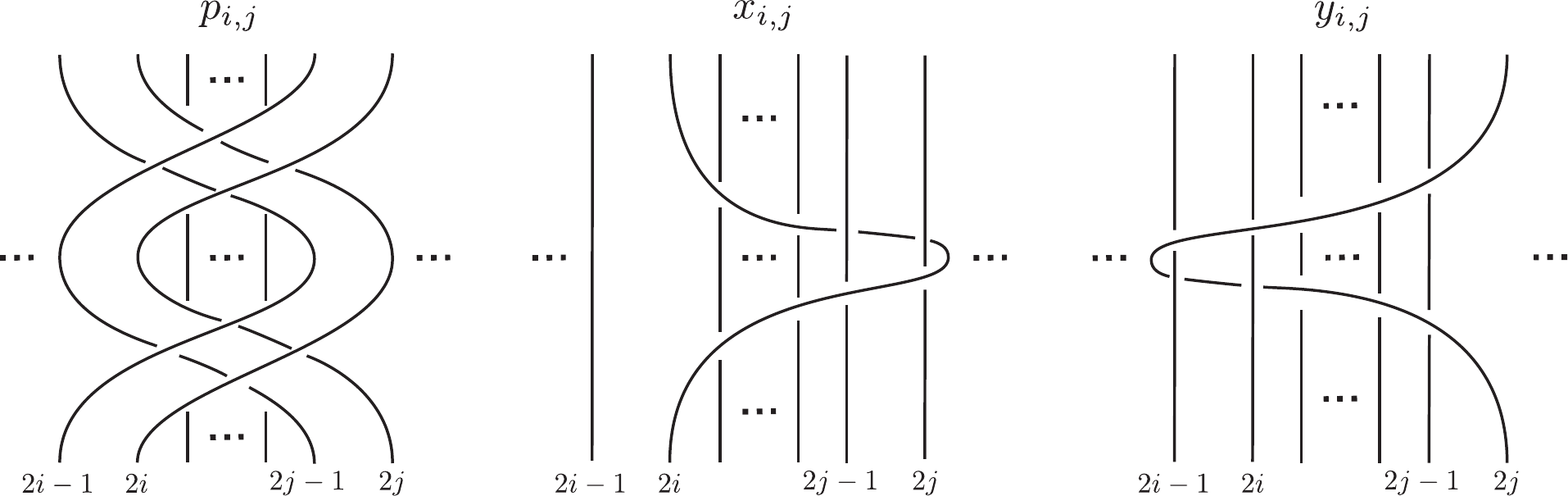}
\caption{The braids $p_{i,j}$, $x_{i,j}$, and $y_{i,j}$ in $SB_{2n+2}$ for $1\leq i,j\leq n+1$ and $i\not =j$.}\label{fig_p_ij-x_ij-y_ij}
\end{figure}

\begin{lem}\label{lem_p_ij_liftable}
For $1\leq i,j\leq n+1$ and $i\not=j$, $p_{i,j}$, $x_{i,j}$, and $y_{i,j}$ are liftable with respect to $p$, moreover, $p_{i,j}$, $x_{i,j}$, and $y_{i,j}$ lie in $\PH $. 
\end{lem}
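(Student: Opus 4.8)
The plan is to follow the template already used for Lemmas~\ref{lem_s_i-r_i_liftable}--\ref{lem_t_i_liftable}: first show that each of $p_{i,j}$, $x_{i,j}$, $y_{i,j}$ lies in the pure spherical wicket group $PSW_{2n+2}=SW_{2n+2}\cap PSB_{2n+2}$, then push forward by $\Gamma$ to land in $\PH$, and finally invoke $\PH\subset\LH$.

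First I would verify membership in $SW_{2n+2}$. By definition this means exhibiting, for each $b\in\{p_{i,j},x_{i,j},y_{i,j}\}$, an isotopy of the tangle $^{b}\!\A$ in $B^3$ back to the standard wicket $\A$, rel $\partial B^3=S^2\times\{1\}$. From the diagrams in Figure~\ref{fig_p_ij-x_ij-y_ij}, these braids are built so that the $i$-th and $j$-th cups of $\A$ can be dragged past the intervening strands: the case $p_{i,j}$ is the model (two arcs circling a block of strands), and $x_{i,j}$, $y_{i,j}$ are the variants obtained by adding a half-twist on one foot, respectively on both feet, of that block, and the same slide still returns the wicket to standard position. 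I would describe this slide explicitly. This ``picture'' argument is the only place where the precise shape of the braids in Figure~\ref{fig_p_ij-x_ij-y_ij} enters, and making the isotopy genuinely rel boundary is the step I expect to be the main obstacle.

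Next I would read off the endpoints in Figure~\ref{fig_p_ij-x_ij-y_ij} to see that each of $p_{i,j}$, $x_{i,j}$, $y_{i,j}$ induces the trivial permutation on $\B$, so $\Psi\circ\Gamma(b)=\mathrm{id}$ and $b\in PSB_{2n+2}$; together with the previous step this gives $b\in PSW_{2n+2}$. Applying $\Gamma$ and using $\Gamma(PSW_{2n+2})=\PH$, established at the end of Section~\ref{section_wicket-group}, the mapping classes $p_{i,j}$, $x_{i,j}$, $y_{i,j}$ lie in $\PH$. Since $\PH\subset\LH$ --- equivalently, by Lemma~\ref{liftable_condition_handlebody}, because $\Psi$ of each equals the identity, which lies in $W_{2n+2}$ --- they are liftable with respect to $p$, which completes the proof.
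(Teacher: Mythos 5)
Your proposal follows exactly the paper's route: the paper also observes (by inspecting Figure~\ref{fig_p_ij-x_ij-y_ij}) that $^{p_{i,j}}\!\A$, $^{x_{i,j}}\!\A$, $^{y_{i,j}}\!\A$ are isotopic to $\A$ rel $\partial B^3$ and that these braids are pure, so they lie in $PSW_{2n+2}$, whence their $\Gamma$-images lie in $\PH\subset\LH$ by Lemma~\ref{liftable_condition_handlebody}. The only difference is that you make explicit the wicket-isotopy verification that the paper leaves to the figure, which is fine.
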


\subsection{Basic relations among liftable elements for the balanced superelliptic covering map}\label{section_relations_liftable-elements}

In this section, we review relations in $\LH $ among liftable elements 
introduced in Section~\ref{section_liftable-element}. 
Recall that for mapping classes $f$ and $g$, the product $gf$ means that $f$ apply first. 

\paragraph{\emph{Commutative relations}}

For elements $f$ and $h$ in a group $G$, $f\rightleftarrows h$ if $f$ commutes with $h$, namely, the relation $fh=hf$ holds in $G$. 
We call such a relation a \textit{commutative relation}. 
For mapping classes $f$ and $h$, if their representatives have mutually disjoint supports, then we have $f\rightleftarrows h$ in the mapping class group. 
We have the following lemma. 

\begin{lem}\label{lem_comm_rel}
For $n\geq 1$, the following relations hold in $SW_{2n+2}$ and $\Hil $:
\begin{enumerate}
\item $\alpha _i \rightleftarrows \beta _j$ \quad for $j-i\geq 2$ and $\alpha , \beta \in \{ s, r\}$,
\item $\alpha _{i} \rightleftarrows t_{j}$ \quad for $j\not =i, i+1$ and $\alpha \in \{ s, r\}$, 
\item $t_i\rightleftarrows t_{j}$ \quad for $1\leq i<j\leq n+1$, 
\item $s_i \rightleftarrows r$ \quad for $1\leq i\leq n$, 
\item $t_i \rightleftarrows r$ \quad for $1\leq i\leq n+1$. 
\end{enumerate}
\end{lem}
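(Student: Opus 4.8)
The plan is to establish all five relations inside the spherical braid group $SB_{2n+2}$ and then push them forward: since $s_i$, $r_i$, $t_i=\sigma_{2i-1}^2$, and $r=\sigma_1\sigma_3\cdots\sigma_{2n+1}$ all lie in $SW_{2n+2}\leq SB_{2n+2}$ and $\Gamma(SW_{2n+2})=\Hil$, any relation proved in $SB_{2n+2}$ holds at once in $SW_{2n+2}$ and in $\Hil$. The main tool for~(1)--(3) is the principle recorded just before the lemma: braids supported in disjoint sub-cylinders (resp.\ mapping classes supported on disjoint subsurfaces) commute. Reading off $s_i=\sigma_{2i}\sigma_{2i+1}\sigma_{2i-1}\sigma_{2i}$ and $r_i=\sigma_{2i}^{-1}\sigma_{2i+1}^{-1}\sigma_{2i-1}\sigma_{2i}$, the braids $s_i,r_i$ are carried by the sub-cylinder over the four strands $2i-1,2i,2i+1,2i+2$. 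This cylinder is disjoint from the corresponding one for $s_j,r_j$ when $j-i\geq 2$ (since then $2j-1\geq 2i+3$), which gives~(1); and, since $t_j=\sigma_{2j-1}^2$ is carried by the sub-cylinder over strands $2j-1,2j$, it is disjoint from the cylinder of $s_i,r_i$ exactly when $j\neq i,i+1$, which gives~(2). For~(3), the curves $\gamma_{2i-1,2i}$ and $\gamma_{2j-1,2j}$ of Figure~\ref{fig_path_l} are disjoint for $i\neq j$, so their Dehn twists $t_i,t_j$ commute.

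For~(5) I would simply note that the generators $\sigma_1,\sigma_3,\dots,\sigma_{2n+1}$ have pairwise index difference at least $2$ and hence pairwise commute in $SB_{2n+2}$; therefore $\sigma_{2i-1}$ commutes with the whole product $r$, and so does $t_i=\sigma_{2i-1}^2$.

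The one relation requiring genuine work is~(4), $s_i\rightleftarrows r$. The plan is to factor $r=A\,\sigma_{2i-1}\sigma_{2i+1}\,B$ with $A=\sigma_1\sigma_3\cdots\sigma_{2i-3}$ and $B=\sigma_{2i+3}\cdots\sigma_{2n+1}$ (either factor possibly empty). Every letter of $A$ or $B$ is some $\sigma_{2j-1}$ with $j\notin\{i,i+1\}$, and such an index differs by at least $2$ from each of $2i-1,2i,2i+1$; hence that letter commutes with all three and therefore with the word $s_i$. Thus $A$ and $B$ commute with $s_i$, and the relation reduces to the single identity $s_i\,\sigma_{2i-1}\sigma_{2i+1}=\sigma_{2i-1}\sigma_{2i+1}\,s_i$, a statement about the four-strand braid group on strands $2i-1,\dots,2i+2$. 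I expect this last step to be the only real obstacle: one sees it geometrically because $s_i$ slides the block of strands $\{2i-1,2i\}$ across the block $\{2i+1,2i+2\}$ while $\sigma_{2i-1}\sigma_{2i+1}$ crosses the two strands within each block, and these two moves take place in separated regions once the blocks are pulled apart; alternatively it is a short computation from the braid relations among $\sigma_{2i-1},\sigma_{2i},\sigma_{2i+1}$, or can be read off directly from Figure~\ref{fig_r_i-s_i}. Everything else in the lemma is bookkeeping about which strands each generator touches.
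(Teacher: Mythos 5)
Your proof is correct and takes essentially the same route as the paper, which justifies (1)--(3) and (5) by the disjoint-support principle stated immediately before the lemma and leaves (4) to a braid calculation. The four-strand identity you reduce (4) to, $\sigma_{2i}\sigma_{2i+1}\sigma_{2i-1}\sigma_{2i}\cdot\sigma_{2i-1}\sigma_{2i+1}=\sigma_{2i-1}\sigma_{2i+1}\cdot\sigma_{2i}\sigma_{2i+1}\sigma_{2i-1}\sigma_{2i}$, does indeed follow from the braid relations (equivalently, conjugation by the block-transposition braid $s_i$ exchanges $\sigma_{2i-1}$ and $\sigma_{2i+1}$, which commute), so the one step you flagged goes through.
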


\paragraph{\emph{Conjugation relations}}

For elements $f_1$, $f_2$, and $h$ in a group $G$, we call the relation $hf_1h^{-1}=f_2$, which is equivalent to $hf_1=f_2h$, a \textit{conjugation relation}. 
In particular, in the case of $f_1=t_\gamma $ or $f_1=\sigma [l]$, and $h$ is a mapping class, the relations $ht_\gamma =t_{h(\gamma )}h$ and $h\sigma [l]=\sigma [h(l)]h$ hold in the mapping class group. 
By these relations or calculations of braids, we have the following lemma (for the relations (4) and (7) in Lemma~\ref{lem_conj_rel}, see Figure~\ref{fig_proof_lem_conj-rel}). 

\begin{lem}\label{lem_conj_rel}
For $n\geq 1$, the following relations hold in $SW_{2n+2}$ and $\Hil $:
\begin{enumerate}
\item $\alpha _i\alpha _{i+1}\alpha _i=\alpha _{i+1}\alpha _i\alpha _{i+1}$ \quad for $1\leq i\leq n-1$ and $\alpha \in \{ s, r\}$, 
\item $s_i^{\varepsilon }s_{i+1}^{\varepsilon }r_{i}=r_{i+1}s_i^{\varepsilon }s_{i+1}^{\varepsilon }$ \quad for $1\leq i\leq n-1$ and $\varepsilon \in \{ 1, -1\}$,
\item $r_ir_{i+1}s_{i}=s_{i+1}r_ir_{i+1}$ \quad for $1\leq i\leq n-1$,
\item $r_irs_{i}=rs_{i}r_{i}^{-1}$ \quad for $1\leq i\leq n$,
\item $s_i^{\varepsilon }t_{i}=t_{i+1}s_{i}^{\varepsilon }$ \quad for $1\leq i\leq n$ and $\varepsilon \in \{ 1, -1\}$,
\item $r_it_{i}=t_{i+1}r_{i}$ \quad for $1\leq i\leq n$,
\item $t_is_{i}^2r_i=r_is_i^2t_{i+1}$ \quad for $1\leq i\leq n$.
\end{enumerate}
\end{lem}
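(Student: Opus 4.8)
The plan is to prove each of the seven conjugation relations in Lemma~\ref{lem_conj_rel} by working primarily in the spherical braid group $SW_{2n+2}$ and then pushing forward under $\Gamma$ to obtain the corresponding relations in $\Hil$; since several of the generators ($s_i$, $r_i$) are already expressed as explicit words in the half-twists $\sigma_j$, many of these reduce to braid identities. Specifically, for relation (1) with $\alpha=s$ I would substitute $s_i=\sigma_{2i}\sigma_{2i+1}\sigma_{2i-1}\sigma_{2i}$ and $s_{i+1}=\sigma_{2i+2}\sigma_{2i+3}\sigma_{2i+1}\sigma_{2i+2}$ and verify $s_is_{i+1}s_i=s_{i+1}s_is_{i+1}$ using only the braid relations among the $\sigma_j$ (the far-commutation $\sigma_a\sigma_b=\sigma_b\sigma_a$ for $|a-b|\geq 2$ and the braid relation $\sigma_a\sigma_{a+1}\sigma_a=\sigma_{a+1}\sigma_a\sigma_{a+1}$); the case $\alpha=r$ is analogous with $r_i=\sigma_{2i}^{-1}\sigma_{2i+1}^{-1}\sigma_{2i-1}\sigma_{2i}$. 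Relations (2) and (3) follow the same template: rewrite everything as words in the $\sigma_j$ and reduce to braid-group bookkeeping; the indices are arranged so that the only interactions are between consecutive $\sigma$'s, so these are finite, if slightly tedious, verifications.

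For relation (5), the clean approach is geometric rather than computational: $t_i=t_{\gamma_{2i-1,2i}}$ is a Dehn twist and $s_i$ is a homeomorphism, so the conjugation relation $s_i t_i s_i^{-1}=t_{s_i(\gamma_{2i-1,2i})}$ holds in the mapping class group, and it suffices to check that $s_i$ carries the curve $\gamma_{2i-1,2i}$ to (a curve isotopic to) $\gamma_{2i+1,2i+2}$, hence $s_i t_i s_i^{-1}=t_{i+1}$, which rearranges to the stated relation; the case $\varepsilon=-1$ is obtained by conjugating the $\varepsilon=1$ relation. Relation (6) is identical in spirit with $r_i$ in place of $s_i$: one checks $r_i(\gamma_{2i-1,2i})$ is isotopic to $\gamma_{2i+1,2i+2}$ from the pictures of the braids in Figure~\ref{fig_r_i-s_i} and the curves in Figure~\ref{fig_path_l}. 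For (6) and the pictures referenced in the statement, I would lean on Figure~\ref{fig_proof_lem_conj-rel} to exhibit the relevant curve and its image.

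Relations (4) and (7) are the ones flagged by the authors as needing a figure, so these are where I expect the main work to lie. For (4), $r_i r s_i = r s_i r_i^{-1}$, rewrite as $r^{-1}r_i r = s_i r_i^{-1} s_i^{-1}$, i.e. compute the conjugate of $r_i$ by $r=\sigma_1\sigma_3\cdots\sigma_{2n+1}$ and compare with the conjugate of $r_i^{-1}$ by $s_i$; both sides can be expanded as words in the $\sigma_j$ using the given expressions and then matched using braid relations, though the bookkeeping is heavier than in (1)--(3) because $r$ involves $\sigma_{2i-1}$ and $\sigma_{2i+1}$, both of which appear in $r_i$ and $s_i$. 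Alternatively one can verify (4) by a direct tangle isotopy (the approach suggested by Figure~\ref{fig_proof_lem_conj-rel}). For (7), $t_i s_i^2 r_i = r_i s_i^2 t_{i+1}$: using relation (5) twice, $s_i^2 t_{i+1}^{-1} = t_i^{-1} s_i^2$ — more precisely $s_i^2 t_i = t_{i+1} s_i^2$ from squaring nothing but rather from $s_i t_i = t_{i+1} s_i$ applied twice — so (7) is equivalent to $t_i s_i^2 r_i s_i^{-2} t_i^{-1}\cdot(\text{correction}) = r_i$, which unwinds to checking a relation between $r_i$ and $s_i^2$; the cleanest route is to observe that $s_i^2$ and $t_i$, $t_{i+1}$ are all pure braids supported near the same sub-disk, and to verify $(r_i^{-1} t_i r_i)\, s_i^2 = s_i^2\, t_{i+1}$, i.e. that $r_i$ conjugates $t_i$ appropriately — which is precisely relation (6) rearranged — combined with relation (5). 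So (7) should follow formally from (5) and (6), and the figure is needed only to establish (5) and (6) geometrically.

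The main obstacle I anticipate is relation (4): unlike (7) it does not obviously reduce to the Dehn-twist relations (5) and (6), and unlike (1)--(3) it is not purely local in the $\sigma_j$ because $r$ is a global product. I would handle it either by a careful but finite braid-word computation (expanding both sides in the $\sigma_j$, moving the $\sigma_{2j-1}$ for $j\neq i,i\pm1$ out to the sides by far-commutation since they do not interact with $r_i,s_i$, and reducing the remaining word in $\sigma_{2i-1},\sigma_{2i},\sigma_{2i+1}$ to a normal form) or, following the authors' evident intent, by drawing the tangle $^{r_i r s_i}\!\A$ and isotoping it to $^{r s_i r_i^{-1}}\!\A$ directly. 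Everything else should be routine: (1)--(3) are braid relations, (5) and (6) are change-of-coordinates for Dehn twists, and (7) is a formal consequence of (5) and (6).
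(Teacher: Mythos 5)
Your plan for (1)--(3) (expand in the $\sigma_j$ and use braid relations), for (5)--(6) (the conjugation formula $ht_\gamma h^{-1}=t_{h(\gamma)}$ applied to curve images), and for (4) (a direct braid/tangle check as in Figure~\ref{fig_proof_lem_conj-rel}) is exactly the paper's route. The genuine gap is your claim that (7) ``follows formally from (5) and (6).'' It does not, and your derivation contains two errors. First, multiplying (7) on the left by $r_i^{-1}$ gives $r_i^{-1}t_is_i^2r_i=s_i^2t_{i+1}$; to rewrite the left side as $(r_i^{-1}t_ir_i)s_i^2$ you need $s_i^2$ to commute with $r_i$, which is not among the available relations and is false in general (for $n\geq 2$ one computes, e.g., that it would force $r_it_{i+1}r_i^{-1}=t_{i+1}^{-1}t_it_{i+1}$, equivalently that $t_{\gamma_{2i+2,2i+2}}$-type twists along essentially intersecting curves commute). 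Second, the identity you then invoke, $r_i^{-1}t_ir_i=t_{i+1}$, is not ``relation (6) rearranged'': (6) is conjugation by $r_i$, namely $r_it_ir_i^{-1}=t_{i+1}$, and conjugation by $r_i^{-1}$ sends $\gamma_{2i-1,2i}$ only to a curve conjugate to, not isotopic to, $\gamma_{2i+1,2i+2}$ (indeed $r_i^{-1}t_ir_i=t_{i+1}$ together with (6) would make $r_i^2$ commute with $t_i$, which fails). That (7) cannot be a formal consequence of (5) and (6) is seen abstractly: in the free group on two letters $s_i,r_i$ with $t_i=t_{i+1}=1$, relations (5) and (6) hold trivially while (7) reduces to $s_i^2r_i=r_is_i^2$, which is false. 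So (7) needs its own verification, which is precisely why the paper checks it by the braid computation in Figure~\ref{fig_proof_lem_conj-rel}. (If you want an algebraic derivation, the correct extra input is that $s_i^2t_it_{i+1}$ is the full twist on the four local strands, hence commutes with $r_i$; writing $t_is_i^2=\Delta^2t_{i+1}^{-1}$ and $s_i^2t_{i+1}=\Delta^2t_i^{-1}$ then reduces (7) to (6) -- but this uses more than (5) and (6).)

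A smaller instance of the same confusion appears in (5): the $\varepsilon=-1$ case, $s_i^{-1}t_is_i=t_{i+1}$, is not obtained ``by conjugating'' the $\varepsilon=1$ case $s_it_is_i^{-1}=t_{i+1}$; it is the separate (easy) fact that $s_i(\gamma_{2i+1,2i+2})$ is isotopic to $\gamma_{2i-1,2i}$, or equivalently that $s_i^2=p_{i,i+1}$ commutes with $t_{i+1}$. With that repaired, and with (7) verified directly (by braid calculation or the figure), the rest of your argument is sound and coincides with the paper's.
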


\begin{figure}[h]
\includegraphics[scale=0.9]{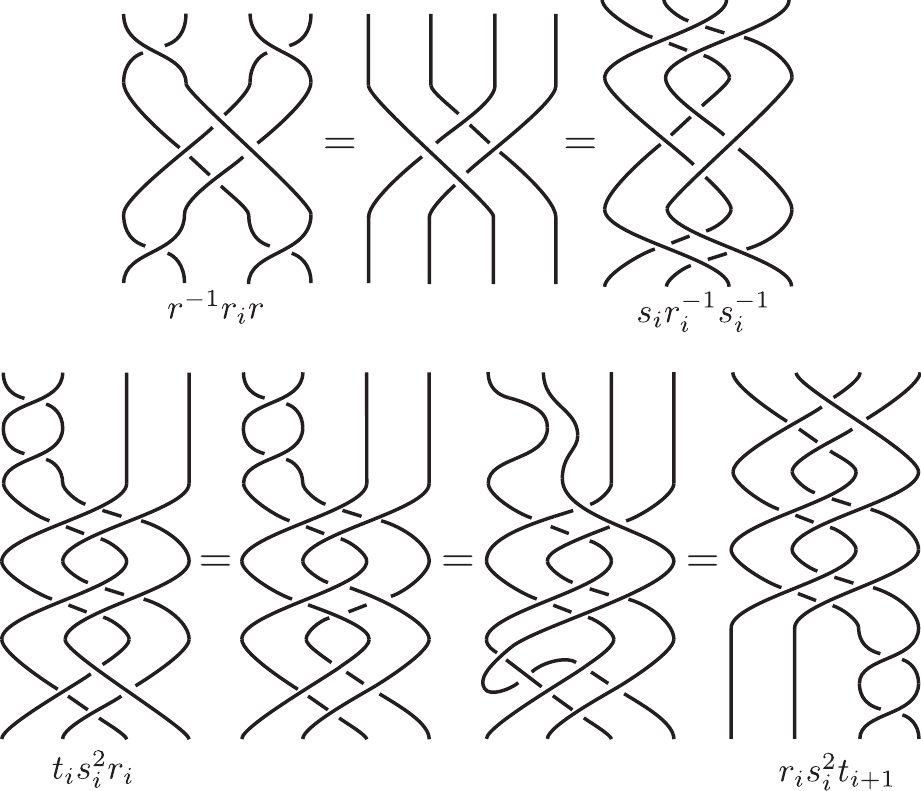}
\caption{The relations (4) and (7) in Lemma~\ref{lem_conj_rel}.}\label{fig_proof_lem_conj-rel}
\end{figure}

\paragraph{\emph{A Lift of a relation in the symmetric group}}

Recall that we have the surjective homomorphism $\Psi \colon \LM \to W_{2n+2}$ by Lemma~\ref{lem_GW} and $\LH$ is a subgroup of $\LM $. 
Since $\PH =\ker \Psi |_{\LH }$ and $\Psi (r^2)=1$ in $W_{2n+2}\subset S_{2n+2}$, we can express $r^2$ by a product of elements in $\PH $. 
The next lemma follows from some calculations in $SW_{2n+2}$. 

\begin{lem}\label{lem_lift_W}
For $n\geq 1$, the relation 
\[
r^2=t_1t_2\cdots t_{n+1}
\]
holds in $SW_{2n+2}$ and $\Hil $. 
\end{lem}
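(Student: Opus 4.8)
The plan is to verify the relation $r^2 = t_1 t_2 \cdots t_{n+1}$ directly in the spherical braid group $SB_{2n+2}$, using the factorizations of the generators $s_i$, $r_i$, $t_i$, $r$ in terms of the half-twists $\sigma_j$ that are recorded in Section~\ref{section_liftable-element}; since $\Gamma\colon SB_{2n+2}\to\M$ carries these braid expressions to the corresponding mapping classes, an identity in $SB_{2n+2}$ (a fortiori in $SW_{2n+2}$) descends to the desired identity in $\Hil\subset\M$. Recall $r=\sigma_1\sigma_3\cdots\sigma_{2n+1}$ and $t_i=\sigma_{2i-1}^2$, so the target relation is
\[
(\sigma_1\sigma_3\cdots\sigma_{2n+1})^2 = \sigma_1^2\sigma_3^2\cdots\sigma_{2n+1}^2 .
\]
At the level of the symmetric group $S_{2n+2}$ this holds because the transpositions $(1\,2),(3\,4),\dots,(2n+1\,2n+2)$ are pairwise disjoint (hence commute and are involutions); the content of the lemma is that the lift of this relation to $SW_{2n+2}$ involves no full-twist correction and no reshuffling.

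First I would observe that the $\sigma_{2i-1}$ for distinct $i$ have representatives with disjoint supports — the arc $l_{2i-1}$ joining $p_{2i-1},p_{2i}$ is disjoint from $l_{2j-1}$ for $i\neq j$ — so $\sigma_{2i-1}\rightleftarrows\sigma_{2j-1}$ in $\M$, and in fact in $SB_{2n+2}$ the corresponding half-twist braids commute (disjoint strand pairs). Granting this, in $SB_{2n+2}$ one may freely rearrange $(\sigma_1\sigma_3\cdots\sigma_{2n+1})^2$ into $\sigma_1\sigma_1\,\sigma_3\sigma_3\cdots\sigma_{2n+1}\sigma_{2n+1} = \sigma_1^2\sigma_3^2\cdots\sigma_{2n+1}^2 = t_1t_2\cdots t_{n+1}$, which is exactly $r^2$. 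Thus the relation already holds in $SB_{2n+2}$, hence in its subgroup $SW_{2n+2}$ (all the elements involved lie there by Lemmas~\ref{lem_r_liftable} and~\ref{lem_t_i_liftable}), and applying $\Gamma$ gives the relation in $\Hil$.

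The only point needing care — and the main (mild) obstacle — is justifying the commutation $\sigma_{2i-1}\rightleftarrows\sigma_{2j-1}$ at the braid level rather than merely modulo the center: two half-twist braids on disjoint pairs of adjacent strands are supported in disjoint cylinders in $S^2\times[0,1]$, so they literally commute in $SB_{2n+2}$, with no full-twist ambiguity. Alternatively, if one prefers to argue entirely within $\M$: $r^2$ is parity-preserving with $\Psi(r^2)=1$, so $r^2\in\PH$, and one checks that $r^2$ and $t_1\cdots t_{n+1}$ agree by comparing their actions on the curve system $L$ (both act trivially on $\B$ and perform the product of disjoint Dehn-twist-like half-rotations squared along $l_1,l_3,\dots,l_{2n+1}$), but the braid-level computation above is cleaner and avoids any appeal to a disk-by-disk isotopy argument. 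Either way the verification is a short, routine calculation once the disjointness of supports is in place.
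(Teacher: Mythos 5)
Your proposal is correct and is essentially the paper's argument: the paper simply asserts that the relation ``follows from some calculations in $SW_{2n+2}$'', and your computation—writing $r=\sigma_1\sigma_3\cdots\sigma_{2n+1}$ and $t_i=\sigma_{2i-1}^2$ and using that half-twists on disjoint pairs of strands commute in $SB_{2n+2}$ (the relation $\sigma_a\sigma_b=\sigma_b\sigma_a$ for $|a-b|\geq 2$), so that $(\sigma_1\sigma_3\cdots\sigma_{2n+1})^2=\sigma_1^2\sigma_3^2\cdots\sigma_{2n+1}^2$—is exactly such a calculation, which then descends to $\Hil$ via $\Gamma$ with no full-twist ambiguity.
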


\paragraph{\emph{Relations among pure elements and non-pure elements}}

The next lemma follows from some calculations in $SW_{2n+2}$ (see Figure~\ref{fig_proof_lem_pxy}). 

\begin{lem}\label{lem_p_ij_relations}
For $n\geq 1$, the following relations hold in $SW_{2n+2}$ and $\Hil $:
\begin{enumerate}
\item $p_{i,i+1}=s_{i}^2$, \quad $x_{i,i+1}=s_{i}r_i^{-1}$, \quad $y_{i,i+1}=r_i^{-1}s_{i}$ \quad for $1\leq i\leq n$, 
\item $s_{j-1}^{-1}\alpha _{i,j}s_{j-1}=\alpha _{i,j-1}$ \quad for $j-i\geq 2$ and $\alpha \in \{ p, x, y\}$, 
\item $s_{i-1}^{-1}\alpha _{i,j}s_{i-1}=\alpha _{i-1,j}$ \quad for $2\leq i<j\leq n+1$ and $\alpha \in \{ p, x, y\}$. 
\end{enumerate}
\end{lem}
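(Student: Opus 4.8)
The plan is to establish all three families of identities by direct manipulation of braid (equivalently, tangle) diagrams. Since $s_i,r_i$ and $p_{i,j},x_{i,j},y_{i,j}$ all lie in $SW_{2n+2}$ by Lemmas~\ref{lem_s_i-r_i_liftable}~and~\ref{lem_p_ij_liftable}, it suffices to verify the relations in $SW_{2n+2}$; applying the homomorphism $\Gamma $ then yields the corresponding relations in $\Hil $. So throughout I work with spherical braids.

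For part~(1), I would draw $p_{i,i+1}$, $x_{i,i+1}$, $y_{i,i+1}$ from Figure~\ref{fig_p_ij-x_ij-y_ij} in the adjacent case $j=i+1$: since the $i$-th and $(i+1)$-st arcs occupy the four strands $2i-1,2i,2i+1,2i+2$, each of these braids is a word in $\sigma _{2i-1},\sigma _{2i},\sigma _{2i+1}$, which can be read off from the diagram. Substituting $s_i=\sigma _{2i}\sigma _{2i+1}\sigma _{2i-1}\sigma _{2i}$ and $r_i=\sigma _{2i}^{-1}\sigma _{2i+1}^{-1}\sigma _{2i-1}\sigma _{2i}$ into $s_i^2$, $s_ir_i^{-1}$, $r_i^{-1}s_i$ and simplifying with the braid relations then matches these with the words for $p_{i,i+1}$, $x_{i,i+1}$, $y_{i,i+1}$. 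Conceptually, $s_i$ and $r_i$ are the two ``handle swaps'' exchanging the $i$-th and $(i+1)$-st arcs, $p_{i,i+1}$ is the square of a handle swap, and $x_{i,i+1}$, $y_{i,i+1}$ are the two composites of the two different handle swaps.

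For parts~(2) and~(3), I would use a conjugation (change-of-coordinates) relation. The braid $s_{j-1}$ exchanges the $(j-1)$-st and $j$-th pairs of strands while fixing the $i$-th pair, the condition $j-i\geq 2$ guaranteeing there is no overlap; conjugating $\alpha _{i,j}$ by $s_{j-1}$ therefore simply relabels its $j$-index to $j-1$, which is the asserted relation $s_{j-1}^{-1}\alpha _{i,j}s_{j-1}=\alpha _{i,j-1}$ for $\alpha \in \{ p,x,y\}$. Part~(3) is the symmetric statement: $s_{i-1}$ (available since $i\geq 2$) exchanges the $(i-1)$-st and $i$-th pairs and fixes the $j$-th pair because $j>i$, so conjugation by $s_{i-1}$ lowers the $i$-index to $i-1$. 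The explicit isotopies of strands realizing these relabelings are displayed in Figure~\ref{fig_proof_lem_pxy}.

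The main obstacle is bookkeeping rather than anything conceptual. In part~(1) one must track over/under crossings and orientations carefully so that the $\sigma $-words for $p_{i,i+1}$, $x_{i,i+1}$, $y_{i,i+1}$ come out with exactly the right exponents, and in parts~(2)--(3) one must check that the conjugating braid really carries $\alpha _{i,j}$ to the claimed element as a tangle rel $\partial B^3$, rather than merely to something braid-isotopic to it in $SB_{2n+2}$. Since only a bounded number of strands move in each case, all of these reduce to finite diagram checks.
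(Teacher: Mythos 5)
Your proposal is correct and follows essentially the same route as the paper, which justifies this lemma precisely by diagrammatic calculations in $SW_{2n+2}$ (the content of Figure~\ref{fig_proof_lem_pxy}), with part~(1) a direct comparison of braid words and parts~(2)--(3) a relabeling under conjugation by the adjacent swap. One small remark: since $SW_{2n+2}$ is a subgroup of $SB_{2n+2}$, equality there is just equality of spherical braids, so your extra caution about verifying the identity ``as a tangle rel $\partial B^3$'' rather than up to braid isotopy is unnecessary (though harmless).
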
 

\if0
\begin{lem}\label{lem_p_ij_relations}
For $n\geq 1$, the following relations hold in $SW_{2n+2}$ and $\Hil $:
\begin{enumerate}
\item $p_{i,i+1}=s_{i}^2$, \quad $x_{i,i+1}=s_{i}r_i^{-1}$, \quad $y_{i,i+1}=r_i^{-1}s_{i}$ \quad for $1\leq i\leq n$, 
\item $\alpha _{i,j}=s_{j-1}\cdots s_{i+2}s_{i+1}\alpha _{i,i+1}s_{i+1}^{-1}s_{i+2}^{-1}\cdots s_{j-1}^{-1}$ \quad for $|j-i|\geq 2$ and $\alpha \in \{ p, x, y\}$, 
\item $\alpha _{i+1,i+2}=s_{i}s_{i+1}\alpha _{i,i+1}s_{i+1}^{-1}s_{i}^{-1}\cdots s_{j-1}^{-1}$ \quad for $1\leq i\leq n-1$ and $\alpha \in \{ p, x, y\}$. 
\end{enumerate}
\end{lem} 
\fi 

\begin{figure}[h]
\includegraphics[scale=0.75]{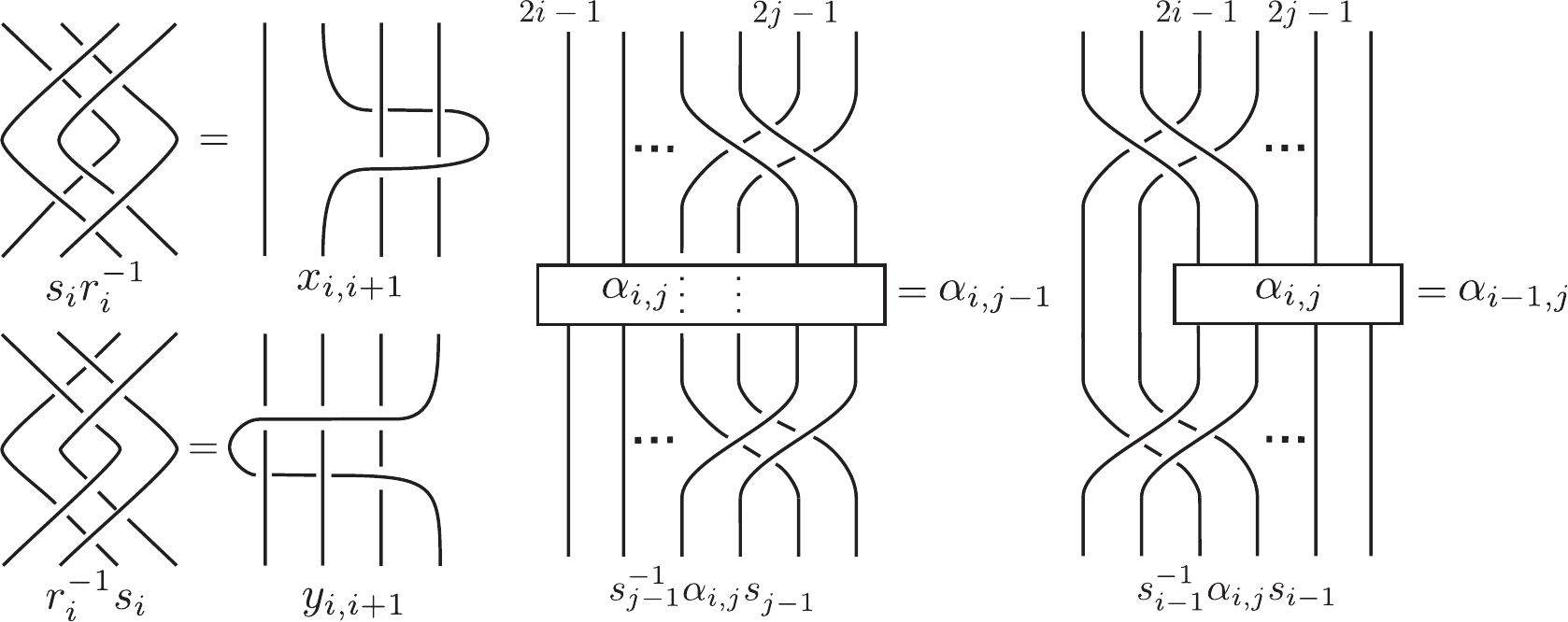}
\caption{The relations $x_{i,i+1}=s_{i}r_i^{-1}$, $y_{i,i+1}=r_i^{-1}s_{i}$ for $1\leq i\leq n$, $s_{j-1}^{-1}\alpha _{i,j}s_{j-1}=\alpha _{i,j-1}$ for $j-i\geq 2$ and $\alpha \in \{ p, x, y\}$, and $s_{i-1}^{-1}\alpha _{i,j}s_{i-1}=\alpha _{i-1,j}$ for $2\leq i<j\leq n+1$ and $\alpha \in \{ p, x, y\}$.}\label{fig_proof_lem_pxy}
\end{figure}

\section{The group structure of the liftable Hilden group via a group extension}\label{section_exact-seq}

In this section, we observe the group structure of $\LH $ via a group extension. 
Recall that the image of the liftable mapping class group $\LM $ with respect to $\Psi \colon \M \to S_{2n+2}$ coincides with the group $W_{2n+2}\cong (S_{n+1}^o\times S_{n+1}^e)\rtimes \Z _2$ by Lemma~\ref{lem_GW}, and we identify the symmetric group $S_{2n+2}$ with the group $\mathrm{Map}(\B )$ of self-bijections on $\B =\{ p_1, p_2, \dots ,p_{2n+2}\}$ (see Section~\ref{section_liftable-condition}). 
For $f\in \M $, we define $\bar{f}=\Psi (f)$. 

Put $\widehat{\B }=\{ \{ p_1, p_2\} , \{ p_3, p_4\} , \dots , \{ p_{2n+1}, p_{2n+2}\} \}$ and 
\[
\sigma (\widehat{\B })=\{ \sigma (\{ p_1, p_2\} ), \sigma (\{ p_3, p_4\} ), \dots , \sigma (\{ p_{2n+1}, p_{2n+2}\} )\}
\]
for $\sigma \in S_{2n+2}$. 
We remark that $\widehat{\B }=\{ \partial a_1, \partial a_2, \dots ,\partial a_{n+1}\}$. 
Denote by $V_{2n+2}$ the subgroup of $S_{2n+2}$ which consists of elements preserving $\widehat{\B }$, i.e. $\sigma (\widehat{\B })=\widehat{\B }$ for each $\sigma \in V_{2n+2}$. 
Then we have the following lemma. 

\begin{lem}\label{lem_Hil-V}
For $n\geq 1$, we have $\Psi (\Hil )=V_{2n+2}$.
\end{lem}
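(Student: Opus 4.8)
The plan is to prove the equality $\Psi(\Hil) = V_{2n+2}$ by two inclusions, using the description of $\Hil$ as $\Gamma(SW_{2n+2})$ together with the geometric meaning of a wicket.

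First I would establish $\Psi(\Hil) \subseteq V_{2n+2}$. Take $f \in \Hil$, so $f$ extends to a homeomorphism $\varphi$ of $B^3$ preserving $\A$ setwise. Recalling that $\widehat{\B} = \{\partial a_1, \dots, \partial a_{n+1}\}$, the homeomorphism $\varphi$ permutes the $n+1$ arcs $a_1,\dots,a_{n+1}$ (since it preserves $\A = a_1 \sqcup \cdots \sqcup a_{n+1}$ and each $a_j$ is a connected component), hence it permutes their boundary pairs $\partial a_j$. This says precisely that $\bar f = \Psi(f)$ preserves $\widehat{\B}$, i.e. $\Psi(f) \in V_{2n+2}$.

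For the reverse inclusion $V_{2n+2} \subseteq \Psi(\Hil)$, I would exhibit enough elements of $\Hil$ whose $\Psi$-images generate $V_{2n+2}$. A natural generating set for $V_{2n+2}$ consists of the transpositions $(2i-1\ 2i)$ for $1 \le i \le n+1$ (which swap the two endpoints within a single pair), together with elements realizing the symmetric group $S_{n+1}$ acting by permuting the $n+1$ pairs blockwise — for the latter the block transpositions sending $\{p_{2i-1},p_{2i}\}$ to $\{p_{2i+1},p_{2i+2}\}$ (and fixing the rest) suffice, e.g. $(2i-1\ 2i+1)(2i\ 2i+2)$ for $1 \le i \le n$. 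Now $\Psi(\sigma_{2i-1}) = (2i-1\ 2i)$ and $\sigma_{2i-1} \in \Hil$ for each $i$ (as noted in Section~\ref{section_liftable-element}, since $\sigma_{2i-1} \in SW_{2n+2}$), and $\Psi(s_i) = (2i-1\ 2i+1)(2i\ 2i+2)$ with $s_i \in \Hil$ by Lemma~\ref{lem_s_i-r_i_liftable}. Since these images together generate $V_{2n+2}$, we get $V_{2n+2} \subseteq \Psi(\Hil)$, completing the proof.

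The main obstacle I anticipate is the combinatorial verification that the listed permutations actually generate all of $V_{2n+2}$: one should check carefully that $V_{2n+2}$ fits into an extension $1 \to (\Z_2)^{n+1} \to V_{2n+2} \to S_{n+1} \to 1$ (the kernel being generated by the within-pair swaps $(2i-1\ 2i)$ and the quotient being the block permutations), and that the chosen elements hit generators of both the kernel and a set-theoretic section of the quotient. This is routine but needs to be stated correctly; everything else follows from already-established facts, namely $\Hil = \Gamma(SW_{2n+2})$ and the explicit membership statements $\sigma_{2i-1}, s_i \in \Hil$.
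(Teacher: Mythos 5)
Your proposal is correct and follows essentially the same route as the paper: the inclusion $\Psi(\Hil)\subseteq V_{2n+2}$ via preservation of the arcs $a_1,\dots,a_{n+1}$, and the reverse inclusion by generating $V_{2n+2}$ through the extension $1\to\Z_2^{n+1}\to V_{2n+2}\to S_{n+1}\to 1$, hit by $\Psi$-images of the $s_i$ and the within-pair half-twists. If anything, realizing the transpositions $(2i-1\ 2i)$ directly by $\sigma_{2i-1}\in\Hil$ is slightly cleaner than the paper's wording, which at that point writes $t_i=\sigma_{2i-1}$ even though $t_i$ was defined as $\sigma_{2i-1}^2$ (whose $\Psi$-image is trivial).
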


\begin{proof}
For any $f\in \Hil $, since the mapping class $f$ preserves $\A =a_1\sqcup a_2\sqcup \cdots \sqcup a_{2n+2}$ (i.e. $\{ f(a_1), f(a_2), \dots , f(a_{2n+2})\} =\{ a_1, a_2, \dots , a_{2n+2}\}$), we have $f(\widehat{\B })=\{ f(\partial a_1), f(\partial a_2), \dots , f(\partial a_{n+1})\} =\{ \partial a_1, \partial a_2, \dots ,\partial a_{n+1}\} =\widehat{\B }$. 
Thus we also have $\bar{f}(\widehat{\B })=\widehat{\B }$ and $\Psi (\Hil )\subset V_{2n+2}$. 

Since $V_{2n+2}$ acts on $\widehat{\B }$, we have the natural homomorphism $\Pi \colon V_{2n+2}\to S_{n+1}$, where we regard $S_{n+1}$ as the group $\mathrm{Map}(\widehat{\B })$. 
Remark that $S_{n+1}$ is generated by the transpositions $\tau _i$ $(1\leq i\leq n)$ such that $\tau _i$ transposes $\partial a_i$ and $\partial a_{i+1}$. 
For $1\leq i\leq n$, we have $s_i(a_i)=a_{i+1}$, $s_i(a_{i+1})=a_{i}$, and $s_i(a_l)=a_{l}$ for $l\not =i, i+1$. 
Thus we have $\Pi (\bar{s}_i)=\tau _i$ for $1\leq i\leq n$ and $\Pi $ is surjective. 

Recall that $t_{i}=\sigma _{2i-1}$ and $\bar{t}_{i}=(2i-1\ 2i)\in V_{2n+2}$ for $1\leq i\leq n+1$. 
For $\sigma \in V_{2n+2}$, $\Pi (\sigma )=1$ in $S_{n+1}$ if and only if the pair $(\sigma (p_{2i-1}), \sigma (p_{2i}))$ is equal to either $(p_{2i-1}, p_{2i})$ or $(p_{2i}, p_{2i-1})$. 
Hence, if $\Pi (\sigma )=1$ in $S_{n+1}$, then $\sigma =1$ or there exists integers $1\leq i_1<i_2<\cdots <i_l\leq n+1$ such that $\sigma =\bar{t}_{i_1}\bar{t}_{i_2}\cdots \bar{t}_{i_l}$. 
Thus $\ker \Pi $ is generated by $\bar{t}_1, \bar{t}_2, \dots , \bar{t}_{n+1}$ and we have the following exact sequence:
\begin{eqnarray}\label{exact_V}
1\longrightarrow \Z _2^{n+1} \longrightarrow V_{2n+2} \stackrel{\Pi }{\longrightarrow }S_{n+1}\longrightarrow 1. 
\end{eqnarray}
By arguments above, we show that $V_{2n+2}$ is generated by $\bar{s}_{i}=\Psi (s_i)$ for $1\leq i\leq n$ and $\bar{t}_{i}=\Psi (t_i)$ for $1\leq i\leq n+1$. 
Therefore, we have $\Psi (\Hil )\supset V_{2n+2}$ and we have completed the proof of Lemma~\ref{lem_Hil-V}.  
\end{proof}

We remark that the preimage $\Psi ^{-1}(V_{2n+2})$ does not coincide with $\Hil $. 
For instance, $\bar{\sigma }_2^2=1\in V_{2n+2}$, however, $^{\sigma _2^2}\! \A $ is not isotopic to $\A $ relative to $\partial B^3$ (i.e. $\sigma _2^2\not \in \Hil $). 
By Lemma~\ref{lem_Hil-V} and restricting the exact sequence~(\ref{exact2}) in Section~\ref{section_liftable-element} to $\Hil $, we have the following proposition. 

\begin{prop}\label{prop_exact_Hil}
For $n\geq 1$, we have the following exact sequence: 
\begin{eqnarray}\label{exact_Hil}
1\longrightarrow \PH \longrightarrow \Hil \stackrel{\Psi }{\longrightarrow }V_{2n+2}\longrightarrow 1. 
\end{eqnarray}
\end{prop}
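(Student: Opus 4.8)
The plan is to obtain the exact sequence~(\ref{exact_Hil}) simply by restricting the exact sequence~(\ref{exact2}) to the subgroup $\Hil \subset \LM$ and combining it with Lemma~\ref{lem_Hil-V}. First I would recall that~(\ref{exact2}) says $\PM = \ker(\Psi \colon \LM \to W_{2n+2})$, so the restriction of $\Psi$ to $\Hil$ has kernel $\Hil \cap \PM$, which is precisely $\PH$ by the definition given in Section~\ref{section_wicket-group}. This immediately yields exactness at $\PH$ and at $\Hil$ in the sequence
\[
1 \longrightarrow \PH \longrightarrow \Hil \stackrel{\Psi }{\longrightarrow } \Psi(\Hil ) \longrightarrow 1.
\]
The only remaining point is to identify the image $\Psi(\Hil )$, and this is exactly the content of Lemma~\ref{lem_Hil-V}, which gives $\Psi(\Hil ) = V_{2n+2}$. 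Substituting this in completes the argument.

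The main (and essentially only) obstacle has already been dispatched in Lemma~\ref{lem_Hil-V}: one must check both that every element of $\Hil$ permutes the pairs $\widehat{\B } = \{\partial a_1,\dots ,\partial a_{n+1}\}$ (because a liftable Hilden homeomorphism preserves the arc system $\A$ up to isotopy), giving $\Psi(\Hil ) \subseteq V_{2n+2}$, and conversely that $V_{2n+2}$ is generated by the images $\bar s_i$ and $\bar t_i$ of the explicit Hilden elements $s_i$ and $t_i$, giving the reverse inclusion. With Lemma~\ref{lem_Hil-V} in hand, the proposition is a one-line consequence; I would therefore keep the proof to a couple of sentences, explicitly noting that $\PH = \ker(\Psi|_{\Hil })$ follows from $\PH = \Hil \cap \PM$ together with~(\ref{exact2}), and that $\Psi(\Hil ) = V_{2n+2}$ is Lemma~\ref{lem_Hil-V}. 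No delicate point remains, so there is nothing I expect to require more than routine verification.
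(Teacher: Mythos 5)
Your proposal is correct and matches the paper's own one-line derivation: restrict $\Psi$ to $\Hil$, note that $\ker \Psi |_{\Hil }=\Hil \cap \PM =\PH $, and identify the image as $V_{2n+2}$ via Lemma~\ref{lem_Hil-V}. One small correction: $\Hil $ is \emph{not} a subgroup of $\LM $ (e.g.\ $\sigma _1\in \Hil $ but $\Psi (\sigma _1)=(1\ 2)\notin W_{2n+2}$), so what is restricted is the homomorphism $\Psi \colon \M \to S_{2n+2}$ (whose kernel is $\PM $) rather than literally the sequence~(\ref{exact2}); this slip does not affect the substance of your argument, which only uses $\ker \Psi =\PM $ and $\Psi (\Hil )=V_{2n+2}$.
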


Let $VW_{2n+2}$ be the intersection of $V_{2n+2}$ and $W_{2n+2}$, i.e. $VW_{2n+2}$ consists of elements in $S_{2n+2}$ which preserve $\widehat{\B }$ and are parity-preserving or parity-reversing, and let $S_{n+1}^{oe}$ be the subgroup of $S_{2n+2}$ which consists of elements that preserve $\widehat{\B }$ and are parity-preserving. 
By the definitions, we have $S_{n+1}^{oe}=VW_{2n+2}\cap \ker \pi $ (for the definition of $\pi \colon W_{2n+2}\to \Z _2$, see Section~\ref{section_liftable-condition}). 
We have the following proposition. 

\begin{prop}\label{prop_exact_VW}
For $n\geq 1$, we have the following exact sequence: 
\begin{eqnarray}\label{exact_VW}
1\longrightarrow S_{n+1}^{oe} \longrightarrow VW_{2n+2} \stackrel{\pi }{\longrightarrow }\Z _2\longrightarrow 1. 
\end{eqnarray}
\end{prop}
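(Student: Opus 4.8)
The plan is to deduce exactness at the first two spots straight from the definitions, thereby reducing the whole proposition to the single claim that $\pi$ is surjective on $VW_{2n+2}$, and then to witness that surjectivity by a concrete parity-reversing permutation preserving $\widehat{\B}$.

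First I would note that $\pi|_{VW_{2n+2}}$ is defined, since $VW_{2n+2}\subset W_{2n+2}$ by construction, and that the map $S_{n+1}^{oe}\hookrightarrow VW_{2n+2}$ is a subgroup inclusion, hence injective; this gives exactness at $S_{n+1}^{oe}$. For exactness at $VW_{2n+2}$ there is nothing to do beyond recalling the identity $S_{n+1}^{oe}=VW_{2n+2}\cap\ker\pi$ already noted before the statement: the left-hand side is precisely the kernel of $\pi|_{VW_{2n+2}}$, so the image of $S_{n+1}^{oe}$ equals $\ker(\pi|_{VW_{2n+2}})$.

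The only substantive step is surjectivity of $\pi\colon VW_{2n+2}\to\Z_2$, i.e.\ exhibiting a parity-reversing element of $S_{2n+2}$ that preserves $\widehat{\B}$. I would use $\bar r=\Psi(r)=(1\ 2)(3\ 4)\cdots(2n+1\ 2n+2)$, which was introduced before Lemma~\ref{lem_r_liftable} and equals $\bar t_1\bar t_2\cdots\bar t_{n+1}$ in the notation of the proof of Lemma~\ref{lem_Hil-V}. Since $\bar r$ swaps the two points within each pair $\{p_{2i-1},p_{2i}\}$, it fixes every element of $\widehat{\B}$ and so lies in $V_{2n+2}$; since it sends every odd-indexed point to the adjacent even-indexed one, it carries $\B_o$ onto $\B_e$, hence is parity-reversing and lies in $W_{2n+2}$. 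Therefore $\bar r\in VW_{2n+2}$ with $\pi(\bar r)=1$, which proves surjectivity; combining the three steps gives the exact sequence.

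I do not expect a real obstacle here: once $\bar r$ is recognised as a parity-reversing element preserving the pairing $\widehat{\B}$, everything else is definitional. If desired one could also record the structural fact that the action on $\widehat{\B}$ identifies $S_{n+1}^{oe}$ with $S_{n+1}$, by the argument in the proof of Lemma~\ref{lem_Hil-V} restricted to parity-preserving lifts, but this is not needed for the proposition itself.
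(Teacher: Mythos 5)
Your proposal is correct and follows essentially the same route as the paper: the paper also establishes surjectivity of $\pi|_{VW_{2n+2}}$ by exhibiting $\bar{r}=(1\ 2)(3\ 4)\cdots(2n+1\ 2n+2)$ as a parity-reversing element, and then identifies the kernel with $S_{n+1}^{oe}$ via the definitional equality $S_{n+1}^{oe}=VW_{2n+2}\cap\ker\pi$, by restricting the exact sequence~(\ref{exact1}) to $VW_{2n+2}$. Your explicit verification that $\bar{r}$ preserves $\widehat{\B}$ and exchanges $\B_o$ and $\B_e$ just spells out what the paper asserts tersely.
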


\begin{proof}
Recall that $\bar{r}=(1\ 2)(3\ 4)\cdots (2n+1\ 2n+2)$ in $S_{2n+1}$. 
We can see that $\bar{r}$ is parity-reversing and $\pi (\bar{r})=1\in \Z _2$. 
Thus the restriction $\pi |_{VW_{2n+2}}\colon VW_{2n+2}\to \Z _2$ is surjective.  
By restricting the exact sequence~(\ref{exact1}) in Section~\ref{section_liftable-condition} to $VW_{2n+2}$, we have the exact sequence
\[
1\longrightarrow VW_{2n+2}\cap \ker \pi  \longrightarrow VW_{2n+2} \stackrel{\pi }{\longrightarrow }\Z _2\longrightarrow 1. 
\] 
By an argument above, we have $S_{n+1}^{oe}=VW_{2n+2}\cap \ker \pi $ and have completed the proof of Proposition~\ref{prop_exact_VW}. 
\end{proof}

We review the precise definition of $\Pi \colon V_{2n+2}\to S_{n+1}$ which appear in the proof of Lemma~\ref{lem_Hil-V}. 
Put $\frac{\B }{2}=\{ 1, 2, \dots , n+1\}$ and we define $\beta \colon \B \to \frac{\B }{2}$ by $\beta (p_i)=\frac{i+1}{2}$ for odd $1\leq i\leq 2n+1$ and $\beta (p_i)=\frac{i}{2}$ for even $2\leq i\leq 2n+2$. 
We regard $S_{n+1}$ as the group of bijections on $\frac{\B }{2}$. 
Then we redefine $\Pi \colon V_{2n+2}\to S_{n+1}$ by $\Pi (\sigma )(i)=\beta (\sigma (p_{2i}))$ for $\sigma \in V_{2n+2}$ and $i\in \frac{\B }{2}$. 
Since $V_{2n+2}$ preserves $\widehat{\B }$, $\Pi $ is a well-defined homomorphism with the following commutative diagram:  
\[
\xymatrix{
 \B \ar[r]^{\sigma }\ar[d]_{\beta }  &  \B \ar[d]^{\beta } \\
 \frac{\B }{2}  \ar[r]_{\Pi (\sigma )} &\frac{\B }{2}. \ar@{}[lu]|{\circlearrowright}
}
\] 
Then, we have the following lemma. 

\begin{lem}\label{lem_S^oe}
For $n\geq 1$, the restriction $\Pi |_{S_{n+1}^{oe}}\colon S_{n+1}^{oe}\to S_{n+1}$ is an isomorphism.
\end{lem}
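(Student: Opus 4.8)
The plan is to show that $\Pi|_{S_{n+1}^{oe}}$ is both injective and surjective by exhibiting an explicit two-sided inverse, or at least by controlling the kernel and image directly. Recall that $S_{n+1}^{oe}$ consists of the parity-preserving elements of $V_{2n+2}$, i.e. those $\sigma\in S_{2n+2}$ with $\sigma(\widehat{\B})=\widehat{\B}$ and $\sigma(\B_o)=\B_o$ (equivalently $\sigma(\B_e)=\B_e$). First I would prove injectivity: if $\sigma\in S_{n+1}^{oe}$ and $\Pi(\sigma)=1$, then for every $i$ we have $\beta(\sigma(p_{2i}))=i$, so $\sigma(p_{2i})\in\{p_{2i-1},p_{2i}\}$; but $\sigma$ is parity-preserving, so $\sigma(p_{2i})$ must be even, forcing $\sigma(p_{2i})=p_{2i}$. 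Since $\sigma$ preserves $\widehat{\B}=\{\partial a_1,\dots,\partial a_{n+1}\}$ and fixes each $p_{2i}$, it must also fix each $p_{2i-1}$ (the partner of $p_{2i}$ in its pair $\partial a_i$). Hence $\sigma=1$, so $\Pi|_{S_{n+1}^{oe}}$ is injective.

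Next I would prove surjectivity. Given an arbitrary $\tau\in S_{n+1}$ (viewed as a bijection of $\frac{\B}{2}=\{1,\dots,n+1\}$), define $\sigma\in S_{2n+2}$ by $\sigma(p_{2i-1})=p_{2\tau(i)-1}$ and $\sigma(p_{2i})=p_{2\tau(i)}$ for $1\leq i\leq n+1$. This is a well-defined bijection of $\B$; it sends $\partial a_i=\{p_{2i-1},p_{2i}\}$ to $\{p_{2\tau(i)-1},p_{2\tau(i)}\}=\partial a_{\tau(i)}$, so $\sigma(\widehat{\B})=\widehat{\B}$ and $\sigma\in V_{2n+2}$; and it sends odd-indexed points to odd-indexed points, so $\sigma$ is parity-preserving, hence $\sigma\in S_{n+1}^{oe}$. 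Finally $\Pi(\sigma)(i)=\beta(\sigma(p_{2i}))=\beta(p_{2\tau(i)})=\tau(i)$, so $\Pi(\sigma)=\tau$. Therefore $\Pi|_{S_{n+1}^{oe}}$ is surjective, and combined with injectivity it is an isomorphism.

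The argument is essentially bookkeeping, so there is no serious obstacle; the one point that needs care is checking that the map $\sigma$ constructed in the surjectivity step is consistently defined as a permutation of $\B$ — i.e. that specifying it on the odd points and the even points separately via the same $\tau$ genuinely yields a bijection and that the parity/pair structure is respected. An alternative, slightly slicker route is to observe from the exact sequence~(\ref{exact1}) restricted to $VW_{2n+2}$ that $S_{n+1}^{oe}\subset S_{n+1}^o\times S_{n+1}^e$, that $\Pi$ restricted to $S_{n+1}^o$ is already an isomorphism onto $S_{n+1}$ (since $S_{n+1}^o$ permutes the odd points freely while fixing the even ones), and that $S_{n+1}^{oe}$ is the "diagonal" copy sitting inside $S_{n+1}^o\times S_{n+1}^e$; but the direct construction above is the most transparent and is the one I would write out.
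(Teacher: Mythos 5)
Your proof is correct and follows essentially the same argument as the paper: injectivity by noting that $\Pi(\sigma)=1$ forces $\sigma(p_{2i})\in\beta^{-1}(i)=\{p_{2i-1},p_{2i}\}$ and then using the parity/pair structure to conclude $\sigma=1$, and surjectivity via the same explicit lift $\widetilde{\tau}(p_{2i-1})=p_{2\tau(i)-1}$, $\widetilde{\tau}(p_{2i})=p_{2\tau(i)}$. The only cosmetic difference is that in the injectivity step you fix the even points by parity and the odd points by preservation of $\widehat{\B}$, while the paper fixes both points of each pair at once and then invokes parity, which is the same bookkeeping.
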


\begin{proof}
For $\sigma \in S_{n+1}^{oe}$, we suppose that $\Pi (\sigma )=1$ in $S_{n+1}$. 
By the commutative diagram above, we have $\beta \circ \sigma =\beta $. 
Since $\beta (\sigma (p_{2i-1}))=\beta (p_{2i-1})=i$ and $\beta (\sigma (p_{2i}))=\beta (p_{2i})=i$ for $1\leq i\leq n+1$, we have 
\[
\{ \sigma (p_{2i-1}), \sigma (p_{2i})\} =\beta ^{-1}(i)=\{ p_{2i-1}, p_{2i}\}
\]
for $1\leq i\leq n+1$ (the condition $\{ \sigma (p_{2i-1}), \sigma (p_{2i})\} \supset \beta ^{-1}(i)$ follows from the injectivity of $\sigma $). 
Since $\sigma $ is parity-preserving, we have $(\sigma (p_{2i-1}), \sigma (p_{2i}))=(p_{2i-1}, p_{2i})$ for $1\leq i\leq n+1$. 
Thus we have $\sigma =1$ in $S_{n+1}^{oe}$ and $\Pi |_{S_{n+1}^{oe}}$ is injective. 

For $\sigma \in S_{n+1}$, we define $\widetilde{\sigma }\colon \B \to \B$ by $\widetilde{\sigma }(p_{2i-1})=p_{2\sigma (i)-1}$ and $\widetilde{\sigma }(p_{2i})=p_{2\sigma (i)}$ for $1\leq i\leq n+1$. 
By the bijectivity of $\sigma $, $\widetilde{\sigma }$ is also bijective (i.e. $\widetilde{\sigma }\in S_{2n+2}$). 
Since $\widetilde{\sigma }$ is parity-preserving and $\widetilde{\sigma }(\widehat{\B })=\widehat{\B }$ by the definition of $\widetilde{\sigma }$, we have $\widetilde{\sigma }\in  S_{n+1}^{oe}$. 
For $1\leq i\leq n+1$, we have 
\begin{eqnarray*}
\Pi (\widetilde{\sigma })(i)=\beta (\widetilde{\sigma }(p_{2i}))=\beta (p_{2\sigma (i)})=\sigma (i). 
\end{eqnarray*}
Thus we have $\Pi (\widetilde{\sigma })=\sigma $ and $\Pi |_{S_{n+1}^{oe}}$ is surjective. 
Therefore we have completed the proof of Lemme~\ref{lem_S^oe}. 
\end{proof}

Recall that $\bar{s}_{i}=(2i-1\ 2i+1)(2i\ 2i+2)$ in $S_{n+1}^{oe}$ and $\Pi (\bar{s}_{i})=(i\ i+1)=\tau _i$ in $S_{n+1}$ for $1\leq i\leq n$. 
Thus, by Lemma~\ref{lem_S^oe}, we have the following Lemma. 

\begin{lem}\label{lem_S^oe_gen}
For $n\geq 1$, the restriction $S_{n+1}^{oe}$ is generated by $\bar{s}_{i}$ for $1\leq i\leq n$.
\end{lem}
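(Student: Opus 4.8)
The plan is to combine the previous two lemmas. By Lemma~\ref{lem_S^oe} we know that $\Pi |_{S_{n+1}^{oe}}\colon S_{n+1}^{oe}\to S_{n+1}$ is an isomorphism, so it suffices to check that the images of the elements $\bar{s}_1,\dots ,\bar{s}_n$ generate $S_{n+1}$. For this I would first recall, as noted just before the statement, that $\bar{s}_i=(2i-1\ 2i+1)(2i\ 2i+2)$ lies in $S_{n+1}^{oe}$ (it is parity-preserving and preserves $\widehat{\B }$), and that $\Pi(\bar{s}_i)(j)=\beta(\bar{s}_i(p_{2j}))$; a direct computation from the definition of $\beta$ gives $\Pi(\bar{s}_i)=(i\ i+1)=\tau_i$ for $1\leq i\leq n$.

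Next I would invoke the standard fact that $S_{n+1}$, viewed as $\mathrm{Map}(\tfrac{\B }{2})$ with $\tfrac{\B }{2}=\{1,\dots,n+1\}$, is generated by the consecutive transpositions $\tau_1=(1\ 2),\ \tau_2=(2\ 3),\ \dots,\ \tau_n=(n\ n+1)$. Hence $\Pi(S_{n+1}^{oe})$ contains $\langle \tau_1,\dots,\tau_n\rangle=S_{n+1}$, and since $\Pi|_{S_{n+1}^{oe}}$ is surjective anyway by Lemma~\ref{lem_S^oe}, equality $\Pi(\langle \bar s_1,\dots,\bar s_n\rangle)=S_{n+1}$ holds.

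Finally, since $\Pi|_{S_{n+1}^{oe}}$ is \emph{injective} (Lemma~\ref{lem_S^oe}), a subset of $S_{n+1}^{oe}$ whose $\Pi$-image generates $S_{n+1}$ must itself generate $S_{n+1}^{oe}$: if $H=\langle \bar s_1,\dots,\bar s_n\rangle\leq S_{n+1}^{oe}$, then $\Pi(H)=S_{n+1}=\Pi(S_{n+1}^{oe})$, and injectivity of $\Pi|_{S_{n+1}^{oe}}$ forces $H=S_{n+1}^{oe}$. This completes the argument.

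There is essentially no obstacle here; the statement is an immediate corollary of Lemma~\ref{lem_S^oe} together with the elementary generation of the symmetric group by consecutive transpositions. The only point requiring a line of verification is the identity $\Pi(\bar s_i)=\tau_i$, which is a routine unwinding of the definitions of $\beta$ and $\Pi$ and is exactly the computation already displayed in the text preceding the lemma.
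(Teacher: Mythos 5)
Your argument is correct and is essentially the paper's own: the text deduces the lemma from the isomorphism $\Pi |_{S_{n+1}^{oe}}\colon S_{n+1}^{oe}\to S_{n+1}$ of Lemma~\ref{lem_S^oe} together with the computation $\Pi (\bar{s}_{i})=(i\ i+1)=\tau _i$ and the standard generation of $S_{n+1}$ by consecutive transpositions. Your explicit use of injectivity to pull the generation statement back to $S_{n+1}^{oe}$ is exactly the step the paper leaves implicit.
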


By the next proposition, we can see the group $\LH$ as a group extension. 

\begin{prop}\label{prop_exact_LH}
For $n\geq 1$, we have the following exact sequence: 
\begin{eqnarray}\label{exact_LH}
1\longrightarrow \PH \longrightarrow \LH \stackrel{\Psi }{\longrightarrow }VW_{2n+2}\longrightarrow 1. 
\end{eqnarray}
\end{prop}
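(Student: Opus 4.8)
The plan is to deduce the sequence by restricting the exact sequence~(\ref{exact_Hil}) of Proposition~\ref{prop_exact_Hil}, namely $1\to\PH\to\Hil\xrightarrow{\Psi}V_{2n+2}\to 1$, to the subgroup $\LH\subseteq\Hil$, and then to identify the kernel and image of $\Psi|_{\LH}$ using the two descriptions $\LM=\Psi^{-1}(W_{2n+2})$ (Lemma~\ref{lem_GW}) and $\Psi(\Hil)=V_{2n+2}$ (Lemma~\ref{lem_Hil-V}) together with $\LH=\LM\cap\Hil$ (Lemma~\ref{liftability_surf_hand}). I would treat exactness at the three positions in turn.

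Exactness at $\PH$ and at $\LH$ is essentially immediate. The inclusion $\PH\hookrightarrow\LH$ is the restriction of the injection $\PH\hookrightarrow\Hil$, and it is well defined because $\PH\subseteq\LH$ (every element of $\PH$ is parity-preserving, hence liftable by Lemma~\ref{liftable_condition_handlebody}; alternatively this is the remark at the end of Section~\ref{section_wicket-group}). For exactness at $\LH$, since $\LH\subseteq\Hil$ and $\ker(\Psi|_{\Hil})=\PH$ by the exact sequence~(\ref{exact_Hil}), we get $\ker(\Psi|_{\LH})=\PH\cap\LH=\PH$, the last equality using $\PH\subseteq\LH$ once more. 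So the only remaining point is surjectivity of $\Psi|_{\LH}\colon\LH\to VW_{2n+2}$.

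For surjectivity I would argue both inclusions. First, $\Psi(\LH)\subseteq VW_{2n+2}$: from $\LH=\LM\cap\Hil$ we have $\Psi(\LH)\subseteq\Psi(\LM)\cap\Psi(\Hil)=W_{2n+2}\cap V_{2n+2}=VW_{2n+2}$ by Lemmas~\ref{lem_GW} and~\ref{lem_Hil-V}. Conversely, take $\sigma\in VW_{2n+2}=V_{2n+2}\cap W_{2n+2}$. Since $\sigma\in V_{2n+2}=\Psi(\Hil)$ by Lemma~\ref{lem_Hil-V}, choose $f\in\Hil$ with $\Psi(f)=\sigma$; since also $\sigma\in W_{2n+2}$, the equality $\LM=\Psi^{-1}(W_{2n+2})$ of Lemma~\ref{lem_GW} forces $f\in\LM$, hence $f\in\LM\cap\Hil=\LH$ and $\sigma\in\Psi(\LH)$. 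This completes exactness.

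The argument is a short diagram chase, so there is no serious obstacle. The one place that needs a little care is the reverse inclusion in the surjectivity step: one must use the ``preimage'' form $\LM=\Psi^{-1}(W_{2n+2})$ of Lemma~\ref{lem_GW} (liftability of a homeomorphism on $S^2$ depends only on its action on $\B$, hence only on its image permutation) rather than merely $\Psi(\LM)=W_{2n+2}$, in order to conclude that the $\Hil$-lift $f$ of $\sigma$ supplied by Lemma~\ref{lem_Hil-V} is automatically liftable for $p_{g,k}$ and therefore lies in $\LH$.
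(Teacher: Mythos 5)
Your proof is correct and follows essentially the same route as the paper: it identifies the kernel of $\Psi |_{\LH }$ as $\PH $ and the image as $W_{2n+2}\cap V_{2n+2}=VW_{2n+2}$ using Lemmas~\ref{lem_GW}, \ref{lem_Hil-V}, and \ref{liftability_surf_hand}, obtained by restricting one of the earlier exact sequences to $\LH $. If anything you are more careful than the paper, which simply asserts $\Psi (\LH )=\Psi (\LM )\cap \Psi (\Hil )$; as you observe, the nontrivial inclusion there needs the preimage form $\LM =\Psi ^{-1}(W_{2n+2})$ of Lemma~\ref{lem_GW}, exactly the point you spell out.
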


\begin{proof}
By Lemmas~\ref{lem_GW} and \ref{lem_Hil-V}, we have $\Psi (\LM )=W_{2n+2}$ and $\Psi (\Hil )=V_{2n+2}$. 
Hence, since $\LH =\LM \cap \Hil $ by Lemma~\ref{liftability_surf_hand}, we have 
\[
\Psi (\LH )=\Psi (\LM )\cap \Psi (\Hil )=W_{2n+2}\cap V_{2n+2}=VW_{2n+2}.
\] 
Recall that $\PH =\PM \cap \Hil $. 
Therefore, by restricting the exact sequence~(\ref{exact2}) in Section~\ref{section_liftable-element} to $\LH $, we obtain the exact sequence~\ref{exact_LH}, and have completed the proof of Proposition~\ref{prop_exact_LH}. 
\end{proof}

\section{A presentation for the liftable Hilden group}\label{section_lmod}


Recall that for elements $f$ and $h$ in a group $G$, $f\rightleftarrows h$ means the commutative relation $fh=hf$ in $G$. 
The main theorem of this paper for the liftable Hilden group is as follows. 

\begin{thm}\label{thm_pres_LH}
For $n\geq 1$, $\LH $ admits the presentation with generators $s_i$ for $1\leq i\leq n$, $r_i$ for $1\leq i\leq n$, $t_{i}$ for $1\leq i\leq n+1$, and $r$, and the following defining relations: 
\begin{enumerate}
\item commutative relations
\begin{enumerate}
\item $\alpha _i \rightleftarrows \beta _j$ \quad for $j-i\geq 2$ and $\alpha , \beta \in \{ s, r\}$,
\item $\alpha _{i} \rightleftarrows t_{j}$ \quad for $j\not =i, i+1$ and $\alpha \in \{ s, r\}$, 
\item $t_i\rightleftarrows t_{j}$ \quad for $1\leq i<j\leq n+1$, 
\item $s_i \rightleftarrows r$ \quad for $1\leq i\leq n$, 
\item $t_i \rightleftarrows r$ \quad for $1\leq i\leq n+1$, 
\end{enumerate}
\item conjugation relations
\begin{enumerate}
\item $\alpha _i\alpha _{i+1}\alpha _i=\alpha _{i+1}\alpha _i\alpha _{i+1}$ \quad for $1\leq i\leq n-1$ and $\alpha \in \{ s, r\}$, 
\item $s_i^{\varepsilon }s_{i+1}^{\varepsilon }r_{i}=r_{i+1}s_i^{\varepsilon }s_{i+1}^{\varepsilon }$ \quad for $1\leq i\leq n-1$ and $\varepsilon \in \{ 1, -1\}$,
\item $r_ir_{i+1}s_{i}=s_{i+1}r_ir_{i+1}$ \quad for $1\leq i\leq n-1$,
\item $r_irs_{i}=rs_{i}r_{i}^{-1}$ \quad for $1\leq i\leq n$,
\item $s_i^{\varepsilon }t_{i}=t_{i+1}s_{i}^{\varepsilon }$ \quad for $1\leq i\leq n$ and $\varepsilon \in \{ 1, -1\}$,
\item $r_it_{i}=t_{i+1}r_{i}$ \quad for $1\leq i\leq n$,
\item $t_is_{i}^2r_i=r_is_i^2t_{i+1}$ \quad for $1\leq i\leq n$,
\end{enumerate}
\item $r^2=t_1t_2\cdots t_{n+1}$, 
\item $r_1r_2\cdots r_ns_n\cdots s_2s_1t_{1}=1$, 
\item $t_{1}t_2\cdots t_{n+1}\bigl( s_1(s_2s_1)\cdots (s_{n-1}\cdots s_2s_1)(s_n\cdots s_2s_1)\bigr) ^2=1$. 
\end{enumerate}
\end{thm}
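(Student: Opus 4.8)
The plan is to obtain the presentation for $\LH$ from the group extension in Proposition~\ref{prop_exact_LH},
\[
1\longrightarrow \PH \longrightarrow \LH \stackrel{\Psi }{\longrightarrow }VW_{2n+2}\longrightarrow 1,
\]
by combining a known presentation for the kernel $\PH$ (the pure Hilden group, presented in Section~\ref{section_pure_Hilden} from Tawn's work in \cite{Tawn2}) with a presentation for the quotient $VW_{2n+2}$, via the standard machinery for presenting group extensions. Concretely, one needs: (i) a finite presentation $\langle \mathcal{P}\mid \mathcal{R}_P\rangle$ of $\PH$; (ii) a finite presentation $\langle \mathcal{Q}\mid \mathcal{R}_Q\rangle$ of $VW_{2n+2}$, which I would build from the two exact sequences $1\to S_{n+1}^{oe}\to VW_{2n+2}\xrightarrow{\pi}\Z_2\to 1$ (Proposition~\ref{prop_exact_VW}) and $1\to \Z_2^{n+1}\to V_{2n+2}\xrightarrow{\Pi}S_{n+1}\to 1$ (exact sequence~(\ref{exact_V})), so that $VW_{2n+2}$ is generated by $\bar s_1,\dots,\bar s_n,\bar r$ with the braid-type relations on the $\bar s_i$ (Lemma~\ref{lem_S^oe_gen}), the involution/commutation relations for $\bar r$, and the relation expressing $\bar r^2$; (iii) lifts in $\LH$ of the generators $\mathcal{Q}$ — here we take $s_i,r$ as the obvious lifts — and rewritings of each relation of $\mathcal{R}_Q$ as a word in $\mathcal{P}$; (iv) the conjugation data describing how each lifted generator acts on the normal subgroup $\PH$, expressed as words in $\mathcal{P}$. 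The resulting presentation of $\LH$ then has generator set $\mathcal{P}\cup\mathcal{Q}$ and relations $\mathcal{R}_P$, the lifted relations from $\mathcal{R}_Q$, and the conjugation relations.

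The second, and essential, part of the argument is a Tietze-transformation reduction of this large but explicit presentation down to the compact form stated in the theorem. The generators of the target presentation are $s_i$ ($1\le i\le n$), $r_i$ ($1\le i\le n$), $t_i$ ($1\le i\le n+1$), and $r$; so I would first use Lemmas~\ref{lem_t_i_liftable}, \ref{lem_p_ij_liftable} together with the relations in Lemma~\ref{lem_p_ij_relations}(1)–(3) to eliminate all of Tawn's generators $p_{i,j},x_{i,j},y_{i,j}$ in favour of the $s_i,r_i,t_i$: each $p_{i,j},x_{i,j},y_{i,j}$ with $j>i+1$ is an explicit $s$-conjugate of $p_{i,i+1}=s_i^2$, $x_{i,i+1}=s_ir_i^{-1}$, $y_{i,i+1}=r_i^{-1}s_i$, which are themselves words in $s_i,r_i$. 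Relations of $\PH$ that survive this elimination should collapse, after using the already-listed Lemmas~\ref{lem_comm_rel}, \ref{lem_conj_rel}, \ref{lem_lift_W}, to consequences of relations (1)(a)–(e), (2)(a)–(g), (3); so the relations in $\mathcal{R}_P$ contribute nothing new beyond what is displayed. Relation (4), $r_1r_2\cdots r_ns_n\cdots s_2s_1t_1=1$, and relation (5) are the two genuinely new relations that come out of the extension: (5) should be the lift of the relation $\bar r^2 = \bar t_1\cdots\bar t_{n+1}$ combined with the identification $\bar r^2$-word $=\bigl(s_1(s_2s_1)\cdots(s_n\cdots s_1)\bigr)^2$ in $S_{n+1}^{oe}$ pushed up to $\LH$ and rewritten via (3), while (4) encodes the single nontrivial relation among the chosen section data — essentially identifying two expressions for a lift of a longest-element-type relation in $VW_{2n+2}$, or equivalently pinning down the ambiguity of the chosen lifts.

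The main obstacle I expect is step (iv) together with verifying that no relation has been lost in the reduction: one must show that conjugating each of Tawn's pure generators by $s_i$, $r_i$ (these are in $\PH\subset\LH$, so those conjugations are internal to $\PH$ and come from $\mathcal{R}_P$) and, crucially, by the non-pure generator $r$, yields only relations that are already consequences of (1)–(5). The $r$-conjugation is the delicate case because $r$ is parity-reversing, so $r\,p_{i,j}\,r^{-1}$, $r\,x_{i,j}\,r^{-1}$, $r\,y_{i,j}\,r^{-1}$ must be recomputed as pure braids/wickets and matched against words in $s_i,r_i,t_i$; the relevant computations are partly packaged in Lemmas~\ref{lem_comm_rel}(4)–(5) and \ref{lem_conj_rel}(4). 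Establishing that the exhibited finite list of relations is \emph{complete} — i.e. that the homomorphism from the group defined by the presentation in the theorem onto $\LH$ is injective — is exactly the point where one invokes the extension-presentation lemma: it suffices to check that the proposed relations (a) hold in $\LH$ (this is Lemmas~\ref{lem_comm_rel}, \ref{lem_conj_rel}, \ref{lem_lift_W} plus direct wicket computations for (4),(5)), (b) imply a full set of relations of $\PH$ once we substitute the $p,x,y$ expressions, and (c) imply a full set of relations of $VW_{2n+2}$ after applying $\Psi$. Parts (b) and (c) are finite checks against the explicit presentations from Section~\ref{section_pure_Hilden} and Propositions~\ref{prop_exact_VW}, \ref{prop_exact_LH}; I would organize them as: first project to $VW_{2n+2}$ and confirm (c) using Lemma~\ref{lem_S^oe_gen} and the structure of the two exact sequences, then work in the kernel and confirm (b) using the $s$-conjugation formulas of Lemma~\ref{lem_p_ij_relations}. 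The bookkeeping of which Tawn relation maps to which combination of (1)–(5) after the $p,x,y$-elimination is the bulk of the work, but it is routine modulo care.
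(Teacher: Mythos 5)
Your proposal follows essentially the same route as the paper: apply the extension--presentation lemma to $1\to\PH\to\LH\stackrel{\Psi}{\to}VW_{2n+2}\to 1$ with Tawn's presentation of $\PH$ (augmented by the sphere relations (Z) and (F)) and the presentation of $VW_{2n+2}$ generated by $\bar{s}_i,\bar{r}$, and then Tietze-eliminate $p_{i,j},x_{i,j},y_{i,j}$ via $p_{i,i+1}=s_i^2$, $x_{i,i+1}=s_ir_i^{-1}$, $y_{i,i+1}=r_i^{-1}s_i$ and their $s$-conjugates, checking that all of Tawn's relations and the conjugation data (including the delicate $r$-conjugations) collapse to the listed relations. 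One minor correction that does not affect the plan: relation (3) is the lift of $\bar{r}^2=1$, while relations (4) and (5) are the rewritten kernel relations (Z) (triviality of the braid $z$ in the spherical braid group) and (F) (the full twist), rather than section-ambiguity data or a lift of $\bar{r}^2$.
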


We will give the proof of Theorem~\ref{thm_pres_LH} in Section~\ref{section_proof_lmod}.

\subsection{A finite presentation for the pure Hilden group}\label{section_pure_Hilden}

In this section, we give a finite presentation for $\PH $ which is obtained from Tawn's finite presentation for the pure Hilden group with one marked disk in \cite{Tawn2}. 
Let $D$ be a disk in $S^2-\B $, $\Hil ^1$ the group of isotopy classes of orientation-preserving self-homeomorphisms on $B^3$ fixing $\A $ setwise and $D$ pointwise, and $\PH ^1$ the group of isotopy classes of orientation-preserving self-homeomorphisms on $B^3$ fixing $\A \cup D$ pointwise. 
Tawn~\cite{Tawn2} gave the following presentation for $\PH ^1$. 

\begin{thm}[\cite{Tawn2}]\label{thm_pres_PH^1}
For $n\geq 1$, $\PH ^1$ admits the presentation with generators $p_{i,j}$, $x_{i,j}$, $y_{i,j}$ for $1\leq i,j\leq n+1$ with $i\not =j$, and $t_i$ for $1\leq i\leq n+1$, and the following defining relations: 
\begin{enumerate}
\item[(0)] $\alpha _{i,j}=\alpha _{j,i}$  \quad for $1\leq i,j\leq n+1$ with $i\not =j$ and $\alpha \in \{ p, x, y\}$, 
\item[(C-pt)] $p_{i,j} \rightleftarrows t_k$ \quad for $1\leq i<j\leq n+1$ and $1\leq k\leq n+1$,
\item[(C-tt)] $t_{i} \rightleftarrows t_{j}$ \quad for $1\leq i<j\leq n+1$, 
\item[(C-xt)] $x_{i,j} \rightleftarrows t_{k}$ \quad for $i<j$ and $k\not =i$, 
\item[(C-yt)] $y_{i,j} \rightleftarrows t_{k}$ \quad for $i<j$ and $k\not =j$, 
\item[(C1)] $\alpha _{i,j} \rightleftarrows \beta _{k,l}$ \quad for cyclically ordered $(i,j,k,l)$ and $\alpha , \beta \in \{ p, x, y\}$, 
\item[(C2)] $\alpha _{i,j} \rightleftarrows \beta _{i,k}\gamma _{j,k}$ \quad for cyclically ordered $(i,j,k)$ and $(\alpha , \beta , \gamma )$ as in Table~\ref{rel_C2_PH^1}, 
\item[(C3)] $\alpha _{i,k} \rightleftarrows p_{j,k}\beta _{j,l}p_{j,k}^{-1}$ \quad for cyclically ordered $(i,j,k,l)$ and $\alpha , \beta \in \{ p, x, y\}$, 
\item[(M-x)] $x_{i,j} \rightleftarrows p_{i,j}t_{i}$ \quad for $i<j$, 
\item[(M-y)] $y_{i,j} \rightleftarrows p_{i,j}t_{j}$ \quad for $i<j$. 
\end{enumerate}
\end{thm}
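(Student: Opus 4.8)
The plan is to transcribe Tawn's presentation from \cite{Tawn2} into the generating set used here. Tawn presents $\PH ^1$ on a set of elementary wicket moves: a ``pull past'' move sliding one wicket entirely over another, two ``threading'' moves pushing a single endpoint of one wicket through another, and the twists of individual wickets. The first step is to fix a dictionary, writing each of Tawn's generators as an explicit word in $p_{i,j}$, $x_{i,j}$, $y_{i,j}$ (Figure~\ref{fig_p_ij-x_ij-y_ij}) and $t_i$, and conversely each of $p_{i,j}, x_{i,j}, y_{i,j}, t_i$ as a word in Tawn's generators, and then checking that the two substitutions are mutually inverse using only relations already in Tawn's list together with the symmetry $\alpha _{i,j}=\alpha _{j,i}$, which is relation~(0). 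A point to be careful about here is that $\PH ^1$ has the disk $D$ fixed pointwise, so the representatives of $p_{i,j}, x_{i,j}, y_{i,j}, t_i$ must be chosen with support disjoint from $D$ (e.g.\ with $D$ placed outside all the wickets).

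With the dictionary in hand, the proof is a Tietze-transformation argument: adjoin the generators $p_{i,j}, x_{i,j}, y_{i,j}, t_i$ together with the equations defining them in terms of Tawn's generators, and then delete Tawn's original generators. It then remains to check that the resulting relator set is equivalent, via further Tietze moves, to the displayed relations. Relations~(C1), (C2) (with the type data recorded in Table~\ref{rel_C2_PH^1}) and~(C3) are the wicket counterparts of the standard pure braid relations and should match Tawn's ``disjoint support'' and ``linking'' relations after the substitution; (M-x) and~(M-y) are the mixed relations coupling a threading move to the twist at the endpoint that has been moved; and the commuting relations~(C-pt), (C-tt), (C-xt), (C-yt) record disjointness of supports once a twist is isolated near a single foot. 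None of these individual checks is conceptually difficult.

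The main obstacle I anticipate is purely one of bookkeeping. Tawn's ambient ball, the ordering of the feet of the wickets, the placement of the marked disk, and the precise arcs defining the threading moves need not coincide with the conventions fixed in Section~\ref{section_liftable-element}, so one must track orientations and basepoints carefully enough to be certain that Tawn's threading move corresponds to, say, $x_{i,j}$ rather than $x_{i,j}^{-1}$ or $y_{i,j}$, and that the marked disk is positioned consistently. Pinning down this dictionary correctly is exactly what forces relations~(C2), (M-x) and~(M-y) into the stated form; once it is pinned down, the remaining verifications are finite diagram chases.
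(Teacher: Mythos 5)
The paper does not actually prove this statement: Theorem~\ref{thm_pres_PH^1} is imported from Tawn~\cite{Tawn2} as a citation, and Tawn's presentation of the pure Hilden group with one marked disk is already stated (up to harmless notational normalization) in the generators $p_{i,j}$, $x_{i,j}$, $y_{i,j}$, $t_i$ of Figure~\ref{fig_p_ij-x_ij-y_ij}, with the relation labels (0), (C-pt), (C1), (C2), (C3), (M-x), (M-y) and the table of triples reproduced here as Table~\ref{rel_C2_PH^1}. Your plan is, in substance, the same reliance on Tawn's theorem, but it interposes an extra layer --- a dictionary between Tawn's ``elementary wicket moves'' and the generators used here, followed by Tietze transformations --- that the paper never needs, since no change of generating set is involved. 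That layer is also exactly where your proposal stops short of being a proof: the dictionary is never written down, none of the announced relation checks (matching a threading move with $x_{i,j}$ versus $y_{i,j}$ or its inverse, placement of $D$, verification of (C2), (M-x), (M-y)) is carried out, and the genuinely hard content --- that this list of relations is a \emph{complete} set of defining relations for $\PH ^1$ --- is still taken entirely on faith from \cite{Tawn2}. So judged against the paper you are on the same route (quote Tawn); judged as a standalone argument, what you have is an outline of a notational translation whose necessity is moot and whose execution is missing, not an independent derivation of the presentation.
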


\begin{table}[h]
  \centering
  \begin{tabular}{|c||cccc|}
    \hline
    $i<j<k$ & $(p, p, p)$ & $(p, y, y)$ & $(x, p, p)$ & $(x, x, p)$ \\
     & $(x, y, y)$ & $(y, p, p)$ & $(y, p, x)$ & $(y, y, y)$ \\
    \hline
    $j<k<i$ & $(p, p, p)$ & $(p, x, y)$ & $(x, p, p)$ & $(x, p, x)$ \\
     & $(x, x, y)$ & $(y, p, p)$ & $(y, x, y)$ & $(y, y, p)$ \\
    \hline
    $k<i<j$ & $(p, p, p)$ & $(p, x, x)$ & $(x, p, p)$ & $(x, x, x)$ \\
     & $(x, y, p)$ & $(y, p, p)$ & $(y, p, y)$ & $(y, x, x)$ \\
    \hline
    \end{tabular}
\caption{The values of $(\alpha , \beta , \gamma )$ for the relation (C2) in Theorem~\ref{thm_pres_PH^1}. }\label{rel_C2_PH^1}
\end{table}

Let $B_{2n+2}$ be the ``classical'' braid group of $2n+2$ strands. 
$B_{2n+2}$ is isomorphic to the group of isotopy classes of orientation-preserving self-homeomorphisms on $S^2$ fixing $\B $ setwise and $D$ pointwise. 
By restricting the action of $\Hil ^1$ to $S^2$, we have the injective homomorphism $\Hil ^1\hookrightarrow B_{2n+2}$ and regard $\Hil ^1$ as a subgroup of $B_{2n+2}$. 
Fadell and Buskirk~\cite{Fadell-Buskirk} showed that the spherical braid group $SB_{2n+2}$ is isomorphic to the quotient of $B_{2n+2}$ by the subgroup generated by 
\[
z=\sigma _1\sigma _2\cdots \sigma _{2n+1}\sigma _{2n+1}\cdots \sigma _2\sigma _1
\]
(see the left-hand side in Figure~\ref{fig_braid_z}). 
Recall that the surjective homomorphism $\Gamma \colon SB_{2n+2}\to \M $ (defined in Section~\ref{section_wicket-group}) has the kernel generated by the full-twist braid. 
The image of $\PH ^1$ by the composition of the quotient map $B_{2n+2}\to SB_{2n+2}$ and $\Gamma $ coincides with $\PH $. 
We have 
\[
z=x_{1,n+1}^{-1}\cdots x_{1,3}^{-1}x_{1,2}^{-1}p_{1,2}p_{1,3}\cdots p_{1,n+1}t_{1}
\]
in $\PH ^1$ (see the right-hand side in Figure~\ref{fig_braid_z}) and the full-twist braid in $B_{2n+2}$ is expressed by 
\[
t_{1}t_2\cdots t_{n+1}p_{1,2}(p_{1,3}p_{2,3})\cdots (p_{1,n}p_{2,n}\cdots p_{n-1,n})(p_{1,n+1}p_{2,n+1}\cdots p_{n,n+1}) 
\]
in $\PH ^1$ (see the right-hand side in Figure~\ref{fig_full-twist}). 
Thus we have the following lemma. 

\begin{figure}[h]
\includegraphics[scale=1.06]{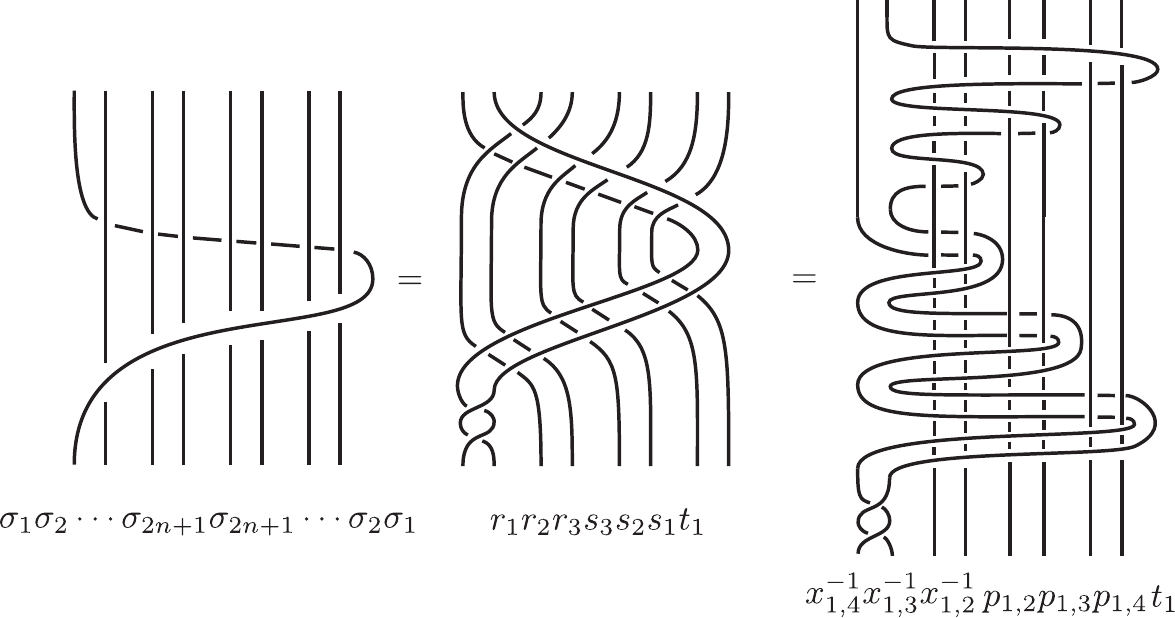}
\caption{Expressions $z=r_1r_2\cdots r_{n}s_n\cdots s_2s_1t_{1}$ in $SW_{2n+2}$ and $z=x_{1,n+1}^{-1}\cdots x_{1,3}^{-1}x_{1,2}^{-1}p_{1,2}p_{1,3}\cdots p_{1,n+1}t_{1}$ in $PSW_{2n+2}$ of the braid $z=\sigma _1\sigma _2\cdots \sigma _{2n+1}\sigma _{2n+1}\cdots \sigma _2\sigma _1$ when $n=3$.}\label{fig_braid_z}
\end{figure}

\begin{figure}[h]
\includegraphics[scale=1.1]{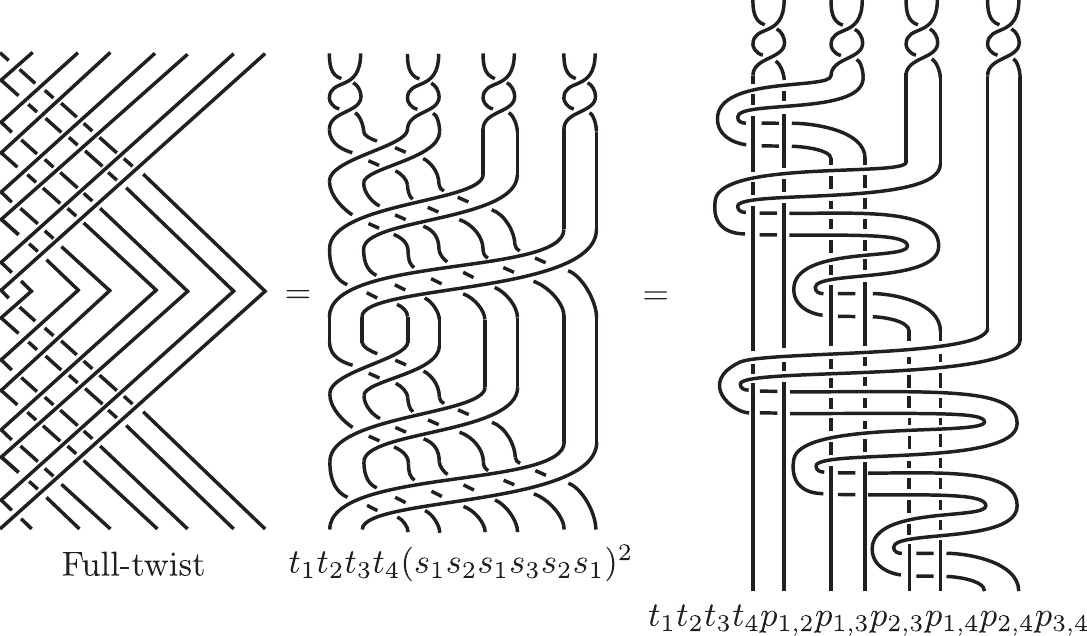}
\caption{Expressions $t_{1}t_2\cdots t_{n+1}\bigl( s_1(s_2s_1)\cdots (s_{n-1}\cdots s_2s_1)(s_n\cdots s_2s_1)\bigr) ^2$ and $t_{1}t_2\cdots t_{n+1}p_{1,2}(p_{1,3}p_{2,3})\cdots (p_{1,n}p_{2,n}\cdots p_{n-1,n})(p_{1,n+1}p_{2,n+1}\cdots p_{n,n+1})$ of the full-twist by generators for $SW_{2n+2}$ and $PSW_{2n+2}$ when $n=3$.}\label{fig_full-twist}
\end{figure}

\begin{lem}\label{lem_pres_PH}
For $n\geq 1$, $\PH $ admits the presentation which is obtained from the finite presentation for $\PH ^1$ in Theorem~\ref{thm_pres_PH^1} by adding the relations 
\begin{enumerate}
\item[(Z)] $x_{1,n+1}^{-1}\cdots x_{1,3}^{-1}x_{1,2}^{-1}p_{1,2}p_{1,3}\cdots p_{1,n+1}t_{1}=1$,
\item[(F)] $t_{1}t_2\cdots t_{n+1}p_{1,2}(p_{1,3}p_{2,3})\cdots (p_{1,n}p_{2,n}\cdots p_{n-1,n})(p_{1,n+1}p_{2,n+1}\cdots p_{n,n+1})=1$.  
\end{enumerate}
\end{lem}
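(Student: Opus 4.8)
The plan is to obtain the presentation for $\PH$ from the presentation for $\PH^1$ in Theorem~\ref{thm_pres_PH^1} by applying a standard presentation-theoretic argument for the composite surjection $q\colon \PH^1 \twoheadrightarrow \PH$, whose kernel we must first identify. Recall that $\PH^1$ sits inside $B_{2n+2}$, and $\PH$ is the image of $\PH^1$ under the composite of the quotient map $B_{2n+2}\to SB_{2n+2}$ (killing $z = \sigma_1\cdots\sigma_{2n+1}\sigma_{2n+1}\cdots\sigma_1$, by Fadell--Buskirk~\cite{Fadell-Buskirk}) and the map $\Gamma\colon SB_{2n+2}\to\M$ (killing the full twist, by Section~\ref{section_wicket-group}). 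So the first step is to check that $\ker q$ is the normal closure in $\PH^1$ of the two elements that $z$ and the full twist equal when rewritten in the generators $p_{i,j},x_{i,j},y_{i,j},t_i$ of $\PH^1$; these rewritings are exactly the words appearing in relations (Z) and (F), read off from the right-hand sides of Figures~\ref{fig_braid_z} and~\ref{fig_full-twist}. Indeed $z$ maps into $\PH^1$ (being a pure element whose wicket tangle is trivial after the sphere and full-twist relabelings), so the subgroup $\langle z\rangle$ of $B_{2n+2}$ meets $\PH^1$ precisely in $\langle z\rangle$, and similarly for the full twist, which lies in the center of $B_{2n+2}$ and hence in $\PH^1$.

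Concretely, the argument runs as follows. The map $B_{2n+2}\to\M$ factors as $B_{2n+2}\to SB_{2n+2}\xrightarrow{\Gamma}\M$, with total kernel the subgroup generated by $z$ and the full twist (the full twist maps to the full twist braid in $SB_{2n+2}$, which generates $\ker\Gamma$). Restricting this map to $\PH^1$ gives $q$, and I would argue that $\ker q = \PH^1 \cap \langle\langle z, \Delta^2\rangle\rangle_{B_{2n+2}}$ equals the normal closure in $\PH^1$ of $\{z_{\PH^1}, \Delta^2_{\PH^1}\}$, where $z_{\PH^1}$ and $\Delta^2_{\PH^1}$ denote the expressions of $z$ and the full twist $\Delta^2$ as words in the $\PH^1$-generators given in the statement. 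Since $\PH^1 \to B_{2n+2}$ is injective with image of finite index (being pure and marked), and both $z$ and $\Delta^2$ already lie in $\PH^1$, this equality holds: a relator of $B_{2n+2}$ killing the quotient, when already inside the subgroup $\PH^1$, generates (as a normal subgroup of $\PH^1$) exactly the intersection. Given this, Tietze's theorem (adding relations corresponding to a normal generating set of the kernel of a surjection) yields that $\PH$ has the presentation of $\PH^1$ together with the two new relations (Z) and (F).

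The key steps, in order: (1) verify that the composite $\PH^1\to\PH$ is surjective with kernel normally generated by $z$ and $\Delta^2$ inside $\PH^1$ — this is where I would cite Fadell--Buskirk and the discussion of $\Gamma$ preceding the lemma; (2) check the two rewriting identities $z = x_{1,n+1}^{-1}\cdots x_{1,2}^{-1}p_{1,2}\cdots p_{1,n+1}t_1$ and $\Delta^2 = t_1\cdots t_{n+1}\,p_{1,2}(p_{1,3}p_{2,3})\cdots(p_{1,n+1}\cdots p_{n,n+1})$ hold in $\PH^1$, which is the content of Figures~\ref{fig_braid_z} and~\ref{fig_full-twist}; (3) invoke the standard fact that if $G = \langle X \mid R\rangle$ and $N = \langle\langle w_1,\dots,w_m\rangle\rangle$, then $G/N = \langle X \mid R, w_1,\dots,w_m\rangle$.

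The main obstacle is step (1): one must be careful that $\ker q$ is \emph{not} larger than the normal closure of $z_{\PH^1}$ and $\Delta^2_{\PH^1}$ \emph{within $\PH^1$} — a priori $\PH^1 \cap \langle\langle z,\Delta^2\rangle\rangle_{B_{2n+2}}$ could be normally generated by conjugates $g\, z\, g^{-1}$ with $g\in B_{2n+2}\setminus\PH^1$ that nonetheless land in $\PH^1$. This is resolved by noting that $\langle z,\Delta^2\rangle$ is already normal in $B_{2n+2}$ (it is the center of $SB_{2n+2}$ pulled back, or more directly $z$ is central modulo $\Delta^2$ and $\Delta^2$ is central), so the intersection with any subgroup is just the subgroup $\langle z,\Delta^2\rangle$ itself, which is manifestly contained in — in fact equal to, after the identifications — the normal closure inside $\PH^1$ of the two displayed words. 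Once this is pinned down, the rest is a routine Tietze-transformation bookkeeping.
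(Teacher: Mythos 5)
Your overall route is the same one the paper takes: restrict the composite $B_{2n+2}\to SB_{2n+2}\stackrel{\Gamma}{\to}\M$ to $\PH ^1$, note that $z$ and the full twist $\Delta^2$ lie in $\PH ^1$ and are given by the words (Z) and (F) in Tawn's generators, and then add those two words as relations. You also put your finger on exactly the step that needs an argument, namely that $\ker(\PH ^1\to\PH )=\PH ^1\cap\ker(B_{2n+2}\to\M )$ is normally generated \emph{inside} $\PH ^1$ by $z$ and $\Delta^2$. But every claim you make to settle that step is false. By Fadell--Van Buskirk the kernel of $B_{2n+2}\to SB_{2n+2}$ is the \emph{normal closure} of $z$, not the cyclic group $\langle z\rangle$, so $\ker(B_{2n+2}\to\M )=\langle\langle z\rangle\rangle\cdot\langle\Delta^2\rangle$, which is far larger than $\langle z,\Delta^2\rangle$; the latter is a rank-two abelian subgroup, is not normal, and is not the pullback of the center of $SB_{2n+2}$. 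Likewise ``$z$ is central modulo $\Delta^2$'' fails: if $[z,g]\in\langle\Delta^2\rangle$ for all $g$, then comparing exponent sums (a commutator has exponent sum $0$, a nontrivial power of $\Delta^2$ does not) forces $[z,g]=1$ for all $g$, i.e.\ $z\in Z(B_{2n+2})=\langle\Delta^2\rangle$, which is again impossible by exponent sums. Finally $\PH ^1$ has infinite index in $B_{2n+2}$: no nonzero power of $\sigma_2^2$ lies in $\Hil ^1$ (compare the paper's remark that $^{\sigma_2^2}\!\A$ is not isotopic to $\A$), so ``finite index'' cannot be invoked. Hence the resolution you offer for what you rightly call the main obstacle collapses, and the crucial step is not established in your write-up.

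Moreover, this step cannot be settled by a purely formal normality argument of the kind you attempt, which is why a genuine geometric identification of the kernel is required. For each $i$, the twist $t_{c_i}$ about a curve $c_i\subset S^2-\B$ enclosing all marked points except $p_i$ lies in $\PH ^1$ (it extends to $B^3$ fixing $\A\cup D$ pointwise by rotating a collar of a disk bounded by $c_i$ that meets $\A$ in one point, about that arc) and is trivial in $\M$, hence lies in $\ker(\PH ^1\to\PH )$; yet since $\PH ^1$ consists of pure braids, any product of $\PH ^1$-conjugates of $z^{\pm1}$ and $\Delta^{\pm2}$ has linking-number vector $(\mathrm{lk}_{ij})$ confined to the rank-two span of those of $z$ and $\Delta^2$, while the linking-number vectors of $t_{c_1},\dots,t_{c_{2n+2}}$ span a subgroup of rank $2n+2$. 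So the intersection-versus-normal-closure issue is genuinely delicate: what is needed is an analysis of $\ker(\Hil ^1\to\Hil )$ itself (a disk-pushing/Birman-exact-sequence type argument identifying the kernel and expressing its generators, or the use of conjugation by non-pure Hilden elements, which only becomes available later in the passage to $\LH$), not the normality claim you give. The paper is very terse at this point --- it records the two rewritings of $z$ and the full twist and asserts the lemma --- but your proposed justification is affirmatively incorrect rather than merely terse, so as written your proof does not go through.
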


\subsection{Group extensions and presentations for groups}\label{section_presentation_exact}

To prove Theorem~\ref{thm_pres_LH}, we review a relationship between a group extension and group presentations in this section from Section~3 in \cite{Hirose-Omori}. 
Let $G$ be a group and let $H=\bigl< X\mid R\bigr>$ and $Q=\bigl< Y\mid S\bigr>$ be presented groups which have the short exact sequence 
\[
1\longrightarrow H\stackrel{\iota }{\longrightarrow }G\stackrel{\nu }{\longrightarrow }Q\longrightarrow 1.
\]
We take a preimage $\widetilde{y}\in G$ of $y\in Q$ with respect to $\nu $ for each $y\in Q$. 
Then we put $\widetilde{X}=\{ \iota (x) \mid x\in X\} \subset G$ and $\widetilde{Y}=\{ \widetilde{y} \mid y\in Y\} \subset G$. 
Denote by $\widetilde{r}$ the word in $\widetilde{X}$ which is obtained from $r\in R$ by replacing each $x\in X$ by $\iota (x)$ and also denote by $\widetilde{s}$ the word in $\widetilde{Y}$ which is obtained from $s\in S$ by replacing each $y\in Y$ by $\widetilde{y}$. 
We note that $\widetilde{r}=1$ in $G$. 
Since $\widetilde{s}\in G$ is an element in $\ker \nu =\iota (H)$ for each $s\in S$, there exists a word $v_{s}$ in $\widetilde{X}$ such that $\widetilde{s}=v_{s}$ in $G$. 
Since $\iota (H)$ is a normal subgroup of $G$, for each $x\in X$ and $y\in Y$, there exists a word $w_{x,y}$ in $\widetilde{X}$ such that $\widetilde{y}\iota (x)\widetilde{y}^{-1}=w_{x,y}$ in $G$. 
The next lemma follows from an argument of the combinatorial group theory (for instance, see \cite[Proposition~10.2.1, p139]{Johnson}).

\begin{lem}\label{presentation_exact}
Under the situation above, the group $G$ admits the presentation with the generating set $\widetilde{X}\cup \widetilde{Y}$ and following defining relations:
\begin{enumerate}
 \item[(A)] $\widetilde{r}=1$ \quad for $r\in R$,
 \item[(B)] $\widetilde{s}=v_{s}$ \quad for $s\in S$,
 \item[(C)] $\widetilde{y}\iota (x)\widetilde{y}^{-1}=w_{x,y}$ \quad for $x\in X$ and $y\in Y$.
\end{enumerate} 
\end{lem}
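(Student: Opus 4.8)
The plan is to verify the presentation directly; the argument is short enough to give in full rather than merely quoting \cite{Johnson}. Let $\widehat{G}=\langle\,\widetilde{X}\cup\widetilde{Y}\mid \text{(A),(B),(C)}\,\rangle$ be the group defined by the proposed presentation. By the very construction of the words $\widetilde{r}$, $v_{s}$, $w_{x,y}$, each of the relations (A), (B), (C) holds in $G$, so there is a well-defined homomorphism $\phi\colon\widehat{G}\to G$ carrying each generator to the corresponding element of $G$. It is surjective because $\widetilde{X}\cup\widetilde{Y}$ generates $G$: any $g\in G$ differs, after right-multiplication by (a lift of) a word in $Y$ representing $\nu(g)$, by an element of $\ker\nu=\iota(H)$, which is itself a word in $\widetilde{X}$. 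Thus everything reduces to showing that $\phi$ is injective.

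Next I would record the structure of $\widehat{G}$ relative to the subgroup $\langle\widetilde{X}\rangle$. Relations (C) say $\widetilde{y}\,\iota(x)\,\widetilde{y}^{-1}\in\langle\widetilde{X}\rangle$ for all $x,y$; hence $\widetilde{y}\langle\widetilde{X}\rangle\widetilde{y}^{-1}\subseteq\langle\widetilde{X}\rangle$ for every $y$, and applying this to $y$ and to $y^{-1}$ gives equality, so $\langle\widetilde{X}\rangle$ is normal in $\widehat{G}$. By the standard description of a presentation of a quotient by the normal closure of a set of generators, $\widehat{G}/\langle\widetilde{X}\rangle$ is presented by the generators $\widetilde{Y}$ with the images of (A), (B), (C) as relators: the images of (A) and (C) are trivial (both sides lie in $\langle\widetilde{X}\rangle$), while each (B)-relation $\widetilde{s}=v_{s}$ becomes $\widetilde{s}=1$, i.e. the relator $s$. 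Therefore $\widehat{G}/\langle\widetilde{X}\rangle\cong\langle\,Y\mid S\,\rangle=Q$, and since the composite $\widehat{G}\to\widehat{G}/\langle\widetilde{X}\rangle\cong Q$ agrees with $\nu\circ\phi$ on generators, we get $\ker\phi\subseteq\ker(\widehat{G}\to Q)=\langle\widetilde{X}\rangle$.

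Now take $g\in\ker\phi$. By the previous step $g\in\langle\widetilde{X}\rangle$, so $g=\iota_{*}(w)$ for some word $w$ in the free group $F(X)$, where $\iota_{*}\colon F(X)\to\widehat{G}$ sends $x\mapsto\iota(x)$. Then $\phi(g)=\iota(\bar{w})=1$ in $G$, with $\bar{w}$ the image of $w$ in $H=\langle\,X\mid R\,\rangle$; injectivity of $\iota$ forces $\bar{w}=1$ in $H$, so $w$ lies in the normal closure of $R$ in $F(X)$. Pushing that expression of $w$ as a product of conjugates of elements of $R^{\pm1}$ through $\iota_{*}$ writes $g$ as a product of conjugates of the $\widetilde{r}$'s, each of which is trivial in $\widehat{G}$ by (A). Hence $g=1$, $\phi$ is injective, and therefore an isomorphism.

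The step to handle with care — the only genuine obstacle — is the middle one: establishing that $\langle\widetilde{X}\rangle$ is normal using (C) only for the chosen set of preimages $\widetilde{y}$, and that no relation among the $\widetilde{Y}$'s beyond $S$ survives in $\widehat{G}/\langle\widetilde{X}\rangle$, which is precisely what the (B)-relations are designed to guarantee. Once these two points are in place, the rest is a routine diagram chase, so I would state it concisely. (The hypotheses are used exactly once each: $\iota$ injective to pass from $\bar{w}=1$ in $G$ to $w\in\langle\langle R\rangle\rangle$, the completeness of $R$ as relators of $H$ to express $w$ via conjugates of $R$, and the exactness of $1\to H\to G\to Q\to1$ to get both surjectivity of $\phi$ and the identification $\ker(\widehat{G}\to Q)=\langle\widetilde{X}\rangle$.)
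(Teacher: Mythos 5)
Your overall strategy (build $\widehat{G}$ from the proposed presentation, map it onto $G$, identify $\widehat{G}/\langle\widetilde{X}\rangle$ with $Q$, then kill $\ker\phi$ inside $\langle\widetilde{X}\rangle$ using injectivity of $\iota$ and completeness of $R$) is exactly the standard route behind the reference the paper cites, and the first and last steps are fine. But the step you yourself single out as the crux — normality of $N=\langle\widetilde{X}\rangle$ in $\widehat{G}$ — is not actually established by your argument. The relations (C) are imposed only for $y\in Y$; there is no generator $\widetilde{y^{-1}}$ and no relation describing $\widetilde{y}^{-1}\iota(x)\widetilde{y}$. So "applying this to $y$ and to $y^{-1}$" is not available: what you can deduce from (C) is only $\widetilde{y}N\widetilde{y}^{-1}\subseteq N$ for $y\in Y$, and an inclusion $tNt^{-1}\subseteq N$ does not formally imply $t^{-1}Nt\subseteq N$ (conjugation can map a subgroup properly into itself, as for $\langle a\rangle$ in $\langle a,t\mid tat^{-1}=a^2\rangle$). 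Without normality you only get $\ker\phi$ inside the \emph{normal closure} of $\widetilde{X}$, and your final step, which needs each element of $\ker\phi$ to be a word in $\widetilde{X}^{\pm1}$ alone, breaks down.

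The gap can be closed, but it needs an extra idea using the hypotheses rather than a formal symmetry: since $\iota(H)$ is normal in $G$, conjugation by $\widetilde{y}$ induces an automorphism $\alpha_y$ of $H$; by von Dyck and the relations (A) there is a homomorphism $\psi\colon H\to\widehat{G}$ with $\psi(x)=\iota(x)$ and image $N$; from (C) one gets $\psi(\alpha_y(x'))=\widetilde{y}\,\iota(x')\,\widetilde{y}^{-1}$ for every generator $x'$, hence $\psi(\alpha_y(h))=\widetilde{y}\,\psi(h)\,\widetilde{y}^{-1}$ for all $h\in H$. Applying this to $h=\alpha_y^{-1}(x)$ — here the \emph{surjectivity} of $\alpha_y$, i.e. exactness of the sequence, is what is really used — gives $\widetilde{y}^{-1}\iota(x)\widetilde{y}=\psi(h)\in N$, which is the missing inclusion. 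With that inserted, your identification of $\widehat{G}/N$ with $Q$ and the concluding argument go through as written.
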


\subsection{Proof of presentation for the liftable Hilden group}\label{section_proof_lmod}

Recall that $VW_{2n+2}$ is the subgroup of the symmetric group $S_{2n+2}$ which consists of parity-preserving or parity-reversing elements preserving $\widehat{\B }=\{ \{ p_1, p_2\} , \{ p_3, p_4\} , \dots , \{ p_{2n+1}, p_{2n+2}\} \}$, we define $\bar{\sigma }=\Psi (\sigma )\in S_{2n+2}$ for $\sigma \in \LM $, and $\bar{s}_i=(2i-1\ 2i+1)(2i\ 2i+2)$ for $1\leq i\leq n$ and $\bar{r}=(1\ 2)(3\ 4)\cdots (2n+1\ 2n+2)$ (see Sections~\ref{section_liftable-element} and \ref{section_exact-seq}). 
First we give a finite presentation for $VW_{2n+2}$. 

\begin{lem}\label{lem_pres_VW}
For $n\geq 1$, $VW_{2n+2}$ admits the presentation with generators $\bar{s}_{i}$ for $1\leq i\leq n$ and $\bar{r}$, and the following defining relations: 
\begin{enumerate}
\item $\bar{s}_i^2=1$ \quad for $1\leq i\leq n$, 
\item $\bar{s}_{i} \rightleftarrows \bar{s}_{j}$ \quad for $j-i\geq 2$,
\item $\bar{s}_{i}\bar{s}_{i+1}\bar{s}_{i}=\bar{s}_{i+1}\bar{s}_{i}\bar{s}_{i+1}$ \quad for $1\leq i\leq n-1$, 
\item $\bar{r}^2=1$, 
\item $\bar{s}_{i} \rightleftarrows \bar{r}$ \quad for $1\leq i\leq n$. 
\end{enumerate}
\end{lem}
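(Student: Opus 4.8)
The plan is to build the presentation of $VW_{2n+2}$ directly from the group extension of Proposition~\ref{prop_exact_VW} together with the combinatorial-group-theory Lemma~\ref{presentation_exact}. First I would record a presentation of the kernel $S_{n+1}^{oe}$: by Lemma~\ref{lem_S^oe} the restriction $\Pi|_{S_{n+1}^{oe}}\colon S_{n+1}^{oe}\to S_{n+1}$ is an isomorphism, and since $\Pi(\bar s_i)=\tau_i=(i\ i+1)$ for $1\le i\le n$, the element $\bar s_i$ corresponds under this isomorphism to the $i$-th adjacent transposition of $S_{n+1}$. Hence $S_{n+1}^{oe}$ admits the classical Coxeter presentation of $S_{n+1}$ with generators $\bar s_1,\dots,\bar s_n$ and defining relations $\bar s_i^2=1$, $\bar s_i\rightleftarrows\bar s_j$ for $j-i\ge 2$, and $\bar s_i\bar s_{i+1}\bar s_i=\bar s_{i+1}\bar s_i\bar s_{i+1}$ for $1\le i\le n-1$; these are relations (1)--(3). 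For the quotient I would use $\Z_2=\bigl<\, y \mid y^2\,\bigr>$, and take $\bar r\in VW_{2n+2}$ as the chosen preimage of $y$; this is legitimate because $\pi(\bar r)=1$, as observed in the proof of Proposition~\ref{prop_exact_VW}.

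Next I would apply Lemma~\ref{presentation_exact} with $H=S_{n+1}^{oe}$, $Q=\Z_2$, $\widetilde X=\{\bar s_1,\dots,\bar s_n\}$, and $\widetilde Y=\{\bar r\}$. The type (A) relations are precisely relations (1)--(3) above. For the single type (B) relation I must express $\bar r^2$ as a word in the $\bar s_i$: since $\bar r=(1\ 2)(3\ 4)\cdots(2n+1\ 2n+2)$ is an involution in $S_{2n+2}$, we have $\bar r^2=1$, which is relation (4). For the type (C) relations I must compute $\bar r\bar s_i\bar r^{-1}$ for each $i$: because $\bar r$ swaps $2i-1\leftrightarrow 2i$ and $2i+1\leftrightarrow 2i+2$, conjugating $\bar s_i=(2i-1\ 2i+1)(2i\ 2i+2)$ by $\bar r$ gives $(2i\ 2i+2)(2i-1\ 2i+1)=\bar s_i$, so $\bar r$ commutes with every $\bar s_i$, which is relation (5). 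Lemma~\ref{presentation_exact} then yields exactly the asserted presentation of $VW_{2n+2}$.

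The arguments here are elementary permutation bookkeeping, so I do not expect a genuine obstacle. The only points that require care are: verifying that $S_{n+1}^{oe}$ really does inherit the standard Coxeter presentation of $S_{n+1}$ under the isomorphism of Lemma~\ref{lem_S^oe}, so that one may legitimately take relations (1)--(3) as the relation set $R$ in Lemma~\ref{presentation_exact}; and double-checking the conjugation identity $\bar r\bar s_i\bar r^{-1}=\bar s_i$ in $S_{2n+2}$, which (together with $\bar r^2=1$) is what makes the type (B) and (C) relations collapse to the clean form stated.
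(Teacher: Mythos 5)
Your proposal is correct and follows essentially the same route as the paper: both apply Lemma~\ref{presentation_exact} to the extension of Proposition~\ref{prop_exact_VW}, take the Coxeter presentation of $S_{n+1}^{oe}\cong S_{n+1}$ via Lemmas~\ref{lem_S^oe} and \ref{lem_S^oe_gen} for the kernel, choose $\bar r$ as the lift of the generator of $\Z_2$, and obtain $\bar r^2=1$ and $\bar r\bar s_i\bar r^{-1}=\bar s_i$ as the (B) and (C) relations. Your explicit permutation check of the conjugation identity is a detail the paper only remarks on, but the argument is the same.
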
 

\begin{proof}
We apply Lemma~\ref{presentation_exact} to the exact sequence~(\ref{exact_VW}) in Proposition~\ref{prop_exact_VW}. 
By Lemmas~\ref{lem_S^oe} and \ref{lem_S^oe_gen}, the group $S_{n+1}^{oe}$ is isomorphic to the symmetric group $S_{n+1}$ and is generated by the transpositions $\bar{s}_1, \bar{s}_2, \dots , \bar{s}_{n}$.  
Hence $S_{n+1}^{oe}$ has the presentation with generators $\bar{s}_{i}$ for $1\leq i\leq n$ and the following defining relations: 
\begin{enumerate}
\item $\bar{s}_i^2=1$ \quad for $1\leq i\leq n$, 
\item $\bar{s}_{i} \rightleftarrows \bar{s}_{j}$ \quad for $j-i\geq 2$,
\item $\bar{s}_{i}\bar{s}_{i+1}\bar{s}_{i}=\bar{s}_{i+1}\bar{s}_{i}\bar{s}_{i+1}$ \quad for $1\leq i\leq n-1$. 
\end{enumerate}
Since the image $\pi (VW_{2n+2})=\Z _2$ is generated by the image of any one parity-reversing element with respect to $\pi $, $\pi (\bar{r})$ generates $\Z _2$. 
Remark that the relation $\bar{r}^2=1$ holds in $VW_{2n+2}$ and $\bar{r}$ commutes with $\bar{s}_i$ for $1\leq i\leq n$. 
By applying Lemma~\ref{presentation_exact} to the exact sequence~(\ref{exact_VW}), we have the presentation for $VW_{2n+2}$ with generators $\bar{s}_{i}$ for $1\leq i\leq n$ and $\bar{r}$, and the following defining relations: 
\begin{enumerate}
\item[(A)]
\begin{enumerate}
\item $\bar{s}_i^2=1$ \quad for $1\leq i\leq n$, 
\item $\bar{s}_{i} \rightleftarrows \bar{s}_{j}$ \quad for $j-i\geq 2$,
\item $\bar{s}_{i}\bar{s}_{i+1}\bar{s}_{i}=\bar{s}_{i+1}\bar{s}_{i}\bar{s}_{i+1}$ \quad for $1\leq i\leq n-1$, 
\end{enumerate}
\item[(B)] $\bar{r}^2=1$, 
\item[(C)] $\bar{s}_{i} \rightleftarrows \bar{r}$ \quad for $1\leq i\leq n$. 
\end{enumerate}
This presentation coincides with the presentation in Lemma~\ref{lem_pres_VW}. 
Therefore we have completed the proof of Lemma~\ref{lem_pres_VW}.   
\end{proof}

By applying Lemma~\ref{presentation_exact} to the exact sequence~(\ref{exact_LH}) in Proposition~\ref{prop_exact_LH} and presentations for $\PH $ and $VW_{2n+2}$ in Lemmas~\ref{lem_pres_PH} and \ref{lem_pres_VW}, respectively, we have the following lemma. 

\begin{lem}\label{lem_pres_LH}
For $n\geq 1$, $\LH $ admits the presentation with generators $s_i$ for $1\leq i\leq n$, $t_{i}$ for $1\leq i\leq n+1$, $r$, $p_{i,j}$, $x_{i,j}$, and $y_{i,j}$ for $1\leq i,j\leq n+1$ with $i\not =j$ and the following defining relations: 
\begin{enumerate}
\item[(0)] $\alpha _{i,j}=\alpha _{j,i}$  \quad for $1\leq i,j\leq n+1$ with $i\not =j$ and $\alpha \in \{ p, x, y\}$, 
\item[(C-pt)] $p_{i,j} \rightleftarrows t_k$ \quad for $1\leq i<j\leq n+1$ and $1\leq k\leq n+1$,
\item[(C-tt)] $t_{i} \rightleftarrows t_{j}$ \quad for $1\leq i<j\leq n+1$, 
\item[(C-xt)] $x_{i,j} \rightleftarrows t_{k}$ \quad for $1\leq i<j\leq n+1$ and $k\not =i$, 
\item[(C-yt)] $y_{i,j} \rightleftarrows t_{k}$ \quad for $1\leq i<j\leq n+1$ and $k\not =j$, 
\item[(C1)] $\alpha _{i,j} \rightleftarrows \beta _{k,l}$ \quad for cyclically ordered $(i,j,k,l)$ and $\alpha , \beta \in \{ p, x, y\}$, 
\item[(C2)] $\alpha _{i,j} \rightleftarrows \beta _{i,k}\gamma _{j,k}$ \quad for cyclically ordered $(i,j,k)$ and $(\alpha , \beta , \gamma )$ as in Table~\ref{rel_C2_PH^1}, 
\item[(C3)] $\alpha _{i,k} \rightleftarrows p_{j,k}\beta _{j,l}p_{j,k}^{-1}$ \quad for cyclically ordered $(i,j,k,l)$ and $\alpha , \beta \in \{ p, x, y\}$, 
\item[(M-x)] $x_{i,j} \rightleftarrows p_{i,j}t_{i}$ \quad for $i<j$, 
\item[(M-y)] $y_{i,j} \rightleftarrows p_{i,j}t_{j}$ \quad for $i<j$, 
\item[(Z)] $x_{1,n+1}^{-1}\cdots x_{1,3}^{-1}x_{1,2}^{-1}p_{1,2}p_{1,3}\cdots p_{1,n+1}t_{1}=1$,
\item[(F)] $t_{1}t_2\cdots t_{n+1}p_{1,2}(p_{1,3}p_{2,3})\cdots (p_{1,n}p_{2,n}\cdots p_{n-1,n})(p_{1,n+1}p_{2,n+1}\cdots p_{n,n+1})=1$,  
\item $s_i^2=p_{i,i+1}$ \quad for $1\leq i\leq n$, 
\item $s_{i} \rightleftarrows s_{j}$ \quad for $j-i\geq 2$,
\item $s_{i}s_{i+1}s_{i}=s_{i+1}s_{i}s_{i+1}$ \quad for $1\leq i\leq n-1$, 
\item $r^2=t_1t_2\cdots t_{n+1}$, 
\item $s_{i} \rightleftarrows r$ \quad for $1\leq i\leq n$. 
\item[(A1)] 
\begin{enumerate}
\item $s_kt_is_k^{-1}=\left\{
		\begin{array}{ll}
		t_i \quad \text{for }i\not =k, k+1, \\
		t_{i+1} \quad \text{for }i=k, \\
		t_{i-1} \quad \text{for }i=k+1, 
		\end{array}
		\right.$
\item $s_i\alpha _{i,i+1}s_i^{-1}=\left\{
		\begin{array}{ll}
		p_{i,i+1} \quad \text{for }\alpha =p, \\
		p_{i,i+1}y_{i,i+1}p_{i,i+1}^{-1} \quad \text{for }\alpha =x, \\
		x_{i,i+1} \quad \text{for }\alpha =y, 
		\end{array}
		\right.$
\item $s_k\alpha _{i,j}s_k^{-1}=\left\{
		\begin{array}{ll}
		p_{i-1,i}\alpha _{i-1,j}p_{i-1,i}^{-1} \quad \text{for }k=i-1 \text{ and }i<j, \\
		\alpha _{i+1,j} \quad \text{for }k=i \text{ and }j-i\geq 2, \\
		p_{j-1,j}\alpha _{i,j-1}p_{j-1,j}^{-1} \quad \text{for }k=j-1 \text{ and }j-i\geq 2, \\
		\alpha _{i,j+1} \quad \text{for }k=j \text{ and }i<j, \\
		\alpha _{i,j} \quad \text{for }k\leq i-2 \text{ or }i+1\leq k\leq j-2\text{ or }j+1\leq k, 
		\end{array}
		\right.$
\end{enumerate}
\item[(A2)]
\begin{enumerate}
\item $r \rightleftarrows t_i$ \quad for $1\leq i\leq n+1$, 
\item $r \rightleftarrows p_{i,j}$ \quad for $1\leq i<j\leq n+1$, 
\item $r\alpha _{i,j}r^{-1}=\alpha _{i,j}^{-1}p_{i,j}$ \quad for $1\leq i<j\leq n+1$ and $\alpha \in \{ x, y\}$. 
\end{enumerate}
\end{enumerate}
\end{lem}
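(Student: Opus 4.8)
The plan is to apply Lemma~\ref{presentation_exact} to the short exact sequence~(\ref{exact_LH}) of Proposition~\ref{prop_exact_LH}, namely $1\to\PH\to\LH\stackrel{\Psi}{\to}VW_{2n+2}\to 1$, using the presentation for $\PH$ from Lemma~\ref{lem_pres_PH} (generating set $\{p_{i,j},x_{i,j},y_{i,j},t_i\}$, relation set (0), (C-pt), (C-tt), (C-xt), (C-yt), (C1), (C2), (C3), (M-x), (M-y), (Z), (F)) and the presentation for $VW_{2n+2}$ from Lemma~\ref{lem_pres_VW} (generating set $\{\bar{s}_i,\bar{r}\}$ and the five families of relations). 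As preimages of the generators of $VW_{2n+2}$ I would take $\widetilde{\bar{s}_i}=s_i$ and $\widetilde{\bar{r}}=r$, which are legitimate since $\Psi(s_i)=\bar{s}_i$ and $\Psi(r)=\bar{r}$ by Sections~\ref{section_liftable-element} and \ref{section_exact-seq}. Lemma~\ref{presentation_exact} then produces a presentation of $\LH$ on the generating set $\{s_i,t_i,r,p_{i,j},x_{i,j},y_{i,j}\}$ with three kinds of relations, and the remaining work is to match each kind to a block of the statement.

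The type-(A) relations are exactly the defining relations of $\PH$, which appear verbatim as (0)--(F) in the statement, so nothing is needed there. For the type-(B) relations, for each defining relation $s$ of $VW_{2n+2}$ I would exhibit the word $v_s$ in the $\PH$-generators with $\widetilde{s}=v_s$ in $\LH$: the relation $\bar{s}_i^2=1$ lifts to $s_i^2=p_{i,i+1}$ by Lemma~\ref{lem_p_ij_relations}(1); the relations $\bar{s}_i\rightleftarrows\bar{s}_j$ ($j-i\geq 2$), the braid relation among the $\bar{s}_i$, and $\bar{s}_i\rightleftarrows\bar{r}$ lift with trivial $v_s$ to $s_i\rightleftarrows s_j$, $s_is_{i+1}s_i=s_{i+1}s_is_{i+1}$, and $s_i\rightleftarrows r$ by Lemmas~\ref{lem_comm_rel}(1),(4) and \ref{lem_conj_rel}(1); and $\bar{r}^2=1$ lifts to $r^2=t_1t_2\cdots t_{n+1}$ by Lemma~\ref{lem_lift_W}. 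This reproduces precisely the relations $s_i^2=p_{i,i+1}$, $s_i\rightleftarrows s_j$, $s_is_{i+1}s_i=s_{i+1}s_is_{i+1}$, $r^2=t_1\cdots t_{n+1}$, and $s_i\rightleftarrows r$ of the statement.

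The type-(C) relations are the conjugates $\widetilde{y}\,x\,\widetilde{y}^{-1}$ for $\widetilde{y}\in\{s_k,r\}$ and $x\in\{p_{i,j},x_{i,j},y_{i,j},t_i\}$, rewritten as words in the $\PH$-generators; these yield the blocks (A1) and (A2). For $\widetilde{y}=s_k$: the action on the $t_i$ comes from Lemmas~\ref{lem_comm_rel}(2) and \ref{lem_conj_rel}(5), giving (A1)(a); the action on the adjacent triples $p_{i,i+1},x_{i,i+1},y_{i,i+1}$ is computed from $p_{i,i+1}=s_i^2$, $x_{i,i+1}=s_ir_i^{-1}$, $y_{i,i+1}=r_i^{-1}s_i$ (Lemma~\ref{lem_p_ij_relations}(1), eliminating $r_i$), giving (A1)(b); and the action on a general $\alpha_{i,j}$ follows from the change-of-index relations Lemma~\ref{lem_p_ij_relations}(2),(3) (rewriting $s_k^{\pm 1}\alpha_{i,j}s_k^{\mp 1}$, also using $s_k^2=p_{k,k+1}$) together with disjointness of supports when $k$ is far from $\{i-1,i,j-1,j\}$, giving (A1)(c). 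For $\widetilde{y}=r$: $r\rightleftarrows t_i$ is Lemma~\ref{lem_comm_rel}(5); $r\rightleftarrows p_{i,j}$ reduces to $p_{i,i+1}=s_i^2$ with $r\rightleftarrows s_i$, transported by the $s_k$'s via Lemma~\ref{lem_p_ij_relations}(2),(3); and $r\alpha_{i,j}r^{-1}=\alpha_{i,j}^{-1}p_{i,j}$ for $\alpha\in\{x,y\}$ is obtained from the spherical wicket braid picture, settling the adjacent case from Lemma~\ref{lem_conj_rel}(4) and $r\rightleftarrows s_i$ and then spreading it over all $(i,j)$ by Lemma~\ref{lem_p_ij_relations}(2),(3).

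The main obstacle is the type-(C) bookkeeping, and inside it the computation of $r\,x_{i,j}\,r^{-1}$ and $r\,y_{i,j}\,r^{-1}$: unlike the $s_k$-conjugations, these mix a non-pure generator with pure ones, are not forced by the relations already on the table, and require a careful analysis of the corresponding wicket braids in the spirit of Figures~\ref{fig_proof_lem_conj-rel} and \ref{fig_proof_lem_pxy}. One must also check that the case divisions in (A1)(c) according to the position of $k$ relative to $i$ and $j$ are exhaustive and pairwise exclusive, so that every conjugate $s_k\alpha_{i,j}s_k^{-1}$ is accounted for exactly once; with that verified, Lemma~\ref{presentation_exact} delivers exactly the presentation claimed.
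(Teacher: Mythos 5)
Your proposal is correct and follows essentially the same route as the paper: apply Lemma~\ref{presentation_exact} to the extension $1\to\PH\to\LH\to VW_{2n+2}\to 1$ of Proposition~\ref{prop_exact_LH} with the presentations of Lemmas~\ref{lem_pres_PH} and~\ref{lem_pres_VW}, lift the quotient generators to $s_i$ and $r$, and identify the type-(B) and type-(C) words via Lemmas~\ref{lem_comm_rel}, \ref{lem_conj_rel}, \ref{lem_lift_W}, \ref{lem_p_ij_relations} together with wicket-braid computations for the remaining conjugates (the paper's Figure~\ref{fig_proof_lem_LH}). Your occasional algebraic shortcuts (e.g.\ computing $s_i\alpha_{i,i+1}s_i^{-1}$ from $x_{i,i+1}=s_ir_i^{-1}$, $y_{i,i+1}=r_i^{-1}s_i$) are legitimate since only validity of the chosen words $w_{x,y}$ in $\LH$ is needed at this stage, so they do not change the argument.
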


\begin{proof}
The relations $s_i^2=p_{i,i+1}$ for $1\leq i\leq n$, $s_{i} \rightleftarrows s_{j}$ for $j-i\geq 2$, $s_{i}s_{i+1}s_{i}=s_{i+1}s_{i}s_{i+1}$ for $1\leq i\leq n-1$, $r^2=t_1t_2\cdots t_{n+1}$, and $s_{i} \rightleftarrows r$ for $1\leq i\leq n$ hold in $\LH $. 
Hence, by applying Lemma~\ref{presentation_exact} to the exact sequence~(\ref{exact_LH}) in Proposition~\ref{prop_exact_LH} and presentations for $\PH $ and $VW_{2n+2}$ in Lemmas~\ref{lem_pres_PH} and \ref{lem_pres_VW}, respectively, we have the presentation for $\LH $ with generators $s_i$ for $1\leq i\leq n$, $t_{i}$ for $1\leq i\leq n+1$, $r$, $p_{i,j}$, $x_{i,j}$, and $y_{i,j}$ for $1\leq i,j\leq n+1$ with $i\not =j$ and the following defining relations: 
\begin{enumerate}
\item[(A)]
\begin{enumerate}
\item[(0)] $\alpha _{i,j}=\alpha _{j,i}$  \quad for $1\leq i,j\leq n+1$ with $i\not =j$ and $\alpha \in \{ p, x, y\}$, 
\item[(C-pt)] $p_{i,j} \rightleftarrows t_k$ \quad for $1\leq i<j\leq n+1$ and $1\leq k\leq n+1$,
\item[(C-tt)] $t_{i} \rightleftarrows t_{j}$ \quad for $1\leq i<j\leq n+1$, 
\item[(C-xt)] $x_{i,j} \rightleftarrows t_{k}$ \quad for $1\leq i<j\leq n+1$ and $k\not =i$, 
\item[(C-yt)] $y_{i,j} \rightleftarrows t_{k}$ \quad for $1\leq i<j\leq n+1$ and $k\not =j$, 
\item[(C1)] $\alpha _{i,j} \rightleftarrows \beta _{k,l}$ \quad for cyclically ordered $(i,j,k,l)$ and $\alpha , \beta \in \{ p, x, y\}$, 
\item[(C2)] $\alpha _{i,j} \rightleftarrows \beta _{i,k}\gamma _{j,k}$ \quad for cyclically ordered $(i,j,k)$ and $(\alpha , \beta , \gamma )$ as in Table~\ref{rel_C2_PH^1}, 
\item[(C3)] $\alpha _{i,k} \rightleftarrows p_{j,k}\beta _{j,l}p_{j,k}^{-1}$ \quad for cyclically ordered $(i,j,k,l)$ and $\alpha , \beta \in \{ p, x, y\}$, 
\item[(M-x)] $x_{i,j} \rightleftarrows p_{i,j}t_{i}$ \quad for $i<j$, 
\item[(M-y)] $y_{i,j} \rightleftarrows p_{i,j}t_{j}$ \quad for $i<j$, 
\item[(Z)] $x_{1,n+1}^{-1}\cdots x_{1,3}^{-1}x_{1,2}^{-1}p_{1,2}p_{1,3}\cdots p_{1,n+1}t_{1}=1$,
\item[(F)] $t_{1}t_2\cdots t_{n+1}p_{1,2}(p_{1,3}p_{2,3})\cdots (p_{1,n}p_{2,n}\cdots p_{n-1,n})(p_{1,n+1}p_{2,n+1}\cdots p_{n,n+1})=1$,  
\end{enumerate}
\item[(B)]
\begin{enumerate}
\item $s_i^2=p_{i,i+1}$ \quad for $1\leq i\leq n$, 
\item $s_{i} \rightleftarrows s_{j}$ \quad for $j-i\geq 2$,
\item $s_{i}s_{i+1}s_{i}=s_{i+1}s_{i}s_{i+1}$ \quad for $1\leq i\leq n-1$, 
\item $r^2=t_1t_2\cdots t_{n+1}$, 
\item $s_{i} \rightleftarrows r$ \quad for $1\leq i\leq n$. 
\end{enumerate}
\item[(C)]
\begin{enumerate}
\item $s_kt_is_k^{-1}=w_{t_i, s_k}$ \quad for $1\leq i\leq n+1$ and $1\leq k\leq n$, 
\item $s_k\alpha _{i,j}s_k^{-1}=w_{\alpha _{i,j}, s_k}$ \quad for $1\leq i<j\leq n+1$ and $1\leq k\leq n+1$, 
\item $rt_ir^{-1}=w_{t_i, r}$ \quad for $1\leq i\leq n+1$, 
\item $r\alpha _{i,j}r^{-1}=w_{\alpha _{i,j}, r}$ \quad for $1\leq i<j\leq n+1$, 
\end{enumerate}
\end{enumerate}
where $w_{t_i, s_k}$, $w_{\alpha _{i,j}, s_k}$, $w_{t_i, r}$, and $w_{\alpha _{i,j}, s_i}$ are some products of $t_{i}$ for $1\leq i\leq n+1$, $p_{i,j}$, $x_{i,j}$, and $y_{i,j}$ for $1\leq i<j\leq n+1$.  
These generators and the relations~(A) and (B) of this presentation coincide with the generators and the relations~(0), (C-pt), $\dots $, (5) of the presentation in Lemma~\ref{lem_pres_LH}. 
Hence it is enough for completing the proof of Lemma~\ref{lem_pres_LH} to prove that the relations~(C) in the presentation above coincide with the relations~(A1) and (A2) of the presentation in Lemma~\ref{lem_pres_LH}. 

By Lemmas~\ref{lem_comm_rel} and \ref{lem_conj_rel}, we have
\[
w_{t_k, s_i}=
\left\{
\begin{array}{ll}
t_i \quad \text{for }i\not =k, k+1, \\
t_{i+1} \quad \text{for }i=k, \\
t_{i-1} \quad \text{for }i=k+1 
\end{array}
\right.
\]
and $w_{t_i, r}=t_i$ for $1\leq i\leq n+1$, and the relations~(C) (a) and (c) coincide  with the relations~(A1) (a) and (A2) (a), respectively. 
By Lemma~\ref{lem_p_ij_relations}, we have $w_{p_{i,i+1}, s_i}=p_{i,i+1}$ for $1\leq i\leq n$ and 
\[
w_{\alpha _{i,j}, s_k}=\left\{
		\begin{array}{ll}
		p_{i-1,i}\alpha _{i-1,j}p_{i-1,i}^{-1} \quad \text{for }k=i-1 \text{ and }i<j, \\
		\alpha _{i+1,j} \quad \text{for }k=i \text{ and }j-i\geq 2, \\
		p_{j-1,j}\alpha _{i,j-1}p_{j-1,j}^{-1} \quad \text{for }k=j-1 \text{ and }j-i\geq 2, \\
		\alpha _{i,j+1} \quad \text{for }k=j \text{ and }i<j, \\
		\alpha _{i,j} \quad \text{for }k\leq i-2 \text{ or }i+1\leq k\leq j-2\text{ or }j+1\leq k.  
		\end{array}
		\right.
\]
Finally, The braids $s_i\alpha _{i,i+1}s_i^{-1}$ and $r\alpha _{i,j}r^{-1}$ for $\alpha \in \{ x, y\}$ are braids as in Figure~\ref{fig_proof_lem_LH}.  
By calculations of the braids as in Figure~\ref{fig_proof_lem_LH}, we have 
\[
w_{\alpha _{i,i+1}, s_i}=\left\{
		\begin{array}{ll}
		p_{i,i+1}y_{i,i+1}p_{i,i+1}^{-1} \quad \text{for }\alpha =x, \\
		x_{i,i+1} \quad \text{for }\alpha =y, 
		\end{array}
		\right.
\]
and $w_{\alpha _{i,j}, s_i}=\alpha _{i,j}^{-1}p_{i,j}$ for $1\leq i<j\leq n+1$ and $\alpha \in \{ x, y\}$. 
Thus the relations~(C) (b) and (d) in the presentation above are coincide with the relations~(A1) (b), (c), (A2) (b), and (c) of the presentation in Lemma~\ref{lem_pres_LH}.  
Therefore we have completed the proof of Lemma~\ref{lem_pres_LH}. 
\end{proof}

\begin{figure}[h]
\includegraphics[scale=0.8]{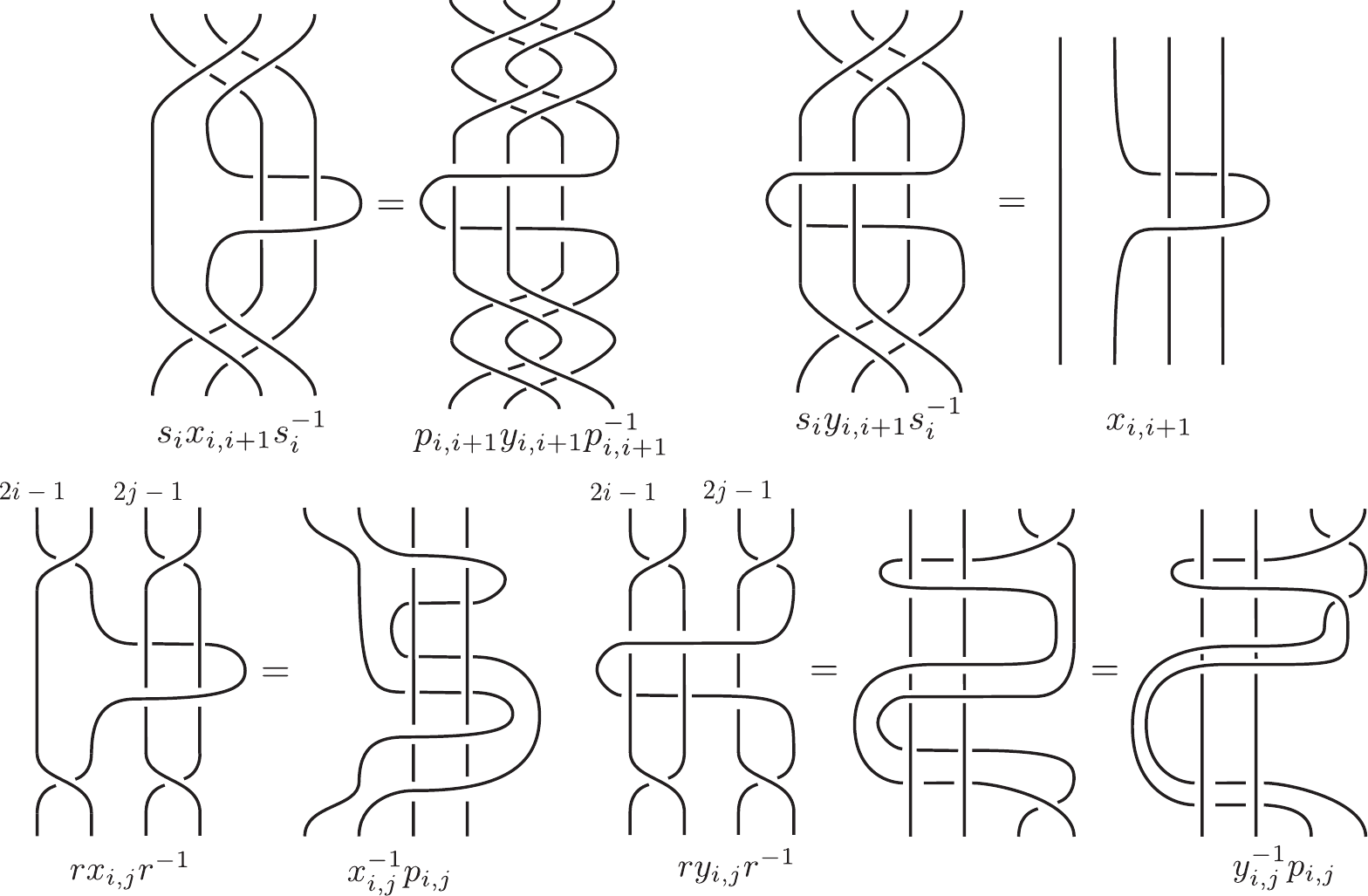}
\caption{The relations $s_{i}x_{i,i+1}s_{i}^{-1}=p_{i,i+1}y_{i,i+1}p_{i,i+1}^{-1}$, $s_{i}y_{i,i+1}s_{i}^{-1}=x_{i,i+1}$ for $1\leq i\leq n$, $rx_{i,j}r^{-1}=x_{i,j}^{-1}p_{i,j}$ and $ry_{i,j}r^{-1}=y_{i,j}^{-1}p_{i,j}$  for $1\leq i<j\leq n+1$.}\label{fig_proof_lem_LH}
\end{figure}

Put $s=s_n\cdots s_2s_1t_1\in SW_{2n+2}$ and $s=\Gamma (s)\in \LH $. 
The braid $s$ is as in Figure~\ref{fig_braid_s}.  
Then, by the conjugation relations, we have the following lemma.  

\begin{figure}[h]
\includegraphics[scale=1.2]{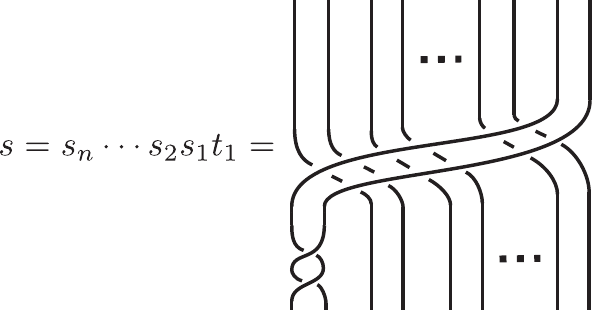}
\caption{The braid $s=s_n\cdots s_2s_1t_1$ in $SW_{2n+2}$.}\label{fig_braid_s}
\end{figure}

\begin{lem}\label{lem_rel_s-p_ij}
The relations
\begin{enumerate}
\item $s\alpha _{1,j}s^{-1}=\beta _{j-1, n+1}$ \quad for $(\alpha ,\beta )\in \{ (p,p), (x,y), (y,x)\}$ and $2\leq j\leq n+1$. 
\item $s\alpha _{i,j}s^{-1}=\alpha _{i-1,j-1}$ \quad for $\alpha \in \{ p, x, y\}$ and $2\leq i<j\leq n+1$
\end{enumerate}
hold in $SW_{2n+2}$ and $\LH $. 
\end{lem}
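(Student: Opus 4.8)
The plan is to carry out the computation in the braid group $SW_{2n+2}$, where $s = s_n\cdots s_1 t_1$ is pictured in Figure~\ref{fig_braid_s}, and then transfer the identities to $\LH$ via the homomorphism $\Gamma$ exactly as in the preceding lemmas. The guiding picture is that $s$ realizes, on the wicket arcs, the cyclic shift sending the arc with subscript $1$ to position $n+1$ and the arc with subscript $k$ to position $k-1$ for $2\le k\le n+1$, pre-composed with the twist $t_1$; so one expects the subscripts of $\alpha_{i,j}$ to be relabelled accordingly, which is precisely the content of (1) and (2). However, conjugation of a pure-braid generator by a half-twist is \emph{not} a mere relabelling — it produces conjugating factors among the $p_{\cdot,\cdot}$ — so the identities must actually be pinned down using the conjugation relations already recorded: the index-shifting formulas $s_b\alpha_{a,b}s_b^{-1}=\alpha_{a,b+1}$, $s_a\alpha_{a,b}s_a^{-1}=\alpha_{a+1,b}$ and $s_{a-1}^{-1}\alpha_{a,b}s_{a-1}=\alpha_{a-1,b}$, $s_{b-1}^{-1}\alpha_{a,b}s_{b-1}=\alpha_{a,b-1}$ of Lemma~\ref{lem_p_ij_relations}, the ``consecutive'' formulas (A1)(b) and the general formula (A1)(c) from the proof of Lemma~\ref{lem_pres_LH}, and the commutation relations (C-pt), (C-xt), (C-yt), (M-x), (M-y) governing the interaction of the $t_k$ with $p$, $x$, $y$.

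For part (2), with $2\le i<j\le n+1$, note first that $1\notin\{i,j\}$ when $\alpha=p$, $1\ne i$ when $\alpha=x$, and $1\ne j$ when $\alpha=y$, so (C-pt), (C-xt), (C-yt) give $t_1\alpha_{i,j}=\alpha_{i,j}t_1$ and hence $s\alpha_{i,j}s^{-1}=(s_n\cdots s_1)\alpha_{i,j}(s_n\cdots s_1)^{-1}$. I would then push this conjugation through $s_1,s_2,\dots,s_n$ in turn, applying at each step the appropriate case of (A1)(c) (or (A1)(b) when two subscripts become consecutive, using $s_m^2=p_{m,m+1}$). Since neither subscript of $\alpha_{i,j}$ ever drops to $1$, the pair $(i,j)$ is carried down to $(i-1,j-1)$ with no change of type, and the conjugating $p$-factors produced at intermediate stages telescope away, giving $\alpha_{i-1,j-1}$.

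For part (1), with $2\le j\le n+1$, the subscript pair $(1,j)$ evolves under successive conjugation by $t_1,s_1,\dots,s_n$ as follows: the entry $1$ climbs to $j-1$ (under $s_1,\dots,s_{j-2}$), the two entries then cross once (under $s_{j-1}$), and the entry $j$ climbs to $n+1$ (under $s_j,\dots,s_n$), landing at $(j-1,n+1)$. When $\alpha=p$ the crossing is trivial (as $s_{j-1}p_{j-1,j}s_{j-1}^{-1}=p_{j-1,j}$) and $t_1$ commutes with $p_{1,j}$ by (C-pt), so $sp_{1,j}s^{-1}=p_{j-1,n+1}$. When $\alpha\in\{x,y\}$ the single crossing flips the type $x\leftrightarrow y$ via (A1)(b); moreover for $\alpha=x$ the relation (M-x) gives $t_1x_{1,j}t_1^{-1}=p_{1,j}^{-1}x_{1,j}p_{1,j}$, whereas for $\alpha=y$, $t_1$ commutes with $y_{1,j}$ by (C-yt). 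The auxiliary $p$-conjugating factors introduced by (M-x) and by (A1)(b) are carried along, are transported to subscripts $(j-1,n+1)$ by the already-established $\alpha=p$ case, and cancel, leaving $y_{j-1,n+1}$ (resp. $x_{j-1,n+1}$). As a cross-check one can also use the identity $s=(r_1r_2\cdots r_n)^{-1}$, which follows from the sphere relation $z=1$ (see Figure~\ref{fig_braid_z}), to re-express $s$ and recompute. The transfer to $\LH$ is immediate since $\Gamma$ is a homomorphism.

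The step I expect to be the main obstacle is exactly the bookkeeping of these conjugating $p$-factors in part (1) for $\alpha\in\{x,y\}$: keeping track, through the whole chain of $s_m$-conjugations, of the factors coming from (M-x) and from (A1)(b) and verifying that they cancel in the end. This is the point where an explicit braid diagram — isotoping $\alpha_{1,j}$ past the braid $s$ in $SW_{2n+2}$ — is both the cleanest verification and the form in which we present the argument.
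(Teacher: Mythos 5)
Your approach---verifying the identities directly in $SW_{2n+2}$ by conjugating the braids $p_{i,j},x_{i,j},y_{i,j}$ past the explicit braid $s=s_n\cdots s_2s_1t_1$ (i.e.\ a braid-diagram isotopy, organized by the index bookkeeping you describe) and then transferring to $\LH$ via $\Gamma$---is essentially the paper's own justification, which offers nothing beyond ``by the conjugation relations'' together with Figure~\ref{fig_braid_s}. The cancellation of the auxiliary $p$-conjugating factors that you flag as the main obstacle is in effect carried out later in the paper, where the same identities are re-derived purely algebraically from the relations (1), (2), (6)~(a)--(c) of Proposition~\ref{prop_pres_LH} in Lemmas~\ref{lem_rel_6d} and~\ref{lem_rel_6e}.
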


To prove Theorem~\ref{thm_pres_LH}, we prepare the following proposition that is proved in Section~\ref{section_proof_prop} by showing that the following presentation is equivalent to the presentation in Lemma~\ref{lem_pres_LH} up to Tietze transformations. 

\begin{prop}\label{prop_pres_LH}
For $n\geq 1$, $\LH $ admits the presentation with generators $s_i$ for $1\leq i\leq n$, $r_i$ for $1\leq i\leq n$, $t_{i}$ for $1\leq i\leq n+1$, $r$, $p_{i,j}$, $x_{i,j}$, $y_{i,j}$ for $1\leq i<j\leq n+1$, and $s$, and the following defining relations: 
\begin{enumerate}
\item commutative relations
\begin{enumerate}
\item $\alpha _i \rightleftarrows \beta _j$ \quad for $j-i\geq 2$ and $\alpha , \beta \in \{ s, r\}$,
\item $\alpha _{i} \rightleftarrows t_{j}$ \quad for $j\not =i, i+1$ and $\alpha \in \{ s, r\}$, 
\item $t_i\rightleftarrows t_{j}$ \quad for $1\leq i<j\leq n+1$, 
\item $s_i \rightleftarrows r$ \quad for $1\leq i\leq n$, 
\item $t_i \rightleftarrows r$ \quad for $1\leq i\leq n+1$, 
\end{enumerate}
\item conjugation relations
\begin{enumerate}
\item $\alpha _i\alpha _{i+1}\alpha _i=\alpha _{i+1}\alpha _i\alpha _{i+1}$ \quad for $1\leq i\leq n-1$ and $\alpha \in \{ s, r\}$, 
\item $s_i^{\varepsilon }s_{i+1}^{\varepsilon }r_{i}=r_{i+1}s_i^{\varepsilon }s_{i+1}^{\varepsilon }$ \quad for $1\leq i\leq n-1$ and $\varepsilon \in \{ 1, -1\}$,
\item $r_ir_{i+1}s_{i}=s_{i+1}r_ir_{i+1}$ \quad for $1\leq i\leq n-1$,
\item $r_irs_{i}=rs_{i}r_{i}^{-1}$ \quad for $1\leq i\leq n$,
\item $s_i^{\pm 1}t_{i}=t_{i+1}s_{i}^{\pm 1 }$ \quad for $1\leq i\leq n$,
\item $r_it_{i}=t_{i+1}r_{i}$ \quad for $1\leq i\leq n$,
\item $t_is_{i}^2r_i=r_is_i^2t_{i+1}$ \quad for $1\leq i\leq n$,
\end{enumerate}
\item $r^2=t_1t_2\cdots t_{n+1}$, 
\item $r_1r_2\cdots r_ns_n\cdots s_2s_1t_{1}=1$, 
\item $t_{1}t_2\cdots t_{n+1}\bigl( s_1(s_2s_1)\cdots (s_{n-1}\cdots s_2s_1)(s_n\cdots s_2s_1)\bigr) ^2=1$, 
\item
\begin{enumerate}
\item $p_{i,i+1}=s_{i}^2$, \quad $x_{i,i+1}=s_{i}r_i^{-1}$, \quad $y_{i,i+1}=r_i^{-1}s_{i}$ \quad for $1\leq i\leq n$, 
\item $\alpha _{i,j}=s_{j-1}\cdots s_{i+2}s_{i+1}\alpha _{i,i+1}s_{i+1}^{-1}s_{i+2}^{-1}\cdots s_{j-1}^{-1}$\\
for $\alpha \in \{ p, x, y\}$ and $1\leq i<j-1\leq n$,
\if0
\item $s_{j-1}^{-1}\alpha _{i,j}s_{j-1}=\alpha _{i,j-1}$ \quad for $j-i\geq 2$ and $\alpha \in \{ p, x, y\}$, 
\item $s_{i-1}^{-1}\alpha _{i,j}s_{i-1}=\alpha _{i-1,j}$ \quad for $2\leq i<j\leq n+1$ and $\alpha \in \{ p, x, y\}$, 
\fi
\item $s=s_n\cdots s_2s_1t_1$,
\item $s\alpha _{1,j}s^{-1}=\beta _{j-1, n+1}$ \quad for $(\alpha ,\beta )\in \{ (p,p), (x,y), (y,x)\}$ and $2\leq j\leq n+1$. 
\item $s\alpha _{i,j}s^{-1}=\alpha _{i-1,j-1}$ \quad for $\alpha \in \{ p, x, y\}$ and $2\leq i<j\leq n+1$,
\item $r\rightleftarrows p_{i,j}$  \quad for $1\leq i<j\leq n+1$, 
\item $r\alpha _{i,j}r^{-1}=\alpha _{i,j}^{-1}p_{i,j}$ \quad for $1\leq i<j\leq n+1$ and $\alpha \in \{ x, y\}$. 
\end{enumerate}
\end{enumerate}
\end{prop}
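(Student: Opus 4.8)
The plan is to deduce the presentation in Proposition~\ref{prop_pres_LH} from the presentation of $\LH$ established in Lemma~\ref{lem_pres_LH} by a chain of Tietze transformations. The first step is to enlarge the generating set of the presentation in Lemma~\ref{lem_pres_LH}: I would adjoin the generators $r_i$ for $1\le i\le n$ with the defining relation $r_i=x_{i,i+1}^{-1}s_i$ (so that, by Lemma~\ref{lem_p_ij_relations}, $r_i$ is the braid of Figure~\ref{fig_r_i-s_i}), and the generator $s$ with the defining relation $s=s_n\cdots s_2s_1t_1$ (the braid of Figure~\ref{fig_braid_s}). At the same time, using relation~(0) of Lemma~\ref{lem_pres_LH}, I would discard the redundant generators $p_{i,j},x_{i,j},y_{i,j}$ with $i>j$. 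The result is a presentation whose generating set is exactly that of Proposition~\ref{prop_pres_LH} and whose relations are those of Lemma~\ref{lem_pres_LH} rewritten in the enlarged generating set, together with the two new defining relations.

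The second step is to check that each relation~(1)--(6) of Proposition~\ref{prop_pres_LH} is a consequence of this relation list. Many cases are immediate from results already available: relations~(1) and~(2) are Lemmas~\ref{lem_comm_rel} and~\ref{lem_conj_rel}, relation~(3) appears in Lemma~\ref{lem_pres_LH}, (6)(a) follows from $s_i^2=p_{i,i+1}$, the defining relation of $r_i$, and relation~(A1)(b), (6)(b) follows from~(A1)(c) and $s_i^2=p_{i,i+1}$, (6)(c) is the defining relation of $s$, (6)(d),(e) are Lemma~\ref{lem_rel_s-p_ij}, and (6)(f),(g) are relations~(A2)(b),(c). Relations~(4) and~(5) are relations~(Z) and~(F) of Lemma~\ref{lem_pres_LH} rewritten in the new generators by means of the braid identities expressing $z$ as $r_1r_2\cdots r_ns_n\cdots s_2s_1t_1$ and the full twist as $t_1t_2\cdots t_{n+1}\bigl(s_1(s_2s_1)\cdots(s_n\cdots s_2s_1)\bigr)^2$ pictured in Figures~\ref{fig_braid_z} and~\ref{fig_full-twist}; alternatively they follow from Lemma~\ref{lem_lift_W}, Lemma~\ref{lem_p_ij_relations}, and the conjugation relations.

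The third step, which carries most of the weight, is the converse: to show that every relation of the enlarged presentation of Lemma~\ref{lem_pres_LH} --- in particular the relations~(C-pt), (C-tt), (C-xt), (C-yt), (C1), (C2), (C3), (M-x), (M-y) inherited from Tawn's presentation of $\PH$ (Lemma~\ref{lem_pres_PH}), and the conjugation relations~(A1) and~(A2) --- follows from relations~(1)--(6). I would first use (6)(a) and~(b) to rewrite every $p_{i,j},x_{i,j},y_{i,j}$ as an explicit conjugate of a local generator $p_{i,i+1}=s_i^2$, $x_{i,i+1}=s_ir_i^{-1}$, or $y_{i,i+1}=r_i^{-1}s_i$ by a word in the $s_k$. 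Then, using the braid relations~(1)(a) and~(2)(a), the conjugation relations~(2)(b),(c),(e),(f),(g), and the $s$-conjugation relations~(6)(d),(e) --- which shift the index pair $(i,j)$ and thereby act on the index set $\{1,\dots,n+1\}$ essentially by a cyclic rotation, up to interchanging $x$ and $y$ --- I would reduce each relation with arbitrary cyclically ordered indices to a bounded, rank-independent list of base cases, and verify those directly by braid calculation (checking, for~(C2), the entries of Table~\ref{rel_C2_PH^1}). The remaining relations are comparatively short: (A1)(a) and~(A2)(a) reduce to~(2)(e) with both signs of $\varepsilon$ together with~(1)(b),(e); (A1)(b) is formal from~(6)(a) and $s_i^2=p_{i,i+1}$; and (A1)(c) as well as~(A2)(b),(c) follow from~(6)(b),(d),(e),(f),(g).

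The hard part will be organizing this last step. Tawn's pure-Hilden relations range over all cyclic orderings of up to four of the indices $1,\dots,n+1$, with an extra case split recorded in Table~\ref{rel_C2_PH^1}, so the reduction by conjugation must be set up carefully enough that only a controlled, rank-independent collection of configurations survives and that the conjugating words used are themselves traceable to relations already present. Once this is arranged, each surviving base case is a short computation in $SW_{2n+2}$, and assembling these with the verifications of the second step completes the chain of Tietze transformations.
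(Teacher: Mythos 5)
Your overall route---enlarging the presentation of Lemma~\ref{lem_pres_LH} by the generators $r_i$ and $s$ with their defining words, adding the relations (1)--(6) after checking that they hold in $\LH$, and then eliminating the old relations and the redundant generators $\alpha_{j,i}$ ($j<i$) by Tietze transformations---is the same as the paper's, and your reduction of Tawn's relations to rank-independent base cases via conjugation is in substance the paper's Lemmas~\ref{lem_C123_cyclic}, \ref{lem_alpha_ij_conj} and \ref{lem_Cxs}. The gap is in how you close the elimination step, which is where all the content lies. To delete the relations inherited from Lemma~\ref{lem_pres_LH} (the Tawn relations (C-pt), (C-tt), (C-xt), (C-yt), (C1)--(C3), (M-x), (M-y), as well as (Z), (F), (A1), (A2)) you must exhibit each of them as a \emph{formal consequence} of the relations (1)--(6), i.e.\ derive it in the group presented by (1)--(6). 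You keep the conjugation reductions formal, but you then propose to settle the surviving base cases ``directly by braid calculation,'' as ``a short computation in $SW_{2n+2}$,'' and similarly you pass between (Z), (F) and (4), (5) via braid identities read off from Figures~\ref{fig_braid_z} and \ref{fig_full-twist}. A computation in $SW_{2n+2}$ (or in $\LH$) only shows that the relation in question holds in the group, which is already known---every relation of Lemma~\ref{lem_pres_LH} holds in $\LH$ by construction---and contributes nothing to showing that it follows from (1)--(6). As written, your step 3 would only establish that the group presented by (1)--(6) surjects onto $\LH$, not that this surjection is injective, so the Tietze deletion of the old relations is not justified.

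The repair is exactly what occupies Section~\ref{section_proof_prop} of the paper: each base case, and likewise the equivalence of (Z) with (4) and of (F) with (5), is derived purely syntactically from the relations (1), (2) and (6)(a)--(c) (Lemmas~\ref{lem_Ct}, \ref{lem_Mxy}, \ref{lem_C2}, \ref{lem_C3}, \ref{lem_Z}, \ref{lem_F}). For instance, the $(i,i+1,i+2)$ case of (C2) with $(\beta,\gamma)=(p,p)$ is handled by rewriting $p_{i,i+2}p_{i+1,i+2}$ as $s_{i+1}s_i^2s_{i+1}$ using (6)(a),(b) and then commuting this word with $s_i$ and $r_i$ using only (2)(a)--(c); the analogues for the other entries of Table~\ref{rel_C2_PH^1}, for (C3), and the long rewritings in Lemmas~\ref{lem_Z} and \ref{lem_F} are of the same nature. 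These are the computations your plan must carry out in the free group modulo (1)--(6) rather than in $SW_{2n+2}$; once your base-case verifications are recast this way, your scheme coincides with the paper's proof and goes through.
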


We remark that the relations~(1)--(5), (6) (f), and (g) in Proposition~\ref{prop_pres_LH} coincide with the relations~(1)--(5), (A2) (b), and (c) in Theorem~\ref{thm_pres_LH}, respectively. 

From here, a label of a relation in $\LH $ indicates one of relations in Proposition~\ref{prop_pres_LH}. 
For instance, $A\stackrel{\text{(1)(a)}}{=}B$ (resp. $A=B\stackrel{\text{(1)(a)}}{\Longleftrightarrow }A^\prime= B^\prime $) means that $B$ (resp. the relation $A^\prime =B^\prime $) is obtained from $A$ (resp. the relation $A=B$) by the relation~(1) (a) in Proposition~\ref{prop_pres_LH}. 
We denote $A^\prime \underline{A}=B\stackrel{\text{CONJ}}{\Longleftrightarrow }A^\prime =BA^{-1}$ and $\underline{A}A^\prime =B\stackrel{\text{CONJ}}{\Longleftrightarrow }A^\prime =A^{-1}B$, and ``$A_1\underset{\rightarrow }{\underline{A}}A_2$'' (resp. ``$A_1\underset{\leftarrow }{\underline{A}}A_2$'') in an equation means that deformation of the word $A_1AA_2$ by using commutative relations and moving $A$ right (resp. left). 
We have the following lemma. 

\begin{lem}\label{lem_rel_6d}
The relation (6) (d) in Proposition~\ref{prop_pres_LH} is obtained from the relations~(1), (2), and (6) (a)--(c) in Proposition~\ref{prop_pres_LH}. 
\end{lem}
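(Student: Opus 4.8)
The plan is to prove relation (6)(d) by induction on $j$, reducing each case to short word computations that use only (1), (2), (6)(a) and (6)(b). Throughout I abbreviate by (6)(b) the fact that $\alpha_{i,j}=g\,\alpha_{i,i+1}\,g^{-1}$ with $g=s_{j-1}s_{j-2}\cdots s_{i+1}$ (for $\alpha\in\{p,x,y\}$, $1\le i<j-1\le n$), and I first isolate two auxiliary computations.

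\emph{Auxiliary (i):} for $2\le j\le n$ one has $s s_j s^{-1}=s_{j-1}$. Indeed (6)(c) gives $s=s_ns_{n-1}\cdots s_1 t_1$; since $j\ge2$, the factor $t_1$ commutes with $s_j$ by (1)(b), so $s s_j s^{-1}=w s_j w^{-1}$ with $w=s_ns_{n-1}\cdots s_1$. Moving $s_j$ to the right through $w^{-1}=s_1^{-1}s_2^{-1}\cdots s_n^{-1}$, using (1)(a) to pass the far generators and the braid relation (2)(a) in the form $s_j s_{j-1}^{-1}s_j^{-1}=s_{j-1}^{-1}s_j^{-1}s_{j-1}$ to pass $s_{j-1}^{-1}$, collapses $w s_j w^{-1}$ to $s_{j-1}$. \emph{Auxiliary (ii) (adjacent case):} for $1\le m\le n-1$ and $\beta\in\{p,x,y\}$ one has $(s_m s_{m+1})\beta_{m,m+1}(s_m s_{m+1})^{-1}=\beta_{m+1,m+2}$. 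Writing $g=s_m s_{m+1}$, the braid relation (2)(a) gives $g s_m g^{-1}=s_{m+1}$, and (2)(b) with $\varepsilon=1$ gives $g r_m g^{-1}=r_{m+1}$; substituting $p_{i,i+1}=s_i^2$, $x_{i,i+1}=s_i r_i^{-1}$, $y_{i,i+1}=r_i^{-1}s_i$ from (6)(a) then yields the claim for each $\beta$.

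\emph{Base case $j=2$.} By (6)(b) (or directly by (6)(a) when $n=1$) we have $\beta_{1,n+1}=(s_ns_{n-1}\cdots s_2)\beta_{1,2}(s_ns_{n-1}\cdots s_2)^{-1}$, and since $s=(s_n\cdots s_2)s_1 t_1$ with $t_1$ commuting with $s_2,\dots,s_n$, it suffices to prove $(s_1 t_1)\alpha_{1,2}(s_1 t_1)^{-1}=\beta_{1,2}$ for the three pairs $(\alpha,\beta)\in\{(p,p),(x,y),(y,x)\}$. From (2)(e) one extracts $s_1 t_1 s_1^{-1}=t_2=s_1^{-1}t_1 s_1$, hence $t_1\rightleftarrows s_1^2$ and $t_1 s_1=s_1 t_2$, $t_2 s_1=s_1 t_1$; from (2)(f) one has $t_1 r_1^{-1}=r_1^{-1}t_2$. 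Then $(p,p)$ follows from $t_1\rightleftarrows s_1^2$ at once; $(y,x)$ follows by rewriting $(s_1 t_1)(r_1^{-1}s_1)(s_1 t_1)^{-1}=s_1 t_1 r_1^{-1}s_1 t_1^{-1}s_1^{-1}$ and applying $t_1 r_1^{-1}=r_1^{-1}t_2$ and then $t_2 s_1=s_1 t_1$; and $(x,y)$, after clearing the outer $s_1^{\pm1}$ and using $t_1 s_1=s_1 t_2$ together with $t_1\rightleftarrows s_1^2$, is seen to be equivalent to relation (2)(g), namely $t_1 s_1^2 r_1=r_1 s_1^2 t_2$.

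\emph{Inductive step.} Assume $2\le j\le n$ and that (6)(d) holds for $j$. By (6)(b), $\alpha_{1,j+1}=s_j\alpha_{1,j}s_j^{-1}$, so Auxiliary (i) and the induction hypothesis give $s\alpha_{1,j+1}s^{-1}=s_{j-1}\beta_{j-1,n+1}s_{j-1}^{-1}$. Expanding $\beta_{j-1,n+1}=(s_n\cdots s_j)\beta_{j-1,j}(s_n\cdots s_j)^{-1}$ by (6)(b) and using (1)(a) to move $s_{j-1}^{\pm1}$ past $s_{j+1},\dots,s_n$, this rewrites as $(s_n\cdots s_{j+1})(s_{j-1}s_j)\beta_{j-1,j}(s_{j-1}s_j)^{-1}(s_n\cdots s_{j+1})^{-1}$, which equals $(s_n\cdots s_{j+1})\beta_{j,j+1}(s_n\cdots s_{j+1})^{-1}=\beta_{j,n+1}$ by Auxiliary (ii) and (6)(b) (the case $j=n$ being the degenerate instance where $s_n\cdots s_{j+1}$ is empty). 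With the base case this gives (6)(d) for all $2\le j\le n+1$. The delicate part is the base case — specifically the subcases $(x,y)$ and $(y,x)$, where the conjugation must be normalized so that one lands precisely on relation (2)(g) (resp. on a product of (2)(e) and (2)(f)); the index bookkeeping in Auxiliary (i) is routine but needs to be carried out with care.
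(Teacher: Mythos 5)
Your proof is correct, and every step stays inside the allowed relations (1), (2), (6)(a)--(c): Auxiliary (i) $ss_js^{-1}=s_{j-1}$ uses only (6)(c), (1)(a), (1)(b) and the braid relation (2)(a); Auxiliary (ii) uses (2)(a), (2)(b), (6)(a); and the base case at $j=2$ uses (2)(e), (2)(f) and, for the pair $(x,y)$, exactly the reformulation of (2)(g) as $r_1^{-1}s_1=s_1t_1s_1r_1^{-1}t_1^{-1}s_1^{-1}$, which is the same identity the paper isolates as its equation~(\ref{rel_2g}). The route, however, is organized differently from the paper's. The paper proves (6)(d) by three separate telescoping computations, one for each pair $(p,p)$, $(x,y)$, $(y,x)$, threading the conjugator $s=s_n\cdots s_1t_1$ through the word $s_{j-1}\cdots s_2\alpha_{1,2}s_2^{-1}\cdots s_{j-1}^{-1}$ and collapsing it step by step (with a letter-specific auxiliary shift identity in the $(x,y)$ case). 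You instead induct on $j$: the identity $ss_js^{-1}=s_{j-1}$ (not isolated as such in the paper, though close in spirit to its Lemma~\ref{lem_rel_6e_pre}) together with the uniform shift $(s_ms_{m+1})\beta_{m,m+1}(s_ms_{m+1})^{-1}=\beta_{m+1,m+2}$ reduces the step from $j$ to $j+1$ to bookkeeping that is independent of the letter, so all letter-specific work is confined to the single base case $j=2$. This buys a shorter and more structural argument that treats $p$, $x$, $y$ on the same footing, at the price of having to watch the index ranges — Auxiliary (i) only for $2\le j\le n$, relation (6)(b) only for $j-i\ge 2$, and the degenerate instances at $n=1$ and $j=n$ — all of which you do handle. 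Both arguments ultimately rest on the same two ingredients: the $j=2$ conjugation identities coming from (2)(e)--(g), and braid-type index shifting via (2)(a)--(b).
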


\begin{proof}
In the case of $(\alpha ,\beta )=(p,p)$ for $j=2$, we have
\begin{eqnarray*}
s\alpha _{1,2}s^{-1}&=&sp_{1,2}s^{-1}\\
&\overset{\text{(6)(a),(c)}}{\underset{}{=}}&(s_n\cdots s_2s_1\underset{ }{\underline{t_1)s_{1}s_{1}}}(t_1^{-1}s_1^{-1}s_2^{-1}\cdots s_n^{-1})\\
&\overset{\text{(2)(e)}}{\underset{}{=}}&(s_n\cdots s_2)s_{1}s_{1}(s_2^{-1}\cdots s_n^{-1})\\
&\overset{\text{(6)(a),(b)}}{\underset{}{=}}&p_{1,n+1}=\beta _{1,n+1}.
\end{eqnarray*}

For $j\geq 3$, we have
\begin{eqnarray*}
&&s\alpha _{1,j}s^{-1}=sp_{1,j}s^{-1}\\
&\overset{\text{(6)(a)-(c)}}{\underset{}{=}}&(s_n\cdots s_2s_1\underset{\rightarrow }{\underline{t_1}})s_{j-1}\cdots s_{3}s_{2}\cdot s_{1}s_{1}\cdot s_2^{-1}s_3^{-1}\cdots s_{j-1}^{-1}(t_1^{-1}s_1^{-1}s_2^{-1}\cdots s_n^{-1})\\
&\overset{\text{(1)(b)}}{\underset{\text{(2)(e)}}{=}}&(s_n\cdots s_2\underset{\rightarrow }{\underline{s_1}})s_{j-1}\cdots s_{3}s_{2}\cdot s_{1}s_{1}\cdot s_2^{-1}s_3^{-1}\cdots s_{j-1}^{-1}(\underset{\leftarrow }{\underline{s_1^{-1}}}s_2^{-1}\cdots s_n^{-1})\\
&\overset{\text{(1)(a)}}{\underset{}{=}}&(s_n\cdots s_3s_2)s_{j-1}\cdots s_{3}\cdot \underline{s_1s_{2}\cdot s_{1}}\ \underline{s_{1}\cdot s_2^{-1}s_1^{-1}}\cdot s_3^{-1}\cdots s_{j-1}^{-1}(s_2^{-1}s_3^{-1}\cdots s_n^{-1})\\
&\overset{\text{(2)(a)}}{\underset{}{=}}&(s_n\cdots s_3\underset{\rightarrow }{\underline{s_2}})s_{j-1}\cdots s_{3}\cdot s_2s_2\cdot s_3^{-1}\cdots s_{j-1}^{-1}(\underset{\leftarrow }{\underline{s_2^{-1}}}s_3^{-1}\cdots s_n^{-1})\\
&\overset{\text{(1)(a)}}{\underset{}{=}}&(s_n\cdots s_4s_3)s_{j-1}\cdots s_4\cdot \underline{s_2s_{3}\cdot s_2}\ \underline{s_2\cdot s_3^{-1}s_2^{-1}}\cdot s_4^{-1}\cdots s_{j-1}^{-1}(s_3^{-1}s_4^{-1}\cdots s_n^{-1})\\
&\overset{\text{(2)(a)}}{\underset{}{=}}&(s_n\cdots s_4s_3)s_{j-1}\cdots s_4\cdot s_3s_3\cdot s_4^{-1}\cdots s_{j-1}^{-1}(s_3^{-1}s_4^{-1}\cdots s_n^{-1})\\
&\vdots &\\
&\overset{}{\underset{}{=}}&(s_n\cdots s_{j-1}\underline{s_{j-2})s_{j-1}\cdot s_{j-2}}\ \underline{s_{j-2}\cdot s_{j-1}^{-1}(s_{j-2}^{-1}}s_{j-1}^{-1}\cdots s_n^{-1})\\
&\overset{\text{(2)(a)}}{\underset{}{=}}&(s_n\cdots s_{j})s_{j-1}^2(s_{j}^{-1}\cdots s_n^{-1})\\
&\overset{\text{(6)(a),(b)}}{\underset{}{=}}&p_{j-1,n+1}=\beta _{j-1,n+1}.
\end{eqnarray*}
Thus the relation (6) (d) in Proposition~\ref{prop_pres_LH} for $(\alpha ,\beta )=(p,p)$ is obtained from the relations~(1), (2), and (6) (a)--(c) in Proposition~\ref{prop_pres_LH}. 

In the case of $(\alpha ,\beta )=(x,y)$ for $j=2$, first, we have
\begin{eqnarray}
\text{the rel. (2) (g)}&\Longleftrightarrow & \underline{t_is_{i}^2}r_i=r_is_i\underline{s_it_{i+1}}\overset{\text{(2)(e)}}{\underset{}{\Longleftrightarrow }}s_{i}\underline{s_{i}t_ir_i}=\underline{r_i}s_it_{i}s_i\notag \\
&\overset{\text{CONJ}}{\underset{}{\Longleftrightarrow }}& r_i^{-1}s_{i}=s_it_{i}s_ir_i^{-1}t_i^{-1}s_{i}^{-1},\label{rel_2g} 
\end{eqnarray}
Hence we have  
\begin{eqnarray*}
s\alpha _{1,2}s^{-1}&=&sx_{1,2}s^{-1}\\
&\overset{\text{(6)(a),(c)}}{\underset{}{=}}&(s_n\cdots s_2\underline{s_1t_1)s_{1}r_{1}^{-1}(t_1^{-1}s_1^{-1}}s_2^{-1}\cdots s_n^{-1})\\
&\overset{\text{(2)(e),(g)}}{\underset{}{=}}&(s_n\cdots s_2)r_{1}^{-1}s_{1}(s_2^{-1}\cdots s_n^{-1})\\
&\overset{\text{(6)(a),(b)}}{\underset{}{=}}&y_{1,n+1}=\beta _{1,n+1}.
\end{eqnarray*}
For $j\geq 3$, first, we have
\begin{eqnarray*}
\underline{s_is_{i+1}t_i}s_{i}r_{i}^{-1}\underline{t_i^{-1}s_{i+1}^{-1}s_i^{-1}}&\overset{\text{(1)(b)}}{\underset{\text{(2)(e)}}{=}}&t_{i+1}\underline{s_is_{i+1}s_{i}}r_{i}^{-1}s_{i+1}^{-1}s_i^{-1}t_{i+1}^{-1}\\
&\overset{\text{(2)(a)}}{\underset{}{=}}&t_{i+1}s_{i+1}\underline{s_is_{i+1}r_{i}^{-1}}s_{i+1}^{-1}s_i^{-1}t_{i+1}^{-1}\\
&\overset{\text{(2)(b)}}{\underset{}{=}}&t_{i+1}s_{i+1}r_{i+1}^{-1}t_{i+1}^{-1}
\end{eqnarray*}
for $1\leq i\leq n-1$. 
Hence, by using this relation inductively, we have
\begin{eqnarray*}
&&s\alpha _{1,j}s^{-1}=sx_{1,j}s^{-1}\\
&\overset{\text{(6)(a)-(c)}}{\underset{}{=}}&(s_n\cdots s_2\underset{\rightarrow }{\underline{s_1t_1}})s_{j-1}\cdots s_{3}s_{2}\cdot s_{1}r_{1}^{-1}\cdot s_2^{-1}s_3^{-1}\cdots s_{j-1}^{-1}(\underset{\leftarrow }{\underline{t_1^{-1}s_1^{-1}}}s_2^{-1}\cdots s_n^{-1})\\
&\overset{\text{(1)(a)}}{\underset{\text{(1)(b)}}{=}}&(s_n\cdots s_2)s_{j-1}\cdots s_{3}\cdot \underline{s_1s_{2}t_1s_{1}r_{1}^{-1}t_1^{-1}s_2^{-1}s_1^{-1}}\cdot s_3^{-1}\cdots s_{j-1}^{-1}(s_2^{-1}\cdots s_n^{-1})\\
&\overset{}{\underset{}{=}}&(s_n\cdots \underset{\rightarrow }{\underline{s_2}})s_{j-1}\cdots s_{3}\cdot t_2s_{2}r_{2}^{-1}t_2^{-1}\cdot s_3^{-1}\cdots s_{j-1}^{-1}(\underset{\leftarrow }{\underline{s_2^{-1}}}\cdots s_n^{-1})\\
&\overset{\text{(1)(a)}}{\underset{}{=}}&(s_n\cdots s_3)s_{j-1}\cdots s_{4}\cdot \underline{s_2s_3t_2s_{2}r_{2}^{-1}t_2^{-1}s_3^{-1}s_2^{-1}}\cdot s_4^{-1}\cdots s_{j-1}^{-1}(s_3^{-1}\cdots s_n^{-1})\\
&\overset{}{\underset{}{=}}&(s_n\cdots s_3)s_{j-1}\cdots s_{4}\cdot t_3s_{3}r_{3}^{-1}t_3^{-1}\cdot s_4^{-1}\cdots s_{j-1}^{-1}(s_3^{-1}\cdots s_n^{-1})\\
&\vdots &\\
&\overset{}{\underset{}{=}}&(s_n\cdots s_{j-1}\underline{s_{j-2})s_{j-1}\cdot t_{j-2}s_{j-2}r_{j-2}^{-1}t_{j-2}^{-1}\cdot s_{j-1}^{-1}(s_{j-2}^{-1}}s_{j-1}^{-1}\cdots s_n^{-1})\\
&\overset{}{\underset{}{=}}&(s_n\cdots s_{j}s_{j-1})t_{j-1}s_{j-1}r_{j-1}^{-1}t_{j-1}^{-1}(s_{j-1}^{-1}s_{j}^{-1}\cdots s_n^{-1})\\
&\overset{}{\underset{}{=}}&(s_n\cdots s_{j}\underline{s_{j-1})t_{j-1}s_{j-1}r_{j-1}^{-1}t_{j-1}^{-1}(s_{j-1}^{-1}}s_{j}^{-1}\cdots s_n^{-1})\\
&\overset{\text{(\ref{rel_2g})}}{\underset{}{=}}&(s_n\cdots s_{j+1}s_{j})r_{j-1}^{-1}s_{j-1}(s_{j}^{-1}s_{j+1}^{-1}\cdots s_n^{-1})\\
&\overset{\text{(6)(a),(b)}}{\underset{}{=}}&y_{j-1,n+1}=\beta _{j-1,n+1}.
\end{eqnarray*}
Thus the relation (6) (d) in Proposition~\ref{prop_pres_LH} for $(\alpha ,\beta )=(x,y)$ is obtained from the relations~(1), (2), and (6) (a)--(c) in Proposition~\ref{prop_pres_LH}. 

In the case of $(\alpha ,\beta )=(y,x)$ for $j=2$, we have
\begin{eqnarray*}
s\alpha _{1,2}s^{-1}&=&sy_{1,2}s^{-1}\\
&\overset{\text{(6)(a),(c)}}{\underset{}{=}}&(s_n\cdots s_2s_1\underline{t_1)r_{1}^{-1}}\ \underline{s_{1}(t_1^{-1}}s_1^{-1}s_2^{-1}\cdots s_n^{-1})\\
&\overset{\text{(2)(e),(f)}}{\underset{}{=}}&(s_n\cdots s_2)s_1r_{1}^{-1}(s_2^{-1}\cdots s_n^{-1})\\
&\overset{\text{(6)(a),(b)}}{\underset{}{=}}&x_{1,n+1}=\beta _{1,n+1}.
\end{eqnarray*}
For $j\geq 3$, we have
\begin{eqnarray*}
&&s\alpha _{1,j}s^{-1}=sy_{1,j}s^{-1}\\
&\overset{\text{(6)(a)-(c)}}{\underset{}{=}}&(s_n\cdots s_2s_1\underset{\rightarrow }{\underline{t_1}})s_{j-1}\cdots s_{3}s_{2}\cdot r_{1}^{-1}s_{1}\cdot s_2^{-1}s_3^{-1}\cdots s_{j-1}^{-1}(t_1^{-1}s_1^{-1}s_2^{-1}\cdots s_n^{-1})\\
&\overset{\text{(1)(b)}}{\underset{\text{(2)(e),(f)}}{=}}&(s_n\cdots s_2\underset{\rightarrow }{\underline{s_1}})s_{j-1}\cdots s_{3}s_{2}\cdot r_{1}^{-1}s_{1}\cdot s_2^{-1}s_3^{-1}\cdots s_{j-1}^{-1}(\underset{\leftarrow }{\underline{s_1^{-1}}}s_2^{-1}\cdots s_n^{-1})\\
&\overset{\text{(1)(a)}}{\underset{}{=}}&(s_n\cdots s_3s_2)s_{j-1}\cdots s_{3}\cdot \underline{s_1s_{2}\cdot r_{1}^{-1}}\ \underline{s_{1}\cdot s_2^{-1}s_1^{-1}}\cdot s_3^{-1}\cdots s_{j-1}^{-1}(s_2^{-1}s_3^{-1}\cdots s_n^{-1})\\
&\overset{\text{(2)(a),(b)}}{\underset{}{=}}&(s_n\cdots s_3\underset{\rightarrow }{\underline{s_2}})s_{j-1}\cdots s_{3}\cdot r_{2}^{-1}s_2\cdot s_3^{-1}\cdots s_{j-1}^{-1}(\underset{\leftarrow }{\underline{s_2^{-1}}}s_3^{-1}\cdots s_n^{-1})\\
&\overset{\text{(1)(b)}}{\underset{}{=}}&(s_n\cdots s_4s_3)s_{j-1}\cdots s_4\cdot \underline{s_2s_{3}\cdot r_{2}^{-1}}\ \underline{s_2\cdot s_3^{-1}s_2^{-1}}\cdot s_4^{-1}\cdots s_{j-1}^{-1}(s_3^{-1}s_4^{-1}\cdots s_n^{-1})\\
&\overset{\text{(2)(a),(b)}}{\underset{}{=}}&(s_n\cdots s_4s_3)s_{j-1}\cdots s_4\cdot r_{3}^{-1}s_3\cdot s_4^{-1}\cdots s_{j-1}^{-1}(s_3^{-1}s_4^{-1}\cdots s_n^{-1})\\
&\vdots &\\
&\overset{}{\underset{}{=}}&(s_n\cdots s_js_{j-1}\underline{s_{j-2})s_{j-1}\cdot r_{j-2}^{-1}}\ \underline{s_{j-2}\cdot s_{j-1}^{-1}(s_{j-2}^{-1}}s_{j-1}^{-1}s_j^{-1}\cdots s_n^{-1})\\
&\overset{\text{(2)(a),(b)}}{\underset{}{=}}&(s_n\cdots s_{j})s_{j-1}r_{j-1}^{-1}(s_{j}^{-1}\cdots s_n^{-1})\\
&\overset{\text{(6)(a),(b)}}{\underset{}{=}}&x_{j-1,n+1}=\beta _{j-1,n+1}.
\end{eqnarray*}
Therefore, the relation (6) (d) in Proposition~\ref{prop_pres_LH} for $(\alpha ,\beta )=(y,x)$ is obtained from the relations~(1), (2), and (6) (a)--(c) in Proposition~\ref{prop_pres_LH} and we have completed the proof of Lemma~\ref{lem_rel_6d}. 
\end{proof}

To prove that the relation (6) (e) in Proposition~\ref{prop_pres_LH} is obtained from the relations~(1), (2), and (6) (a)--(c) in Proposition~\ref{prop_pres_LH}, we prepaire the following lemma.

\begin{lem}\label{lem_rel_6e_pre}
For $2\leq i\leq n$, $\varepsilon \in \{ 1,-1\}$, and $\alpha \in \{ p, x, y\}$, the relation 
\[
s_{i-1}^\varepsilon \alpha _{i,i+1}s_{i-1}^{-\varepsilon }=s_i^{-\varepsilon }\alpha _{i-1,i}s_i^\varepsilon 
\]
is obtained from the relations~(1), (2), and (6) (a)--(c) in Proposition~\ref{prop_pres_LH}.  
\end{lem}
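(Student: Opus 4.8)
The plan is to verify the identity separately in the three cases $\alpha =p$, $\alpha =x$, $\alpha =y$, and within each case for both signs $\varepsilon \in \{1,-1\}$; in every case the first step is to eliminate $\alpha _{i,i+1}$ and $\alpha _{i-1,i}$ in favour of $s$'s and $r$'s by relation~(6)~(a).

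For $\alpha =p$ relation~(6)~(a) turns the claim into $s_{i-1}^{\varepsilon }s_i^2s_{i-1}^{-\varepsilon }=s_i^{-\varepsilon }s_{i-1}^2s_i^{\varepsilon }$. Rewriting the braid relation~(2)~(a) for $\alpha =s$ (with index $i-1$) in the form $s_{i-1}^{\varepsilon }s_is_{i-1}^{-\varepsilon }=s_i^{-\varepsilon }s_{i-1}s_i^{\varepsilon }$, which holds for both signs, and squaring gives exactly this; so the $p$-case uses only~(2)~(a).

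For $\alpha =x$ relation~(6)~(a) reduces the claim to a statement about $s_ir_i^{-1}$ and $s_{i-1}r_{i-1}^{-1}$. The key move is to apply relation~(2)~(b) with \emph{exponent $-\varepsilon $} and indices $(i-1,i)$, namely $s_{i-1}^{-\varepsilon }s_i^{-\varepsilon }r_{i-1}=r_is_{i-1}^{-\varepsilon }s_i^{-\varepsilon }$, i.e.\ $r_{i-1}^{\mp 1}=s_i^{\varepsilon }s_{i-1}^{\varepsilon }r_i^{\mp 1}s_{i-1}^{-\varepsilon }s_i^{-\varepsilon }$, to replace the $r_{i-1}$-term on the right-hand side by a conjugate of $r_i^{\mp 1}$ by the two-letter word $s_i^{\varepsilon }s_{i-1}^{\varepsilon }$. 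After substitution the $r_i^{\mp 1}$ is untouched and the surrounding product of $s$'s collapses by a single application of the braid relation~(2)~(a) together with a cancellation (e.g.\ for $\varepsilon =1$ the right-hand side becomes $s_i^{-1}s_{i-1}s_is_{i-1}\,r_i^{-1}\,s_{i-1}^{-1}$ and $s_i^{-1}s_{i-1}s_is_{i-1}=s_{i-1}s_i$), yielding the left-hand side. The $y$-case is handled identically after replacing $s_ir_i^{-1}$ by $r_i^{-1}s_i$ and $s_{i-1}r_{i-1}^{-1}$ by $r_{i-1}^{-1}s_{i-1}$; the same sign $-\varepsilon $ in~(2)~(b) works.

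The computation is essentially bookkeeping, and the only genuine decision — hence the only place one must be careful — is choosing the variant of the mixed relation~(2)~(b) with exponent $-\varepsilon $ rather than $+\varepsilon $, since this is precisely what makes the leftover $s$-word a length-three braid word that~(2)~(a) can absorb. I note that neither the commutative relations~(1), nor the braid relations among the $r_i$, nor any of the $t$-relations in~(2) are actually needed here, although invoking the full blocks~(1), (2), and~(6)~(a)--(c) as in the statement is harmless.
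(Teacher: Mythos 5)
Your proposal is correct and follows essentially the same route as the paper's proof: rewrite $\alpha_{i,i+1}$ and $\alpha_{i-1,i}$ via relation~(6)~(a), use relation~(2)~(b) with exponent $-\varepsilon$ (indices $(i-1,i)$) to trade $r_{i-1}^{-1}$ for a conjugate of $r_i^{-1}$, and absorb the leftover $s$-word with one application of the braid relation~(2)~(a); the paper simply runs the same computation from the left-hand side rather than substituting into the right-hand side. Your observations that the $p$-case needs only~(2)~(a) and that relations~(1) and the $r$- and $t$-relations are not actually used also match the paper's argument.
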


\begin{proof}
For $2\leq i\leq n$ and $\varepsilon \in \{ 1,-1\}$, since we have
\begin{enumerate}
\item[(p)] $s_{i-1}^\varepsilon s_i^2s_{i-1}^{-\varepsilon }=\underline{s_{i-1}^\varepsilon s_is_{i-1}^{-\varepsilon }}\ \underline{s_{i-1}^\varepsilon s_is_{i-1}^{-\varepsilon }}\overset{\text{(2)(a)}}{\underset{}{=}}s_i^{-\varepsilon }s_{i-1}^2s_i^\varepsilon $,
\item[(x)] $s_{i-1}^\varepsilon s_ir_i^{-1}s_{i-1}^{-\varepsilon }=s_{i-1}^\varepsilon s_i\underline{r_i^{-1}s_{i-1}^{-\varepsilon }s_{i}^{-\varepsilon }}s_i^\varepsilon \overset{\text{(2)(b)}}{\underset{}{=}}\underline{s_{i-1}^\varepsilon s_is_{i-1}^{-\varepsilon }}s_{i}^{-\varepsilon }r_{i-1}^{-1}s_i^\varepsilon $\\
\hspace{1.9cm} $\overset{\text{(2)(a)}}{\underset{}{=}}s_i^{-\varepsilon }s_{i-1}r_{i-1}^{-1}s_i^\varepsilon $,
\item[(y)] $s_{i-1}^\varepsilon r_i^{-1}s_is_{i-1}^{-\varepsilon }=s_{i-1}^\varepsilon r_i^{-1}s_{i-1}^{-\varepsilon }\underline{s_{i-1}^\varepsilon s_is_{i-1}^{-\varepsilon }}\overset{\text{(2)(b)}}{\underset{}{=}}s_{i-1}^\varepsilon \underline{r_i^{-1}s_{i-1}^{-\varepsilon }s_{i}^{-\varepsilon }}s_{i-1}s_i^\varepsilon $\\
\hspace{1.9cm} $\overset{\text{(2)(a)}}{\underset{}{=}}s_i^{-\varepsilon }r_{i-1}^{-1}s_{i-1}s_i^\varepsilon $,
\end{enumerate}
the relation $s_{i-1}^\varepsilon \alpha _{i,i+1}s_{i-1}^{-\varepsilon }=s_i^{-\varepsilon }\alpha _{i-1,i}s_i^\varepsilon $ for $\alpha \in \{ p, x, y\}$ is obtained from the relations~(1), (2), and (6) (a)--(c) in Proposition~\ref{prop_pres_LH}.  
Therefore, we have completed the proof of Lemma~\ref{lem_rel_6e_pre}. 
\end{proof}

\begin{lem}\label{lem_rel_6e}
The relation (6) (e) in Proposition~\ref{prop_pres_LH} is obtained from the relations~(1), (2), and (6) (a)--(c) in Proposition~\ref{prop_pres_LH}. 
\end{lem}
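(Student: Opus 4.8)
The plan is to show that conjugation by $s=s_n\cdots s_2s_1t_1$ implements a ``downward shift'' of the double-point indices on the relevant generators, and then to deduce~(6)(e) from two base computations together with the defining relation~(6)(b).

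First I would prove that $ss_ks^{-1}=s_{k-1}$ for $2\le k\le n$. By~(6)(c) we have $s=s_n\cdots s_2s_1t_1$, and since $1\ne k,k+1$ the relation~(1)(b) gives $t_1\rightleftarrows s_k$, so
\[
ss_ks^{-1}=(s_n\cdots s_{k+1})(s_ks_{k-1}\cdots s_1)\,s_k\,(s_1^{-1}\cdots s_{k-1}^{-1}s_k^{-1})(s_{k+1}^{-1}\cdots s_n^{-1}).
\]
By~(1)(a) the factors $s_1,\dots,s_{k-2}$ commute with $s_k$, so the inner conjugation collapses to $s_ks_{k-1}s_ks_{k-1}^{-1}s_k^{-1}$, and the braid relation~(2)(a) (with $\alpha=s$, index $k-1$) turns this into $s_{k-1}$; since $s_{k-1}$ commutes with $s_{k+1},\dots,s_n$ by~(1)(a), the outer factors cancel and $ss_ks^{-1}=s_{k-1}$. (For $k=n$ the block $s_n\cdots s_{k+1}$ is empty; for $k=2$ there are no factors $s_1,\dots,s_{k-2}$ to discard.)

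Next I would settle the case $j=i+1$ of~(6)(e), namely $s\alpha_{i,i+1}s^{-1}=\alpha_{i-1,i}$ for $2\le i\le n$ and $\alpha\in\{p,x,y\}$. By~(6)(a) the element $\alpha_{i,i+1}$ is a word in $s_i$ and $r_i$; since $i\ge 2$, both commute with $t_1$ by~(1)(b) and with every $s_j$ having $|j-i|\ge 2$ by~(1)(a). Discarding those factors of $s$ exactly as above leaves $s\alpha_{i,i+1}s^{-1}=s_{i+1}s_is_{i-1}\,\alpha_{i,i+1}\,s_{i-1}^{-1}s_i^{-1}s_{i+1}^{-1}$ (the outermost $s_{i+1}$ absent when $i=n$). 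Here Lemma~\ref{lem_rel_6e_pre} with $\varepsilon=1$ gives $s_is_{i-1}\,\alpha_{i,i+1}\,s_{i-1}^{-1}s_i^{-1}=\alpha_{i-1,i}$, and $\alpha_{i-1,i}$ --- a word in $s_{i-1},r_{i-1}$ --- commutes with $s_{i+1}$ by~(1)(a); hence $s\alpha_{i,i+1}s^{-1}=\alpha_{i-1,i}$.

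Finally, for the general range $2\le i<j-1\le n$ I would use~(6)(b) to write $\alpha_{i,j}=(s_{j-1}\cdots s_{i+1})\,\alpha_{i,i+1}\,(s_{i+1}^{-1}\cdots s_{j-1}^{-1})$, conjugate by $s$, and substitute: each $k$ with $i+1\le k\le j-1$ satisfies $2\le k\le n$, so the first step gives $ss_ks^{-1}=s_{k-1}$, and the second step gives $s\alpha_{i,i+1}s^{-1}=\alpha_{i-1,i}$. This yields $s\alpha_{i,j}s^{-1}=(s_{j-2}\cdots s_i)\,\alpha_{i-1,i}\,(s_i^{-1}\cdots s_{j-2}^{-1})$, which equals $\alpha_{i-1,j-1}$ by~(6)(b) applied to the indices $i-1<j-1$ (legitimate since $1\le i-1<(j-1)-1\le n$). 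I expect the only delicate point to be the bookkeeping of commutations in the first two steps --- verifying that precisely the three consecutive factors $s_{i-1},s_i,s_{i+1}$ of $s$ fail to commute with the element being conjugated, and that the extreme indices $i=n$ (no $s_{n+1}$) and $i=2$ (no $s_0$) cause no trouble; once that is in order, the rest is one braid relation plus the already-proved Lemma~\ref{lem_rel_6e_pre}.
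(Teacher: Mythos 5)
Your argument is correct and rests on the same two pillars as the paper's proof: the base case $j=i+1$ handled by Lemma~\ref{lem_rel_6e_pre} together with the commutativity relations, and the general case obtained by conjugating the expression for $\alpha_{i,j}$ from (6)(b) by $s$. The difference is organizational: the paper proves the range $2\le i<j-1\le n$ by one long in-line computation, pushing $s_{i-1}$ and then $s_i$ through the word $s_{j-1}\cdots s_{i+1}$ with repeated uses of (1)(a) and (2)(a), whereas you first isolate the identity $ss_ks^{-1}=s_{k-1}$ for $2\le k\le n$ (a correct consequence of (6)(c), (1)(a), (1)(b), (2)(a)) and then conjugate the (6)(b) word letter by letter, so the general case collapses to one application of (6)(b) at the shifted indices $(i-1,j-1)$, whose hypotheses you check correctly. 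This modular route is easier to verify than the paper's computation, at the cost of one extra, easy auxiliary identity; the ingredients used stay within (1), (2), (6)(a)--(c) as required. One bookkeeping caveat in your second step: the outer letters $s_{i+2},\dots,s_n$ of $s$ cannot be discarded merely because they commute with $\alpha_{i,i+1}$, since they lie outside the block $s_{i+1}s_is_{i-1}$; they may be dropped only after the inner conjugate has been computed to equal $\alpha_{i-1,i}$, a word in $s_{i-1},r_{i-1}$ with which they commute by (1)(a) --- exactly the maneuver you already carry out for $s_{i+1}$ and in your first step --- so the fix is a reordering of the justification rather than a mathematical gap, and the proof goes through.
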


\begin{proof}
For $2\leq i=j-1\leq n$ and $\alpha \in \{ p, x, y\}$, we have
\begin{eqnarray*}
s\alpha _{i,i+1}s^{-1}
&\overset{\text{(6)(c)}}{\underset{}{=}}&(s_n\cdots s_{i-1}\underset{\rightarrow }{\underline{s_{i-2}\cdots s_1t_1}})\alpha _{i,i+1}(t_1^{-1}s_1^{-1}s_2^{-1}\cdots s_n^{-1})\\
&\overset{\text{(1)}}{\underset{\text{(6)(a)}}{=}}&(s_n\cdots s_{i}\underline{s_{i-1})\alpha _{i,i+1}(s_{i-1}^{-1}}s_{i}^{-1}\cdots s_n^{-1})\\
&\overset{\text{Lem.\ref{lem_rel_6e_pre}}}{\underset{}{=}}&(s_n\cdots s_{i})s_i^{-1}\alpha _{i-1,i}s_i(s_{i}^{-1}\cdots s_n^{-1})\\
&\overset{}{\underset{}{=}}&\underset{\rightarrow }{\underline{(s_n\cdots s_{i+1})}}\alpha _{i-1,i}(s_{i+1}^{-1}\cdots s_n^{-1})\\
&\overset{\text{(1)}}{\underset{\text{(6)(a)}}{=}}&\alpha _{i-1,i}. 
\end{eqnarray*}
Thus, the relation (6) (e) in Proposition~\ref{prop_pres_LH} for $2\leq i=j-1\leq n$ is obtained from the relations~(1), (2), and (6) (a)--(c) in Proposition~\ref{prop_pres_LH}.  

For $2\leq i<j-1\leq n$ and $\alpha \in \{ p, x, y\}$, we have
\begin{eqnarray*}
&&s\alpha _{i,j}s^{-1}\\
&\overset{\text{(6)(b)}}{\underset{\text{(6)(c)}}{=}}&(s_n\cdots s_{i-1}\underset{\rightarrow }{\underline{s_{i-2}\cdots s_1t_1}})s_{j-1}\cdots s_{i+2}s_{i+1}\alpha _{i,i+1}s_{i+1}^{-1}s_{i+2}^{-1}\cdots s_{j-1}^{-1}(t_1^{-1}s_1^{-1}s_2^{-1}\cdots s_n^{-1})\\
&\overset{\text{(1)}}{\underset{\text{(6)(a)}}{=}}&(s_n\cdots s_{i}\underset{\rightarrow }{\underline{s_{i-1}}})s_{j-1}\cdots s_{i+2}s_{i+1}\alpha _{i,i+1}s_{i+1}^{-1}s_{i+2}^{-1}\cdots s_{j-1}^{-1}(\underset{\leftarrow }{\underline{s_{i-1}^{-1}}}s_{i}^{-1}\cdots s_n^{-1})\\
&\overset{\text{(1)(a)}}{\underset{}{=}}&(s_n\cdots s_{i+1}s_{i})s_{j-1}\cdots s_{i+2}s_{i+1}\cdot \underline{s_{i-1}\alpha _{i,i+1}s_{i-1}^{-1}}\cdot s_{i+1}^{-1}s_{i+2}^{-1}\cdots s_{j-1}^{-1}(s_{i}^{-1}s_{i+1}^{-1}\cdots s_n^{-1})\\
&\overset{\text{Lem.\ref{lem_rel_6e_pre}}}{\underset{}{=}}&(s_n\cdots s_{i+1}\underset{\rightarrow }{\underline{s_{i}}})s_{j-1}\cdots s_{i+2}s_{i+1}\cdot s_i^{-1}\alpha _{i-1,i}s_i\cdot s_{i+1}^{-1}s_{i+2}^{-1}\cdots s_{j-1}^{-1}(\underset{\leftarrow }{\underline{s_{i}^{-1}}}s_{i+1}^{-1}\cdots s_n^{-1})\\
&\overset{\text{(1)(a)}}{\underset{}{=}}&(s_n\cdots s_{i+2}s_{i+1})s_{j-1}\cdots s_{i+2}\cdot \underline{s_is_{i+1}s_i^{-1}}\alpha _{i-1,i}\underline{s_is_{i+1}^{-1}s_i^{-1}}\\
&&\cdot s_{i+2}^{-1}\cdots s_{j-1}^{-1}(s_{i+1}^{-1}s_{i+2}^{-1}\cdots s_n^{-1})\\
&\overset{\text{(2)(a)}}{\underset{\text{(1)}}{=}}&(s_n\cdots s_{i+2}\underset{\rightarrow }{\underline{s_{i+1}}})s_{j-1}\cdots s_{i+2}\cdot s_{i+1}^{-1}\cdot s_i\alpha _{i-1,i}s_i^{-1}\cdot s_{i+1}\\
&&\cdot s_{i+2}^{-1}\cdots s_{j-1}^{-1}(\underset{\leftarrow }{\underline{s_{i+1}^{-1}}}s_{i+2}^{-1}\cdots s_n^{-1})\\
&\overset{\text{(1)(a)}}{\underset{}{=}}&(s_n\cdots s_{i+3}s_{i+2})s_{j-1}\cdots s_{i+3}\cdot \underline{s_{i+1}s_{i+2}s_{i+1}^{-1}}s_i\alpha _{i-1,i}s_i^{-1}\underline{s_{i+1}s_{i+2}^{-1}s_{i+1}^{-1}}\\
&&\cdot s_{i+3}^{-1}\cdots s_{j-1}^{-1}(s_{i+2}^{-1}s_{i+3}^{-1}\cdots s_n^{-1})\\
&\overset{\text{(2)(a)}}{\underset{\text{(1)}}{=}}&(s_n\cdots s_{i+3}s_{i+2})s_{j-1}\cdots s_{i+3}\cdot s_{i+2}^{-1}\cdot s_{i+1}s_i\alpha _{i-1,i}s_i^{-1}s_{i+1}^{-1}\cdot s_{i+2}\\
&&\cdot s_{i+3}^{-1}\cdots s_{j-1}^{-1}(s_{i+2}^{-1}s_{i+3}^{-1}\cdots s_n^{-1})\\
&\vdots &\\
&\overset{}{\underset{}{=}}&(s_n\cdots s_{j-1}\underline{s_{j-2})s_{j-1}\cdot s_{j-2}^{-1}}\cdot s_{j-3}\cdots s_{i+1}s_i\alpha _{i-1,i}s_i^{-1}s_{i+1}^{-1}\cdots s_{j-3}^{-1}\\
&&\cdot \underline{s_{j-2}^{-1}\cdot s_{j-1}^{-1}(s_{j-2}^{-1}}s_{j-1}^{-1}\cdots s_n^{-1})\\
&\overset{\text{(2)(a)}}{\underset{\text{(1)}}{=}}&\underset{\rightarrow }{\underline{(s_n\cdots s_{j})}}s_{j-2}\cdots s_{i+1}s_i\alpha _{i-1,i}s_i^{-1}s_{i+1}^{-1}\cdots s_{j-2}^{-1}(s_{j}^{-1}\cdots s_n^{-1})\\
&\overset{\text{(1)}}{\underset{\text{(6)(a)}}{=}}&s_{j-2}\cdots s_{i+1}s_i\alpha _{i-1,i}s_i^{-1}s_{i+1}^{-1}\cdots s_{j-2}^{-1}\\
&\overset{\text{(6)(b)}}{\underset{}{=}}&\alpha _{i-1,j-1}. 
\end{eqnarray*}
Therefore, we have completed the proof of Lemma~\ref{lem_rel_6e}. 
\end{proof}

By the relations~(1) (d), (6) (a), and (b), the next lemma holds immediately. 

\begin{lem}\label{lem_rel_6f}
The relation (6) (f) in Proposition~\ref{prop_pres_LH} is obtained from the relations~(1), (2), and (6) (a)--(c) in Proposition~\ref{prop_pres_LH}. 
\end{lem}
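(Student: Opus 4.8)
The plan is to reduce the claimed commutativity $r \rightleftarrows p_{i,j}$ to the special case $j = i+1$ via the conjugation formula (6)(b), and to handle that special case by squaring relation (1)(d).

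First I would establish the base case. Relation (6)(a) in Proposition~\ref{prop_pres_LH} identifies $p_{i,i+1}$ with $s_i^2$ for $1\leq i\leq n$, and relation (1)(d) gives $s_i r = r s_i$. Multiplying this on the left by $s_i$ and substituting once more yields
\[
s_i^2 r = s_i r s_i = r s_i^2,
\]
which is precisely $r \rightleftarrows p_{i,i+1}$.

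Next, for $j - 1 > i$, relation (6)(b) writes
\[
p_{i,j} = s_{j-1}\cdots s_{i+2}s_{i+1}\, p_{i,i+1}\, s_{i+1}^{-1}s_{i+2}^{-1}\cdots s_{j-1}^{-1}.
\]
Since $j \leq n+1$, every conjugating generator $s_k$ here has index $i+1 \leq k \leq j-1 \leq n$, so relation (1)(d) gives $s_k \rightleftarrows r$ for each of them. Together with the base case $r \rightleftarrows p_{i,i+1}$, this shows that $r$ commutes with every factor on the right-hand side, hence with $p_{i,j}$ itself. Thus relation (6)(f) is a consequence of (1), (2), and (6)(a)--(c).

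There is no real obstacle: the only point to watch is the index range in the word from (6)(b), namely that it stays within $\{1,\dots,n\}$ so that (1)(d) applies, which is forced by $j \leq n+1$. This is exactly why the lemma follows at once, and the proof amounts to the two-line computation above plus this index bookkeeping.
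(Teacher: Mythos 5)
Your proof is correct and is essentially the paper's argument: the paper disposes of (6)(f) in one line by noting it follows immediately from (1)(d), (6)(a), and (6)(b), which is exactly your reduction of $r \rightleftarrows p_{i,j}$ to $r \rightleftarrows s_k$ via $p_{i,i+1}=s_i^2$ and the conjugation formula.
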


\begin{lem}\label{lem_rel_6g}
The relation (6) (g) in Proposition~\ref{prop_pres_LH} is obtained from the relations~(1), (2), and (6) (a)--(c) in Proposition~\ref{prop_pres_LH}. 
\end{lem}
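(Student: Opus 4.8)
The plan is to reduce relation~(6)(g) to the case $j=i+1$ by a conjugation argument, and then to verify that base case directly from the relations among $s_i$, $r_i$, and $r$. Throughout, the only feature of relation~(1)(d) that I use is that $r$ commutes with every $s_k$; note also that~(6)(g) concerns only $\alpha\in\{x,y\}$, the case $\alpha=p$ being covered by~(6)(f).

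First I would treat the base case $j=i+1$. By~(6)(a) we have $x_{i,i+1}=s_ir_i^{-1}$, $y_{i,i+1}=r_i^{-1}s_i$, and $p_{i,i+1}=s_i^2$, so after substitution the two instances of~(6)(g) become
\[
rs_ir_i^{-1}r^{-1}=r_is_i \qquad\text{and}\qquad rr_i^{-1}s_ir^{-1}=s_i^{-1}r_is_i^2 .
\]
The crucial identity is $rr_i^{-1}r^{-1}=s_i^{-1}r_is_i$: rewriting $rs_i$ as $s_ir$ on both sides of relation~(2)(d) ($r_irs_i=rs_ir_i^{-1}$) gives $r_is_ir=s_irr_i^{-1}$, and then multiplying on the left by $s_i^{-1}$ and on the right by $r^{-1}$ yields $s_i^{-1}r_is_i=rr_i^{-1}r^{-1}$. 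Granting this, the first displayed relation follows from $rs_ir_i^{-1}r^{-1}=s_i(rr_i^{-1}r^{-1})=s_is_i^{-1}r_is_i=r_is_i$ (using~(1)(d) in the first equality), and the second from $rr_i^{-1}s_ir^{-1}=(rr_i^{-1}r^{-1})(rs_ir^{-1})=(s_i^{-1}r_is_i)s_i$ (again using~(1)(d)).

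Next, for $j>i+1$ put $w=s_{j-1}s_{j-2}\cdots s_{i+1}$; by~(1)(d) the element $r$ commutes with each $s_k$ occurring in $w$, hence with $w$ itself. Relation~(6)(b) gives $\alpha_{i,j}=w\,\alpha_{i,i+1}\,w^{-1}$ for each $\alpha\in\{p,x,y\}$ (so in particular $\alpha_{i,j}^{-1}=w\,\alpha_{i,i+1}^{-1}\,w^{-1}$ and $p_{i,j}=w\,p_{i,i+1}\,w^{-1}$), whence
\begin{align*}
r\alpha_{i,j}r^{-1}
&=w\,(r\alpha_{i,i+1}r^{-1})\,w^{-1}
 =w\,(\alpha_{i,i+1}^{-1}p_{i,i+1})\,w^{-1}\\
&=(w\alpha_{i,i+1}^{-1}w^{-1})(w p_{i,i+1} w^{-1})
 =\alpha_{i,j}^{-1}p_{i,j},
\end{align*}
the second equality being the base case just established. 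Since~(6)(a) supplies the generators $x_{i,i+1},y_{i,i+1},p_{i,i+1}$ for $j=i+1$ and~(6)(b) the generators $\alpha_{i,j}$ for $i+1<j\le n+1$, this covers all $1\le i<j\le n+1$.

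The only step calling for real care is the base case: one must check that relation~(2)(d)---which is phrased purely in terms of $r_i$ and $r$ and mentions none of the $p,x,y$ generators---does, after the substitutions of~(6)(a), encode exactly the conjugation action of $r$ on $x_{i,i+1}$ and $y_{i,i+1}$. Everything beyond that is a formal manipulation using only the commutativity relation~(1)(d), so I anticipate no genuine obstacle.
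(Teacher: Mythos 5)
Your proof is correct and takes essentially the same route as the paper: both first rewrite relation~(2)(d), using~(1)(d), into the identity $s_i^{-1}r_is_i=rr_i^{-1}r^{-1}$ to settle the case $j=i+1$ via the substitutions of~(6)(a), and then conjugate by $s_{j-1}\cdots s_{i+1}$ through~(6)(b) and~(1)(d) to obtain the general case.
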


\begin{proof}
First, up to the relations~(1) (d), the relation~(2) (d) is equivalent to the following relation. 
\begin{eqnarray*}
r_irs_{i}=rs_{i}r_{i}^{-1}\Longleftrightarrow r_is_{i}=rs_{i}r_{i}^{-1}r^{-1}\Longleftrightarrow s_{i}^{-1}r_is_{i}=rr_{i}^{-1}r^{-1}. 
\end{eqnarray*} 
Hence, since we have
\begin{enumerate}
\item[(x)] $rs_ir_i^{-1}r^{-1}=r_is_{i}=r_is_{i}^{-1}\cdot s_{i}^2$,
\item[(y)] $rr_i^{-1}s_i\underset{\leftarrow }{\underline{r^{-1}}}\overset{\text{(1)(d)}}{\underset{}{=}}\underline{rr_i^{-1}r^{-1}}s_i=s_{i}^{-1}r_i\cdot s_i^2$
\end{enumerate}
for $1\leq i\leq n$, the relation $r\alpha _{i,i+1}r^{-1}=\alpha _{i,i+1}^{-1}p_{i,i+1}$ for $\alpha \in \{ x, y\}$ is obtained from the relations~(1), (2), and (6) (a)--(c) in Proposition~\ref{prop_pres_LH}.  
Hence, for $\alpha \in \{ x, y\}$ and $1\leq i<j-1\leq n$, we have
\begin{eqnarray*}
&&r\alpha _{i,j}r^{-1}\\
&\overset{\text{(6)(b)}}{\underset{}{=}}&\underset{\rightarrow }{\underline{r}}s_{j-1}\cdots s_{i+2}s_{i+1}\alpha _{i,i+1}s_{i+1}^{-1}s_{i+2}^{-1}\cdots s_{j-1}^{-1}\underset{\leftarrow }{\underline{r^{-1}}}\\
&\overset{\text{(1)(d)}}{\underset{}{=}}&s_{j-1}\cdots s_{i+2}s_{i+1}\underline{r\alpha _{i,i+1}r^{-1}}s_{i+1}^{-1}s_{i+2}^{-1}\cdots s_{j-1}^{-1}\\
&\overset{}{\underset{}{=}}&s_{j-1}\cdots s_{i+2}s_{i+1}\alpha _{i,i+1}^{-1}p_{i,i+1}s_{i+1}^{-1}s_{i+2}^{-1}\cdots s_{j-1}^{-1}\\
&\overset{}{\underset{}{=}}&s_{j-1}\cdots s_{i+2}s_{i+1}\alpha _{i,i+1}^{-1}s_{i+1}^{-1}s_{i+2}^{-1}\cdots s_{j-1}^{-1}\cdot s_{j-1}\cdots s_{i+2}s_{i+1}p_{i,i+1}s_{i+1}^{-1}s_{i+2}^{-1}\cdots s_{j-1}^{-1}\\
&\overset{\text{(6)(b)}}{\underset{}{=}}&\alpha _{i,j}^{-1}p_{i,j}. 
\end{eqnarray*}
Therefore, we have completed the proof of Lemma~\ref{lem_rel_6g}. 
\end{proof}

By using Proposition~\ref{prop_pres_LH} and lemmas above, we will prove Theorem~\ref{thm_pres_LH} as follows. 

\begin{proof}[Proof of Theorem~\ref{thm_pres_LH}]
The finite presentation for $\LH $ in Proposition~\ref{prop_pres_LH} is obtained from the presentation for $\LH $ in Theorem~\ref{thm_pres_LH} by adding the generators $p_{i,j}$, $x_{i,j}$, $y_{i,j}$ for $1\leq i<j\leq n+1$, and $s$ and the relations~(6) (a)--(g) in Proposition~\ref{prop_pres_LH}. 
By Lemmas~\ref{lem_rel_6d}, \ref{lem_rel_6e}, \ref{lem_rel_6f}, and \ref{lem_rel_6d}, $\LH $ admits the presentation which is obtained from the presentation in Proposition~\ref{prop_pres_LH} by removing the relations~(6) (d)--(g). 
Since the generators $p_{i,j}$, $x_{i,j}$, $y_{i,j}$ for $1\leq i<j\leq n+1$, and $s$ do not appear in the relations~(1)--(5), by Tietze transformations, we can remove the generators $p_{i,j}$, $x_{i,j}$, $y_{i,j}$ for $1\leq i<j\leq n+1$, and $s$ and the relations~(6) (a)--(c) from this presentation for $\LH $.  
Therefore, we have completed the proof of Theorem~\ref{thm_pres_LH}. 
\end{proof}

\subsection{The first homology groups of the liftable Hilden group}\label{section_abel-lmod}

Recall the integral first homology group $H_1(G)$ of a group $G$ is isomorphic to the abelianization of $G$. 
In this section, we prove Theorem~\ref{thm_abel_lmod} by using the finite presentation in Theorem~\ref{thm_pres_LH}. 
For conveniences, we denote the equivalence class in $H_1(\LH )$ of an element $h$ in $\LH $ by $h$. 

\begin{proof}[Proof of Theorem~\ref{thm_abel_lmod}]
The relations~(2) (a), (b), and (c) in Theorem~\ref{thm_pres_LH} are equivalent to the relations $s_i=s_{i+1}$ and $r_i=r_{i+1}$ for $1\leq i\leq n-1$ in $H_1(\LH )$. 
The relation~(2) (d) in Theorem~\ref{thm_pres_LH} is equivalent to the relation $r_i^2=1$ for $1\leq i\leq n$ in $H_1(\LH )$. 
The relations~(2) (e), (f), and (g) in Theorem~\ref{thm_pres_LH} are equivalent to the relation $t_{i}=t_{i+1}$ for $1\leq i\leq n$ in $H_1(\LH )$. 
Up to these relations, the relations~(3), (4), and (5) in Theorem~\ref{thm_pres_LH} are equivalent to the relations $r^2t_1^{-n-1}=1$, $t_1=(r_1s_1)^{-n}$, and $t_1^{n+1}s_1^{n(n+1)}=1$ in $H_1(\LH )$, respectively. 
Hence, as a presentation for an abelian group, we have
\begin{eqnarray*}
&&H_1(\LH )\\
&\cong &\left< s_1, r_1, t_1, r \middle| r_1^2=1, r^2t_1^{-n-1}=1, t_1=(r_1s_1)^{-n}, t_1^{n+1}s_1^{n(n+1)}=1\right> \\
&\cong &\left< s_1, r_1, r \middle| r_1^2=1, r^2(r_1s_1)^{n(n+1)}=1, r_1^{-n(n+1)}=1\right> \\
&\cong &\left< s_1, r_1, r \middle| r_1^2=1, r^2(r_1s_1)^{n(n+1)}=1\right> \\
&\cong &\left< s_1, r_1, r, X \middle| r_1^2=1, X^2=1, X=r(r_1s_1)^{\frac{n(n+1)}{2}}\right> \\
&\cong &\left< s_1, r_1, X \middle| r_1^2=1, X^2=1\right> \\
&\cong &\Z [s_1]\oplus \Z _2[r_1]\oplus \Z _2[X],
\end{eqnarray*}
where $X=r(r_1s_1)^{\frac{n(n+1)}{2}}$. 
Therefore,  $H_1(\LH )$ is isomorphic to $\Z \oplus \Z _2\oplus \Z _2$ and we have completed the proof of Theorem~\ref{thm_abel_lmod}. 
\end{proof}

\subsection{The proof of Proposition~\ref{prop_pres_LH}}\label{section_proof_prop}

In this section, we prove Proposition~\ref{prop_pres_LH} by showing that the two presentations in Lemma~\ref{lem_pres_LH} and Proposition~\ref{prop_pres_LH} are equivalent up to Tietze transformations. 

First, the following lemma immediately holds by using the relations~(1) (b) and (2) (e) in Proposition~\ref{prop_pres_LH}. 

\begin{lem}\label{lem_Cst}
For $1\leq i<j\leq n+1$ and $\varepsilon \in \{ 1, -1\}$, the following relations are obtained from the relations (1) (b) and (2) (e) in Proposition~\ref{prop_pres_LH}. 
\begin{enumerate}
\item $(s_{j-1}^\varepsilon \cdots s_{i+1}^\varepsilon s_{i}^\varepsilon )t_{i}=t_{j}(s_{j-1}^\varepsilon \cdots s_{i+1}^\varepsilon s_{i}^\varepsilon )$, \\ 
$(s_{i}^\varepsilon s_{i+1}^\varepsilon \cdots s_{j-1}^\varepsilon )t_{j}=t_{i}(s_{i}^\varepsilon s_{i+1}^\varepsilon \cdots s_{j-1}^\varepsilon )$,
\item $(s_{j-1}^\varepsilon \cdots s_{i+1}^\varepsilon s_{i}^\varepsilon )t_{k}=t_{k-1}(s_{j-1}^\varepsilon \cdots s_{i+1}^\varepsilon s_{i}^\varepsilon )$, \\ 
$(s_{i}^\varepsilon s_{i+1}^\varepsilon \cdots s_{j-1}^\varepsilon )t_{k-1}=t_{k}(s_{i}^\varepsilon s_{i+1}^\varepsilon \cdots s_{j-1}^\varepsilon )$\quad for $i+1\leq k\leq j$. 
\end{enumerate}
\end{lem}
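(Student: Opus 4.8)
The plan is to reduce the whole statement to one elementary observation: for each $1\le m\le n$ and each $1\le l\le n+1$, conjugation by $s_m^{\varepsilon}$ moves the subscript of $t_l$ according to the transposition $(m\ m+1)$, that is,
\[
s_m^{\varepsilon}t_l s_m^{-\varepsilon}=
\begin{cases}
t_{m+1}&\text{if }l=m,\\
t_m&\text{if }l=m+1,\\
t_l&\text{otherwise.}
\end{cases}
\]
Indeed, relation~(2)(e) in Proposition~\ref{prop_pres_LH} reads $s_m^{\pm1}t_m=t_{m+1}s_m^{\pm1}$, so taking the sign $\varepsilon$ gives $s_m^{\varepsilon}t_ms_m^{-\varepsilon}=t_{m+1}$, while taking the sign $-\varepsilon$ gives $s_m^{-\varepsilon}t_ms_m^{\varepsilon}=t_{m+1}$, which rearranges to $s_m^{\varepsilon}t_{m+1}s_m^{-\varepsilon}=t_m$; the remaining case is precisely relation~(1)(b), i.e. $s_m$ commutes with $t_l$ for $l\ne m,m+1$.

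Granting this, I would prove part~(1) by peeling off the outermost generator and inducting on $j-i$. Writing $w=s_{j-1}^{\varepsilon}\cdots s_{i+1}^{\varepsilon}s_i^{\varepsilon}$, conjugating $t_i$ by $w$ and applying the displayed identity from the inside outward moves the subscript along $i\mapsto i+1\mapsto\cdots\mapsto j$, so $wt_iw^{-1}=t_j$, which is the first relation of~(1); the second relation of~(1) is the same computation performed in the opposite order with $w'=s_i^{\varepsilon}s_{i+1}^{\varepsilon}\cdots s_{j-1}^{\varepsilon}$, for which conjugating $t_j$ moves the subscript along $j\mapsto j-1\mapsto\cdots\mapsto i$. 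For part~(2), fix $k$ with $i+1\le k\le j$: conjugating $t_k$ by $w$, the generators $s_i^{\varepsilon},\dots,s_{k-2}^{\varepsilon}$ leave the subscript $k$ fixed (their transpositions involve only indices $\le k-1$), then $s_{k-1}^{\varepsilon}$ sends $k\mapsto k-1$, and $s_k^{\varepsilon},\dots,s_{j-1}^{\varepsilon}$ leave $k-1$ fixed, whence $wt_kw^{-1}=t_{k-1}$; conjugating $t_{k-1}$ by $w'$ runs the analogous sweep in the reverse order ($s_{j-1}^{\varepsilon},\dots,s_k^{\varepsilon}$ fix $k-1$, then $s_{k-1}^{\varepsilon}$ sends $k-1\mapsto k$, then $s_{k-2}^{\varepsilon},\dots,s_i^{\varepsilon}$ fix $k$) and yields $w't_{k-1}w'^{-1}=t_k$.

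The argument is pure bookkeeping, so there is no genuine obstacle; the one place that warrants care is keeping straight, in the two products $w$ and $w'$, the order in which the conjugations $s_m^{\varepsilon}(\,\cdot\,)s_m^{-\varepsilon}$ are to be applied, and, at each intermediate stage, exactly which instance of the commutation relation~(1)(b) is being invoked for the subscript that is momentarily inert. Organising the computation as a formal induction on $j-i$, with the displayed identity as the base step and one generator removed at a time, makes all of this transparent.
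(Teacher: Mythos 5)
Your proposal is correct and is exactly the argument the paper has in mind: the paper offers no written proof beyond asserting that the lemma "immediately holds" from relations (1)(b) and (2)(e) of Proposition~\ref{prop_pres_LH}, and your reduction to the single conjugation identity $s_m^{\varepsilon}t_ls_m^{-\varepsilon}=t_{(m\ m+1)(l)}$ followed by peeling off generators is the intended bookkeeping. Nothing further is needed.
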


\begin{lem}\label{lem_Ct}
The relations~(C-pt), (C-tt), (C-xt), and (C-yt) in Lemma~\ref{lem_pres_LH} are obtained from the relations (1), (2), (6) (a), and (b) in Proposition~\ref{prop_pres_LH}. 
\end{lem}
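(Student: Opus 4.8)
The plan is to derive the four families of commutative relations (C-pt), (C-tt), (C-xt), (C-yt) in Lemma~\ref{lem_pres_LH}, each written in terms of the ``two-index'' generators $p_{i,j}, x_{i,j}, y_{i,j}$, by first reducing them (via relation~(6)~(b), which expresses $\alpha_{i,j}$ as an $s$-conjugate of $\alpha_{i,i+1}$) to the corresponding ``adjacent-index'' statements involving only $p_{i,i+1}=s_i^2$, $x_{i,i+1}=s_ir_i^{-1}$, $y_{i,i+1}=r_i^{-1}s_i$, and then verifying those adjacent cases directly from the commutative and conjugation relations (1) and (2). The key bookkeeping tool is Lemma~\ref{lem_Cst}: conjugating $t_k$ by a consecutive block $s_{j-1}^\varepsilon\cdots s_i^\varepsilon$ either shifts the index of $t$ by one or leaves it fixed, depending on whether $k$ lies inside the block.

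Concretely, for \textbf{(C-tt)} there is nothing to do beyond relation~(1)~(c). For \textbf{(C-pt)}, I would write $p_{i,j}=(s_{j-1}\cdots s_{i+1})s_i^2(s_{i+1}^{-1}\cdots s_{j-1}^{-1})$ by (6)~(b), and compute $p_{i,j}t_k p_{i,j}^{-1}$ by pushing $t_k$ through the $s$-block on each side using Lemma~\ref{lem_Cst}: if $k\notin\{i,\dots,j\}$ the block commutes with $t_k$ by (1)~(b) and the conjugation by $s_i^2$ is trivial by (1)~(b) again; if $k\in\{i,\dots,j\}$ the two $s$-blocks produce reciprocal index shifts that cancel, and the middle $s_i^2$ acts by (2)~(e) (which shifts $t_i\leftrightarrow t_{i+1}$) in a way that is absorbed. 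Either way $p_{i,j}t_k=t_kp_{i,j}$. For \textbf{(C-xt)} and \textbf{(C-yt)} I do the same reduction: by (6)~(b) it suffices to treat $x_{i,i+1}=s_ir_i^{-1}$ commuting with $t_k$ for $k\ne i$, and $y_{i,i+1}=r_i^{-1}s_i$ commuting with $t_k$ for $k\ne i+1$. When $k\notin\{i,i+1\}$ both $s_i$ and $r_i$ commute with $t_k$ by (1)~(b), done. The genuinely nontrivial adjacent cases are $x_{i,i+1}\rightleftarrows t_{i+1}$ and $y_{i,i+1}\rightleftarrows t_i$: here one uses $s_it_{i+1}=t_is_i$ and $r_it_i=t_{i+1}r_i$ (relations (2)~(e),(f)) to check e.g. $s_ir_i^{-1}t_{i+1}=s_it_ir_i^{-1}\,$? — more carefully, $s_ir_i^{-1}t_{i+1}=s_i(t_i r_i^{-1})=(s_it_i)r_i^{-1}=t_{i+1}s_ir_i^{-1}$, and symmetrically for $y$. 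For the non-adjacent case $j-i\ge 2$ with $k$ an interior index one again lets the $s$-blocks shuttle $t_k$ across and relies on the adjacent identities plus the index-shift lemma.

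The main obstacle I anticipate is purely organizational rather than conceptual: keeping track, in the general index case $\alpha_{i,j}$ with $i+1<j$, of exactly how the left block $s_{j-1}\cdots s_{i+1}$ and the right block $s_{i+1}^{-1}\cdots s_{j-1}^{-1}$ relabel a fixed $t_k$ as $k$ ranges over all of $\{1,\dots,n+1\}$ — one must split into $k\le i-1$ (trivial), $k=i$, $i+1\le k\le j$, $k\ge j+1$, and in the middle range verify that the two relabelings performed by the two blocks are mutually inverse, using Lemma~\ref{lem_Cst}~(1) and (2), so that the net effect is trivial after accounting for the inner $r_i$, $s_i^2$ conjugation. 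Since Lemma~\ref{lem_Cst} already packages the block-conjugation formulas, and relations (1),(2),(6)~(a),(b) supply every elementary move needed, the argument is a finite, if slightly tedious, case check; I would present the adjacent cases in full and indicate the general case as ``conjugating by the $s$-block and applying Lemma~\ref{lem_Cst}.''
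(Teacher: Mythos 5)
Your proposal is correct and follows essentially the same route as the paper: it dismisses (C-tt) and all cases with $k\notin\{i,j\}$ as immediate from relations (1)(b),(c) together with Lemma~\ref{lem_Cst}, verifies the genuinely nontrivial adjacent cases $x_{i,i+1}\rightleftarrows t_{i+1}$ and $y_{i,i+1}\rightleftarrows t_{i}$ by the same short computation with (2)(e),(f) via (6)(a), and then transports these to general $x_{i,j}$, $y_{i,j}$ (and $p_{i,j}$) by conjugating with the $s$-block from (6)(b) and shuttling $t_k$ across with Lemma~\ref{lem_Cst}. No gaps; the only difference is that you spell out the case analysis for (C-pt) which the paper treats as immediate.
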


\begin{proof}
The relations~(C-pt), (C-tt), and (C-xt) and (C-yt) for $k\not \in \{ i, j\}$ in Lemma~\ref{lem_pres_LH} are clearly obtained from the relations (1) (b), (c), (2) (e), (6) (a), and (b) in Proposition~\ref{prop_pres_LH} and Lemma~\ref{lem_Cst}. 
Since we have
\begin{enumerate}
\item[(x)] $\underline{x_{i,i+1}}t_{i+1}\overset{\text{(6)(a)}}{\underset{}{=}}s_i\underline{r_i^{-1}\cdot t_{i+1}}\overset{\text{(2)(f)}}{\underset{}{=}}\underline{s_it_{i}}r_i^{-1}\overset{\text{(2)(e)}}{\underset{}{=}}t_{i+1}\cdot \underline{s_ir_i^{-1}}\overset{\text{(6)(a)}}{\underset{}{=}}t_{i+1}x_{i,i+1}$,
\item[(y)] $\underline{y_{i,i+1}}t_{i}\overset{\text{(6)(a)}}{\underset{}{=}}r_i^{-1}\underline{s_i\cdot t_{i}}\overset{\text{(2)(e)}}{\underset{}{=}}\underline{r_i^{-1}t_{i+1}}s_i\overset{\text{(2)(f)}}{\underset{}{=}}t_{i}\cdot \underline{r_i^{-1}s_i}\overset{\text{(6)(a)}}{\underset{}{=}}t_{i}y_{i,i+1}$
\end{enumerate}
for $1\leq i\leq n$, the relations (C-xt) for $k=j=i+1$ and (C-yt) for $k=i=j-1$ in Lemma~\ref{lem_pres_LH} are obtained from the relations (1), (2), (6) (a), and (b) in Proposition~\ref{prop_pres_LH}. 
For $j-i\geq 2$, we have
\begin{eqnarray*}
\underline{x_{i,j}}t_{j}&\overset{\text{(6)(b)}}{\underset{}{=}}&(s_{j-1}\cdots s_{i+2}s_{i+1})x_{i,i+1}\underline{(s_{i+1}^{-1}s_{i+2}^{-1}\cdots s_{j-1}^{-1})t_{j}}\\
&\overset{\text{Lem.\ref{lem_Cst}}}{\underset{}{=}}&(s_{j-1}\cdots s_{i+2}s_{i+1})\underline{x_{i,i+1}t_{i+1}}(s_{i+1}^{-1}s_{i+2}^{-1}\cdots s_{j-1}^{-1})\\
&\overset{}{\underset{}{=}}&\underline{(s_{j-1}\cdots s_{i+2}s_{i+1})t_{i+1}}x_{i,i+1}(s_{i+1}^{-1}s_{i+2}^{-1}\cdots s_{j-1}^{-1})\\
&\overset{\text{Lem.\ref{lem_Cst}}}{\underset{}{=}}&t_{j}\underline{(s_{j-1}\cdots s_{i+2}s_{i+1})x_{i,i+1}(s_{i+1}^{-1}s_{i+2}^{-1}\cdots s_{j-1}^{-1})}\\
&\overset{\text{(6)(b)}}{\underset{}{=}}&t_{j}x_{i,j}.
\end{eqnarray*}
Thus, the relation~(C-xt) for $j-i\geq 2$ is obtained from the relations (1), (2), (6) (a), and (b) in Proposition~\ref{prop_pres_LH}. 
By a similar argument above,  the relation (C-yt) for $k=i$ and $1\leq i<j\leq n+1$ in Lemma~\ref{lem_pres_LH} is also obtained from the relations (1), (2), (6) (a), and (b) in Proposition~\ref{prop_pres_LH}.  
Therefore, we have completed the proof of Lemma~\ref{lem_Ct}. 
\end{proof}

\begin{lem}\label{lem_Mxy}
The relations~(M-x) and (M-y) in Lemma~\ref{lem_pres_LH} are obtained from the relations (1), (2), (6) (a), and (b) in Proposition~\ref{prop_pres_LH}. 
\end{lem}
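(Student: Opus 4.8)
The plan is to prove Lemma~\ref{lem_Mxy} by the same strategy that worked for Lemma~\ref{lem_Ct}: first establish the ``base case'' relations (M-x) and (M-y) for adjacent indices $j=i+1$ using only the relations (1), (2), and (6)(a) in Proposition~\ref{prop_pres_LH}, and then propagate to general $j-i\geq 2$ by conjugating with the braids $s_{j-1}\cdots s_{i+1}$ via relation (6)(b), using Lemma~\ref{lem_Cst} to carry the $t$-generators through.

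For the base case, I would rewrite everything in terms of $s_i$ and $r_i$ using (6)(a): $x_{i,i+1}=s_ir_i^{-1}$, $y_{i,i+1}=r_i^{-1}s_i$, and $p_{i,i+1}=s_i^2$. So (M-x) for $j=i+1$ becomes the claim that $s_ir_i^{-1}$ commutes with $s_i^2 t_i$, i.e. $s_ir_i^{-1}\cdot s_i^2 t_i = s_i^2 t_i\cdot s_ir_i^{-1}$; after cancelling one $s_i$ on the left this is $r_i^{-1}s_i^2 t_i = s_i t_i s_i r_i^{-1} \cdot$ (some rearrangement), which should reduce to relation (2)(g) $t_is_i^2r_i=r_is_i^2t_{i+1}$ together with the conjugation relations (2)(d), (2)(e), (2)(f). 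I expect that the manipulation $(\ref{rel_2g})$ already derived in the proof of Lemma~\ref{lem_rel_6d}, namely $r_i^{-1}s_i = s_it_is_ir_i^{-1}t_i^{-1}s_i^{-1}$, is exactly the identity needed here, so I would invoke or re-derive it. Similarly (M-y) for $j=i+1$ is the claim that $r_i^{-1}s_i$ commutes with $s_i^2 t_{i+1}$, which should follow by the analogous reshuffling using (2)(e), (2)(g). This is the step I expect to be the main obstacle — not because it is deep, but because it requires finding the correct order of applications of (2)(d)--(2)(g); I would model the computation closely on the chain of equalities leading to $(\ref{rel_2g})$ and on the computations in Lemma~\ref{lem_rel_6e_pre}.

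For the inductive step, once (M-x) for $j=i+1$ is known, I would use (6)(b) to write $x_{i,j}=s_{j-1}\cdots s_{i+1}\,x_{i,i+1}\,s_{i+1}^{-1}\cdots s_{j-1}^{-1}$ and $p_{i,j}=s_{j-1}\cdots s_{i+1}\,p_{i,i+1}\,s_{i+1}^{-1}\cdots s_{j-1}^{-1}$. Then
\[
x_{i,j}\,p_{i,j}t_i = (s_{j-1}\cdots s_{i+1})\,x_{i,i+1}\,p_{i,i+1}\,\bigl((s_{i+1}^{-1}\cdots s_{j-1}^{-1})t_i\bigr),
\]
and by Lemma~\ref{lem_Cst} the tail $(s_{i+1}^{-1}\cdots s_{j-1}^{-1})t_i = t_i(s_{i+1}^{-1}\cdots s_{j-1}^{-1})$ since $i<i+1$ (the index $i$ is untouched by $s_{i+1},\dots,s_{j-1}$), so $t_i$ commutes through and we can apply the base case $x_{i,i+1}p_{i,i+1}t_i = p_{i,i+1}t_ix_{i,i+1}$, then push $t_i$ back out through $s_{i+1}^{-1}\cdots s_{j-1}^{-1}$ and re-collect using (6)(b) to get $p_{i,j}t_i\,x_{i,j}$. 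This is a verbatim analogue of the argument displayed for (C-xt) in the proof of Lemma~\ref{lem_Ct}. The case (M-y) with $p_{i,j}t_j$ is symmetric, using that $t_j$ is carried correctly: here I must be slightly more careful because $j$ \emph{is} in the range of the conjugating braid, so I would instead write $p_{i,j}=s_{i+1}\cdots s_{j-1}\,p_{j-1,j}\,s_{j-1}^{-1}\cdots s_{i+1}^{-1}$ (using the ``other'' form of (6)(b), or equivalently the $s_{i-1}$-type relations of Lemma~\ref{lem_p_ij_relations}(3)) so that the relevant index is again untouched, then apply Lemma~\ref{lem_Cst} and the base case (M-y) for adjacent indices. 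In all cases the only ingredients are relations (1), (2), (6)(a), and (6)(b), as asserted.

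Concretely, I would structure the written proof as: (i) a short paragraph reducing (M-x), (M-y) to their adjacent-index cases via (6)(a), (6)(b), and Lemma~\ref{lem_Cst}, exactly parallel to Lemma~\ref{lem_Ct}; (ii) two short displayed computations verifying the adjacent cases $x_{i,i+1}\rightleftarrows p_{i,i+1}t_i$ and $y_{i,i+1}\rightleftarrows p_{i,i+1}t_{i+1}$, each a chain of applications of (2)(d)--(2)(g) of length comparable to the $(\ref{rel_2g})$ derivation. I do not anticipate any conceptual difficulty beyond bookkeeping; the risk is purely in getting the exponents and the order of the braid relations right in step (ii), so I would double-check those by comparing with the already-verified identity $(\ref{rel_2g})$ and with Lemma~\ref{lem_rel_6e_pre}.
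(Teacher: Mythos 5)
Your plan follows essentially the same route as the paper's proof: verify the adjacent-index cases $x_{i,i+1}\rightleftarrows p_{i,i+1}t_i$ and $y_{i,i+1}\rightleftarrows p_{i,i+1}t_{i+1}$ after rewriting via (6)(a), using the braid-type relations (2)(e)--(g), and then propagate to $j-i\geq 2$ by conjugating with $s_{j-1}\cdots s_{i+1}$ via (6)(b), exactly parallel to Lemma~\ref{lem_Ct}. The adjacent-index computations you defer do go through as you expect; the identity (\ref{rel_2g}) suffices for both (for instance $s_ir_i^{-1}s_i^2t_i=s_i(s_it_is_ir_i^{-1}t_i^{-1}s_i^{-1})s_it_i=s_i^2t_is_ir_i^{-1}$), and the paper instead uses the equivalent rearrangement $r_i^{-1}s_i^2t_i=t_{i+1}s_i^2r_i^{-1}$ of (2)(g) obtained from (2)(e); these are interchangeable.

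Two remarks on your handling of (M-y). First, the extra care is unnecessary: the paper keeps the same conjugator $s_{j-1}\cdots s_{i+1}$ and simply uses Lemma~\ref{lem_Cst} to convert $t_j$ into $t_{i+1}$ as it is pulled through $(s_{i+1}^{-1}\cdots s_{j-1}^{-1})$, after which the adjacent case $y_{i,i+1}\rightleftarrows p_{i,i+1}t_{i+1}$ applies directly (for (M-x) the index $i$ is outside the conjugator's range, so $t_i$ passes through by (1)(b), as you note --- that commutation is (1)(b), not Lemma~\ref{lem_Cst}). Second, if you do take your alternative route, the formula you wrote, $p_{i,j}=s_{i+1}\cdots s_{j-1}\,p_{j-1,j}\,s_{j-1}^{-1}\cdots s_{i+1}^{-1}$, is not correct (that conjugation preserves the pair $\{j-1,j\}$); the correct statement is $p_{i,j}=s_i^{-1}s_{i+1}^{-1}\cdots s_{j-2}^{-1}\,p_{j-1,j}\,s_{j-2}\cdots s_{i+1}s_i$. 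Moreover you cannot simply cite Lemma~\ref{lem_p_ij_relations}~(3) here, since that lemma only asserts the relation holds in $SW_{2n+2}$ and $\Hil$, whereas for the present lemma it must be exhibited as a formal consequence of (1), (2), (6)(a), (b); this derivation is exactly Lemma~\ref{lem_alpha_ij_conj} (iterating Lemma~\ref{lem_rel_6e_pre}), whose proof uses only those relations. With either fix --- the paper's simpler $t_j\mapsto t_{i+1}$ conversion or the corrected conjugation formula derived as above --- your argument is complete.
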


\begin{proof}
First, we remark that the relation~(2) (g) in Proposition~\ref{prop_pres_LH} is equivalent to the following relation up to the relation~(2) (e) in Proposition~\ref{prop_pres_LH}. 
\begin{eqnarray*}
t_is_i^2\underline{r_i}=\underline{r_i}s_i^2t_{i+1}\overset{\text{CONJ}}{\underset{}{\Longleftrightarrow }}r_i^{-1}\underline{t_is_i^2}=\underline{s_i^2t_{i+1}}r_i^{-1}
\overset{\text{(2)(e)}}{\underset{}{\Longleftrightarrow }}r_i^{-1}s_i^2t_i=t_{i+1}s_i^2r_i^{-1}.
\end{eqnarray*}
In the case of $j=i+1$, we have 
\begin{enumerate}
\item[(x)] $\underline{x_{i,i+1}}\cdot \underline{p_{i,i+1}}t_{i}\overset{\text{(6)(a)}}{\underset{}{=}}s_i\underline{r_i^{-1}\cdot s_i^2t_{i}}\overset{}{\underset{}{=}}s_i\underline{t_{i+1}s_i}s_ir_i^{-1}\overset{\text{(2)(e)}}{\underset{}{=}}s_i^2t_{i}\cdot s_ir_i^{-1}$\\
\hspace{2.3cm}$\overset{\text{(6)(a)}}{\underset{}{=}}p_{i,i+1}t_{i}\cdot x_{i,i+1}$,
\item[(y)] $\underline{y_{i,i+1}}\cdot \underline{p_{i,i+1}}t_{i+1}\overset{\text{(6)(a)}}{\underset{}{=}}r_i^{-1}s_i\cdot s_i\underline{s_it_{i+1}}\overset{\text{(2)(e)}}{\underset{}{=}}\underline{r_i^{-1}s_i^2t_{i}}s_i\overset{}{\underset{}{=}}s_i^2t_{i+1}r_i^{-1}s_i$\\
\hspace{2.3cm}$\overset{\text{(6)(a)}}{\underset{}{=}}p_{i,i+1}t_{i+1}\cdot y_{i,i+1}$.
\end{enumerate}
Thus, we have 
\begin{eqnarray*}
\underline{y_{i,j}}\cdot \underline{p_{i,j}}t_{j}&\overset{\text{(6)(b)}}{\underset{}{=}}&(s_{j-1}\cdots s_{i+2}s_{i+1})y_{i,i+1}p_{i,i+1}\underline{(s_{i+1}^{-1}s_{i+2}^{-1}\cdots s_{j-1}^{-1})t_{j}}\\
&\overset{\text{Lem.\ref{lem_Cst}}}{\underset{}{=}}&(s_{j-1}\cdots s_{i+2}s_{i+1})\underline{y_{i,i+1}p_{i,i+1}t_{i+1}}(s_{i+1}^{-1}s_{i+2}^{-1}\cdots s_{j-1}^{-1})\\
&\overset{}{\underset{}{=}}&(s_{j-1}\cdots s_{i+2}s_{i+1})p_{i,i+1}t_{i+1}\cdot y_{i,i+1}(s_{i+1}^{-1}s_{i+2}^{-1}\cdots s_{j-1}^{-1})\\
&\overset{}{\underset{}{=}}&(s_{j-1}\cdots s_{i+2}s_{i+1})p_{i,i+1}\underline{t_{i+1}(s_{i+1}^{-1}s_{i+2}^{-1}\cdots s_{j-1}^{-1})}\\
&&\cdot (s_{j-1}\cdots s_{i+2}s_{i+1})y_{i,i+1}(s_{i+1}^{-1}s_{i+2}^{-1}\cdots s_{j-1}^{-1})\\
&\overset{}{\underset{}{=}}&\underline{(s_{j-1}\cdots s_{i+2}s_{i+1})p_{i,i+1}(s_{i+1}^{-1}s_{i+2}^{-1}\cdots s_{j-1}^{-1})}t_{j}\\
&&\cdot \underline{(s_{j-1}\cdots s_{i+2}s_{i+1})y_{i,i+1}(s_{i+1}^{-1}s_{i+2}^{-1}\cdots s_{j-1}^{-1})}\\
&\overset{\text{(6)(b)}}{\underset{}{=}}&p_{i,j}t_{j}\cdot y_{i,j}.
\end{eqnarray*}
By a similar argument and the commutativity of $t_i$ and $s_{j-1}\cdots s_{i+2}s_{i+1}$, we can also show that the relation~(M-x) is obtained from the relations (1), (2), (6) (a), and (b) in Proposition~\ref{prop_pres_LH}. 
Therefore, we have completed the proof of Lemma~\ref{lem_Mxy}. 
\end{proof}

Recall that the relations~(6) (d) and (e) in Proposition~\ref{prop_pres_LH} are obtained from the relations (1), (2), and (6) (a)--(c) in Proposition~\ref{prop_pres_LH} by Lemmas~\ref{lem_rel_6d} and \ref{lem_rel_6e}. 
By this fact, we have the following lemma. 

\begin{lem}\label{lem_C123_cyclic}
The relations~(C1), (C2), and (C3) in Lemma~\ref{lem_pres_LH} are equivalent to the relations~(C1), (C2), and (C3) for $i<j<k<l$ in Lemma~\ref{lem_pres_LH} up to the relations (1), (2), and (6) (a)--(c) in Proposition~\ref{prop_pres_LH}, respectively. 
\end{lem}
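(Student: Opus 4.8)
The plan is to exploit the cyclic symmetry of the index set $\{1,2,\dots,n+1\}$ that is built into the generating set of $\PH$, and which is realized inside $\LH$ by the explicit element $s=s_n\cdots s_2s_1t_1$ together with its conjugation action recorded in relation~(6)(d) and~(6)(e) of Proposition~\ref{prop_pres_LH}. Concretely, the relations~(C1), (C2), (C3) are each indexed by a ``cyclically ordered'' tuple of indices, and the point is that any such tuple can be brought, by a cyclic shift $i\mapsto i-1$ (mod $n+1$, with the wrap-around $1\mapsto n+1$ handled by the special form of~(6)(d)), into the normal form $i<j<k$ or $i<j<k<l$. Since conjugation by $s$ permutes the generators $\alpha_{i,j}$ exactly by this cyclic shift — this is precisely the content of~(6)(d) and~(6)(e), which by Lemmas~\ref{lem_rel_6d} and~\ref{lem_rel_6e} are already derivable from relations~(1), (2), and~(6)(a)--(c) — conjugating a given instance of~(C1), (C2), or~(C3) by an appropriate power of $s$ transforms it into another instance of the same relation with a cyclically shifted tuple, modulo the relations we are allowed to use.

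First I would fix one of the three relation families, say~(C1). Given a cyclically ordered tuple $(i,j,k,l)$, I would choose the smallest $m\geq 0$ such that applying the cyclic shift $m$ times puts the tuple into the linearly ordered position $i'<j'<k'<l'$; then I would conjugate the relation $\alpha_{i,j}\rightleftarrows\beta_{k,l}$ by $s^{m}$, using~(6)(d)/(6)(e) to rewrite each $s^{m}\alpha_{i,j}s^{-m}$ as $\alpha_{i',j'}$ (or, at the wrap-around step, as the appropriate $\beta$-type generator conjugated by a $p$-factor, as in~(6)(d)) and likewise for $\beta_{k,l}$. Care is needed because the wrap-around case of~(6)(d) introduces conjugating factors $p_{j-1,n+1}$; these must be tracked and shown to cancel or to be absorbable using the $p$-commutation relations already present. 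The same scheme handles~(C2) and~(C3): for~(C3) the conjugated relation involves $s^{m}p_{j,k}\beta_{j,l}p_{j,k}^{-1}s^{-m}$, which by~(6)(d)/(6)(e) again becomes a $p$-conjugated generator after the shift, so the structural form of~(C3) is preserved. For~(C2) one must additionally check that the table of admissible triples $(\alpha,\beta,\gamma)$ in Table~\ref{rel_C2_PH^1} is itself invariant under the cyclic shift on $(i,j,k)$; the three-row structure of that table (rows labelled $i<j<k$, $j<k<i$, $k<i<j$) is exactly what makes this work, and I would verify the shift sends each row to the next.

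The main obstacle I anticipate is bookkeeping at the wrap-around step: relation~(6)(d) says $s\alpha_{1,j}s^{-1}=\beta_{j-1,n+1}$ with $(\alpha,\beta)\in\{(p,p),(x,y),(y,x)\}$, so conjugation by $s$ does not literally fix the symbol $\alpha\in\{p,x,y\}$ — it swaps $x\leftrightarrow y$ on the wrapped index. Consequently, when a tuple requires several shifts to reach normal form and crosses the wrap-around an odd number of times, the symbols in the target instance of~(C1), (C2), or~(C3) get an $x\leftrightarrow y$ transposition relative to the original. I would need to confirm that each of the three relation families is stable under this involution — for~(C1) this is immediate since $\{p,x,y\}$ ranges over all symbols; for~(C3) likewise; for~(C2) this requires checking that Table~\ref{rel_C2_PH^1} is invariant under simultaneously swapping $x\leftrightarrow y$ and performing the corresponding row shift, which again follows from inspecting the table. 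Once these symmetry checks are in place, the proof reduces to: pick the shift, conjugate, apply~(6)(d)/(6)(e) term by term, and collect the resulting $p$-conjugating factors using the commutation relations among the $p_{i,j}$ and the (C1)-type relations in normal-form position — so that the normal-form instances genuinely imply all instances, and conversely trivially, giving the claimed equivalence modulo relations~(1), (2), and~(6)(a)--(c).
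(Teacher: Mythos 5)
Your proposal is correct and follows essentially the same route as the paper: the paper derives Lemma~\ref{lem_C123_cyclic} directly from the fact (Lemmas~\ref{lem_rel_6d} and \ref{lem_rel_6e}) that the conjugation-by-$s$ relations (6)(d),(e) are consequences of (1), (2), (6)(a)--(c), so conjugating any cyclically ordered instance of (C1)--(C3) by a suitable power of $s$ shifts it into the linearly ordered position, with the $x\leftrightarrow y$ swap at the wrap-around absorbed exactly as you describe. One small correction: relation (6)(d) reads $s\alpha_{1,j}s^{-1}=\beta_{j-1,n+1}$ with no conjugating $p$-factor, so the bookkeeping of factors $p_{j-1,n+1}$ you anticipate at the wrap-around does not actually arise.
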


By Lemma~\ref{lem_Cst} and \ref{lem_C123_cyclic}, the next lemma immediately follows. 

\begin{lem}\label{lem_C1}
The relation~(C1) in Lemma~\ref{lem_pres_LH} is obtained from the relations (1), (2), and (6) (a)--(c) in Proposition~\ref{prop_pres_LH}.  
\end{lem}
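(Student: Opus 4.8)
The plan is to combine Lemma~\ref{lem_C123_cyclic} with a direct termwise commutation argument. By Lemma~\ref{lem_C123_cyclic} it suffices to derive the relation~(C1) in Lemma~\ref{lem_pres_LH} in the case $1\leq i<j<k<l\leq n+1$ from the relations~(1), (2), and (6)~(a)--(c) in Proposition~\ref{prop_pres_LH}. So I would fix such $i,j,k,l$ and $\alpha ,\beta \in \{ p,x,y\}$.

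First I would rewrite each of the two factors as a word in the braid generators. By the relation~(6)~(a) in Proposition~\ref{prop_pres_LH}, each of $p_{i,i+1}$, $x_{i,i+1}$, $y_{i,i+1}$ is a word in $s_i$ and $r_i$, and by~(6)~(b) the element $\alpha _{i,j}$ is obtained from $\alpha _{i,i+1}$ by conjugating with $s_{i+1},\dots ,s_{j-1}$; hence $\alpha _{i,j}$ is represented by a word $U$ all of whose letters lie in $\{ s_i^{\pm 1},s_{i+1}^{\pm 1},\dots ,s_{j-1}^{\pm 1},r_i^{\pm 1}\}$. In the same way $\beta _{k,l}$ is represented by a word $V$ all of whose letters lie in $\{ s_k^{\pm 1},s_{k+1}^{\pm 1},\dots ,s_{l-1}^{\pm 1},r_k^{\pm 1}\}$.

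Next I would observe that any letter of $U$ commutes with any letter of $V$. Indeed, a letter of $U$ has index $m$ with $i\leq m\leq j-1$ and a letter of $V$ has index $m'$ with $k\leq m'\leq l-1$, so $m'-m\geq k-(j-1)\geq 2$ because $j<k$ forces $k\geq j+1$. Hence each of the four relations $s_m\rightleftarrows s_{m'}$, $s_m\rightleftarrows r_k$, $r_i\rightleftarrows s_{m'}$, $r_i\rightleftarrows r_k$ is an instance of the relation~(1)~(a) in Proposition~\ref{prop_pres_LH}. Moving the letters of $U$ one by one past $V$ therefore yields $UV=VU$, that is, $\alpha _{i,j}\rightleftarrows \beta _{k,l}$, using only the relations~(1) and (6)~(a),(b). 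Together with Lemma~\ref{lem_C123_cyclic} this proves the claim.

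There is no essential obstacle here: the content of the reduction to the cyclically standard case has already been absorbed into Lemma~\ref{lem_C123_cyclic}, and the remaining step is the purely formal disjoint-support commutation above. The only point that needs care is verifying the index inequality $k-(j-1)\geq 2$, which is exactly where the cyclic ordering $i<j<k<l$ (and not merely $i<j$ and $k<l$ separately) is used.
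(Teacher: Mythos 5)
Your argument is correct and is essentially the paper's proof: the paper likewise reduces to the case $1\leq i<j<k<l\leq n+1$ via Lemma~\ref{lem_C123_cyclic} and then treats the commutation as immediate, which is exactly the disjoint-index commutation you spell out using the relations (1) and (6)~(a),(b). Your index check $k-(j-1)\geq 2$ correctly justifies the step the paper leaves implicit.
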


To prove that the relations~(C2) and (C3) in Lemma~\ref{lem_pres_LH} are obtained from the relations (1), (2), and (6) (a)--(c) in Proposition~\ref{prop_pres_LH}, we prepaire the following lemma. 

\begin{lem}\label{lem_alpha_ij_conj}
The relation
\[
\alpha _{i,j}=s_i^{-1}s_{i+1}^{-1}\cdots s_{j-2}^{-1}\alpha _{j-1,j}s_{j-2}\cdots s_{i+1}s_i
\]
for $1\leq i<j-1\leq n$ is obtained from the relations (1), (2), and (6) (a)--(c) in Proposition~\ref{prop_pres_LH}.  
\end{lem}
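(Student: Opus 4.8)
The plan is to reduce everything to a single ``index-shift'' relation and then iterate. Concretely, I would first prove that
\[
\alpha_{i,j}=s_i^{-1}\alpha_{i+1,j}s_i\qquad\text{for }1\le i<j-1\le n\text{ and }\alpha\in\{p,x,y\},
\]
which is exactly the relation of Lemma~\ref{lem_p_ij_relations}(3) re-indexed and is the only nontrivial input. To get it, start from the defining relation (6)(b) of Proposition~\ref{prop_pres_LH} applied to $\alpha_{i+1,j}$, namely $\alpha_{i+1,j}=(s_{j-1}\cdots s_{i+2})\,\alpha_{i+1,i+2}\,(s_{i+2}^{-1}\cdots s_{j-1}^{-1})$; conjugate both sides by $s_i^{-1}$ and use the commutative relations (1)(a) (so that $s_i$ slides past $s_{i+2},\dots,s_{j-1}$) to rewrite the right-hand side as $(s_{j-1}\cdots s_{i+2})\bigl(s_i^{-1}\alpha_{i+1,i+2}s_i\bigr)(s_{i+2}^{-1}\cdots s_{j-1}^{-1})$. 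Now Lemma~\ref{lem_rel_6e_pre}, applied with $i$ replaced by $i+1$ and $\varepsilon=-1$, gives $s_i^{-1}\alpha_{i+1,i+2}s_i=s_{i+1}\alpha_{i,i+1}s_{i+1}^{-1}$, so the expression becomes $(s_{j-1}\cdots s_{i+2}s_{i+1})\,\alpha_{i,i+1}\,(s_{i+1}^{-1}s_{i+2}^{-1}\cdots s_{j-1}^{-1})$, which is $\alpha_{i,j}$ by (6)(b) once more. The degenerate case $j=i+2$ (where $s_{j-1}\cdots s_{i+2}$ is the empty word) is handled by the same computation, reading $\alpha_{i+1,j}=\alpha_{j-1,j}$ directly; all index constraints (such as $2\le i+1\le n$) hold because $i<j-1\le n$ forces $i\le n-1$.

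With this single-step relation established, the lemma follows by a short induction on $j-i$: applying it repeatedly,
\[
\alpha_{i,j}=s_i^{-1}\alpha_{i+1,j}s_i=s_i^{-1}s_{i+1}^{-1}\alpha_{i+2,j}s_{i+1}s_i=\cdots=s_i^{-1}s_{i+1}^{-1}\cdots s_{j-2}^{-1}\,\alpha_{j-1,j}\,s_{j-2}\cdots s_{i+1}s_i,
\]
each application being legitimate because the first index strictly increases and remains $<j-1$ until the final step, which is the single-step relation with first index $j-2$.

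I expect the only real work to be the bookkeeping in the first step: matching the block $s_{j-1}\cdots s_{i+1}$ produced by (6)(b) against the shifted block coming out of Lemma~\ref{lem_rel_6e_pre}, and verifying that (1)(a) commutes $s_i^{\pm1}$ past precisely $s_{i+2},\dots,s_{j-1}$ and no further. There is no conceptual obstacle here — the whole argument is a direct manipulation of the defining relations (1), (2), and (6)(a)--(c) of Proposition~\ref{prop_pres_LH} together with the already-proven Lemma~\ref{lem_rel_6e_pre}.
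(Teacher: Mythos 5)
Your proof is correct and follows essentially the same route as the paper: both arguments rest on Lemma~\ref{lem_rel_6e_pre} combined with the commutation relations (1)(a) and the defining relation (6)(b) of Proposition~\ref{prop_pres_LH}. The only difference is organizational — the paper carries out one telescoping computation on the full word $s_i^{-1}\cdots s_{j-2}^{-1}\alpha_{j-1,j}s_{j-2}\cdots s_i$, whereas you package the same manipulation as the one-step shift $\alpha_{i,j}=s_i^{-1}\alpha_{i+1,j}s_i$ and iterate, with the same inputs and index checks.
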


\begin{proof}
We have 
\begin{eqnarray*}
&&(s_i^{-1}\cdots s_{j-3}^{-1}\underline{s_{j-2}^{-1})\alpha _{j-1,j}(s_{j-2}}s_{j-3}\cdots s_i)\\
&\overset{\text{Lem.\ref{lem_rel_6e_pre}}}{\underset{}{=}}&(s_i^{-1}\cdots s_{j-4}^{-1}s_{j-3}^{-1})\underset{\leftarrow }{\underline{s_{j-1}}}\alpha _{j-2,j-1}\underset{\rightarrow }{\underline{s_{j-1}^{-1}}}(s_{j-3}s_{j-4}\cdots s_i)\\
&\overset{\text{(1)(a)}}{\underset{}{=}}&s_{j-1}(s_i^{-1}\cdots s_{j-4}^{-1}\underline{s_{j-3}^{-1})\alpha _{j-2,j-1}(s_{j-3}}s_{j-4}\cdots s_i)s_{j-1}^{-1}\\
&\overset{\text{Lem.\ref{lem_rel_6e_pre}}}{\underset{}{=}}&s_{j-1}(s_i^{-1}\cdots s_{j-5}^{-1}s_{j-4}^{-1})\underset{\leftarrow }{\underline{s_{j-2}}}\alpha _{j-3,j-2}\underset{\rightarrow }{\underline{s_{j-2}^{-1}}}(s_{j-4}s_{j-4}\cdots s_i)s_{j-1}^{-1}\\
&\overset{\text{(1)(a)}}{\underset{}{=}}&s_{j-1}s_{j-2}(s_i^{-1}\cdots s_{j-5}^{-1}s_{j-4}^{-1})\alpha _{j-3,j-2}(s_{j-4}s_{j-4}\cdots s_i)s_{j-2}^{-1}s_{j-1}^{-1}\\
&\vdots &\\
&\overset{}{\underset{}{=}}&s_{j-1}s_{j-2}\cdots s_{i+2}\cdot \underline{s_i^{-1}\alpha _{i+1,i+2}s_i}\cdot s_{i+2}^{-1}\cdots s_{j-2}^{-1}s_{j-1}^{-1}\\
&\overset{\text{Lem.\ref{lem_rel_6e_pre}}}{\underset{}{=}}&s_{j-1}s_{j-2}\cdots s_{i+2}\cdot s_{i+1}\alpha _{i,i+1}s_{i+1}^{-1}\cdot s_{i+2}^{-1}\cdots s_{j-2}^{-1}s_{j-1}^{-1}\\
&\overset{\text{(6)(b)}}{\underset{}{=}}&\alpha _{i,j}.
\end{eqnarray*}
Therefore, we have completed the proof of Lemma~\ref{lem_alpha_ij_conj}. 
\end{proof}

\begin{lem}\label{lem_C2}
The relation~(C2) in Lemma~\ref{lem_pres_LH} is obtained from the relations (1), (2), and (6) (a)--(c) in Proposition~\ref{prop_pres_LH}.  
\end{lem}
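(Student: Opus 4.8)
The plan is to derive relation (C2) from the braid-type relations (1), (2), (6)(a)--(c) of Proposition~\ref{prop_pres_LH} by reducing, through conjugation, the index configuration to the single case $(i,j,k)=(1,2,3)$. First I would invoke Lemma~\ref{lem_C123_cyclic} to reduce to $i<j<k$, so that it suffices to treat the eight triples $(\alpha ,\beta ,\gamma )$ appearing in the first row of Table~\ref{rel_C2_PH^1}.

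The core of the argument is a sequence of index shifts, each of which is a consequence of relations (1), (2), (6)(a)--(c): relation (6)(b) gives $s_{j-1}^{-1}\alpha _{i,j}s_{j-1}=\alpha _{i,j-1}$ for $j\geq i+2$ (lowering an upper index), Lemma~\ref{lem_alpha_ij_conj} gives $s_{i}\alpha _{i,j}s_{i}^{-1}=\alpha _{i+1,j}$, equivalently $s_{m-1}^{-1}\gamma _{m,k}s_{m-1}=\gamma _{m-1,k}$ (lowering a lower index), commuting $s_m$ through the word in (6)(b) by means of (1) and (2)(a) yields $s_m\alpha _{i,j}s_m^{-1}=\alpha _{i,j}$ for $i+1\leq m\leq j-2$ (and trivially for $m\geq j+1$ or $m\leq i-2$, by (1)), and (6)(e) gives $s\alpha _{i,j}s^{-1}=\alpha _{i-1,j-1}$ for $i\geq 2$. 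Conjugating relation (C2) for $(i,j,k)$ by $s^{i-1}$ reduces it to the case $i=1$; then repeated conjugation by $s_{k-1}^{-1}$ brings $k$ down to $j+1$; and a further sequence of conjugations by suitable $s_m^{-1}$ ratchets the configuration down to $(1,2,3)$. At each stage I would check that the three factors $\alpha _{i,j}$, $\beta _{i,k}$, $\gamma _{j,k}$ transform compatibly, so that the conjugated relation is again an instance of (C2) with a strictly smaller configuration (for the $\beta$-factor this uses an upper-index shift together with an interior commutation; for the $\gamma$-factor it uses a lower-index shift).

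In the base case $(1,2,3)$, relation (6)(a) gives $\alpha _{1,2}\in \{ s_1^2,\, s_1r_1^{-1},\, r_1^{-1}s_1\}$, $\gamma _{2,3}\in \{ s_2^2,\, s_2r_2^{-1},\, r_2^{-1}s_2\}$, and $\beta _{1,3}=s_2\beta _{1,2}s_2^{-1}$, so (C2) becomes an explicit word identity in $s_1,s_2,r_1,r_2$; I would verify $\alpha _{1,2}\cdot \beta _{1,3}\gamma _{2,3}=\beta _{1,3}\gamma _{2,3}\cdot \alpha _{1,2}$ for each of the eight admissible triples of Table~\ref{rel_C2_PH^1} using (1)(a) and (2)(a)--(c). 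I expect the main obstacle to be the reduction step: one must confirm that every index shift employed is genuinely a consequence of the allowed relations --- this is immediate for the shifts read directly off (6)(b), Lemma~\ref{lem_alpha_ij_conj}, and (6)(e), but the interior commutations $s_m\alpha _{i,j}s_m^{-1}=\alpha _{i,j}$ require an explicit (though routine) rewriting of the word (6)(b) via the braid relations --- and one must keep careful track of how all three generators of (C2) move under each conjugation. The final case analysis in the base case is longer but mechanical.
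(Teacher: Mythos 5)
Your proposal is correct and follows essentially the same strategy as the paper: reduce via Lemma~\ref{lem_C123_cyclic} to the case $i<j<k$, use the conjugation identities coming from relation (6)(b) and Lemma~\ref{lem_alpha_ij_conj} to shift indices, and verify the eight admissible triples of Table~\ref{rel_C2_PH^1} on a consecutive configuration by direct computation with (1), (2)(a)--(c) and (6)(a). The only (harmless) deviation is that the paper stops the reduction at $(i,i+1,i+2)$, handled uniformly in $i$ by one explicit rewriting, and therefore never needs your two extra tools --- conjugation by $s$ (relation (6)(e)) and the interior commutations $s_m\alpha_{i,j}s_m^{-1}=\alpha_{i,j}$ --- but both are indeed consequences of the allowed relations (Lemmas~\ref{lem_rel_6e} and \ref{lem_Cxs}), so your finer reduction to $(1,2,3)$ is legitimate.
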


\begin{proof}
By Lemma~\ref{lem_C123_cyclic}, it is enough for completing Lemma~\ref{lem_C2} to prove that the relation~(C2) for $1\leq i<j<k\leq n+1$ in Lemma~\ref{lem_pres_LH} is obtained from the relations (1), (2), (6) (a)--(c) in Proposition~\ref{prop_pres_LH}. 
By Table~\ref{rel_C2_PH^1}, we will consider the cases $(\alpha , \beta , \gamma )\in \{ (p, p, p), (p, y, y), (x, p, p), (x, x, p), (x, y, y), (y, p, p), (y, p, x), (y, y, y)\} $. 
First, we prove Lemma~\ref{lem_C2} for $(i,j,k)=(i,i+1,i+2)$. 

In the case that $(\beta , \gamma )=(p, p)$, we have 
\begin{eqnarray*}
\beta _{i,i+2}\gamma _{i+1,i+2}=p_{i,i+2}p_{i+1,i+2}\overset{\text{(6)(a)}}{\underset{\text{(6)(b)}}{=}}s_{i+1}s_i^2s_{i+1}^{-1}\cdot s_{i+1}^2=s_{i+1}s_i^2s_{i+1}.
\end{eqnarray*}
For $\delta \in \{ s, r\}$ and $1\leq i\leq n$, the relation $\delta _i\rightleftarrows s_{i+1}s_i^2s_{i+1}$ is obtained from the relations (1), (2), and (6) (a)--(c) in Proposition~\ref{prop_pres_LH} as follows:  
\begin{eqnarray*}
\underline{\delta _i\cdot s_{i+1}s_i}s_is_{i+1}\overset{\text{(2)(a)}}{\underset{\text{(2)(b)}}{=}}s_{i+1}s_i\underline{\delta _{i+1}s_is_{i+1}}\overset{\text{(2)(a)}}{\underset{\text{(2)(b)}}{=}}s_{i+1}s_i^2s_{i+1}\cdot \delta _i. 
\end{eqnarray*}
Since $p_{i,i+1}=s_i^2$, $x_{i,i+1}=s_ir_i^{-1}$, and $y_{i,i+1}=r_i^{-1}s_i$ for $1\leq i\leq n$ by the relation~(6)~(a) in Proposition~\ref{prop_pres_LH}, the relation $\alpha _{i,i+1}\rightleftarrows p_{i,i+2}p_{i+1,i+2}$ for $1\leq i\leq n$ and $\alpha \in \{ p, x, y\}$, that is the relation~(C2) for $(i,j,k)=(i,i+1,i+2)$ and $(\beta , \gamma )=(p, p)$, is obtained from the relations (1), (2), and (6) (a)--(c) in Proposition~\ref{prop_pres_LH}.  

In the case that $(\beta , \gamma )=(y, y)$, we have 
\begin{eqnarray*}
\beta _{i,i+2}\gamma _{i+1,i+2}&=&y_{i,i+2}y_{i+1,i+2}\overset{\text{(6)(a)}}{\underset{\text{(6)(b)}}{=}}s_{i+1}r_i^{-1}s_is_{i+1}^{-1}\cdot r_{i+1}^{-1}s_{i+1}\\
&\overset{}{\underset{}{=}}&s_{i+1}r_i^{-1}\underline{s_is_{i+1}^{-1}s_{i}^{-1}}s_{i}r_{i+1}^{-1}s_{i+1}\\
&\overset{\text{(2)(a)}}{\underset{}{=}}&s_{i+1}\underline{r_i^{-1}s_{i+1}^{-1}s_{i}^{-1}}\ \underline{s_{i+1}s_{i}r_{i+1}^{-1}}s_{i+1}\overset{\text{(2)(b)}}{\underset{}{=}}\underline{s_{i}^{-1}r_{i+1}^{-1}r_{i}^{-1}}s_{i+1}s_{i}s_{i+1}\\
&\overset{\text{(2)(c)}}{\underset{}{=}}&r_{i+1}^{-1}r_i^{-1}s_is_{i+1}.
\end{eqnarray*}
For $\delta \in \{ s, r\}$ and $1\leq i\leq n$, the relation $\delta _i\rightleftarrows r_{i+1}^{-1}r_i^{-1}s_is_{i+1}$ is obtained from the relations (1), (2), and (6) (a)--(c) in Proposition~\ref{prop_pres_LH} as follows:  
\begin{eqnarray*}
\underline{\delta _i\cdot r_{i+1}^{-1}r_i^{-1}}s_is_{i+1}\overset{\text{(2)(a)}}{\underset{\text{(2)(c)}}{=}}r_{i+1}^{-1}r_i^{-1}\underline{\delta _{i+1}s_is_{i+1}}\overset{\text{(2)(a)}}{\underset{\text{(2)(b)}}{=}}r_{i+1}^{-1}r_i^{-1}s_is_{i+1}\cdot \delta _i. 
\end{eqnarray*}
Thus, the relation $\alpha _{i,i+1}\rightleftarrows y_{i,i+2}y_{i+1,i+2}$ for $1\leq i\leq n$ and $\alpha \in \{ p, x, y\}$, that is the relation~(C2) for $(i,j,k)=(i,i+1,i+2)$ and $(\beta , \gamma )=(y, y)$, is obtained from the relations (1), (2), and (6) (a)--(c) in Proposition~\ref{prop_pres_LH}.  

In the case that $(\alpha , \beta , \gamma )=(x, x, p)$, by a similar argument above, we also have $\beta _{i,i+2}\gamma _{i+1,i+2}=s_{i+1}s_ir_i^{-1}s_{i+1}$ and the relation $\delta _i\rightleftarrows s_{i+1}s_ir_i^{-1}s_{i+1}$ for $\delta \in \{ s, r\}$ and $1\leq i\leq n$ is obtained from the relations (1), (2), (6) (a)--(c) in Proposition~\ref{prop_pres_LH}.  
Hence the relation~(C2) for $(i,j,k)=(i,i+1,i+2)$ and $(\alpha , \beta , \gamma )=(x, x, p)$ is obtained from the relations (1), (2), and (6) (a)--(c) in Proposition~\ref{prop_pres_LH}.  

In the case that $(\alpha , \beta , \gamma )=(y, p, x)$, we have
\begin{eqnarray*}
\alpha _{i,i+1}\beta _{i,i+2}\gamma _{i+1,i+2}&=&y_{i,i+1}p_{i,i+2}x_{i+1,i+2}\overset{\text{(6)(a)}}{\underset{\text{(6)(b)}}{=}}r_i^{-1}\underline{s_i\cdot s_{i+1}s_i}s_is_{i+1}^{-1}\cdot s_{i+1}r_{i+1}^{-1}\\
&\overset{\text{(2)(a)}}{\underset{}{=}}&\underline{r_i^{-1}s_{i+1}s_i}\ \underline{s_{i+1}s_ir_{i+1}^{-1}}\overset{\text{(2)(b)}}{\underset{}{=}}s_{i+1}s_i\underline{r_{i+1}^{-1}r_{i}^{-1}s_{i+1}}s_i\\
&\overset{\text{(2)(c)}}{\underset{}{=}}&s_{i+1}s_is_{i}r_{i+1}^{-1}r_{i}^{-1}s_i=s_{i+1}s_i^2s_{i+1}^{-1}\cdot s_{i+1}r_{i+1}^{-1}\cdot r_{i}^{-1}s_i\\
&\overset{\text{(6)(a)}}{\underset{\text{(6)(b)}}{=}}&p_{i,i+2}x_{i+1,i+2}y_{i,i+1}\\
&=&\beta _{i,i+2}\gamma _{i+1,i+2}\alpha _{i,i+1}.
\end{eqnarray*}
Thus, the relation~(C2) for $(i,j,k)=(i,i+1,i+2)$ and $(\alpha , \beta , \gamma )=(y, p, x)$ is obtained from the relations (1), (2), and (6) (a)--(c) in Proposition~\ref{prop_pres_LH} and we have proved Lemma~\ref{lem_C2} for $(i,j,k)=(i,i+1,i+2)$. 

Assume that $(i,j,k)\not =(i,i+1,i+2)$ and $1\leq i<j<k\leq n+1$. 
For $\gamma \in \{ p, x, y\}$, we remark that 
\begin{eqnarray*}
&&s_{i+1}^{-1}s_{i+2}^{-1}\cdots s_{j-1}^{-1}\gamma _{j,k}s_{j-1}\cdots s_{i+2}s_{i+1}\overset{\text{Lem.\ref{lem_alpha_ij_conj}}}{\underset{}{=}}\gamma _{i+1,k}\\
&\overset{\text{CONJ}}{\underset{\text{(6)(b)}}{\Longleftrightarrow }}&\gamma _{j,k}=s_{j-1}\cdots s_{i+2}s_{i+1}\cdot s_{k-1}\cdots s_{i+3}s_{i+2}\gamma _{i+1,i+2}s_{i+2}^{-1}s_{i+3}^{-1}\cdots s_{k-1}^{-1}\cdot s_{i+1}^{-1}s_{i+2}^{-1}\cdots s_{j-1}^{-1}.
\end{eqnarray*}
Then we have 
\begin{eqnarray*}
&&\alpha _{i,j}\beta _{i,k}\gamma _{j,k}\\
&\overset{\text{(6)(b)}}{\underset{\text{Lem.\ref{lem_alpha_ij_conj}}}{=}}&s_{j-1}\cdots s_{i+2}s_{i+1}\alpha _{i,i+1}\underline{s_{i+1}^{-1}s_{i+2}^{-1}\cdots s_{j-1}^{-1}\cdot s_{k-1}\cdots s_{i+2}s_{i+1}}\beta _{i,i+1}\underline{s_{i+1}^{-1}s_{i+2}^{-1}\cdots s_{k-1}^{-1}}\\
&&\underline{\cdot s_{j-1}\cdots s_{i+2}s_{i+1}}\cdot s_{k-1}\cdots s_{i+3}s_{i+2}\gamma _{i+1,i+2}s_{i+2}^{-1}s_{i+3}^{-1}\cdots s_{k-1}^{-1}\cdot s_{i+1}^{-1}s_{i+2}^{-1}\cdots s_{j-1}^{-1}\\
&\overset{\text{(1)(a)}}{\underset{\text{(2)(a)}}{=}}&s_{j-1}\cdots s_{i+2}s_{i+1}\alpha _{i,i+1}\underset{\leftarrow }{\underline{s_{k-1}\cdots s_{i+2}}}s_{i+1}\cdot s_{i+2}^{-1}s_{i+3}^{-1}\cdots s_{j}^{-1}\beta _{i,i+1}\underset{\leftarrow }{\underline{s_{j}\cdots s_{i+3}s_{i+2}}}\\
&&\cdot s_{i+1}^{-1}\gamma _{i+1,i+2}s_{i+2}^{-1}s_{i+3}^{-1}\cdots s_{k-1}^{-1}\cdot s_{i+1}^{-1}s_{i+2}^{-1}\cdots s_{j-1}^{-1}\\
&\overset{\text{(1)}}{\underset{}{=}}&s_{j-1}\cdots s_{i+2}s_{i+1}\cdot s_{k-1}\cdots s_{i+3}s_{i+2}\alpha _{i,i+1}\underline{s_{i+1}\beta _{i,i+1}s_{i+1}^{-1}}\gamma _{i+1,i+2}\\
&&\cdot s_{i+2}^{-1}s_{i+3}^{-1}\cdots s_{k-1}^{-1}\cdot s_{i+1}^{-1}s_{i+2}^{-1}\cdots s_{j-1}^{-1}\\
&\overset{\text{(6)(b)}}{\underset{}{=}}&s_{j-1}\cdots s_{i+2}s_{i+1}\cdot s_{k-1}\cdots s_{i+3}s_{i+2}\underline{\alpha _{i,i+1}\beta _{i,i+2}\gamma _{i+1,i+2}}\\
&&\cdot s_{i+2}^{-1}s_{i+3}^{-1}\cdots s_{k-1}^{-1}\cdot s_{i+1}^{-1}s_{i+2}^{-1}\cdots s_{j-1}^{-1}\\
&\overset{}{\underset{}{=}}&s_{j-1}\cdots s_{i+2}s_{i+1}\cdot s_{k-1}\cdots s_{i+3}s_{i+2}\underline{\beta _{i,i+2}}\gamma _{i+1,i+2}\alpha _{i,i+1}\\
&&\cdot \underset{\leftarrow }{\underline{s_{i+2}^{-1}s_{i+3}^{-1}\cdots s_{k-1}^{-1}}}\cdot s_{i+1}^{-1}s_{i+2}^{-1}\cdots s_{j-1}^{-1}\\
&\overset{\text{(6)(b)}}{\underset{\text{(1)}}{=}}&\underline{s_{j-1}\cdots s_{i+2}s_{i+1}\cdot s_{k-1}\cdots s_{i+2}s_{i+1}}\beta _{i,i+1}s_{i+1}^{-1}\gamma _{i+1,i+2}s_{i+2}^{-1}s_{i+3}^{-1}\cdots s_{k-1}^{-1}\\
&&\cdot \alpha _{i,i+1}s_{i+1}^{-1}s_{i+2}^{-1}\cdots s_{j-1}^{-1}\\
&\overset{\text{(1)(a)}}{\underset{\text{(2)(a)}}{=}}&s_{k-1}\cdots s_{i+2}s_{i+1}\cdot \underset{\rightarrow }{\underline{s_{j}\cdots s_{i+3}s_{i+2}}}\beta _{i,i+1}s_{i+1}^{-1}\gamma _{i+1,i+2}s_{i+2}^{-1}s_{i+3}^{-1}\cdots s_{k-1}^{-1}\\
&&\cdot \alpha _{i,i+1}s_{i+1}^{-1}s_{i+2}^{-1}\cdots s_{j-1}^{-1}\\
&\overset{\text{(1)}}{\underset{}{=}}&s_{k-1}\cdots s_{i+2}s_{i+1}\beta _{i,i+1}s_{j}\cdots s_{i+3}s_{i+2}s_{i+1}^{-1}\gamma _{i+1,i+2}s_{i+2}^{-1}s_{i+3}^{-1}\cdots s_{k-1}^{-1}\\
&&\cdot \alpha _{i,i+1}s_{i+1}^{-1}s_{i+2}^{-1}\cdots s_{j-1}^{-1}\\
&\overset{}{\underset{}{=}}&s_{k-1}\cdots s_{i+2}s_{i+1}\beta _{i,i+1}\underline{s_{j}\cdots s_{i+3}s_{i+2}s_{i+1}^{-1}s_{i+2}^{-1}\cdots s_{k-1}^{-1}}\\
&&\cdot s_{k-1}\cdots s_{i+3}s_{i+2}\gamma _{i+1,i+2}s_{i+2}^{-1}s_{i+3}^{-1}\cdots s_{k-1}^{-1}\cdot s_{i+1}^{-1}s_{i+2}^{-1}\cdots s_{j-1}^{-1}\\
&&\cdot s_{j-1}\cdots s_{i+2}s_{i+1}\alpha _{i,i+1}s_{i+1}^{-1}s_{i+2}^{-1}\cdots s_{j-1}^{-1}\\
&\overset{\text{(1)(a)}}{\underset{\text{(2)(a)}}{=}}&s_{k-1}\cdots s_{i+2}s_{i+1}\beta _{i,i+1}s_{i+1}^{-1}s_{i+2}^{-1}\cdots s_{k-1}^{-1}\\
&&\cdot s_{j-1}\cdots s_{i+2}s_{i+1}\cdot s_{k-1}\cdots s_{i+3}s_{i+2}\gamma _{i+1,i+2}s_{i+2}^{-1}s_{i+3}^{-1}\cdots s_{k-1}^{-1}\cdot s_{i+1}^{-1}s_{i+2}^{-1}\cdots s_{j-1}^{-1}\\
&&\cdot s_{j-1}\cdots s_{i+2}s_{i+1}\alpha _{i,i+1}s_{i+1}^{-1}s_{i+2}^{-1}\cdots s_{j-1}^{-1}\\
&\overset{\text{(6)(b)}}{\underset{\text{Lem.\ref{lem_alpha_ij_conj}}}{=}}&\beta _{i,k}\gamma _{j,k}\alpha _{i,j}.
\end{eqnarray*}
Therefore, the relation~(C2) for $(i,j,k)\not =(i,i+1,i+2)$ is obtained from the relations (1), (2), and (6) (a)--(c) in Proposition~\ref{prop_pres_LH} and we have completed the proof of Lemma~\ref{lem_C2}. 
\end{proof}

\begin{lem}\label{lem_Cxs}
For $\alpha \in \{ p, x, y\}$, $\beta \in \{ s, r\}$, and $1\leq i<k<j-1\leq n+1$, the relation $\alpha _{i,j}\rightleftarrows \beta _k$ is obtained from the relations~(1), (2), and (6) (a)--(c) in Proposition~\ref{prop_pres_LH}. 
\end{lem}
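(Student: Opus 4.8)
The plan is to reduce the claimed commutation to the relations already available by writing $\alpha_{i,j}$ as an explicit conjugate of the consecutive generator $\alpha_{i,i+1}$ and then tracking how $\beta_k$ moves through the conjugating word. Set $w=s_{j-1}s_{j-2}\cdots s_{i+1}$, so that by relation~(6)(b) in Proposition~\ref{prop_pres_LH} (which applies since $i<k<j-1$ forces $i+1\le j-2$, hence $j\ge i+3$, and $j\le n+1$ because $\alpha_{i,j}$ is a generator) we have $\alpha_{i,j}=w\,\alpha_{i,i+1}\,w^{-1}$. Thus $\alpha_{i,j}\rightleftarrows\beta_k$ is equivalent to the assertion that $w^{-1}\beta_k w$ commutes with $\alpha_{i,i+1}$.

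The first and main step is to prove the identity $\beta_k w=w\,\beta_{k+1}$ using only the commutative relations~(1) and the conjugation relations~(2). Because $i+1\le k\le j-2$, both $s_k$ and $s_{k+1}$ occur among the letters of $w$, and $1\le k\le n-1$, so relations~(2)(a) and (2)(b) are available at index $k$. One pushes $\beta_k$ rightward through $w$: it commutes past the initial block $s_{j-1}\cdots s_{k+2}$ by~(1)(a) (indices differ by at least $2$); it passes the pair $s_{k+1}s_k$ using the braid relation $s_ks_{k+1}s_k=s_{k+1}s_ks_{k+1}$ from~(2)(a) when $\beta=s$, and using $r_ks_{k+1}s_k=s_{k+1}s_kr_{k+1}$ (obtained by rewriting $r_k=s_{k+1}s_kr_{k+1}s_k^{-1}s_{k+1}^{-1}$ from~(2)(b) with $\varepsilon=-1$) when $\beta=r$, in each case turning $\beta_k$ into $\beta_{k+1}$; and finally $\beta_{k+1}$ commutes past the tail $s_{k-1}\cdots s_{i+1}$ by~(1)(a). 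The degenerate case $k=i+1$ (empty tail) is covered by the same two moves.

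With $w^{-1}\beta_k w=\beta_{k+1}$ in hand the conclusion is immediate: $\beta_k\alpha_{i,j}\beta_k^{-1}=w\,(\beta_{k+1}\alpha_{i,i+1}\beta_{k+1}^{-1})\,w^{-1}$, and since by~(6)(a) the element $\alpha_{i,i+1}$ is a word in $s_i$ and $r_i$ while $(k+1)-i\ge 2$, relation~(1)(a) gives $\beta_{k+1}\alpha_{i,i+1}\beta_{k+1}^{-1}=\alpha_{i,i+1}$, so $\beta_k\alpha_{i,j}\beta_k^{-1}=w\alpha_{i,i+1}w^{-1}=\alpha_{i,j}$, which is the relation $\alpha_{i,j}\rightleftarrows\beta_k$. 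I do not expect a genuine obstacle here: geometrically the two homeomorphisms have disjoint support once $i<k$ and $k+1<j$, and the only slightly fiddly point is the bookkeeping in the $\beta=r$ case of the identity $\beta_kw=w\beta_{k+1}$, which is where I would be most careful when writing out the full computation.
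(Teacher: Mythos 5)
Your proposal is correct and takes essentially the same route as the paper: both express $\alpha_{i,j}$ as the conjugate of $\alpha_{i,i+1}$ by $w=s_{j-1}\cdots s_{i+1}$ via relation (6)(b), move $\beta_k$ through this word using (1)(a) together with the swap $\beta_k s_{k+1}s_k=s_{k+1}s_k\beta_{k+1}$ coming from (2)(a) (for $\beta=s$) and (2)(b) with $\varepsilon=-1$ (for $\beta=r$), and then commute $\beta_{k+1}$ past $\alpha_{i,i+1}$ using (6)(a) and (1)(a). The only difference is organizational: the paper performs these moves in a single chain of equalities $\alpha_{i,j}\beta_k=\cdots=\beta_k\alpha_{i,j}$ (swapping $\beta_k\to\beta_{k+1}$ and back), whereas you isolate the identity $\beta_k w=w\beta_{k+1}$ and conjugate once.
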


\begin{proof}
For $1\leq i<k<j-1\leq n+1$, we have 
\begin{eqnarray*}
\alpha _{i,j}\beta _k&\overset{\text{(6)(b)}}{\underset{}{=}}&s_{j-1}\cdots s_{i+2}s_{i+1}\alpha _{i,i+1}s_{i+1}^{-1}s_{i+2}^{-1}\cdots s_{j-1}^{-1}\underset{\leftarrow }{\underline{\beta _k}}\\
&\overset{\text{(1)(a)}}{\underset{}{=}}&s_{j-1}\cdots s_{i+2}s_{i+1}\alpha _{i,i+1}s_{i+1}^{-1}s_{i+2}^{-1}\cdots s_{k-1}^{-1}\underline{s_{k}^{-1}s_{k+1}^{-1}\cdot \beta _k}\cdot s_{k+2}^{-1}\cdots s_{j-1}^{-1}\\
&\overset{\text{(2)(a)}}{\underset{\text{(2)(b)}}{=}}&s_{j-1}\cdots s_{i+2}s_{i+1}\alpha _{i,i+1}s_{i+1}^{-1}s_{i+2}^{-1}\cdots s_{k-1}^{-1}\cdot \underset{\leftarrow }{\underline{\beta _{k+1}}}\cdot s_{k}^{-1}s_{k+1}^{-1}s_{k+2}^{-1}\cdots s_{j-1}^{-1}\\
&\overset{\text{(1)(a)}}{\underset{\text{(6)(a)}}{=}}&s_{j-1}\cdots s_{k+2}\underline{s_{k+1}s_{k}\cdot \beta _{k+1}}\cdot s_{k-1}\cdots s_{i+2}s_{i+1}\alpha _{i,i+1}s_{i+1}^{-1}s_{i+2}^{-1}\cdots s_{j-1}^{-1}\\
&\overset{\text{(2)(a)}}{\underset{\text{(2)(b)}}{=}}&s_{j-1}\cdots s_{k+2}\cdot \underset{\leftarrow }{\underline{\beta _{k}}}\cdot s_{k+1}s_{k}s_{k-1}\cdots s_{i+2}s_{i+1}\alpha _{i,i+1}s_{i+1}^{-1}s_{i+2}^{-1}\cdots s_{j-1}^{-1}\\
&\overset{\text{(1)(a)}}{\underset{\text{(6)(b)}}{=}}&\beta _k\alpha _{i,j}.
\end{eqnarray*}
Therefore, we have completed the proof of Lemma~\ref{lem_Cxs}. 
\end{proof}

\begin{lem}\label{lem_C3}
The relation~(C3) in Lemma~\ref{lem_pres_LH} is obtained from the relations (1), (2), and (6) (a)--(c) in Proposition~\ref{prop_pres_LH}.  
\end{lem}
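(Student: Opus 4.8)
By Lemma~\ref{lem_C123_cyclic} it suffices to establish the relation~(C3) in the case $1\le i<j<k<l\le n+1$, i.e. that $\alpha_{i,k}$ commutes with $p_{j,k}\beta_{j,l}p_{j,k}^{-1}$ for $\alpha,\beta\in\{p,x,y\}$. The plan is to first reduce, by conjugating the whole relation by suitable products of the $s_m$'s, to the ``small'' configuration $(i,j,k,l)=(1,2,k,k+1)$, and then to evaluate that case explicitly. The tools for the reduction are the index-shift relations that follow from~(6)(b) and Lemma~\ref{lem_rel_6e_pre}, namely $s_{q-1}^{-1}\alpha_{p,q}s_{q-1}=\alpha_{p,q-1}$ for $q-p\ge2$ and $s_{p-1}^{-1}\alpha_{p,q}s_{p-1}=\alpha_{p-1,q}$ for $2\le p<q$, together with the commutations of Lemma~\ref{lem_Cxs} and relation~(1).

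For the reduction I would argue in three stages. Conjugating the relation by $s_{l-1}^{-1}$ sends $\beta_{j,l}\mapsto\beta_{j,l-1}$ while fixing $\alpha_{i,k}$ and $p_{j,k}$: indeed $l-1\ge k+1$, so by~(1) the generator $s_{l-1}$ commutes with all the $s_m,r_m$ occurring in $\alpha_{i,k}$ and $p_{j,k}$. Repeating, we may assume $l=k+1$. Next, conjugating by $s_{j-1}^{-1}$ sends $p_{j,k}\mapsto p_{j-1,k}$ and $\beta_{j,k+1}\mapsto\beta_{j-1,k+1}$, while Lemma~\ref{lem_Cxs} (applied with $i<j-1<k-1$) shows $s_{j-1}$ commutes with $\alpha_{i,k}$; so we may assume $j=i+1$. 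Finally, conjugating by $s_i^{-1}$ lowers the bottom index of all three pairs by one, so we may assume $i=1$. This brings us to the relation $\alpha_{1,k}\rightleftarrows p_{2,k}\beta_{2,k+1}p_{2,k}^{-1}$ with $2<k\le n$.

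For the reduced case, expand $p_{2,k}$, $\beta_{2,k+1}$ and $\alpha_{1,k}$ by Lemma~\ref{lem_alpha_ij_conj}; the $s$-words telescope, and using $p_{k-1,k}=s_{k-1}^{2}$ (relation~(6)(a)), Lemma~\ref{lem_rel_6e_pre}, and Lemma~\ref{lem_alpha_ij_conj} once more one gets
\[
p_{2,k}\beta_{2,k+1}p_{2,k}^{-1}=s_k^{-1}\beta_{2,k}s_k .
\]
Conjugating the relation by $s_k$ and using the consequence $s_k\alpha_{1,k}s_k^{-1}=\alpha_{1,k+1}$ of Lemmas~\ref{lem_alpha_ij_conj} and~\ref{lem_rel_6e_pre}, relation~(C3) for $(1,2,k,k+1)$ becomes the ``nested'' commutation $\alpha_{1,k+1}\rightleftarrows\beta_{2,k}$ (note $1<2<k<k+1$), which is an instance of relation~(C1) and hence holds by Lemma~\ref{lem_C1}.

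The main obstacle is the bookkeeping in the reduction: at each conjugation one must verify that the chosen $s_m$ shifts an index of some of the three factors $\alpha_{i,k}$, $p_{j,k}$, $\beta_{j,l}$ and commutes past the others, which forces a case analysis driven by the inequalities $i<j<k<l$ to decide whether the ``top-index'', ``bottom-index'' or ``commute'' case of the index-shift/commutation lemmas applies — exactly as in the long computation at the end of the proof of Lemma~\ref{lem_C2}, but now with three factors and the extra conjugation by $p_{j,k}$. An alternative, if one prefers to avoid the reduction, is to expand $\alpha_{i,k}$, $p_{j,k}$, $\beta_{j,l}$ directly via~(6)(b) and Lemma~\ref{lem_alpha_ij_conj} and grind the resulting word as in that proof; each individual rewriting step is a routine application of~(1), (2), or~(6)(a)--(c).
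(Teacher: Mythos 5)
Your route is genuinely different from the paper's. The paper, after the same appeal to Lemma~\ref{lem_C123_cyclic}, verifies the minimal case $(i,i+1,i+2,i+3)$ by hand (reducing it to the commutation of $\alpha_{i,i+1}$ with $\beta_{i+2,i+3}$, which have disjoint generator supports) and then treats all remaining $i<j<k<l$ by one long direct word computation, expanding $p_{j,k}$, $\beta_{j,l}$, $\alpha_{i,k}$ via~(6)(b) and Lemma~\ref{lem_alpha_ij_conj} and pushing letters around with (1), (2) and Lemma~\ref{lem_Cxs}. You instead normalize the indices by conjugation (reduce $l$ to $k+1$, then $j$ to $i+1$) and convert the residual case into an already-established commutation; this is shorter and more structural, and your first two reduction stages are correct and use exactly the tools the paper has available (the index shifts from~(6)(b) and Lemma~\ref{lem_alpha_ij_conj}, the commutations from~(1) and Lemma~\ref{lem_Cxs}).

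Three points need repair, though none is fatal. First, your third stage is false as stated: there is no shift relation of the form $s_i^{-1}\alpha_{i,k}s_i=\alpha_{i-1,k}$ (the bottom shift is $s_{i-1}^{-1}\alpha_{i,k}s_{i-1}=\alpha_{i-1,k}$, and conjugating by $s_{i-1}^{-1}$ fixes $p_{i+1,k}$ and $\beta_{i+1,k+1}$, so it lowers only $i$ and destroys the normalization $j=i+1$); luckily this stage is unnecessary, since your final argument works verbatim for arbitrary $i$ with $j=i+1$, $l=k+1$. Second, your key identity $p_{j,k}\beta_{j,k+1}p_{j,k}^{-1}=s_k^{-1}\beta_{j,k}s_k$ is true but only sketched; the clean justification is that, after writing $\beta_{j,k+1}=s_k\beta_{j,k}s_k^{-1}$ and $s_kp_{j,k}s_k=p_{j,k+1}p_{k,k+1}$, it is exactly the relation $\beta_{j,k}\rightleftarrows p_{j,k+1}p_{k,k+1}$, i.e.\ an instance of~(C2) with triple $(\beta,p,p)$, so you may simply cite Lemma~\ref{lem_C2}. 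Third, the resulting nested commutation $\alpha_{i,k+1}\rightleftarrows\beta_{i+1,k}$ is not an instance of~(C1) for the tuple $(i,k+1,i+1,k)$ in linear order, since that tuple is not cyclically ordered; it is the instance with the wrapped tuple $(k+1,i,i+1,k)$ together with relation~(0) (equivalently, it lies in the cyclic orbit handled by Lemma~\ref{lem_C123_cyclic}), and with that reading Lemma~\ref{lem_C1} does cover it. With these corrections your argument gives a valid, and arguably tidier, alternative to the paper's computation.
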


\begin{proof}
By Lemma~\ref{lem_C123_cyclic}, it is enough for completing Lemma~\ref{lem_C3} to prove that the relation~(C3) for $1\leq i<j<k<l\leq n+1$ in Lemma~\ref{lem_pres_LH} is obtained from the relations (1), (2), (6) (a)--(c) in Proposition~\ref{prop_pres_LH}. 
In the case that $(i,j,k,l)=(i,i+1,i+2,i+3)$, for $\alpha , \beta \in \{ p, x, y\}$, we have 
\begin{eqnarray*}
\alpha _{i,i+2}\cdot p_{i+1,i+2}\beta _{i+1,i+3}p_{i+1,i+2}^{-1}&\overset{\text{(6)}}{\underset{\text{Lem.\ref{lem_alpha_ij_conj}}}{=}}&s_{i+1}\alpha _{i,i+1}s_{i+1}^{-1}\cdot s_{i+1}^2\cdot s_{i+1}^{-1}\beta _{i+2,i+3}s_{i+1}\cdot s_{i+1}^{-2}\\
&=&s_{i+1}\underline{\alpha _{i,i+1}\beta _{i+2,i+3}}s_{i+1}^{-1}\\
&\overset{\text{(6)(a)}}{\underset{\text{(1)}}{=}}&s_{i+1}\beta _{i+2,i+3}\alpha _{i,i+1}s_{i+1}^{-1}\\
&\overset{}{\underset{}{=}}&s_{i+1}^2\cdot s_{i+1}^{-1}\beta _{i+2,i+3}s_{i+1}\cdot s_{i+1}^{-2}\cdot s_{i+1}\alpha _{i,i+1}s_{i+1}^{-1}\\
&\overset{\text{(6)}}{\underset{\text{Lem.\ref{lem_alpha_ij_conj}}}{=}}&p_{i+1,i+2}\beta _{i+1,i+3}p_{i+1,i+2}^{-1}\cdot \alpha _{i,i+2}.
\end{eqnarray*}
Thus, the relation~(C3) for $(i,j,k,l)=(i,i+1,i+2,i+3)$ is obtained from the relations (1), (2), and (6) (a)--(c) in Proposition~\ref{prop_pres_LH}. 

In the case that $(i,j,k,l)\not =(i,i+1,i+2,i+3)$, by an argument similar to the proof of Lemma~\ref{lem_C2} and using Lemma~\ref{lem_alpha_ij_conj} and the relation~(6) (b), we have 
\begin{align*}
p_{j,k}&=s_{j-1}\cdots s_{i+2}s_{i+1}\cdot s_{k-1}\cdots s_{i+3}s_{i+2}p_{i+1,i+2}s_{i+2}^{-1}s_{i+3}^{-1}\cdots s_{k-1}^{-1}\cdot s_{i+1}^{-1}s_{i+2}^{-1}\cdots s_{j-1}^{-1}, \\
\beta _{j,l}&=s_{j-1}\cdots s_{i+2}s_{i+1}\cdot s_{l-1}\cdots s_{i+3}s_{i+2}\beta _{i+1,i+2}s_{i+2}^{-1}s_{i+3}^{-1}\cdots s_{l-1}^{-1}\cdot s_{i+1}^{-1}s_{i+2}^{-1}\cdots s_{j-1}^{-1}.
\end{align*}
Then, we have
\begin{eqnarray*}
&&\alpha _{i,k}\cdot \underline{p_{j,k}\beta _{j,l}p_{j,k}^{-1}}\\
&\overset{\text{(6)(b)}}{\underset{\text{Lem.\ref{lem_alpha_ij_conj}}}{=}}&\underline{\alpha _{i,k}}\cdot \underset{\leftarrow }{\underline{s_{j-1}\cdots s_{i+2}s_{i+1}}}\cdot s_{k-1}\cdots s_{i+3}s_{i+2}p_{i+1,i+2}s_{i+2}^{-1}s_{i+3}^{-1}\cdots s_{k-1}^{-1}\\
&&\cdot s_{l-1}\cdots s_{i+3}s_{i+2}\beta _{i+1,i+2}s_{i+2}^{-1}s_{i+3}^{-1}\cdots s_{l-1}^{-1}\\
&&\cdot s_{k-1}\cdots s_{i+3}s_{i+2}p_{i+1,i+2}^{-1}s_{i+2}^{-1}s_{i+3}^{-1}\cdots s_{k-1}^{-1}\cdot s_{i+1}^{-1}s_{i+2}^{-1}\cdots s_{j-1}^{-1}\\
&\overset{\text{Lem.\ref{lem_Cxs}}}{\underset{\text{(6)(b)}}{=}}&s_{j-1}\cdots s_{i+2}s_{i+1}\cdot s_{k-1}\cdots s_{i+3}s_{i+2}\alpha _{i,i+2}p_{i+1,i+2}s_{i+2}^{-1}\underline{s_{i+3}^{-1}\cdots s_{k-1}^{-1}}\\
&&\underline{\cdot s_{l-1}\cdots s_{i+3}}s_{i+2}\beta _{i+1,i+2}s_{i+2}^{-1}\underline{s_{i+3}^{-1}\cdots s_{l-1}^{-1}}\\
&&\underline{\cdot s_{k-1}\cdots s_{i+3}}s_{i+2}p_{i+1,i+2}^{-1}s_{i+2}^{-1}s_{i+3}^{-1}\cdots s_{k-1}^{-1}\cdot s_{i+1}^{-1}s_{i+2}^{-1}\cdots s_{j-1}^{-1}\\
&\overset{\text{(1)(a)}}{\underset{\text{(2)(a)}}{=}}&s_{j-1}\cdots s_{i+2}s_{i+1}\cdot s_{k-1}\cdots s_{i+3}s_{i+2}\alpha _{i,i+2}p_{i+1,i+2}s_{i+2}^{-1}\cdot \underset{\leftarrow }{\underline{s_{l-1}\cdots s_{i+4}}}s_{i+3}\\
&&\cdot s_{i+4}^{-1}\cdots s_{k}^{-1}\cdot s_{i+2}\beta _{i+1,i+2}s_{i+2}^{-1}\cdot \underset{\leftarrow }{\underline{s_{k}\cdots s_{i+4}}}\\
&&\cdot s_{i+3}^{-1}\underset{\rightarrow }{\underline{s_{i+4}^{-1}\cdots s_{l-1}^{-1}}}\cdot s_{i+2}p_{i+1,i+2}^{-1}s_{i+2}^{-1}s_{i+3}^{-1}\cdots s_{k-1}^{-1}\cdot s_{i+1}^{-1}s_{i+2}^{-1}\cdots s_{j-1}^{-1}\\
&\overset{\text{(1)(a)}}{\underset{}{=}}&s_{j-1}\cdots s_{i+2}s_{i+1}\cdot s_{k-1}\cdots s_{i+3}s_{i+2}\cdot s_{l-1}\cdots s_{i+4}\\
&&\cdot \alpha _{i,i+2}p_{i+1,i+2}\underline{s_{i+2}^{-1}s_{i+3}s_{i+2}}\beta _{i+1,i+2}\underline{s_{i+2}^{-1}s_{i+3}^{-1}s_{i+2}}p_{i+1,i+2}^{-1}\\
&&\cdot s_{i+4}^{-1}\cdots s_{l-1}^{-1}\cdot s_{i+2}^{-1}s_{i+3}^{-1}\cdots s_{k-1}^{-1}\cdot s_{i+1}^{-1}s_{i+2}^{-1}\cdots s_{j-1}^{-1}\\
&\overset{\text{(2)(a)}}{\underset{}{=}}&s_{j-1}\cdots s_{i+2}s_{i+1}\cdot s_{k-1}\cdots s_{i+3}s_{i+2}\cdot s_{l-1}\cdots s_{i+4}\\
&&\cdot \alpha _{i,i+2}p_{i+1,i+2}\underset{\leftarrow }{\underline{s_{i+3}}}s_{i+2}s_{i+3}^{-1}\beta _{i+1,i+2}\underset{\leftarrow }{\underline{s_{i+3}}}s_{i+2}^{-1}\underset{\rightarrow }{\underline{s_{i+3}^{-1}}}p_{i+1,i+2}^{-1}\\
&&\cdot s_{i+4}^{-1}\cdots s_{l-1}^{-1}\cdot s_{i+2}^{-1}s_{i+3}^{-1}\cdots s_{k-1}^{-1}\cdot s_{i+1}^{-1}s_{i+2}^{-1}\cdots s_{j-1}^{-1}\\
&\overset{\text{(1)(a)}}{\underset{}{=}}&s_{j-1}\cdots s_{i+2}s_{i+1}\cdot s_{k-1}\cdots s_{i+3}s_{i+2}\cdot s_{l-1}\cdots s_{i+4}s_{i+3}\\
&&\cdot \alpha _{i,i+2}p_{i+1,i+2}\underline{s_{i+2}\beta _{i+1,i+2}s_{i+2}^{-1}}p_{i+1,i+2}^{-1}\\
&&\cdot s_{i+3}^{-1}s_{i+4}^{-1}\cdots s_{l-1}^{-1}\cdot s_{i+2}^{-1}s_{i+3}^{-1}\cdots s_{k-1}^{-1}\cdot s_{i+1}^{-1}s_{i+2}^{-1}\cdots s_{j-1}^{-1}\\
&\overset{\text{(6)(b)}}{\underset{}{=}}&s_{j-1}\cdots s_{i+2}s_{i+1}\cdot s_{k-1}\cdots s_{i+3}s_{i+2}\cdot s_{l-1}\cdots s_{i+4}s_{i+3}\\
&&\cdot \underline{\alpha _{i,i+2}p_{i+1,i+2}\beta _{i+1,i+3}p_{i+1,i+2}^{-1}}\\
&&\cdot s_{i+3}^{-1}s_{i+4}^{-1}\cdots s_{l-1}^{-1}\cdot s_{i+2}^{-1}s_{i+3}^{-1}\cdots s_{k-1}^{-1}\cdot s_{i+1}^{-1}s_{i+2}^{-1}\cdots s_{j-1}^{-1}\\
&\overset{}{\underset{}{=}}&s_{j-1}\cdots s_{i+2}s_{i+1}\cdot s_{k-1}\cdots s_{i+3}s_{i+2}\cdot s_{l-1}\cdots s_{i+4}\underset{\rightarrow }{\underline{s_{i+3}}}\\
&&\cdot p_{i+1,i+2}\underline{\beta _{i+1,i+3}}p_{i+1,i+2}^{-1}\alpha _{i,i+2}\\
&&\cdot \underset{\leftarrow }{\underline{s_{i+3}^{-1}}}s_{i+4}^{-1}\cdots s_{l-1}^{-1}\cdot s_{i+2}^{-1}s_{i+3}^{-1}\cdots s_{k-1}^{-1}\cdot s_{i+1}^{-1}s_{i+2}^{-1}\cdots s_{j-1}^{-1}\\
&\overset{\text{(6)(b)}}{\underset{\text{(1)(a)}}{=}}&s_{j-1}\cdots s_{i+2}s_{i+1}\cdot s_{k-1}\cdots s_{i+3}s_{i+2}\cdot \underset{\rightarrow }{\underline{s_{l-1}\cdots s_{i+4}}}\\
&&\cdot p_{i+1,i+2}\underline{s_{i+3}s_{i+2}s_{i+3}^{-1}}\beta _{i+1,i+2}\underline{s_{i+3}s_{i+2}^{-1}s_{i+3}^{-1}}p_{i+1,i+2}^{-1}\alpha _{i,i+2}\\
&&\cdot \underset{\leftarrow }{\underline{s_{i+4}^{-1}\cdots s_{l-1}^{-1}}}\cdot s_{i+2}^{-1}s_{i+3}^{-1}\cdots s_{k-1}^{-1}\cdot s_{i+1}^{-1}s_{i+2}^{-1}\cdots s_{j-1}^{-1}\\
&\overset{\text{(2)(a)}}{\underset{\text{(1)(a)}}{=}}&s_{j-1}\cdots s_{i+2}s_{i+1}\cdot s_{k-1}\cdots s_{i+3}s_{i+2}\cdot p_{i+1,i+2}s_{i+2}^{-1}\\
&&\cdot s_{l-1}\cdots s_{i+4}s_{i+3}s_{i+2}\beta _{i+1,i+2}s_{i+2}^{-1}s_{i+3}^{-1}s_{i+4}^{-1}\cdots s_{l-1}^{-1}\\
&&\cdot s_{i+2}p_{i+1,i+2}^{-1}\alpha _{i,i+2}\cdot s_{i+2}^{-1}s_{i+3}^{-1}\cdots s_{k-1}^{-1}\cdot s_{i+1}^{-1}s_{i+2}^{-1}\cdots s_{j-1}^{-1}\\
&\overset{}{\underset{}{=}}&\underline{s_{j-1}\cdots s_{i+2}s_{i+1}\cdot s_{k-1}\cdots s_{i+3}s_{i+2}\cdot p_{i+1,i+2}s_{i+2}^{-1}(s_{i+3}^{-1}\cdots s_{k-1}^{-1}\cdot s_{i+1}^{-1}s_{i+2}^{-1}\cdots s_{j-1}^{-1}}\\
&&\cdot s_{j-1}\cdots s_{i+2}s_{i+1}\cdot s_{k-1}\cdots s_{i+3})s_{l-1}\cdots s_{i+3}s_{i+2}\beta _{i+1,i+2}s_{i+2}^{-1}s_{i+3}^{-1}\cdots s_{l-1}^{-1}\\
&&\cdot s_{i+2}p_{i+1,i+2}^{-1}(s_{i+2}^{-1}s_{i+3}^{-1}\cdots s_{k-1}^{-1}\\
&&\cdot \underline{s_{k-1}\cdots s_{i+3}s_{i+2})\alpha _{i,i+2}\cdot s_{i+2}^{-1}s_{i+3}^{-1}\cdots s_{k-1}^{-1}}\cdot s_{i+1}^{-1}s_{i+2}^{-1}\cdots s_{j-1}^{-1}\\
&\overset{\text{(6)(b)}}{\underset{\text{Lem.\ref{lem_alpha_ij_conj}}}{=}}&p_{j,k}s_{j-1}\cdots s_{i+2}s_{i+1}\cdot \underline{s_{k-1}\cdots s_{i+3}\cdot s_{l-1}\cdots s_{i+3}s_{i+2}}\beta _{i+1,i+2}\underline{s_{i+2}^{-1}s_{i+3}^{-1}\cdots s_{l-1}^{-1}}\\
&&\underline{\cdot (s_{i+3}^{-1}\cdots s_{k-1}^{-1}}\cdot s_{i+1}^{-1}s_{i+2}^{-1}\cdots s_{j-1}^{-1}s_{j-1}\cdots s_{i+2}s_{i+1}\cdot s_{k-1}\cdots s_{i+3})\\
&&\cdot s_{i+2}p_{i+1,i+2}^{-1}s_{i+2}^{-1}s_{i+3}^{-1}\cdots s_{k-1}^{-1}\alpha _{i,k}\underset{\leftarrow }{\underline{s_{i+1}^{-1}s_{i+2}^{-1}\cdots s_{j-1}^{-1}}}\\
&\overset{\text{Lem.\ref{lem_Cxs}}}{\underset{\text{(1),(2)}}{=}}&p_{j,k}\underline{s_{j-1}\cdots s_{i+2}s_{i+1}\cdot s_{l-1}\cdots s_{i+3}s_{i+2}\beta _{i+1,i+2}s_{i+2}^{-1}s_{i+3}^{-1}\cdots s_{l-1}^{-1}\cdot s_{i+1}^{-1}s_{i+2}^{-1}\cdots s_{j-1}^{-1}}\\
&&\cdot \underline{s_{j-1}\cdots s_{i+2}s_{i+1}\cdot s_{k-1}\cdots s_{i+3}s_{i+2}p_{i+1,i+2}^{-1}s_{i+2}^{-1}s_{i+3}^{-1}\cdots s_{k-1}^{-1}\cdot s_{i+1}^{-1}s_{i+2}^{-1}\cdots s_{j-1}^{-1}}\alpha _{i,k}\\
&\overset{\text{(6)(b)}}{\underset{\text{Lem.\ref{lem_alpha_ij_conj}}}{=}}&p_{j,k}\beta _{j,l}p_{j,k}^{-1}\cdot \alpha _{i,k}. 
\end{eqnarray*}
Therefore, we have completed the proof of Lemma~\ref{lem_C3}. 
\end{proof}

\begin{lem}\label{lem_Z}
The relation~(Z) in Lemma~\ref{lem_pres_LH} is equivalent to the relation~(4) in Proposition~\ref{prop_pres_LH} up to the relations (1), (2), and (6) (a)--(c) in Proposition~\ref{prop_pres_LH}.  
\end{lem}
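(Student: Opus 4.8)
The plan is to reduce the asserted equivalence to a single word identity. Both (Z) and (4) have the form ``$W=1$'', and both of these words $W$ end in the letter $t_1$, so it suffices to prove that
\[
x_{1,n+1}^{-1}\cdots x_{1,3}^{-1}x_{1,2}^{-1}\,p_{1,2}p_{1,3}\cdots p_{1,n+1}=r_1r_2\cdots r_ns_n\cdots s_2s_1
\]
is a consequence of the relations (1), (2), and (6)(a)--(c) of Proposition~\ref{prop_pres_LH}; the equivalence of (Z) and (4) modulo those relations is then immediate in both directions.

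First I would use (6)(a) and (6)(b) to eliminate the pure generators from the left-hand side. Relation (6)(a) gives $p_{1,2}=s_1^2$ and $x_{1,2}=s_1r_1^{-1}$, hence $x_{1,2}^{-1}p_{1,2}=r_1s_1$, while (6)(b), in the equivalent ``ladder'' form $\alpha_{1,j}=s_{j-1}\alpha_{1,j-1}s_{j-1}^{-1}$ (obtained by splitting $s_{j-1}\cdots s_2=s_{j-1}(s_{j-2}\cdots s_2)$), gives $x_{1,j}^{-1}=s_{j-1}x_{1,j-1}^{-1}s_{j-1}^{-1}$ and $p_{1,j}=s_{j-1}p_{1,j-1}s_{j-1}^{-1}$ for $3\le j\le n+1$. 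After substituting, the left-hand side is a word in the $s_i$ and $r_i$ alone whose two innermost letters form the block $x_{1,2}^{-1}p_{1,2}=r_1s_1$.

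The heart of the argument is a collapse identity proved by induction on $m$: for $2\le m\le n+1$,
\[
x_{1,m}^{-1}x_{1,m-1}^{-1}\cdots x_{1,2}^{-1}\,p_{1,2}p_{1,3}\cdots p_{1,m}=r_1r_2\cdots r_{m-1}s_{m-1}\cdots s_2s_1
\]
modulo (1), (2), (6)(a)--(c), the case $m=2$ being exactly $x_{1,2}^{-1}p_{1,2}=r_1s_1$. For the step $m-1\to m$ one writes the order-$m$ word as $x_{1,m}^{-1}\bigl(x_{1,m-1}^{-1}\cdots p_{1,m-1}\bigr)p_{1,m}$, inserts the inductive hypothesis, expands $x_{1,m}^{-1}$ and $p_{1,m}$ by (6)(a),(b), and is then left to verify $x_{1,m}^{-1}\,(r_1\cdots r_{m-2}s_{m-2}\cdots s_1)\,p_{1,m}=r_1\cdots r_{m-1}s_{m-1}\cdots s_1$. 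This is done by transporting the nested $s$-letters of $x_{1,m}^{-1}$ and $p_{1,m}$ through the braid word, using the commutative relations (1)(a) for the many low-index factors and the braid relation (2)(a) together with the conjugation relations (2)(b), (2)(c) at each collision with a neighbouring $r_k$ or $s_k$; for instance the step $m=2\to3$ reads
\[
s_2r_1s_1^{-1}s_2^{-1}\,r_1s_1\,s_2s_1^2s_2^{-1}\overset{\text{(2)(a),(2)(b)}}{=}s_2r_1r_2s_2s_1s_2^{-1}\overset{\text{(2)(a),(2)(c)}}{=}r_1r_2s_2s_1 .
\]
Taking $m=n+1$ and re-appending $t_1$ produces the left-hand side of (4).

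The main obstacle is this inductive step for general $m$: since $x_{1,m}^{-1}$ and $p_{1,m}$ are themselves conjugates by the nested word $s_{m-1}\cdots s_2$ once $m\ge3$, one must commute several such letters past the already-assembled product $r_1\cdots r_{m-2}s_{m-2}\cdots s_1$, keeping careful track of which of (1)(a), (2)(a), (2)(b), (2)(c) applies at each position and in which direction. I expect the cleanest route is to record first, as auxiliary sub-lemmas in the spirit of Lemma~\ref{lem_Cst}, the precise rules for how a single $s_k$ (respectively $r_k$) slides past an initial segment $r_1\cdots r_{k-1}$ (respectively $s_{k-1}\cdots s_1$), and then to assemble the step from these so that the index bookkeeping remains localized.
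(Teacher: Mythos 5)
Your proposal is correct and takes essentially the same route as the paper: the paper also cancels the trailing $t_1$ and derives $x_{1,n+1}^{-1}\cdots x_{1,3}^{-1}x_{1,2}^{-1}p_{1,2}p_{1,3}\cdots p_{1,n+1}=r_1r_2\cdots r_ns_n\cdots s_2s_1$ from (6)(a),(b) together with (1)(a) and (2)(a)--(c), by exactly the letter-sliding you describe. The only difference is organizational: the paper writes the computation as one long chain of equalities (the unrolled form of your induction on the truncated identity), and your verified $m=3$ step coincides with its first simplification steps.
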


\begin{proof}
We have 
\begin{eqnarray*}
&&x_{1,n+1}^{-1}\cdots x_{1,3}^{-1}x_{1,2}^{-1}p_{1,2}p_{1,3}\cdots p_{1,n+1}\\
&\overset{\text{(6)(a)}}{\underset{\text{(6)(b)}}{=}}&(s_{n}\cdots s_3s_2r_1s_1^{-1}s_2^{-1}s_3^{-1}\cdots s_{n}^{-1})\cdots (s_3s_2r_1s_1^{-1}s_2^{-1}s_3^{-1})(s_2r_1\underline{s_1^{-1}s_2^{-1})r_1}s_1^{-1}\\
&&\cdot s_1^2(\underline{s_2s_1^{2}s_2^{-1}})(s_3\underline{s_2s_1^{2}s_2^{-1}}s_3^{-1})\cdots (s_{n}\cdots s_3\underline{s_2s_1^{2}s_2^{-1}}s_3^{-1}\cdots s_{n}^{-1})\\
&\overset{\text{(2)(a)}}{\underset{\text{(2)(b)}}{=}}&(s_{n}\cdots s_3s_2r_1s_1^{-1}s_2^{-1}s_3^{-1}\cdots s_{n}^{-1})\cdots (s_3s_2r_1s_1^{-1}s_2^{-1}s_3^{-1})\underline{s_2r_1r_2}s_1^{-1}s_2^{-1}\\
&&\cdot s_1(s_1^{-1}s_2^2s_1)(s_3\underset{\leftarrow }{\underline{s_1^{-1}}}s_2^2s_1s_3^{-1})\cdots (s_{n}\cdots s_3\underset{\leftarrow }{\underline{s_1^{-1}}}s_2^2\underset{\rightarrow }{\underline{s_1}}s_3^{-1}\cdots s_{n}^{-1})\\
&\overset{\text{(2)(c)}}{\underset{\text{(1)(a)}}{=}}&(s_{n}\cdots s_3s_2r_1s_1^{-1}s_2^{-1}s_3^{-1}\cdots s_{n}^{-1})\cdots (s_3s_2r_1\underline{s_1^{-1}s_2^{-1}s_3^{-1})r_1r_2}s_2^{-1}\\
&&\cdot s_2^2(\underline{s_3s_2^2s_3^{-1}})(s_4\underline{s_3s_2^2s_3^{-1}}s_4^{-1})\cdots (s_{n}\cdots s_4\underline{s_3s_2^2s_3^{-1}}s_4^{-1}\cdots s_{n}^{-1})s_1\\
&\overset{\text{(2)(a),(b)}}{\underset{\text{(1)(a)}}{=}}&(s_{n}\cdots s_3s_2r_1s_1^{-1}s_2^{-1}s_3^{-1}\cdots s_{n}^{-1})\cdots (s_4s_3s_2r_1s_1^{-1}s_2^{-1}s_3^{-1}s_4^{-1})\underline{s_3s_2r_1r_2r_3}s_1^{-1}s_2^{-1}s_3^{-1}\\
&&\cdot s_2(s_2^{-1}s_3^2s_2)(s_4\underset{\leftarrow }{\underline{s_2^{-1}}}s_3^2s_2s_4^{-1})\cdots (s_{n}\cdots s_4\underset{\leftarrow }{\underline{s_2^{-1}}}s_3^2\underset{\rightarrow }{\underline{s_2}}s_4^{-1}\cdots s_{n}^{-1})s_1\\
&\overset{\text{(2)(c)}}{\underset{\text{(1)(a)}}{=}}&(s_{n}\cdots s_3s_2r_1s_1^{-1}s_2^{-1}s_3^{-1}\cdots s_{n}^{-1})\cdots (s_4s_3s_2r_1s_1^{-1}s_2^{-1}s_3^{-1}s_4^{-1})r_1r_2r_3s_3^{-1}\\
&&\cdot s_3^2(s_4s_3^2s_4^{-1})\cdots (s_{n}\cdots s_4s_3^2s_4^{-1}\cdots s_{n}^{-1})s_2s_1\\
&\vdots &\\
&=&(s_{n}\cdots s_3s_2r_1\underline{s_1^{-1}s_2^{-1}s_3^{-1}\cdots s_{n}^{-1})r_1r_2\cdots r_{n-1}}s_{n-1}^{-1}\\
&&\cdot s_{n-1}^2(\underline{s_{n}s_{n-1}^2s_{n}^{-1}})s_{n-2}\cdots s_2s_1\\
&\overset{\text{(2)(a),(b)}}{\underset{\text{(1)}}{=}}&\underline{s_{n}\cdots s_3s_2r_1r_2r_3\cdots r_{n}}s_1^{-1}s_2^{-1}s_3^{-1}\cdots s_{n}^{-1}\\
&&\cdot s_{n-1}(s_{n-1}^{-1}s_{n}^2s_{n-1})s_{n-2}\cdots s_2s_1\\
&\overset{\text{(2)(c)}}{\underset{\text{(1)(a)}}{=}}&r_1r_2\cdots r_ns_n\cdots s_2s_1.
\end{eqnarray*}
Therefore, the relation $x_{1,n+1}^{-1}\cdots x_{1,3}^{-1}x_{1,2}^{-1}p_{1,2}p_{1,3}\cdots p_{1,n+1}t_1=1$ is equivalent to the relation $r_1r_2\cdots r_ns_n\cdots s_2s_1t_1=1$ up to the relations (1), (2), and (6) (a)--(c) in Proposition~\ref{prop_pres_LH}, and we have completed the proof of Lemma~\ref{lem_Z}. 
\end{proof}

\begin{lem}\label{lem_F}
The relation~(F) in Lemma~\ref{lem_pres_LH} is equivalent to the relation~(5) in Proposition~\ref{prop_pres_LH} up to the relations (1), (2), and (6) (a)--(c) in Proposition~\ref{prop_pres_LH}.  
\end{lem}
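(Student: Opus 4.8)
The plan is to reduce Lemma~\ref{lem_F} to the classical decomposition of the full twist in the braid group. First I would observe that both the relation~(F) in Lemma~\ref{lem_pres_LH} and the relation~(5) in Proposition~\ref{prop_pres_LH} have the form $t_1t_2\cdots t_{n+1}\cdot W=1$, where $W$ is $p_{1,2}(p_{1,3}p_{2,3})\cdots (p_{1,n+1}p_{2,n+1}\cdots p_{n,n+1})$ in the case of~(F) and $\bigl( s_1(s_2s_1)\cdots (s_{n-1}\cdots s_2s_1)(s_n\cdots s_2s_1)\bigr) ^2$ in the case of~(5). Since the prefix $t_1t_2\cdots t_{n+1}$ is common, it therefore suffices to derive, from the relations~(1), (2), and (6) (a)--(c) in Proposition~\ref{prop_pres_LH}, the identity
\[
p_{1,2}(p_{1,3}p_{2,3})\cdots (p_{1,n+1}p_{2,n+1}\cdots p_{n,n+1})=\bigl( s_1(s_2s_1)\cdots (s_n\cdots s_2s_1)\bigr) ^2 ,
\]
after which, cancelling the common prefix, the two relations become interderivable. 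So the real content is this identity.

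Next I would use the relations~(6) (a) and~(6) (b) to rewrite $p_{i,j}=s_{j-1}\cdots s_{i+1}\,s_i^2\,s_{i+1}^{-1}\cdots s_{j-1}^{-1}$ for every $1\le i<j\le n+1$. After this substitution both sides of the displayed identity become words in $s_1,\dots ,s_n$ alone, on which the relations~(1) (a) and~(2) (a) are precisely the braid relations. Thus the displayed identity is exactly the familiar fact that the full twist $\Delta^2$ of the braid group on $n+1$ strands decomposes as the ordered product $\prod_{j=2}^{n+1}\bigl( A_{1,j}A_{2,j}\cdots A_{j-1,j}\bigr)$ of band generators $A_{i,j}=s_{j-1}\cdots s_{i+1}\,s_i^2\,s_{i+1}^{-1}\cdots s_{j-1}^{-1}$, with $\Delta =s_1(s_2s_1)\cdots (s_n\cdots s_1)$; this is precisely the equality of the two expressions pictured in Figure~\ref{fig_full-twist}.

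To prove the braid identity I would induct on $n$, carrying out the same kind of bookkeeping as in the proof of Lemma~\ref{lem_Z}. The case $n=1$ is the relation~(6) (a), namely $p_{1,2}=s_1^2$. For the inductive step, writing $\Delta_m=s_1(s_2s_1)\cdots (s_{m-1}\cdots s_1)$, the induction hypothesis identifies the product of the first $n-1$ blocks $p_{1,2}(p_{1,3}p_{2,3})\cdots (p_{1,n}\cdots p_{n-1,n})$ with $\Delta_n^2$ (all of whose factors lie in the subgroup generated by $s_1,\dots ,s_{n-1}$), so it remains to prove $\Delta_n^2\cdot (p_{1,n+1}p_{2,n+1}\cdots p_{n,n+1})=\Delta_{n+1}^2$. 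Using the standard factorization $\Delta_{n+1}=\Delta_n\cdot (s_ns_{n-1}\cdots s_1)$ together with the fact that conjugation by $\Delta_n$ sends $s_i$ to $s_{n-i}$ for $1\le i\le n-1$ (itself a consequence of~(1) (a) and~(2) (a)), this collapses to an equality of words in $s_1,\dots ,s_n$ that is verified by a finite sequence of applications of those braid relations.

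I expect the main obstacle to be precisely this last braid-word manipulation. As in the proofs of Lemmas~\ref{lem_Z}, \ref{lem_rel_6d}, and~\ref{lem_rel_6e}, one must keep careful track of the numerous conjugating factors $s_k^{\pm 1}$ and apply the commutation relation~(1) (a) and the braid relation~(2) (a) in the right order so that the substituted word collapses onto $\Delta_{n+1}^2$; this is combinatorially delicate but conceptually routine (and one may instead simply cite the classical full-twist decomposition). Once the braid identity is established, cancelling the common factor $t_1t_2\cdots t_{n+1}$ shows that~(F) and~(5) are interderivable modulo~(1), (2), and (6) (a)--(c), which is the assertion of Lemma~\ref{lem_F}.
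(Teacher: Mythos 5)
Your reduction is sound and, up to substituting $p_{i,j}=s_{j-1}\cdots s_{i+1}s_i^2s_{i+1}^{-1}\cdots s_{j-1}^{-1}$ via (6) (a), (b) and cancelling the common prefix $t_1t_2\cdots t_{n+1}$, it coincides with what the paper does; the difference lies in how the resulting identity among the $s_i$ is verified. The paper proves it by hand: it first telescopes each block to $p_{1,i}p_{2,i}\cdots p_{i-1,i}=s_{i-1}\cdots s_2s_1^2s_2\cdots s_{i-1}$, then reshuffles the whole product into $\bigl(s_1(s_2s_1)\cdots (s_n\cdots s_2s_1)\bigr)\cdot \bigl(s_n(s_{n-1}s_n)\cdots (s_1s_2\cdots s_n)\bigr)$ using (1) (a) and (2) (a), and finally rewrites the second factor as $s_1(s_2s_1)\cdots (s_n\cdots s_2s_1)$ via the auxiliary relation $s_i(s_{i-1}s_i\cdots s_n)=(s_{i-1}s_i\cdots s_n)s_{i-1}$. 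You instead recognize the identity as the classical decomposition of the full twist $\Delta _{n+1}^2$ into band generators and propose to establish it by induction, using $\Delta _{n+1}=\Delta _n(s_n\cdots s_1)$ and $\Delta _ns_i\Delta _n^{-1}=s_{n-i}$, or simply to cite the classical fact. That shortcut is legitimate precisely because (1) (a) and (2) (a), restricted to $s_1,\dots ,s_n$, constitute a complete set of defining relations for the braid group on $n+1$ strands, so any identity valid there is formally derivable from them; this point is implicit in your write-up and is worth stating explicitly. Your route is shorter and more conceptual, while the paper's explicit computation stays self-contained within the listed relations and records the intermediate identity $p_{1,i}\cdots p_{i-1,i}=(s_{i-1}\cdots s_1)(s_1\cdots s_{i-1})$, which your inductive step would in any case have to reprove when absorbing the last block $p_{1,n+1}\cdots p_{n,n+1}$.
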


\begin{proof}
For $3\leq i\leq n+1$, we have 
\begin{eqnarray*}
&&p_{1,i}p_{2,i}\cdots p_{i-1,i}\\
&=&(s_{i-1}\cdots s_{3}s_{2}s_{1}^2s_{2}^{-1}s_{3}^{-1}\cdots s_{i-1}^{-1})(s_{i-1}\cdots s_{4}s_{3}s_{2}^2s_{3}^{-1}s_{4}^{-1}\cdots s_{i-1}^{-1})\cdots (s_{i-1}s_{i-2}s_{i-1}^{-1})s_{i-1}^{2}\\
&=&s_{i-1}\cdots s_{3}s_{2}s_{1}^2s_{2}s_{3}\cdots s_{i-1}.
\end{eqnarray*}
Thus, we have 
\begin{eqnarray*}
&&p_{1,2}(p_{1,3}p_{2,3})\cdots (p_{1,n}p_{2,n}\cdots p_{n-1,n})(p_{1,n+1}p_{2,n+1}\cdots p_{n,n+1})\\
&=&s_1\underline{s_1(s_2s_1}\ \underline{s_1s_2)(s_3s_2s_1}\ \underline{s_1s_2s_3)}\cdots \underline{(s_{n-1}\cdots s_{2}s_1}\ \underline{s_1s_2\cdots s_{n-1})(s_{n}\cdots s_{2}s_1}s_1s_2\cdots s_{n})\\
&\overset{\text{(2)(a)}}{\underset{\text{(1)(a)}}{=}}&\left( s_1(s_2s_1)\right) \\
&&\cdot \underline{s_2(s_3s_2s_1)}\ \underline{s_2s_3(s_4s_3s_2s_1)}\cdots \underline{s_2s_3\cdots s_{n-2}(s_{n-1}\cdots s_{2}s_1)}\ \underline{s_2s_3\cdots s_{n-1}(s_{n}\cdots s_{2}s_1)}\\
&&\cdot \left( s_2s_3\cdots s_{n}\cdot s_1s_2\cdots s_{n}\right) \\
&\overset{\text{(2)(a)}}{\underset{\text{(1)(a)}}{=}}&\left( s_1(s_2s_1)(s_3s_2s_1)\right) \\
&&\cdot s_3(s_4s_3s_2s_1)s_3s_4(s_5s_4s_3s_2s_1)\cdots s_3s_4\cdots s_{n-1}(s_{n}\cdots s_{2}s_1)\\
&&\cdot \left( s_3s_4\cdots s_{n}\cdot s_2s_3\cdots s_{n}\cdot s_1s_2\cdots s_{n}\right) \\
&\vdots &\\
&=&\left( s_1(s_2s_1)\cdots (s_{n-1}\cdots s_2s_1)(s_n\cdots s_2s_1)\right) \cdot \left( s_n(s_{n-1}s_{n})(s_{n-2}s_{n-1}s_n)\cdots (s_1s_2\cdots s_{n})\right) .
\end{eqnarray*}
Since the relation
\[
s_i(s_{i-1}s_i\cdots s_n)=(s_{i-1}s_i\cdots s_n)s_{i-1}
\]
for $2\leq i\leq n$ holds by the relations~(1) (a) and (2) (a) in Proposition~\ref{prop_pres_LH}, we have
\begin{eqnarray*}
&&\underline{s_n}(s_{n-1}s_{n})(s_{n-2}s_{n-1}s_n)\cdots (s_1s_2\cdots s_{n})\\
&=&(s_{n-1}\underline{s_{n}})(s_{n-2}s_{n-1}s_n)\cdots (s_1s_2\cdots s_{n})\cdot s_1\\
&=&s_{n-1}(s_{n-2}s_{n-1}\underline{s_{n}})\cdots (s_1s_2\cdots s_{n})\cdot s_2s_1\\
&=&s_{n-1}(s_{n-2}s_{n-1})(s_{n-3}s_{n-2}s_{n-1}s_n)\cdots (s_1s_2\cdots s_{n})\cdot s_3s_2s_1\\
&\vdots &\\
&=&\underline{s_{n-1}}(s_{n-2}s_{n-1})\cdots (s_1s_2\cdots s_{n-1})\cdot s_{n}\cdots s_2s_1\\
&=&(s_{n-2}\underline{s_{n-1}})(s_{n-3}s_{n-2}s_{n-1})\cdots (s_1s_2\cdots s_{n-1})\cdot s_{1}\cdot s_{n}\cdots s_2s_1\\
&=&s_{n-2}(s_{n-3}s_{n-2}s_{n-1})\cdots (s_1s_2\cdots s_{n-1})\cdot s_2s_{1}\cdot s_{n}\cdots s_2s_1\\
&\vdots &\\
&=&s_{n-2}(s_{n-3}s_{n-2})\cdots (s_1s_2\cdots s_{n-2})\cdot s_{n-1}\cdots s_2s_{1}\cdot s_{n}\cdots s_2s_1\\
&\vdots &\\
&=&s_1(s_2s_1)\cdots (s_{n-1}\cdots s_2s_1)(s_n\cdots s_2s_1). 
\end{eqnarray*}
Therefore, the relation $t_{1}t_2\cdots t_{n+1}p_{1,2}(p_{1,3}p_{2,3})\cdots (p_{1,n+1}p_{2,n+1}\cdots p_{n,n+1})=1$ is equivalent to the relation $t_{1}t_2\cdots t_{n+1}\bigl( s_1(s_2s_1)\cdots (s_{n-1}\cdots s_2s_1)(s_n\cdots s_2s_1)\bigr) ^2=1$ up to the relations (1), (2), and (6) (a)--(c) in Proposition~\ref{prop_pres_LH}, and we have completed the proof of Lemma~\ref{lem_F}. 
\end{proof}

\begin{proof}[Proof of Proposition~\ref{prop_pres_LH}]
We consider the finite presentation which is obtained from the finite presentation for $\LH $ in Lemma~\ref{lem_pres_LH} by adding generators $r_i$ for $1\leq i\leq n$ and $s$ and the relations~(1)--(6) in Proposition~\ref{prop_pres_LH}. 
Since $\LH $ admits the relations~(1)--(6) in Proposition~\ref{prop_pres_LH} and the relations~(6) (a) and (c) in Proposition~\ref{prop_pres_LH} imply that $r_i$ for $1\leq i\leq n$ and $s$ are products of generators for the presentation in Lemma~\ref{lem_pres_LH}, the group which is obtained from this finite presentation is also isomorphic to $\LH $. 

By Lemmas~\ref{lem_Ct}, \ref{lem_Mxy}, \ref{lem_C1}, \ref{lem_C2}, \ref{lem_C3}, \ref{lem_Z}, and \ref{lem_F}, the relations~(C-pt), (C-tt), (C-xt), (C-yt), (M-x), (M-y), (C1), (C2), (C3), (Z), and (F) in Lemma~\ref{lem_pres_LH} are obtained from the relations~(1), (2), and (6) (a)--(c) in Proposition~\ref{prop_pres_LH}. 
The relations~(1), (2), (3), (4), (5), (A2) (a), (b), and (c) in Lemma~\ref{lem_pres_LH} coincide with the relations~(6) (a), (1) (a), (2) (a), (3), (1) (d), (1) (e), (6) (f), and (6) (g) in Proposition~\ref{prop_pres_LH}, respectively. 
The relation~(A1) (a) in Lemma~\ref{lem_pres_LH} obtained from the relations~(1) (b) and (2) (e) in Proposition~\ref{prop_pres_LH}. 
The relation~(A1) (c) in Lemma~\ref{lem_pres_LH} obtained from the relations~(6) (a) and (b) in Proposition~\ref{prop_pres_LH} and Lemma~\ref{lem_alpha_ij_conj}. 
Finally, the relation~(A1) (b) in Lemma~\ref{lem_pres_LH} obtained from the relations~(6) (a) in Proposition~\ref{prop_pres_LH} as follows: in the case that $\alpha =p$, the relation~(A1) is clearly obtained from the relation~(6) (a) in Proposition~\ref{prop_pres_LH}. 
In the case that $\alpha \in \{ x, y\}$, we have
\begin{enumerate}
\item[(x)] $s_ix_{i,i+1}s_i^{-1}\overset{\text{(6)(a)}}{\underset{}{=}}s_is_ir_i^{-1}s_i^{-1}=s_i^2(r_i^{-1}s_i)s_i^{-2}\overset{\text{(6)(a)}}{\underset{}{=}}p_{i,i+1}y_{i,i+1}p_{i,i+1}^{-1}$,
\item[(y)] $s_iy_{i,i+1}s_i^{-1}\overset{\text{(6)(a)}}{\underset{}{=}}s_ir_i^{-1}s_is_i^{-1}=s_ir_i^{-1}\overset{\text{(6)(a)}}{\underset{}{=}}x_{i,i+1}$.
\end{enumerate}

By an argument above and Tietze transformations, $\LH $ admits the presentation which is obtained from the presentation in Proposition~\ref{prop_pres_LH} by adding generators $p_{i,j}$, $x_{i,j}$, and $y_{i,j}$ for $1\leq j<i\leq n+1$ and the relation~(0) in Lemma~\ref{lem_pres_LH}. 
Since generators $p_{i,j}$, $x_{i,j}$, and $y_{i,j}$ for $1\leq j<i\leq n+1$ do not appear in the relations~(1)--(6) in Proposition~\ref{prop_pres_LH}, we can remove these generators and the relation~(0) from this presentation. 
Therefore, we have completed the proof of Proposition~\ref{prop_pres_LH}. 
\end{proof}

\section{Presentations for the balanced superelliptic handlebody groups}\label{section_smod}

Throughout this section, we assume that $g=n(k-1)$ for $n\geq 1$ and $k\geq 3$, and $p=p_{g,k}\colon H_g\to B^3$ is the balanced superelliptic covering map with the covering transformation group $\left< \zeta =\zeta _{g,k}\right> $.

\subsection{Explicit lifts of generators for the liftable Hilden groups}\label{section_lifts}

In this section, we give explicit lifts of generators for $\LH $ in Theorem~\ref{thm_pres_LH} with respect to $p$ (namely, that are preimages with respect to $\theta \colon \SH \to \LH$). 
First, we will review the generators for $\SM $ in Corollary~6.13 of \cite{Hirose-Omori}. 

Recall that $\gamma _{i}^l$ for $1\leq i\leq 2n+1$ and $1\leq l\leq k$ is a simple closed curve on $\Sigma _g$ as in Figure~\ref{fig_scc_c_il} and $\gamma _{2i-1}^l$ for $1\leq i\leq n+1$ bounds a disk in $H_g$. 
By Lemma~6.4 in \cite{Hirose-Omori}, we have the following lemma.

\begin{lem}\label{lift-t_{i,i+1}}
For $1\leq i\leq 2n+1$ and some lift $\widetilde{t}_{\gamma _{i,i+1}}$ of $t_{\gamma _{i,i+1}}$ with respect to $p$, we have
\[
\widetilde{t}_{\gamma _{i,i+1}}=t_{\gamma _i^1}t_{\gamma _i^2}\cdots t_{\gamma _i^{k}}.
\] 
\end{lem}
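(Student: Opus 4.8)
The plan is to compute a lift of the half-twist $\sigma[l_i]$ directly, since $t_{\gamma_{i,i+1}} = \sigma[l_i]^2$, and the lifts of Dehn twists are most transparently obtained from lifts of half-twists. Recall that $\gamma_{i,i+1}$ is the boundary of a regular neighborhood $\mathcal{N}_i$ of the arc $l_i$ in $S^2$ (Figure~\ref{fig_path_l}), and $\widetilde{l}_i^1,\dots,\widetilde{l}_i^k$ are the $k$ lifts of $l_i$ with $\zeta(\widetilde{l}_i^l) = \widetilde{l}_i^{l+1}$ (indices mod $k$), as in Figure~\ref{fig_isotopy_surface_3-handles}. The preimage $p^{-1}(\mathcal{N}_i)$ is a disjoint union of regular neighborhoods $\widetilde{\mathcal{N}}_i^1,\dots,\widetilde{\mathcal{N}}_i^k$ of the arcs $\widetilde{l}_i^1,\dots,\widetilde{l}_i^k$, each mapped homeomorphically onto $\mathcal{N}_i$ by $p$ (because $l_i$ joins two distinct branch points and the two sheets coming into each branch point are locally interchanged, so each $\widetilde{\mathcal{N}}_i^l$ is a single disk, not a branched piece). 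Since $\partial\widetilde{\mathcal{N}}_i^l = \gamma_i^l$ and these curves are permuted by $\zeta$ as in Figure~\ref{fig_scc_c_il}, the half-twist $\sigma[l_i]$, being supported in $\mathcal{N}_i$, lifts to the homeomorphism that performs a half-twist simultaneously in each $\widetilde{\mathcal{N}}_i^l$ (this is the content of Lemma~6.4 in \cite{Hirose-Omori} for the surface case, which I would invoke rather than reprove).

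First I would record that a lift $\widetilde{\sigma[l_i]}$ exists: $\sigma[l_i]$ is parity-preserving when $i$ is even (it fixes $\mathcal{B}_o$ and $\mathcal{B}_e$ setwise) and parity-reversing when $i$ is odd, so in either case $\sigma[l_i] \in \mathrm{LMod}_{2n+2;k}$ by Lemma~\ref{lem_GW}, hence it lifts. Next, I would identify the lift explicitly. The key geometric point is that, up to symmetric isotopy, we may take the lift $\widetilde{\sigma[l_i]}$ to be supported in $p^{-1}(\mathcal{N}_i) = \bigsqcup_{l=1}^k \widetilde{\mathcal{N}}_i^l$, with its restriction to each $\widetilde{\mathcal{N}}_i^l$ conjugate (via the restriction of $p$) to the half-twist $\sigma[l_i]$ on $\mathcal{N}_i$ — here one uses that $\zeta$ permutes the disks $\widetilde{\mathcal{N}}_i^l$ cyclically and that the lift must be $\zeta$-equivariant, which forces all $k$ restrictions to agree under the identifications $\zeta^{l-1}\colon \widetilde{\mathcal{N}}_i^1 \to \widetilde{\mathcal{N}}_i^l$. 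Squaring, $\widetilde{t}_{\gamma_{i,i+1}} = \widetilde{\sigma[l_i]}^2$ restricts on each $\widetilde{\mathcal{N}}_i^l$ to the square of a half-twist along $\widetilde{l}_i^l$, which is precisely the right-handed Dehn twist $t_{\gamma_i^l}$ along $\partial\widetilde{\mathcal{N}}_i^l = \gamma_i^l$. Since the disks $\widetilde{\mathcal{N}}_i^l$ for distinct $l$ are disjoint, the corresponding Dehn twists commute, and
\[
\widetilde{t}_{\gamma_{i,i+1}} = t_{\gamma_i^1}t_{\gamma_i^2}\cdots t_{\gamma_i^k}
\]
as claimed. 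The orientation bookkeeping (that the square of the anticlockwise half-twist of Figure~\ref{fig_sigma_l} gives the \emph{right-handed} Dehn twist in each sheet, consistently with the orientations chosen in Figure~\ref{fig_scc_c_il}) is the one place requiring care, but it is a local computation in a disk and is handled exactly as in \cite{Hirose-Omori}.

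The main obstacle I anticipate is not the algebra but pinning down \emph{which} lift one means: the map $\theta$ has kernel $\langle\zeta\rangle$, so $t_{\gamma_{i,i+1}}$ has $k$ lifts differing by powers of $\zeta$, and the statement asserts that \emph{some} lift equals $t_{\gamma_i^1}\cdots t_{\gamma_i^k}$. Verifying that the particular equivariant lift constructed above is the one with this product form — rather than $\zeta^m t_{\gamma_i^1}\cdots t_{\gamma_i^k}$ for some $m\neq 0$ — amounts to checking that the constructed lift is genuinely supported in $p^{-1}(\mathcal{N}_i)$ (so that it is the identity outside, whereas $\zeta^m$ is not for $m\neq 0$). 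This support statement is exactly Lemma~6.4 of \cite{Hirose-Omori}, so for the handlebody case the proof reduces to citing that lemma and observing that the curves $\gamma_i^l$ appearing there are the same ones (restricted from $\Sigma_g$) that we use here; indeed the statement is explicitly flagged as "By Lemma~6.4 in \cite{Hirose-Omori}." Thus the proof is essentially a one-line invocation, and the work above is just unpacking why that citation applies.
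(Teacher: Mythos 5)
Your fallback position --- that the proof is essentially a one-line invocation of Lemma~6.4 of \cite{Hirose-Omori} --- coincides exactly with what the paper does: its entire proof is the sentence preceding the lemma. However, the geometric unpacking you offer of \emph{why} that citation applies contains a genuine error. The preimage $p^{-1}(\mathcal{N}_i)$ of a disk neighborhood of $l_i$ is \emph{not} a disjoint union of $k$ disks mapped homeomorphically onto $\mathcal{N}_i$. Each branch point is totally ramified: it has a single preimage, around which all $k$ sheets are cyclically permuted (the ``two sheets locally interchanged'' picture is the $k=2$ case, and even then the preimage of $\mathcal{N}_i$ is an annulus, not two disks). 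Hence the $k$ lifts $\widetilde{l}_i^1,\dots,\widetilde{l}_i^k$ all share both endpoints, and $p^{-1}(\mathcal{N}_i)$ is a \emph{connected} planar surface with the $k$ boundary curves $\gamma_i^1,\dots,\gamma_i^k$ (its Euler characteristic is $k\chi(\mathcal{N}_i)-2(k-1)=2-k$). So the steps ``the lift of $\sigma[l_i]$ restricts to a half-twist in each $\widetilde{\mathcal{N}}_i^l$'' and ``its square restricts to $t_{\gamma_i^l}$ on each disk'' do not make sense as stated: the lifted half-twist is a more complicated homeomorphism of a $k$-holed sphere, and your argument would not pin down its square as the asserted product.

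The conclusion is of course still correct, and the elementary argument is in fact simpler if you bypass half-twists altogether: take an annular neighborhood $A$ of $\gamma_{i,i+1}$ disjoint from $\B$. Since $\gamma_{i,i+1}$ has the $k$ disjoint lifts $\gamma_i^1,\dots,\gamma_i^k$, the restriction of $p$ over $A$ is a trivial $k$-fold covering, i.e.\ $p^{-1}(A)$ is $k$ disjoint annuli, each a neighborhood of one $\gamma_i^l$ mapped homeomorphically onto $A$. The homeomorphism supported in $p^{-1}(A)$ that twists once in each of these annuli covers $t_{\gamma_{i,i+1}}$, and each twist is right-handed because $p$ is an orientation-preserving local homeomorphism there; this supported lift is $t_{\gamma_i^1}t_{\gamma_i^2}\cdots t_{\gamma_i^k}$, which is exactly the content of the cited Lemma~6.4. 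Your discussion of the ambiguity by powers of $\zeta$, resolved by the support condition, is correct and is the right point to flag.
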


Denote $\widetilde{t}_{\gamma _{i,i+1}}=t_{\gamma _i^1}t_{\gamma _i^2}\cdots t_{\gamma _i^{k}}$ for $1\leq i\leq 2n+1$ and $\widetilde{t}_{i}=\widetilde{t}_{\gamma _{2i-1,2i}}$ for $1\leq i\leq n+1$. 
Remark that $\widetilde{t}_{i}$ is a lift of $t_i$ with respect to $p$. 
Since $\gamma _{2i-1}^l$ for $1\leq i\leq n+1$ and $1\leq l\leq k$ bounds a disk in $H_g$, $\widetilde{t}_{i}$ for $1\leq i\leq n+1$ lies in $\SH $.  


Let $h_i$ for $1\leq i\leq 2n$ be a self-homeomorphism on $S^2$ which is supported on a regular neighborhood of $l_i\cup l_{i+1}$ in $(S^2-\B )\cup \{ p_{i},\ p_{i+1},\ p_{i+2}\}$ and described as the result of anticlockwise half-rotation of $l_i\cup l_{i+1}$ as in Figure~\ref{fig_h_i}. 
Note that $h_i=\sigma _i\sigma _{i+1}\sigma _i$ for $1\leq i\leq 2n$. 
Since $\Psi (h_i)=(i\ i+2)$ for $1\leq i\leq 2n$, the mapping class $h_i$ is parity-preserving. 
By Lemma~6.6 in \cite{Hirose-Omori}, we have the following lemma. 

\begin{figure}[h]
\includegraphics[scale=0.95]{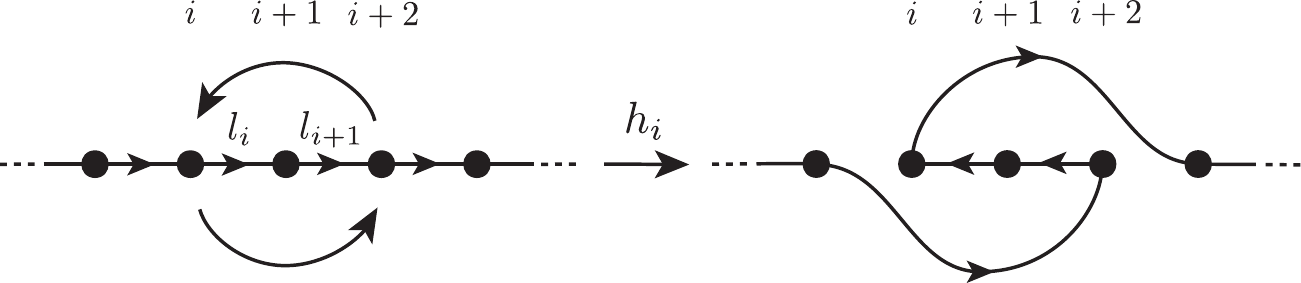}
\caption{The mapping class $h_i$ $(1\leq i\leq 2n)$ on $S^2$.}\label{fig_h_i}
\end{figure}

\begin{lem}\label{lift-h_i}
For some lift $\widetilde{h}_i$ of $h_i$ with respect to $p$, we have 
\begin{enumerate}
\item $\widetilde{h}_i=t_{\gamma _i^1}t_{\gamma _{i+1}^1}t_{\gamma _i^2}t_{\gamma _{i+1}^2}\cdots t_{\gamma _i^{k-1}}t_{\gamma _{i+1}^{k-1}}t_{\gamma _i^{k}}$\quad for odd $1\leq i\leq 2n-1$,
\item $\widetilde{h}_i=t_{\gamma _i^{k}}t_{\gamma _{i+1}^{k}}t_{\gamma _i^{k-1}}t_{\gamma _{i+1}^{k-1}}\cdots t_{\gamma _i^{2}}t_{\gamma _{i+1}^{2}}t_{\gamma _i^{1}}$\quad for even $2\leq i\leq 2n$. 
\end{enumerate}
\end{lem}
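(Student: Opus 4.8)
The plan is to adapt the argument of Lemma~6.6 in \cite{Hirose-Omori}: realise $h_i$ by a homeomorphism with small support and lift that support explicitly. First I would recall, as in Figure~\ref{fig_h_i}, that $h_i=\sigma_i\sigma_{i+1}\sigma_i$ is supported on a disk $N\subset S^2$ which is a regular neighbourhood of $l_i\cup l_{i+1}$ and whose interior meets $\B$ in exactly $p_i,p_{i+1},p_{i+2}$; on $N$ it is the anticlockwise half--rotation interchanging $p_i$ and $p_{i+2}$, fixing $p_{i+1}$, and equal to the identity near $\partial N$. Since $\Psi(h_i)=(i\ i+2)$ is parity--preserving, $h_i$ is liftable by Lemma~\ref{lem_GW}, and I would single out the lift $\widetilde{h}_i$ obtained by lifting $h_i|_N$ over $\widetilde{N}:=p^{-1}(N)$ and extending by the identity over $\Sigma_g\setminus\widetilde{N}$.

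The second step is to describe $\widetilde{N}$. By the defining feature of the balanced superelliptic covering, the meridian monodromy at $p_j$ is $\zeta$ if $j$ is odd and $\zeta^{-1}$ if $j$ is even, so the monodromy around $\partial N$ is $\zeta\cdot\zeta^{-1}\cdot\zeta=\zeta$ when $i$ is odd and $\zeta^{-1}\cdot\zeta\cdot\zeta^{-1}=\zeta^{-1}$ when $i$ is even; in both cases it generates $\langle\zeta\rangle$, hence $p^{-1}(\partial N)$ is a single circle mapping $k$--to--$1$ and $p|_{\widetilde{N}}\colon\widetilde{N}\to N$ is a connected $k$--fold cover of the disk, totally ramified over each of $p_i,p_{i+1},p_{i+2}$. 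Riemann--Hurwitz gives $\chi(\widetilde{N})=k-3(k-1)=3-2k$, so $\widetilde{N}$ has genus $k-1$ and one boundary component. The monodromy around $\gamma_{i,i+1}$ and around $\gamma_{i+1,i+2}$ is trivial, so their preimages inside $\widetilde{N}$ are the $k$ disjoint curves $\gamma_i^1,\dots,\gamma_i^k$ and the $k$ disjoint curves $\gamma_{i+1}^1,\dots,\gamma_{i+1}^k$, respectively (Figure~\ref{fig_scc_c_il}). One then reads off, from an explicit model of $\widetilde{N}$ and the $\langle\zeta\rangle$--action, that the $2k-1$ curves
\[
\gamma_i^1,\ \gamma_{i+1}^1,\ \gamma_i^2,\ \gamma_{i+1}^2,\ \dots,\ \gamma_i^{k-1},\ \gamma_{i+1}^{k-1},\ \gamma_i^k
\]
form a chain of curves filling $\widetilde{N}$ (consecutive curves meet once, non-consecutive ones are disjoint), with $\langle\zeta\rangle$ shifting the chain by two slots.

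Given this, $\widetilde{h}_i$ is the lift of the $\pi$--rotation of $N$, i.e. the half--rotation of the above chain that is the identity on $\partial\widetilde{N}$. By the standard identification of the half--rotation of such a chain with the product, taken in chain order, of the right--handed Dehn twists along its curves --- this is precisely Lemma~6.6 of \cite{Hirose-Omori}, and it is compatible with Lemma~\ref{lift-t_{i,i+1}} and with the chain relation --- one obtains
\[
\widetilde{h}_i = t_{\gamma_i^1}t_{\gamma_{i+1}^1}t_{\gamma_i^2}t_{\gamma_{i+1}^2}\cdots t_{\gamma_i^{k-1}}t_{\gamma_{i+1}^{k-1}}t_{\gamma_i^k}
\]
for odd $i$. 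For even $i$ the alternation $\zeta,\zeta^{-1},\zeta,\dots$ of the meridian monodromies reverses the direction in which the $\langle\zeta\rangle$--orbit runs along the chain relative to the odd case, so the same identification yields the reversed--order product $t_{\gamma_i^{k}}t_{\gamma_{i+1}^{k}}t_{\gamma_i^{k-1}}\cdots t_{\gamma_i^{1}}$. As an alternative to the chain computation, one may instead verify directly that the asserted right--hand side is $\zeta$--equivariant up to a cyclic relabelling of the superscripts and hence descends under $p$ to a mapping class acting on $l_i$ and $l_{i+1}$ exactly as $h_i$ does, which pins the mapping class down.

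The main obstacle is the combinatorial bookkeeping concentrated in the second step: proving that these particular $2k-1$ lifts form a chain with the stated intersection pattern, getting the interleaving order and the two endpoints correct (both endpoints are $\gamma_i$--curves, which is why $\gamma_i$ occurs $k$ times while $\gamma_{i+1}$ occurs only $k-1$ times), tracking how the monodromy alternation produces the order reversal between even and odd $i$, and selecting which lift of $h_i$ in the $\langle\zeta\rangle$--coset is being described. All of this is handled by drawing $\widetilde{N}$ explicitly, as in the figures of \cite{Hirose-Omori}, and reading off the $\langle\zeta\rangle$--action.
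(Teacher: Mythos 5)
Your proposal is essentially the paper's approach: the paper gives no independent argument for this lemma but simply quotes Lemma~6.6 of \cite{Hirose-Omori}, which is verbatim the stated formula, and the decisive step of your proof is that same citation, with your support/monodromy/Riemann--Hurwitz and chain discussion being a plausible reconstruction of what that cited lemma's proof does. One caution: the ``standard identification of the half-rotation of such a chain with the product, in chain order, of the right-handed Dehn twists'' is not an independent standard fact (a product of twists along a chain is a one-step shift of the chain, of order roughly the chain length plus one, not a half-rotation in general; it covers the $\pi$-rotation here only because $\zeta$ shifts the chain by two slots), so it is exactly the content of the cited lemma and cannot serve as justification if you intended to reprove Lemma~6.6 rather than invoke it; an actual verification would check the action on the lifted arcs $\widetilde{l}_i^l$ as in \cite{Hirose-Omori}.
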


Denote $\widetilde{h}_i=t_{\gamma _i^1}t_{\gamma _{i+1}^1}t_{\gamma _i^2}t_{\gamma _{i+1}^2}\cdots t_{\gamma _i^{k-1}}t_{\gamma _{i+1}^{k-1}}t_{\gamma _i^{k}}$\quad for odd $1\leq i\leq 2n-1$, $\widetilde{h}_i=t_{\gamma _i^{k}}t_{\gamma _{i+1}^{k}}t_{\gamma _i^{k-1}}t_{\gamma _{i+1}^{k-1}}\cdots t_{\gamma _i^{2}}t_{\gamma _{i+1}^{2}}t_{\gamma _i^{1}}$\quad for even $2\leq i\leq 2n$. 
Since we can show that $s_i=h_{2i}h_{2i-1}t_i^{-1}$ and $r_i=h_{2i}^{-1}h_{2i}$ for $1\leq i\leq n$ as in Figure~\ref{fig_braid-s_i-r_i-product}, we have the following lemma. 

\begin{figure}[h]
\includegraphics[scale=1.1]{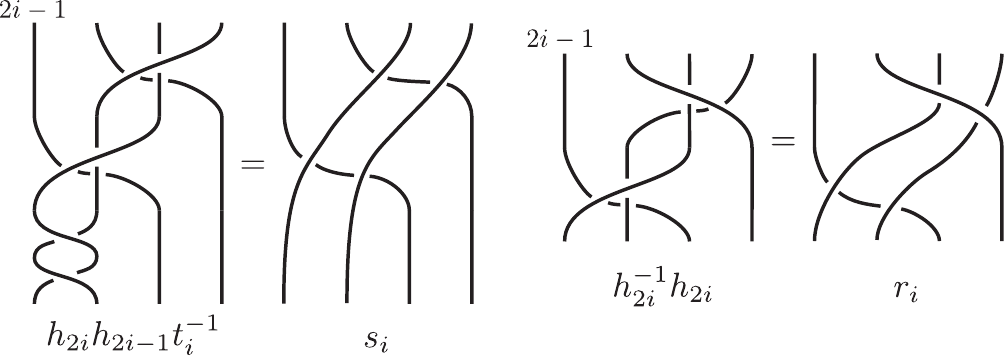}
\caption{Braids $s_i=h_{2i}h_{2i-1}t_i^{-1}$ and $r_i=h_{2i}^{-1}h_{2i}$ for $1\leq i\leq n$.}\label{fig_braid-s_i-r_i-product}
\end{figure}

\begin{lem}\label{lift-s_i-r_i}
For $1\leq i\leq n$ and some lifts $\widetilde{s}_i$ and $\widetilde{r}_i$ of $h_i$ and $r_i$ with respect to $p$, respectively, we have
\begin{enumerate}
\item $\widetilde{s}_i=\widetilde{h}_{2i}\widetilde{h}_{2i-1}\widetilde{t}_i^{-1}$,
\item $\widetilde{r}_i=\widetilde{h}_{2i}^{-1}\widetilde{h}_{2i-1}$.
\end{enumerate}
\end{lem}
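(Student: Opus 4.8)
The plan is to reduce Lemma~\ref{lift-s_i-r_i} to the two braid identities $s_i=h_{2i}h_{2i-1}t_i^{-1}$ and $r_i=h_{2i}^{-1}h_{2i-1}$ in $\Hil$ (displayed in Figure~\ref{fig_braid-s_i-r_i-product}) together with the elementary fact that composites and inverses of lifts are again lifts. First I would record the two identities: from the expressions $s_i=\sigma_{2i}\sigma_{2i+1}\sigma_{2i-1}\sigma_{2i}$, $r_i=\sigma_{2i}^{-1}\sigma_{2i+1}^{-1}\sigma_{2i-1}\sigma_{2i}$ (Section~\ref{section_liftable-element}), $h_j=\sigma_j\sigma_{j+1}\sigma_j$ (Section~\ref{section_lifts}), and $t_i=\sigma_{2i-1}^{2}$, they follow from the braid relations, e.g. $h_{2i}h_{2i-1}t_i^{-1}=\sigma_{2i}\sigma_{2i+1}(\sigma_{2i}\sigma_{2i-1}\sigma_{2i})\sigma_{2i-1}^{-1}=\sigma_{2i}\sigma_{2i+1}(\sigma_{2i-1}\sigma_{2i}\sigma_{2i-1})\sigma_{2i-1}^{-1}=\sigma_{2i}\sigma_{2i+1}\sigma_{2i-1}\sigma_{2i}=s_i$, and $h_{2i}^{-1}h_{2i-1}=\sigma_{2i}^{-1}\sigma_{2i+1}^{-1}(\sigma_{2i}^{-1}\sigma_{2i-1}\sigma_{2i})\sigma_{2i-1}=\sigma_{2i}^{-1}\sigma_{2i+1}^{-1}(\sigma_{2i-1}\sigma_{2i}\sigma_{2i-1}^{-1})\sigma_{2i-1}=\sigma_{2i}^{-1}\sigma_{2i+1}^{-1}\sigma_{2i-1}\sigma_{2i}=r_i$.

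Next I would invoke the general observation that if $\widetilde\varphi$ is a lift of $\varphi$ and $\widetilde\psi$ a lift of $\psi$ with respect to $p$, i.e. $p\circ\widetilde\varphi=\varphi\circ p$ and $p\circ\widetilde\psi=\psi\circ p$, then $p\circ(\widetilde\varphi\widetilde\psi)=\varphi\circ\psi\circ p$, so $\widetilde\varphi\widetilde\psi$ is a lift of $\varphi\psi$; and from $p\circ\widetilde\varphi=\varphi\circ p$ one gets $p\circ\widetilde\varphi^{-1}=\varphi^{-1}\circ p$, so $\widetilde\varphi^{-1}$ is a lift of $\varphi^{-1}$. By Lemma~\ref{lift-h_i} the mapping class $\widetilde h_j$ is a lift of $h_j$, and by Lemma~\ref{lift-t_{i,i+1}} the mapping class $\widetilde t_i$ is a lift of $t_i$, with respect to $p$. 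Hence $\widetilde h_{2i}\widetilde h_{2i-1}\widetilde t_i^{-1}$ is a lift of $h_{2i}h_{2i-1}t_i^{-1}=s_i$ and $\widetilde h_{2i}^{-1}\widetilde h_{2i-1}$ is a lift of $h_{2i}^{-1}h_{2i-1}=r_i$; taking these two products as $\widetilde s_i$ and $\widetilde r_i$ finishes the argument.

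I do not expect a genuine obstacle. Since lifts with respect to $p$ are only well defined up to the deck group $\langle\zeta\rangle$, the phrase ``some lifts'' is precisely what is being asserted, so no compatibility among the various choices of $\widetilde h_j$ and $\widetilde t_i$ needs to be checked. The only point deserving a word of care is the composition convention in the Birman--Hilden setup (the one in which ``$f$ apply first''), but this causes no trouble: the braid identities and the lift formulas are written with the same product convention, so the orders match term by term. In short, the lemma is essentially a formal consequence of the earlier lemmas once the two braid identities are in hand, and the only mildly computational part is the verification of those identities, which is routine.
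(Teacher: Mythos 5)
Your proposal is correct and follows essentially the same route as the paper: the paper also reduces the lemma to the identities $s_i=h_{2i}h_{2i-1}t_i^{-1}$ and $r_i=h_{2i}^{-1}h_{2i-1}$ in $\Hil$ (verified pictorially in Figure~\ref{fig_braid-s_i-r_i-product} rather than by your algebraic braid-relation computation, which checks out) and then concludes that the corresponding products of the lifts $\widetilde{h}_{2i}$, $\widetilde{h}_{2i-1}$, $\widetilde{t}_i$ from Lemmas~\ref{lift-h_i} and \ref{lift-t_{i,i+1}} are lifts of $s_i$ and $r_i$. Your remark that ``some lifts'' removes any need to match choices within the deck group is exactly the intended reading.
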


Denote $\widetilde{s}_i=\widetilde{h}_{2i}\widetilde{h}_{2i-1}\widetilde{t}_i^{-1}$ and $\widetilde{r}_i=\widetilde{h}_{2i}^{-1}\widetilde{h}_{2i-1}$ for $1\leq i \leq n$. 
Since we can check that $\widetilde{h}_{2i}\widetilde{h}_{2i-1}$ switches $\gamma_{2i-1}^l$ and $\gamma_{2i+1}^l$, the element $\widetilde{s}_i$ also switches $\gamma_{2i-1}^l$ and $\gamma_{2i+1}^l$. 
We denote $\delta _i^{l}=t_{\gamma _{2i}^{l}}^{-1}(\gamma _{2i-1}^{l})$ and $\eta _i^{l}=t_{\gamma _{2i}^{l}}^{-1}(\gamma _{2i+1}^{l+1})$ for $1\leq i \leq n$ and $1\leq l\leq k$, where we cosider the index $l+1$ mod $k$. 
We remark that $\delta _i^{l}$ and $\eta _i^{l}$ are simple closed curves on $\Sigma _{g}$ as in Figure~\ref{fig_scc_delta-eta}. 
Then we have the following lemma. 

\begin{figure}[h]
\includegraphics[scale=0.8]{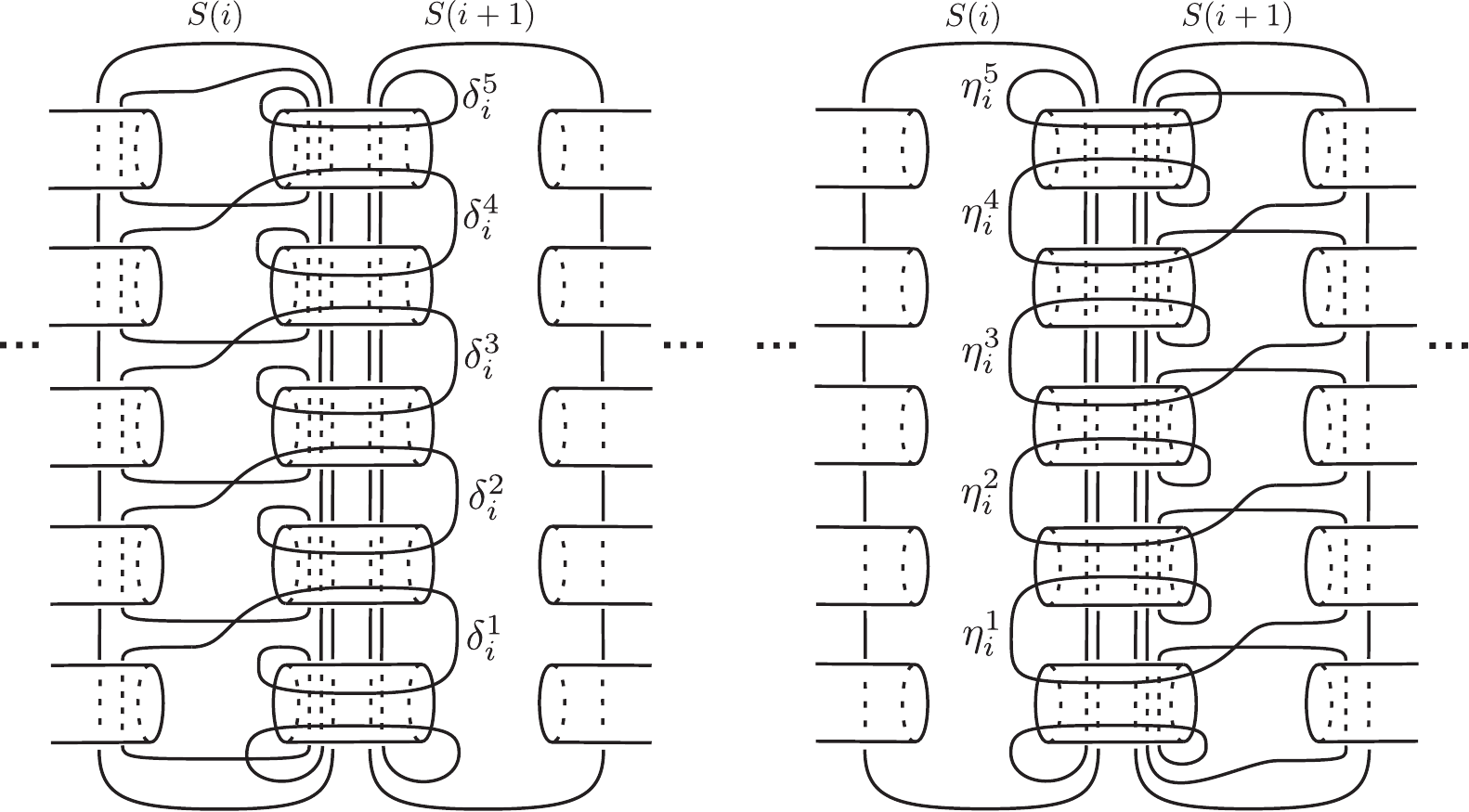}
\caption{Simple closed curves $\delta _i^{l}$ and $\eta _i^{l}$ for $1\leq i\leq n$ and $1\leq l\leq k$ when $k=5$.}\label{fig_scc_delta-eta}
\end{figure}

\begin{lem}\label{lift-r_i}
For $1\leq i\leq n$, we have
\[
\widetilde{r}_i=t_{\delta _{i}^1}t_{\eta _{i}^1}^{-1}t_{\delta _{i}^2}t_{\eta _{i}^2}^{-1}\cdots t_{\delta _{i}^{k-1}}t_{\eta _{i}^{k-1}}^{-1}.
\]
\end{lem}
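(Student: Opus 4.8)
The plan is to compute the action of $\widetilde{r}_i = \widetilde{h}_{2i}^{-1}\widetilde{h}_{2i-1}$ on the collection of lifted curves $\{\gamma_{2i-1}^l\}$ and recognize the result as a product of Dehn twists, using the standard fact that a mapping class supported on a subsurface is determined by its action on a filling system of arcs/curves together with its ``twisting'' data. Concretely, I would use the known formula (Lemma~\ref{lift-h_i}) for $\widetilde{h}_{2i-1}$ and $\widetilde{h}_{2i}$ as explicit products of Dehn twists along the $\gamma_j^l$, and also recall from Lemma~\ref{lift-s_i-r_i} and the surrounding discussion that $\widetilde{s}_i = \widetilde{h}_{2i}\widetilde{h}_{2i-1}\widetilde{t}_i^{-1}$ switches $\gamma_{2i-1}^l$ and $\gamma_{2i+1}^l$ for each $l$; since $\widetilde{r}_i = \widetilde{h}_{2i}^{-1}\widetilde{h}_{2i-1} = \widetilde{h}_{2i}^{-2}\cdot \widetilde{h}_{2i}\widetilde{h}_{2i-1}\widetilde{t}_i^{-1}\cdot \widetilde{t}_i$, and $\widetilde{h}_{2i}^{-2}$ is a product of Dehn twists in the supports near $\gamma_{2i}^l$, one can track the images of the $\gamma_{2i-1}^l$ under $\widetilde{r}_i$.

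The key steps, in order, would be: (1) Fix attention on the subsurface $\widetilde{N}_i \subset \Sigma_g$ which is the preimage under $p$ of a regular neighborhood of $l_{2i-1}\cup l_{2i}$; this is where $\widetilde{h}_{2i-1}$, $\widetilde{h}_{2i}$, $\widetilde{s}_i$, $\widetilde{r}_i$ are all supported. (2) Observe directly from Figure~\ref{fig_braid-s_i-r_i-product} (and the braid relation $r_i = h_{2i}^{-1}h_{2i-1}$ downstairs, which lifts) that $\widetilde{r}_i$ takes $\gamma_{2i-1}^l$ to some explicit curve; by definition $\delta_i^l = t_{\gamma_{2i}^l}^{-1}(\gamma_{2i-1}^l)$ and $\eta_i^l = t_{\gamma_{2i}^l}^{-1}(\gamma_{2i+1}^{l+1})$ are exactly the curves that appear when one slides $\gamma_{2i-1}^l$, resp.\ $\gamma_{2i+1}^{l+1}$, across the twist region. (3) Verify that $\widetilde{r}_i$ and the candidate $\rho_i := t_{\delta_i^1}t_{\eta_i^1}^{-1}\cdots t_{\delta_i^{k-1}}t_{\eta_i^{k-1}}^{-1}$ agree on this filling system of curves and that both are supported on $\widetilde{N}_i$, hence they agree as mapping classes. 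Alternatively — and this is probably cleaner — use the change-of-coordinates principle: since $\widetilde{h}_{2i-1}$ and $\widetilde{h}_{2i}$ are known word-in-Dehn-twists expressions, one can conjugate: $t_{\gamma_{2i}^l}^{-1}t_{\gamma_{2i-1}^l}t_{\gamma_{2i}^l} = t_{\delta_i^l}$ and similarly $t_{\gamma_{2i}^l}^{-1}t_{\gamma_{2i+1}^{l+1}}t_{\gamma_{2i}^l} = t_{\eta_i^l}$, so the product formula for $\widetilde{r}_i$ in terms of the $\gamma_j^l$ can be algebraically rewritten, by inserting $t_{\gamma_{2i}^l}t_{\gamma_{2i}^l}^{-1}$ between appropriate factors and collapsing, into the asserted product of twists along $\delta_i^l$ and $\eta_i^l$.

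The main obstacle I expect is bookkeeping: getting the cyclic indexing $l \mapsto l+1 \pmod k$ correct (the rotation $\zeta$ cyclically permutes the $k$ lifts, and the two half-rotations $\widetilde{h}_{2i-1}$, $\widetilde{h}_{2i}$ are ordered oppositely as in Lemma~\ref{lift-h_i}), and making sure the signs on the twists ($t_{\eta_i^l}^{-1}$ versus $t_{\eta_i^l}$) come out right — this comes from the fact that $\widetilde{h}_{2i}$ appears with exponent $-1$ in $\widetilde{r}_i$ while $\widetilde{h}_{2i-1}$ appears with exponent $+1$. I would handle this by working out the case $k=5$ (matching Figure~\ref{fig_scc_delta-eta}) in full detail as the model computation, since the general $k$ case is identical in structure, and by checking the base case $i=1$, $l=1$ against the picture. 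The verification that the two mapping classes agree on a filling system (and that twists along disjoint curves commute appropriately, $\delta_i^l \cap \eta_i^l = \emptyset$, so the product is well-defined up to the stated ordering) is then routine.
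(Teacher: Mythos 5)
Your second, ``cleaner'' route is essentially the paper's proof: the paper expands $\widetilde{r}_i=\widetilde{h}_{2i}^{-1}\widetilde{h}_{2i-1}$ via Lemma~\ref{lift-h_i}, regroups the letters into the quadruples $t_{\gamma _{2i}^{l}}^{-1}t_{\gamma _{2i+1}^{l+1}}^{-1}t_{\gamma _{2i-1}^{l}}t_{\gamma _{2i}^{l}}$ for $1\leq l\leq k-1$, and recognizes each quadruple as $t_{\eta _i^l}^{-1}t_{\delta _i^l}=t_{\delta _i^l}t_{\eta _i^l}^{-1}$ by the conjugation relation $h t_{\gamma}h^{-1}=t_{h(\gamma )}$ and the disjointness of $\delta _i^l$ and $\eta _i^l$, exactly as you describe. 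However, the mechanism you give (``insert $t_{\gamma _{2i}^l}t_{\gamma _{2i}^l}^{-1}$ and collapse'') is not sufficient by itself, and the missing step is more than sign- or index-bookkeeping. The word $\widetilde{h}_{2i}^{-1}\widetilde{h}_{2i-1}$ has $4k-2$ twist letters while the target has $2k-2$; the quadruple-collapsing accounts for most of the reduction, but it leaves over the final letter $t_{\gamma _{2i}^{k}}^{-1}$ of $\widetilde{h}_{2i}^{-1}$ and the final letter $t_{\gamma _{2i-1}^{k}}$ of $\widetilde{h}_{2i-1}$, whose curves do not occur in any $\delta _i^l$ or $\eta _i^l$. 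The paper disposes of them by the geometric identity $t_{\gamma _{2i-1}^1}t_{\gamma _{2i}^1}\cdots t_{\gamma _{2i-1}^{k-1}}t_{\gamma _{2i}^{k-1}}(\gamma _{2i-1}^{k})=\gamma _{2i}^{k}$ (proved by a separate figure), which rewrites $\widetilde{h}_{2i-1}=t_{\gamma _{2i}^{k}}\cdot t_{\gamma _{2i-1}^1}t_{\gamma _{2i}^1}\cdots t_{\gamma _{2i-1}^{k-1}}t_{\gamma _{2i}^{k-1}}$ and cancels $t_{\gamma _{2i}^{k}}$ against $t_{\gamma _{2i}^{k}}^{-1}$. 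This curve computation is the only non-formal input of the proof, and your proposal does not identify it; any complete write-up must isolate and verify it (your plan to do the $k=5$ model case would likely surface it, but as written the argument has a gap at exactly this point).

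A smaller caveat on your first route: concluding that $\widetilde{r}_i$ equals the candidate product because both are supported on $\widetilde{N}_i$ and agree on a family of closed curves requires arcs in the Alexander-method system (or explicit control of twisting along $\partial \widetilde{N}_i$), since Dehn twists about the boundary components of the supporting subsurface act trivially on every isotopy class of closed curve; you gesture at this with ``twisting data,'' but it would have to be carried out carefully. The paper's algebraic route avoids this issue entirely.
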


\begin{proof}
By Lemma~\ref{lift-s_i-r_i} and the definition above, we have $\widetilde{r}_i=\widetilde{h}_{2i}^{-1}\widetilde{h}_{2i-1}$ for $1\leq i\leq n$. 
We can show that $t_{\gamma _{2i-1}^1}t_{\gamma _{2i}^1}t_{\gamma _{2i-1}^2}t_{\gamma _{2i}^2}\cdots t_{\gamma _{2i-1}^{k-1}}t_{\gamma _{2i}^{k-1}}(\gamma _{2i-1}^{k})=\gamma _{2i}^{k}$ for $1\leq i\leq 2n$ as in Figure~\ref{fig_proof_lift_r_i}. 
Hence, we have
\begin{eqnarray*}
\widetilde{r}_i&=&\widetilde{h}_{2i}^{-1}\widetilde{h}_{2i-1}\\
&=&(t_{\gamma _{2i}^{1}}^{-1}t_{\gamma _{2i+1}^{2}}^{-1}t_{\gamma _{2i}^{2}}^{-1}\cdots t_{\gamma _{2i+1}^{k-1}}^{-1}t_{\gamma _{2i}^{k-1}}^{-1}t_{\gamma _{2i+1}^{k}}^{-1}t_{\gamma _{2i}^{k}}^{-1})
\underline{(t_{\gamma _{2i-1}^1}t_{\gamma _{2i}^1}t_{\gamma _{2i-1}^2}t_{\gamma _{2i}^2}\cdots t_{\gamma _{2i-1}^{k-1}}t_{\gamma _{2i}^{k-1}}t_{\gamma _{2i-1}^{k}})}\\
&&\underline{\cdot (t_{\gamma _{2i-1}^1}t_{\gamma _{2i}^1}t_{\gamma _{2i-1}^2}t_{\gamma _{2i}^2}\cdots t_{\gamma _{2i-1}^{k-1}}t_{\gamma _{2i}^{k-1}})^{-1}}t_{\gamma _{2i-1}^1}t_{\gamma _{2i}^1}t_{\gamma _{2i-1}^2}t_{\gamma _{2i}^2}\cdots t_{\gamma _{2i-1}^{k-1}}t_{\gamma _{2i}^{k-1}}\\
&=&(t_{\gamma _{2i}^{1}}^{-1}t_{\gamma _{2i+1}^{2}}^{-1}t_{\gamma _{2i}^{2}}^{-1}\cdots t_{\gamma _{2i+1}^{k-1}}^{-1}t_{\gamma _{2i}^{k-1}}^{-1}t_{\gamma _{2i+1}^{k}}^{-1})\underset{\leftarrow }{\underline{t_{\gamma _{2i-1}^1}t_{\gamma _{2i}^1}t_{\gamma _{2i-1}^2}t_{\gamma _{2i}^2}\cdots t_{\gamma _{2i-1}^{k-2}}t_{\gamma _{2i}^{k-2}}}}t_{\gamma _{2i-1}^{k-1}}t_{\gamma _{2i}^{k-1}}\\
&=&(t_{\gamma _{2i}^{1}}^{-1}t_{\gamma _{2i+1}^{2}}^{-1}t_{\gamma _{2i-1}^1}t_{\gamma _{2i}^1})(t_{\gamma _{2i}^{2}}^{-1}t_{\gamma _{2i+1}^{3}}^{-1}t_{\gamma _{2i-1}^2}t_{\gamma _{2i}^2})\cdots (t_{\gamma _{2i}^{k-1}}^{-1}t_{\gamma _{2i+1}^{k}}^{-1}t_{\gamma _{2i-1}^{k-1}}t_{\gamma _{2i}^{k-1}})\\
&=&(t_{\eta _{i}^1}^{-1}\underset{\leftarrow }{\underline{t_{\delta _{i}^1}}})(t_{\eta _{i}^2}^{-1}\underset{\leftarrow }{\underline{t_{\delta _{i}^2}}})\cdots (t_{\eta _{i}^{k-1}}^{-1}\underset{\leftarrow }{\underline{t_{\delta _{i}^{k-1}}}})\\
&=&t_{\delta _{i}^1}t_{\eta _{i}^1}^{-1}t_{\delta _{i}^2}t_{\eta _{i}^2}^{-1}\cdots t_{\delta _{i}^{k-1}}t_{\eta _{i}^{k-1}}^{-1}
\end{eqnarray*}
Therefore, we have completed the proof of Lemma~\ref{lift-r_i}. 
\if0
\begin{eqnarray*}
\widetilde{s}_i&=&\widetilde{h}_{2i}\widetilde{h}_{2i-1}t_i^{-1}\\
&=&(t_{\gamma _{2i}^{k}}t_{\gamma _{2i+1}^{k}}t_{\gamma _{2i}^{k-1}}t_{\gamma _{2i+1}^{k-1}}\cdots t_{\gamma _{2i}^{2}}t_{\gamma _{2i+1}^{2}}t_{\gamma _{2i}^{1}})(t_{\gamma _{2i-1}^1}t_{\gamma _{2i}^1}t_{\gamma _{2i-1}^2}t_{\gamma _{2i}^2}\cdots t_{\gamma _{2i-1}^{k-1}}t_{\gamma _{2i}^{k-1}}t_{\gamma _{2i-1}^{k}})\\
&&\cdot t_{\gamma _{2i-1}^{k}}^{-1}t_{\gamma _{2i-1}^{k-1}}^{-1}\cdots t_{\gamma _{2i-1}^1}^{-1}
\end{eqnarray*}
\fi
\end{proof}

\begin{figure}[h]
\includegraphics[scale=0.52]{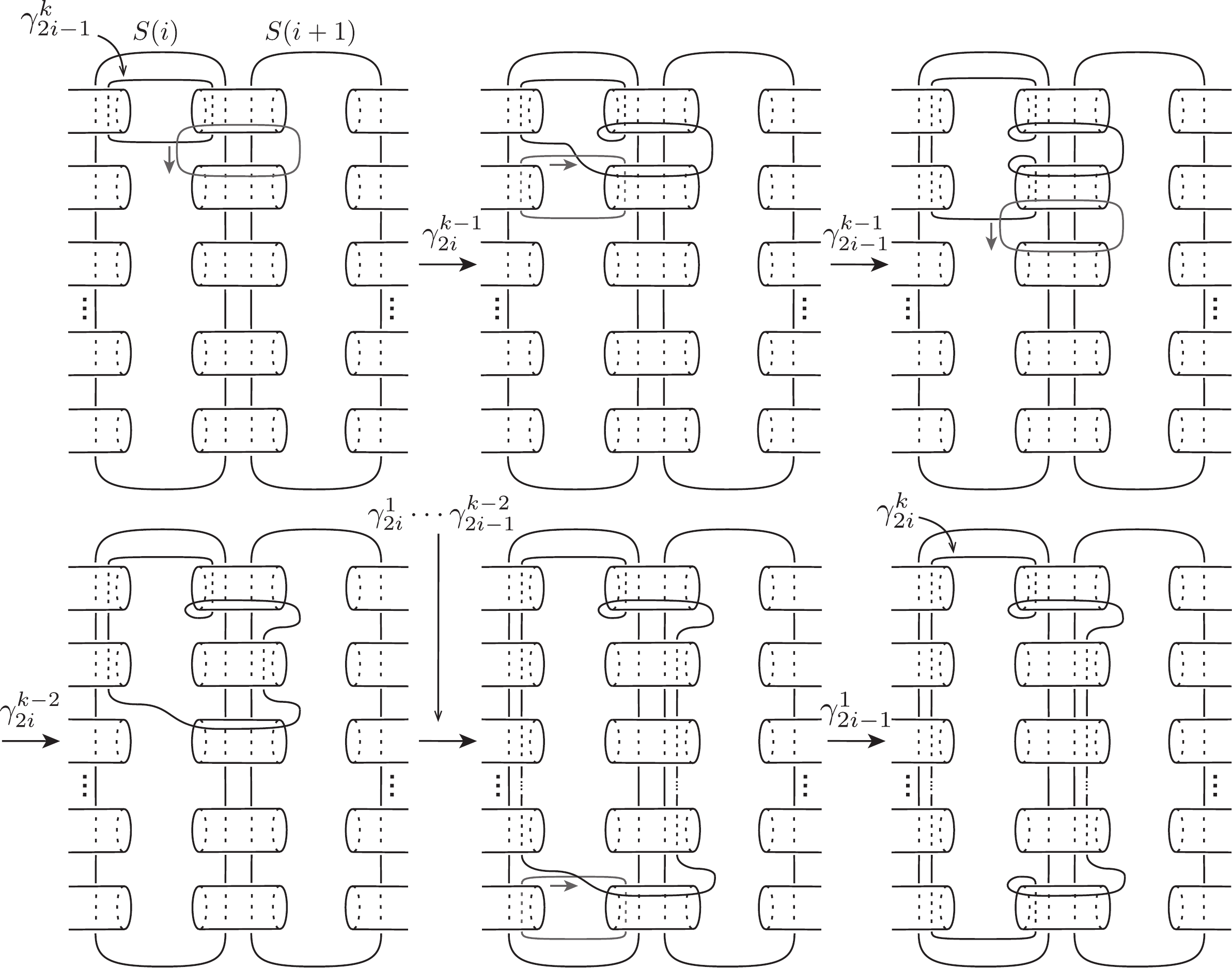}
\caption{Proof of $t_{\gamma _{2i-1}^1}t_{\gamma _{2i}^1}t_{\gamma _{2i-1}^2}t_{\gamma _{2i}^2}\cdots t_{\gamma _{2i-1}^{k-1}}t_{\gamma _{2i}^{k-1}}(\gamma _{2i-1}^{k})=\gamma _{2i}^{k}$ for $1\leq i\leq 2n$, where, in this figure, we express a right-handed Dehn twist $t_{\gamma }$ along $\gamma $ by $\gamma $.}\label{fig_proof_lift_r_i}
\end{figure}

For $1\leq i\leq n$ and $1\leq l\leq k$, the pair of $\delta _i^{l}$ and $\eta _i^{l}$ bounds a proper annulus in $H_g$ as in Figure~\ref{fig_bounded-annulus}. 
Hence the product $t_{\delta _{i}^l}t_{\eta _{i}^l}^{-1}$ is an annulus twist on $H_g$, and by Lemma~\ref{lift-r_i}, $\widetilde{r}_i$ is a product of $k-1$ annulus twists.  

\begin{figure}[h]
\includegraphics[scale=1.0]{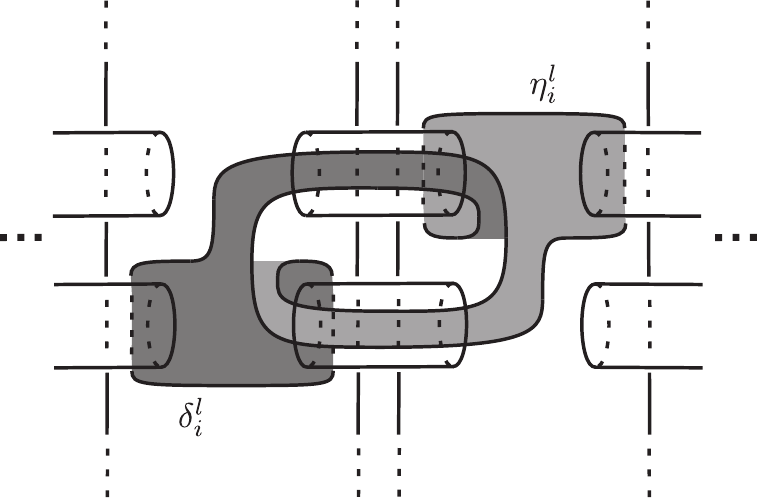}
\caption{A proper annulus in $H_g$ which is bounded by $\delta _i^{l}\sqcup \eta _i^{l}$.}\label{fig_bounded-annulus}
\end{figure}

We identify $H_g$ with the 3-manifold with boundary as on the lower side in Figure~\ref{fig_handlebody-homeo2} by a natural homeomorphism. 
Let $\mu _{i}^{l}$ for $1\leq i\leq n$ and $1\leq l\leq k$ be a simple closed curves on $\Sigma _g$ as in Figure~\ref{fig_handlebody-homeo2}, and $\nu _{i}^{l}$ and $\xi _i^{l^\prime}$ for $1\leq i\leq n-1$, $1\leq l\leq k-1$, and $1\leq l^\prime \leq k-2$ simple closed curves on $\Sigma _g$ as in Figure~\ref{fig_scc_nu}. 
Remark that these simple close curves are mutually disjoint and we have $\mu _{1}^l=\gamma _1^l$, $\mu _{n}^l=(\gamma _{2n+1}^l)^{-1}$, $\nu _i^1=\gamma _{2i+1}^1$, and $\nu _i^{k-1}=\gamma _{2i+1}^k$. 
The simple closed curves $\nu _{i}^{l}$ and $\xi _i^{l^\prime}$ are defined for $n\geq 2$. 
Denote by $R$ a homeomorphism on $H_g$ which is described as the result of a $\pi $-rotation of $H_g$ as in Figure~\ref{fig_rotation1}. 
Then we have the following lemma.  

\begin{figure}[h]
\includegraphics[scale=0.85]{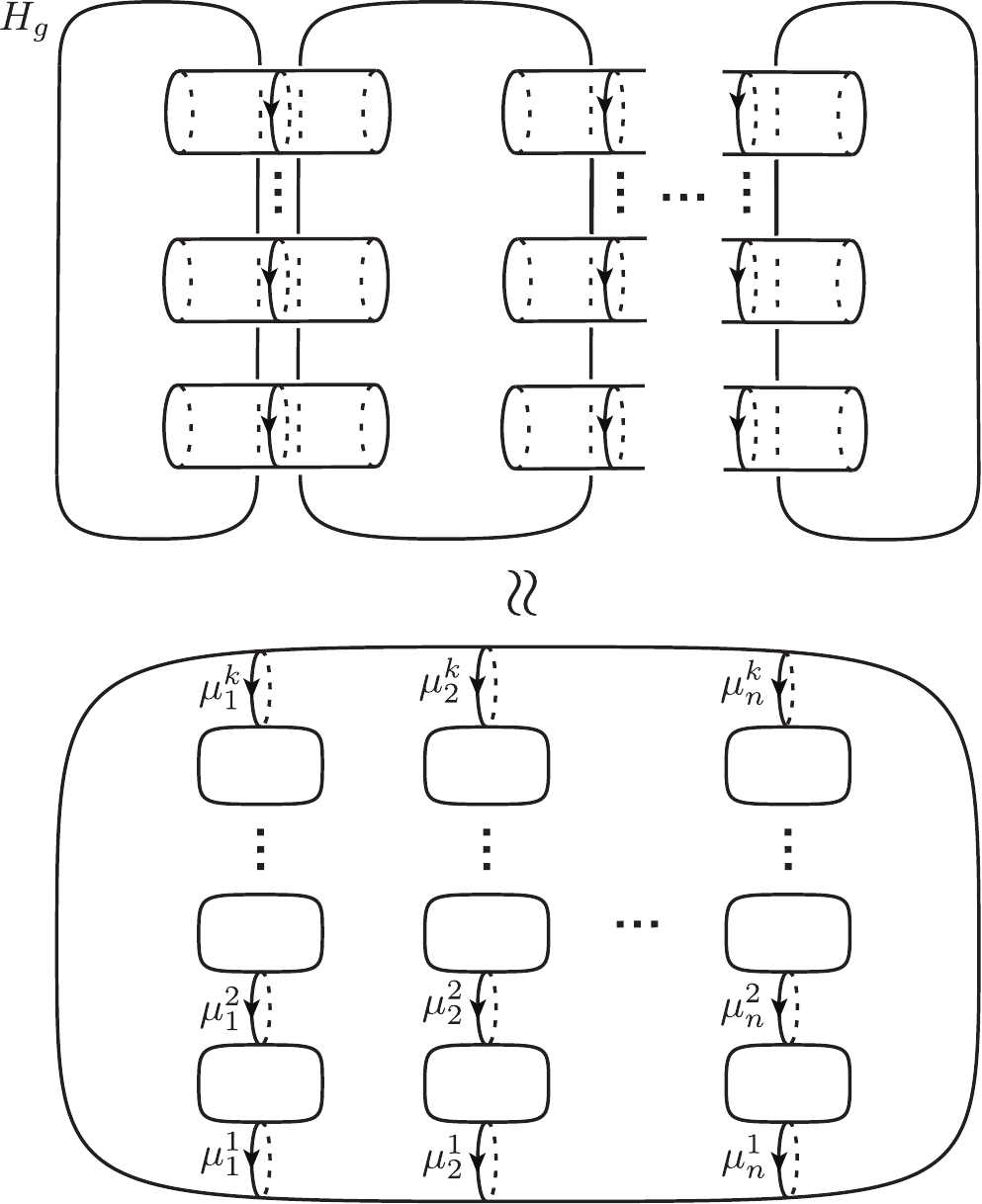}
\caption{A natural homeomorphism of $H_g$ and a simple closed curve $\mu _i^{l}$ for $1\leq i\leq n$ and $1\leq l\leq k$.}\label{fig_handlebody-homeo2}
\end{figure}

\begin{figure}[h]
\includegraphics[scale=0.90]{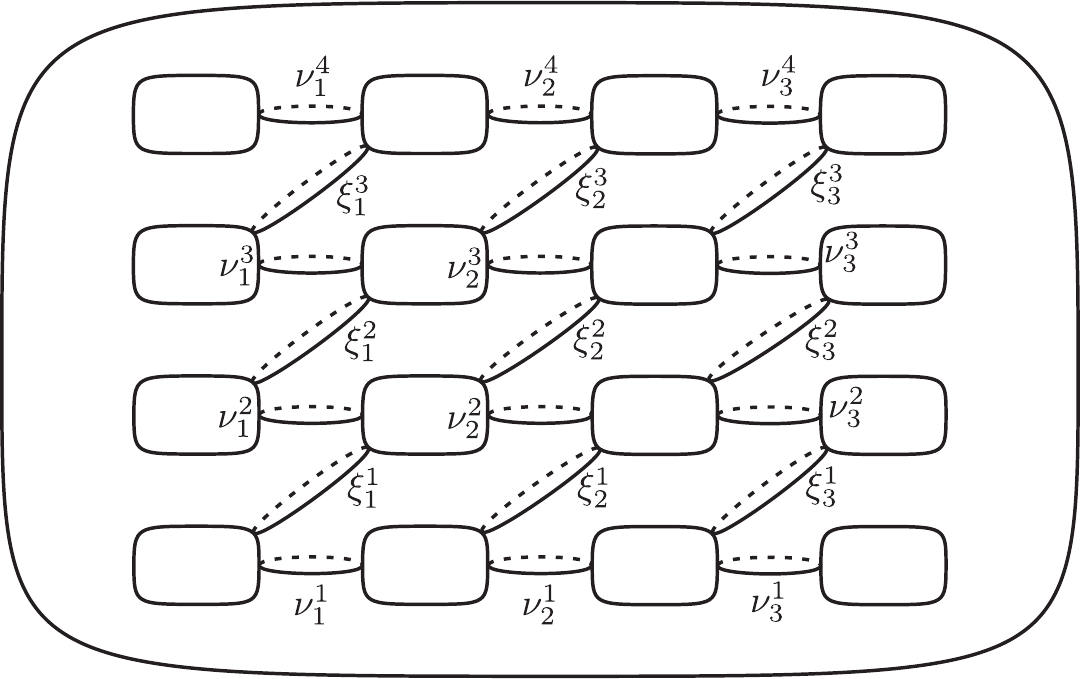}
\caption{Simple closed curves $\nu _i^{l}$ and $\xi _i^{l^\prime}$ for $1\leq i\leq n-1$, $1\leq l\leq k-1$, and $1\leq l^\prime \leq k-2$ when $n=5$ and $k=5$.}\label{fig_scc_nu}
\end{figure}

\begin{figure}[h]
\includegraphics[scale=0.80]{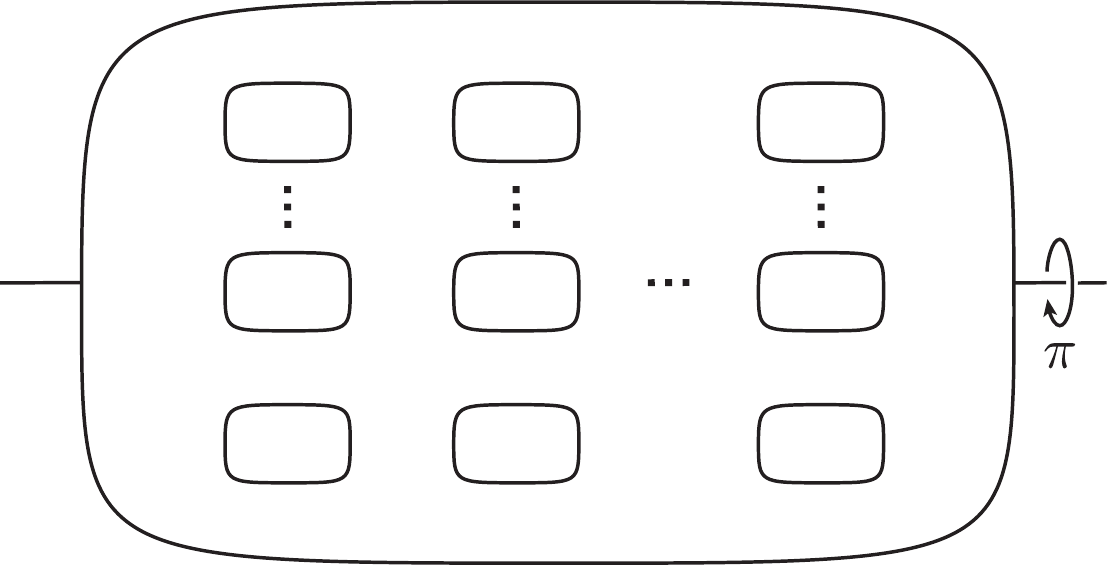}
\caption{A $\pi $-rotation $R$ of $H_g$.}\label{fig_rotation1}
\end{figure}

\begin{lem}\label{lift-r}
For some lift $\widetilde{r}$ of $r$ with respect to $p$, we have
\[
\widetilde{r}=t_{\mu _1^1}t_{\mu _n^k}\left( \prod _{\substack{1\leq i\leq n\\ 2\leq l\leq k-1}}t_{\mu _i^l}\right) \left( \prod _{\substack{1\leq i\leq n-1\\ 1\leq l\leq k-1}}t_{\nu _i^l}\right) \left( \prod _{\substack{1\leq i\leq n-1\\ 1\leq l\leq k-2}}t_{\xi _i^l}^{-1}\right) R.
\] 
\end{lem}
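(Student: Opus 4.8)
The plan is to use the fact recorded in Section~\ref{section_bscov} that the isotopy class of a self-homeomorphism of $H_g$ is determined by the image of $\widetilde{L}=p^{-1}(L)$, together with the fact that a lift of $r$ with respect to $p$ is precisely a symmetric homeomorphism for $\zeta$ whose induced homeomorphism on $B^3$ represents $r\in\Hil$ (such a lift is unique up to composition with a power of $\zeta$, and $r$ is liftable by Lemma~\ref{lem_r_liftable}). Write $\Phi$ for the homeomorphism given by the right-hand side of the displayed formula. First I would check that $\Phi$ is symmetric for $\zeta$: the $\pi$-rotation $R$ of Figure~\ref{fig_rotation1} is chosen so that $R$ normalizes $\langle\zeta\rangle$, while the simple closed curves $\mu_i^l$, $\nu_i^l$, $\xi_i^l$ of Figures~\ref{fig_handlebody-homeo2} and \ref{fig_scc_nu} are permuted by $\zeta$ within each family; one then verifies that the displayed ordered product of Dehn twists commutes with $\zeta$, so that $\Phi\in\SHomH$ and $\Phi$ descends to some $\widehat{\Phi}\in\Hil$.

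Second I would identify the descent $\widehat{R}\in\Hil$ of $R$ and compare $\widehat{\Phi}$ with $r$. Since $R$ is a lift of $\widehat{R}$, the class $\widehat{R}$ lies in $\LH$, and hence so does $c:=r\,\widehat{R}^{-1}$; reading off from Figure~\ref{fig_rotation1} what $\widehat{R}$ does to the arcs $l_1,\dots,l_{2n+1}$ and comparing with $r(l_i)=\sigma_1\sigma_3\cdots\sigma_{2n+1}(l_i)$ expresses $c$ as an explicit product of (products of) half-twists and Dehn twists whose lifts with respect to $p$ have already been computed in Lemmas~\ref{lift-t_{i,i+1}}, \ref{lift-h_i}, \ref{lift-s_i-r_i}, and \ref{lift-r_i}. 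Multiplying these known lifts together and simplifying via the obvious disjointness relations among the curves $\gamma_i^l$ and their images under the relevant twists, I expect exactly $t_{\mu_1^1}$, $t_{\mu_n^k}$, the twists $t_{\mu_i^l}$ for $2\le l\le k-1$, the $t_{\nu_i^l}$, and the $t_{\xi_i^l}^{-1}$ to survive; this yields the lift $\widetilde{c}$ of $c$ and hence $\widetilde{r}=\widetilde{c}\,R=\Phi$. Equivalently, and perhaps more transparently, one can track the images $\Phi(\widetilde{l}_i^{\,j})$ of the lifted arcs one family at a time and check that they agree with a lift of $r(l_i)$ for every $i$ and $j$.

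As a consistency check I would verify $\Phi^2=\zeta^{m}\,\widetilde{t}_1\widetilde{t}_2\cdots\widetilde{t}_{n+1}$ for some $m$, which is forced by $r^2=t_1t_2\cdots t_{n+1}$ (Lemma~\ref{lem_lift_W}) together with Lemma~\ref{lift-t_{i,i+1}}: since $R^2$ is isotopic to the identity on $H_g$, $\Phi^2$ is the product of the displayed twists with the product of their $R$-conjugates, and $R$ carries the family $\{\mu_i^l,\nu_i^l,\xi_i^l\}$ onto a complementary family so that the total twist collapses to $\prod_{i,l}t_{\gamma_{2i-1}^l}$, which bounds disks in $H_g$. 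The main obstacle I anticipate is purely combinatorial and pictorial: keeping track of all the curves $\mu_i^l$ together with the $\nu_i^l$ and $\xi_i^l$ (which occur only for $n\ge 2$), pinning down the exact action of $R$ and of each Dehn twist on them, and getting right the special treatment of $\mu_1^1$ and $\mu_n^k$ and the signs on the twists $t_{\xi_i^l}$; one must also confirm that the formula degenerates correctly when $n=1$ and is uniform in $k\ge 3$.
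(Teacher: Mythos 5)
Your primary plan has a genuine gap at its very first step, and the second step inherits it. You propose to verify that the displayed ordered product of Dehn twists commutes with $\zeta$ and that $R$ normalizes $\langle\zeta\rangle$, so that $\Phi$ lies in $\SHomH$ and $R$ descends to some $\widehat{R}\in\Hil$ with $c:=r\,\widehat{R}^{-1}$ liftable by the earlier lemmas. For $n\geq 2$ the first verification is false: the multicurve supporting the displayed multitwist contains $\mu_1^{k-1}=\gamma_1^{k-1}$ (since $2\leq k-1$) but omits $\zeta(\gamma_1^{k-1})=\gamma_1^{k}=\mu_1^{k}$ (only $\mu_1^1$ and $\mu_n^k$ are included besides the range $2\leq l\leq k-1$), and a multitwist determines its weighted support multicurve, so the product of twists does not commute with $\zeta$. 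Consequently the factorization $\Phi=(\text{twists})\cdot R$ does not split into $\zeta$-compatible pieces: only the total product is a lift, and there is no reason the auxiliary $\pi$-rotation $R$, which is defined in the re-drawn model of $H_g$ of Figures~\ref{fig_handlebody-homeo2} and \ref{fig_rotation1}, is fiber-preserving for $p$. Without that, $\widehat{R}$ and $c=r\,\widehat{R}^{-1}$ are not defined, and the reduction to the lifts of Lemmas~\ref{lift-t_{i,i+1}}, \ref{lift-h_i}, \ref{lift-s_i-r_i}, \ref{lift-r_i} never gets started. (For $n=1$ the support is the full $\zeta$-orbit $\{\gamma_1^1,\dots,\gamma_1^k\}$, so the commutation does hold there; but even then symmetry of $\Phi$ only shows that $\Phi$ descends to \emph{some} element of $\Hil$, not that the descent is $r$, so step~1 alone does not finish.) Your consistency check is also only a sanity check, and the asserted ``collapse'' of $\Phi^2$ to $\prod_{i,l}t_{\gamma_{2i-1}^l}$ is not formal: it requires the lantern relations, exactly as in the proof of Lemma~\ref{lem_lift_rel-square-r}.

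The workable route is the one you mention only parenthetically, and it is the paper's proof: a lift $\widetilde{r}$ is pinned down by its effect on $\widetilde{L}=p^{-1}(L)$, which can be read off from $r(L)$ (Figure~\ref{fig_proof_lift_r-1}); from the intersection pattern of $\widetilde{r}(\widetilde{L})$ with the arcs $\widetilde{l}_i^{\,l}$ one deduces $\widetilde{r}(\mu_i^l)=\mu_i^{k-l+1}$, $\widetilde{r}(\gamma_{2i-1}^1)=\gamma_{2i-1}^k$ and the curves $\widetilde{r}(\gamma_{2i}^l)$ of Figure~\ref{fig_proof_lift_r-2}, where the family $\{\mu_i^l\}\cup\{\gamma_{2i}^l\}\cup\{\gamma_{2i-1}^1\}$ cuts $\Sigma_g$ into disks; one then checks that $R$ followed by the displayed twists produces the same images, the only nontrivial case being $t_{\mu_i^{k-l}}t_{\mu_i^{k-l+1}}t_{\nu_{i-1}^{k-l}}t_{\xi_{i-1}^{k-l-1}}^{-1}t_{\nu_i^{k-l}}t_{\xi_i^{k-l}}^{-1}$ applied to $(\gamma_{2i}^{k-l})^{-1}$. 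If you replace your steps~1 and 2 by this verification, the argument goes through; as written, the primary plan does not.
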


\begin{proof}
Recall that $L=l_1\cup \cdots \cup l_{n+1}\subset S ^2$ and $\displaystyle \widetilde{L}=p^{-1}(L)=\bigcup _{\substack{1\leq i\leq 2n+1\\ 1\leq l\leq k}}\widetilde{l}_{i}^l\subset \Sigma _g$ (see Figures~\ref{fig_path_l} and \ref{fig_isotopy_surface_3-handles}). 
The isotopy class of a self-homeomorphism $\varphi $ on $H_g$ (resp. on $B^3$ relative to $\A $) is determined by the isotopy class of $\varphi (\widetilde{L})$ (resp. $\varphi (L)$) in $\Sigma _g$ (resp. in $S^2$ relative to $\B $). 
Since $r(L)$ is the union of arcs as on the lower side in Figure~\ref{fig_proof_lift_r-1}, for some lift $\widetilde{r}$ of $r$, the image $\widetilde{r}(\widetilde{L})$ is the union of arcs as on the upper side in Figure~\ref{fig_proof_lift_r-1}. 

Since the surface which is obtained from $\Sigma _g$ by cutting along the union $\displaystyle \Bigl( \bigcup _{\substack{1\leq i\leq n\\ 2\leq l\leq k}}\mu _i^l\Bigr) \cup \Bigl( \bigcup _{\substack{1\leq i\leq n\\ 1\leq l\leq k-1}}\gamma _{2i}^l\Bigr) \cup \Bigl( \bigcup _{2\leq i\leq n}\gamma _{2i-1}^1\Bigr) $ is the disjoint union of disks, the isotopy classes of the images of these curves by $\widetilde{r}$ determine $\widetilde{r}$. 
The simple closed curve $\mu _i^l$ transversely intersects with $\widetilde{l}_{2i}^l$ at one point from the left-hand side of $\widetilde{l}_{2i}^l$, $\gamma _{2i}^l$ transversely intersects with $\widetilde{l}_{2i-1}^{l+1}$ at one point from the left-hand side of $\widetilde{l}_{2i-1}^{l+1}$ and with $\widetilde{l}_{2i+1}^{l+1}$ at one point from the right-hand side of $\widetilde{l}_{2i+1}^{l+1}$, and $\gamma _{2i-1}^1$ for $2\leq i\leq n$ transversely intersects with $\widetilde{l}_{2i-2}^{1}$ at one point from the right-hand side of $\widetilde{l}_{2i-2}^{1}$ and with $\widetilde{l}_{2i}^{1}$ at one point from the right-hand side of $\widetilde{l}_{2i}^{1}$. 
Hence we have $\widetilde{r}(\mu _i^l)=\mu _i^{k-l+1}$ and $\widetilde{r}(\gamma _{2i-1}^1)=\gamma _{2i-1}^k$ with orientations, and $\widetilde{r}(\gamma _{2i}^l)$ is a simple closed curve as in Figure~\ref{fig_proof_lift_r-2} (remark that $\widetilde{r}(\gamma _{2i}^{l-1})=\zeta (\widetilde{r}(\gamma _{2i}^{l}))$). 

By acting $R$, we have $R(\mu _i^l)=\mu _i^{k-l+1}$, $R(\gamma _{2i-1}^1)=\gamma _{2i-1}^k$, and $R(\gamma _{2i}^l)=(\gamma _{2i}^{k-l})^{-1}$ for $1\leq l\leq k-1$. 
The product $\displaystyle t_{\mu _1^1}t_{\mu _n^k}\Bigl( \prod _{\substack{1\leq i\leq n\\ 2\leq l\leq k-1}}t_{\mu _i^l}\Bigr) \Bigl( \prod _{\substack{1\leq i\leq n-1\\ 1\leq l\leq k-1}}t_{\nu _i^l}\Bigr) \Bigl( \prod _{\substack{1\leq i\leq n-1\\ 1\leq l\leq k-2}}t_{\xi _i^l}^{-1}\Bigr) $ acts on $\mu _i^{k-l+1}$ and $\gamma _{2i-1}^k$ trivially, and we can check that 
\begin{align*}
&\biggl( t_{\mu _1^1}t_{\mu _n^k}\Bigl( \prod _{\substack{1\leq i\leq n\\ 2\leq l\leq k-1}}t_{\mu _i^l}\Bigr) \Bigl( \prod _{\substack{1\leq i\leq n-1\\ 1\leq l\leq k-1}}t_{\nu _i^l}\Bigr) \Bigl( \prod _{\substack{1\leq i\leq n-1\\ 1\leq l\leq k-2}}t_{\xi _i^l}^{-1}\Bigr) \biggr) ((\gamma _{2i}^{k-l})^{-1})\\
=&t_{\mu _{i}^{k-l}}t_{\mu _{i}^{k-l+1}}t_{\nu _{i-1}^{k-l}}t_{\xi _{i-1}^{k-l-1}}^{-1}t_{\nu _{i}^{k-l}}t_{\xi _{i}^{k-l}}^{-1}((\gamma _{2i}^{k-l})^{-1})\\
=&\widetilde{r}(\gamma _{2i}^l),
\end{align*}
where we regard the homeomorphisms in the equation above which are not defined as the identity map. 
Therefore, we have $\displaystyle \widetilde{r}=t_{\mu _1^1}t_{\mu _n^k}\Bigl( \prod _{\substack{1\leq i\leq n\\ 2\leq l\leq k-1}}t_{\mu _i^l}\Bigr) \Bigl( \prod _{\substack{1\leq i\leq n-1\\ 1\leq l\leq k-1}}t_{\nu _i^l}\Bigr) \Bigl( \prod _{\substack{1\leq i\leq n-1\\ 1\leq l\leq k-2}}t_{\xi _i^l}^{-1}\Bigr) R$ and have completed the proof of Lemma~\ref{lift-r}. 
\end{proof}

\begin{figure}[h]
\includegraphics[scale=1.0]{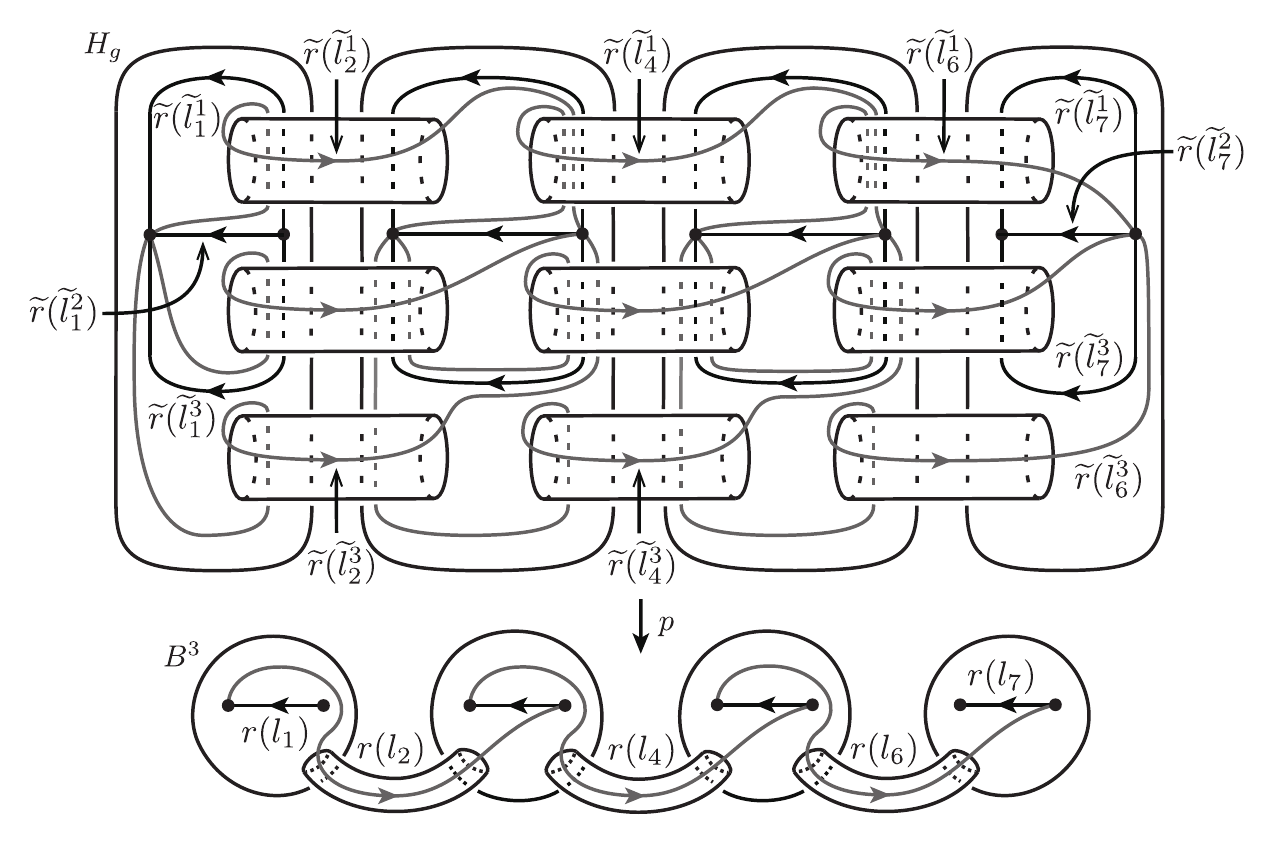}
\caption{The images $r(L)$ and $\widetilde{r}(\widetilde{L})$ when $n=3$ and $k=3$.}\label{fig_proof_lift_r-1}
\end{figure}

\begin{figure}[h]
\includegraphics[scale=1.14]{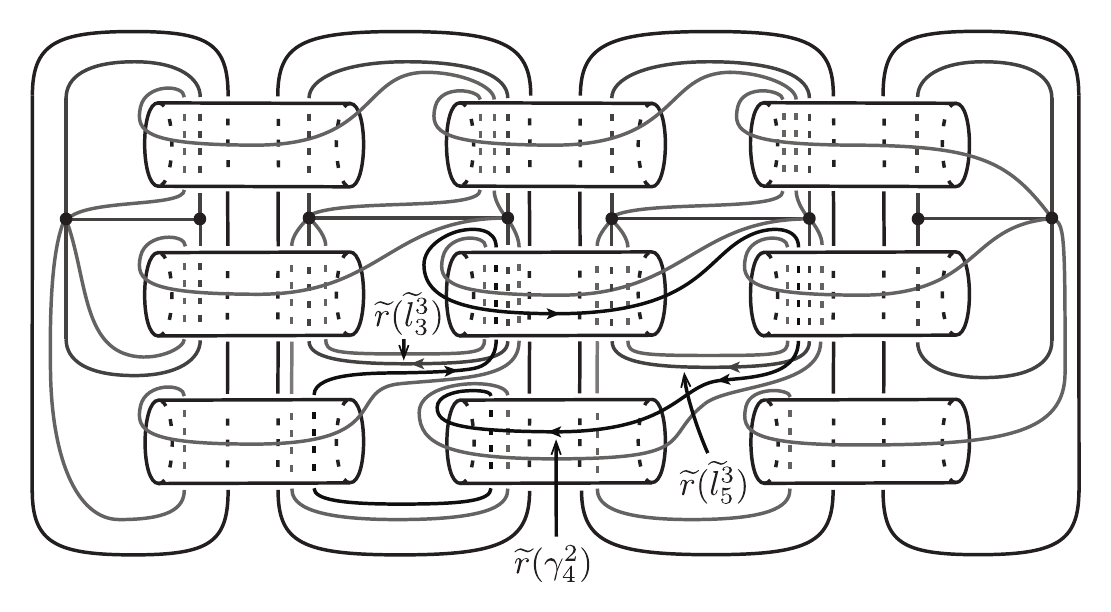}
\caption{The image $\widetilde{r}(\gamma _4^2)$ when $n=3$ and $k=3$.}\label{fig_proof_lift_r-2}
\end{figure}

Denote 
\[
\widetilde{r}=t_{\mu _1^1}t_{\mu _n^k}\Bigl( \prod _{\substack{1\leq i\leq n\\ 2\leq l\leq k-1}}t_{\mu _i^l}\Bigr) \Bigl( \prod _{\substack{1\leq i\leq n-1\\ 1\leq l\leq k-1}}t_{\nu _i^l}\Bigr) \Bigl( \prod _{\substack{1\leq i\leq n-1\\ 1\leq l\leq k-2}}t_{\xi _i^l}^{-1}\Bigr) R.
\] 
We remark that $\widetilde{r}=t_{\mu _1^1}t_{\mu _1^2}\cdots t_{\mu _1^k}R$ when $n=1$. 

Recall that the braid $z\in B_{2n+2}$ lies in the left-hand side of Figure~\ref{fig_braid_z} and the image of $z$ with respect to the natural homomorphism $B_{2n+2}\to SB_{2n+2}$ is trivial (hence its image with respect to $\Gamma \colon SW_{2n+2}\to \Hil $ is also trivial). 
As in Figure~\ref{fig_braid_z}, we have $z=r_1r_2\cdots r_ns_n\cdots s_2s_1t_{1}$ in $B_{2n+2}$. 
The next lemma give a factorization of $\zeta $ by lifts of generators for $\LH $ with respect to $p$.  

\begin{lem}\label{lem_zeta_prod}
We have $\zeta =\widetilde{r}_1\widetilde{r}_2\cdots \widetilde{r}_{n}\widetilde{s}_n\cdots \widetilde{s}_2\widetilde{s}_1\widetilde{t}_{1}$ in $\SH $. 
\end{lem}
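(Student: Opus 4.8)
The plan is first to recognise the right-hand side as a lift of the identity, and then to pin down which power of $\zeta $ it is by following one simple closed curve. Write $\Phi =\widetilde{r}_1\widetilde{r}_2\cdots \widetilde{r}_{n}\widetilde{s}_n\cdots \widetilde{s}_2\widetilde{s}_1\widetilde{t}_{1}\in \SH $. By Lemmas~\ref{lift-t_{i,i+1}}--\ref{lift-r_i} and the definitions of $\widetilde{t}_i$, $\widetilde{s}_i$, $\widetilde{r}_i$, each displayed factor is a lift with respect to $p$ of $t_1$, $s_i$, $r_i$ respectively, so $\Phi $ is a lift of the product $r_1r_2\cdots r_ns_n\cdots s_2s_1t_{1}$. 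As explained just before the statement (and drawn in Figure~\ref{fig_braid_z}), this product equals the braid $z=\sigma _1\sigma _2\cdots \sigma _{2n+1}\sigma _{2n+1}\cdots \sigma _2\sigma _1$, whose image in $SB_{2n+2}$---hence in $\Hil $---is trivial; equivalently this is relation~(4) of Theorem~\ref{thm_pres_LH}. Therefore $\theta (\Phi )=\theta (\widetilde{r}_1)\cdots \theta (\widetilde{t}_1)=r_1\cdots r_ns_n\cdots s_1t_1=1$ in $\LH $, and by the exact sequence~(\ref{exact_SH_handlebody}) of Lemma~\ref{lem_exact_SH_handlebody} we conclude $\Phi =\zeta ^{m}$ for a unique $0\leq m\leq k-1$.

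It remains to show $m=1$. Since $\zeta ^{m}(\gamma _1^1)=\gamma _1^{m+1}$ and the simple closed curves $\gamma _1^1,\dots ,\gamma _1^{k}$ are pairwise non-isotopic on $\Sigma _g$, it suffices to compute the isotopy class of $\Phi (\gamma _1^1)$ and to check that it is $\gamma _1^2$. I would do this by chasing $\gamma _1^1$ through the factors of $\Phi $, reading off each factor from its explicit expression as a product of Dehn twists about the curves $\gamma _i^l$, $\delta _i^l$, $\eta _i^l$. The curves $\gamma _1^1,\dots ,\gamma _1^{k}$ are mutually disjoint, so $\widetilde{t}_1$ fixes $\gamma _1^1$; since each $\widetilde{s}_i$ interchanges $\gamma _{2i-1}^l$ and $\gamma _{2i+1}^l$ (recorded in Section~\ref{section_lifts}), the block $\widetilde{s}_n\cdots \widetilde{s}_1$ carries $\gamma _1^1$ to $\gamma _{2n+1}^1$; and a parallel computation with the annulus twists of Lemma~\ref{lift-r_i}, using the identity of Figure~\ref{fig_proof_lift_r_i}, shows that the block $\widetilde{r}_1\cdots \widetilde{r}_{n}$ carries $\gamma _{2n+1}^1$ back to $\gamma _1^2$, the shift by one sheet being produced by the factors $\widetilde{h}_{2i}^{-1}$ in $\widetilde{r}_i=\widetilde{h}_{2i}^{-1}\widetilde{h}_{2i-1}$ (which carry the opposite orientation to the $\widetilde{h}_{2i}$ appearing in $\widetilde{s}_i=\widetilde{h}_{2i}\widetilde{h}_{2i-1}\widetilde{t}_i^{-1}$). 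Hence $\Phi (\gamma _1^1)=\gamma _1^2=\zeta (\gamma _1^1)$, so $m=1$ and $\Phi =\zeta $.

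The routine part is Step~1, where the only thing to verify---namely that each factor of $\Phi $ is a lift of the corresponding element of $\LH $---is already contained in Lemmas~\ref{lift-t_{i,i+1}}--\ref{lift-r_i}. The genuine obstacle is the curve-chasing of Step~2, in particular the contribution of the $\widetilde{r}_i$ block: because $\gamma _i^l$ and $\gamma _{i+1}^{l'}$ with consecutive indices intersect, one cannot argue by disjointness alone and must track intersection points carefully (this is exactly the role of the auxiliary curves $\delta _i^l$ and $\eta _i^l$ of Lemma~\ref{lift-r_i}), referring to the relevant figures. An alternative way to carry out Step~2 is to compute the induced automorphism of $H_1(H_g;\Z )$---on which $\zeta $ acts with order exactly $k$, because $\zeta $ cyclically permutes the $k$ one-handles in each block of the description of $H_g$ in Section~\ref{section_bscov}---or of $H_1(\Sigma _g;\Z )$: there $\widetilde{t}_1$ and all Dehn twists about disk-bounding curves act trivially, and matching the resulting product of transvections with $(\zeta ^{m})_{\ast }$ forces $m=1$ by a finite linear-algebra check.
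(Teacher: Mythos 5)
Your Step~1 is exactly the paper's first step: since $r_1\cdots r_ns_n\cdots s_1t_1=1$ in $\LH $, the exact sequence of Lemma~\ref{lem_exact_SH_handlebody} gives $\Phi =\zeta ^m$ for some $0\leq m\leq k-1$, and the power is then pinned down by evaluating on a single simple closed curve. The gap is in your Step~2. The claim you lean on, that the block $\widetilde{r}_1\cdots \widetilde{r}_n$ carries $\gamma _{2n+1}^1$ to $\gamma _1^2$, is (given the easy facts about $\widetilde{t}_1$ and the $\widetilde{s}_i$) equivalent to the lemma itself, and the justification you offer does not work: it would require each $\widetilde{r}_i$ to send the \emph{right-hand} curve $\gamma _{2i+1}^l$ to a standard curve $\gamma _{2i-1}^{l+1}$, mirroring Lemma~\ref{lem_image_gamma}~(2). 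But only the other direction $\widetilde{r}_i(\gamma _{2i-1}^l)=\gamma _{2i+1}^{l+1}$ is available, and the situation is genuinely asymmetric. Already downstairs, $r_i$ satisfies $r_it_ir_i^{-1}=t_{i+1}$ (relation~(6) of Lemma~\ref{lem_conj_rel}), but $r_it_{i+1}r_i^{-1}$ is \emph{not} $t_i$; it is only controlled through relation~(7), $t_is_i^2r_i=r_is_i^2t_{i+1}$, so $r_i(\gamma _{2i+1,2i+2})$ is a curve enclosing $p_{2i-1},p_{2i}$ that wraps nontrivially and is not the standard $\gamma _{2i-1,2i}$ (one can check directly in the $4$-strand braid picture that $r_i\sigma _{2i+1}^2r_i^{-1}\neq \sigma _{2i-1}^2$). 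Consequently the intermediate images in your chase leave the family of standard curves after the very first factor $\widetilde{r}_n$, and the ``shift by one sheet produced by $\widetilde{h}_{2i}^{-1}$'' bookkeeping does not proceed as sketched; there is no ``parallel'' of Figure~\ref{fig_proof_lift_r_i} doing this for you.

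The paper avoids this by choosing the curve more carefully: it tracks $\gamma _{2n+1}^1$ rather than $\gamma _1^1$. With that choice $\widetilde{t}_1$ and $\widetilde{s}_1,\dots ,\widetilde{s}_{n-1}$ fix the curve by disjointness of supports, $\widetilde{s}_n$ sends it to $\gamma _{2n-1}^1$, the single factor $\widetilde{r}_n$ then acts in the \emph{good} direction ($\widetilde{r}_n\widetilde{s}_n(\gamma _{2n+1}^1)=\gamma _{2n+1}^2$, verified explicitly in Figure~\ref{fig_proof_zeta_prod}), and the remaining $\widetilde{r}_{n-1},\dots ,\widetilde{r}_1$ again fix the result by disjointness; so only one local computation is needed. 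Your fallback suggestion of comparing actions on $H_1(H_g;\Z )$ is sound in principle (the $\zeta $-action there has order exactly $k$, so it distinguishes the powers of $\zeta $), but as written it is a plan, not a proof: you would still have to compute the matrices of all the $\widetilde{s}_i$, $\widetilde{r}_i$, $\widetilde{r}$-type generators on $H_1(H_g;\Z )$ and multiply them out, which is a substantially heavier computation than the single curve chase the paper performs.
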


\begin{proof}
Since $r_1r_2\cdots r_ns_n\cdots s_2s_1t_{1}=1$ in $\LH $, there exists $0\leq l\leq k-1$ such that $\widetilde{r}_1\widetilde{r}_2\cdots \widetilde{r}_{n}\widetilde{s}_n\cdots \widetilde{s}_2\widetilde{s}_1\widetilde{t}_{1}=\zeta ^l$. 
Thus, it is enough for completing the proof of Lemma~\ref{lem_zeta_prod} to prove that $\widetilde{r}_1\widetilde{r}_2\cdots \widetilde{r}_{n}\widetilde{s}_n\cdots \widetilde{s}_2\widetilde{s}_1\widetilde{t}_{1}(\gamma _{2n+1}^1)=\gamma _{2n+1}^2$. 
Since the supports of $\widetilde{s}_i$ for $1\leq i\leq n-1$ and $\widetilde{t}_j$ are disjoint from $\gamma _{2n+1}^1$, we have $\widetilde{r}_1\widetilde{r}_2\cdots \widetilde{r}_{n}\widetilde{s}_n\cdots \widetilde{s}_2\widetilde{s}_1\widetilde{t}_{1}(\gamma _{2n+1}^1)=\widetilde{r}_1\widetilde{r}_2\cdots \widetilde{r}_{n}\widetilde{s}_n(\gamma _{2n+1}^1)=\widetilde{r}_1\widetilde{r}_2\cdots \widetilde{r}_{n}\widetilde{h}_{2n}t_{\gamma _{2n-1}^1}t_{\gamma _{2n}^1}(\gamma _{2n+1}^1)$. 
By an argument as in Figure~\ref{fig_proof_zeta_prod}, we have $\widetilde{r}_{n}\widetilde{s}_n(\gamma _{2n+1}^1)=\gamma _{2n+1}^2$. 
Since the support of $\widetilde{r}_i$ for $1\leq i\leq n-1$ are disjoint from $\gamma _{2n+1}^2$, we have $\widetilde{r}_1\widetilde{r}_2\cdots \widetilde{r}_{n}\widetilde{s}_n\cdots \widetilde{s}_2\widetilde{s}_1\widetilde{t}_{1}(\gamma _{2n+1}^1)=\widetilde{r}_1\widetilde{r}_2\cdots \widetilde{r}_{n-1}(\gamma _{2n+1}^2)=\gamma _{2n+1}^2$. 
Therefore, we have completed the proof of Lemma~\ref{lem_zeta_prod}. 
\if0
\begin{eqnarray*}
&\overset{\text{Lem.~\ref{lift-s_i-r_i}}}{\underset{}{=}}&\widetilde{r}_{n}\widetilde{h}_{2n}\widetilde{h}_{2n-1}\widetilde{t}_{n}^{-1}(\gamma _{2n+1}^1)\\
&\overset{}{\underset{}{=}}&\widetilde{r}_{n}\widetilde{h}_{2n}\widetilde{h}_{2n-1}(\gamma _{2n+1}^1)\\
&\overset{}{\underset{}{=}}&\widetilde{r}_{n}\widetilde{h}_{2n+1}t_{\gamma _{2n-1}^1}t_{\gamma _{2n}^1}t_{\gamma _{2n-1}^2}t_{\gamma _{2n}^2}\cdots t_{\gamma _{2n-1}^{k-1}}t_{\gamma _{2n}^{k-1}}t_{\gamma _{2n-1}^{k}}(\gamma _{2n+1}^1)\\
&\overset{}{\underset{}{=}}&\widetilde{r}_{n}(t_{\gamma _{2n}^{k}}t_{\gamma _{2n+1}^{k}}t_{\gamma _{2n}^{k-1}}t_{\gamma _{2n+1}^{k-1}}\cdots t_{\gamma _{2n}^{2}}t_{\gamma _{2n+1}^{2}}t_{\gamma _{2n}^{1}})t_{\gamma _{2n-1}^1}t_{\gamma _{2n}^1}(\gamma _{2n+1}^1)\\
\end{eqnarray*}
\fi  
\end{proof}

\begin{figure}[p]
\includegraphics[scale=0.52]{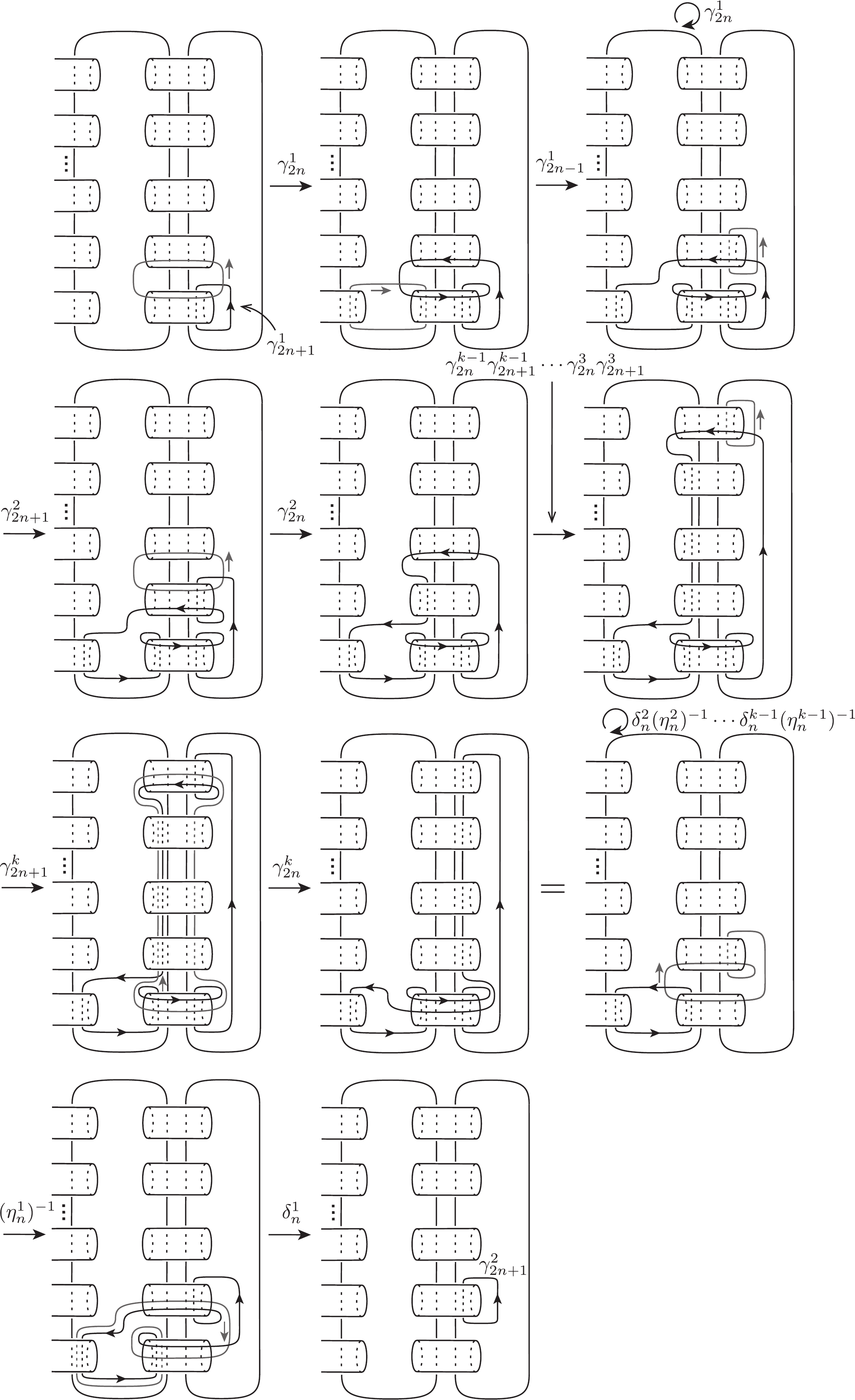}
\caption{The proof of $\widetilde{r}_{n}\widetilde{s}_n(\gamma _{2n+1}^1)=\widetilde{r}_{n}\widetilde{h}_{2n}t_{\gamma _{2n-1}^1}t_{\gamma _{2n}^1}(\gamma _{2n+1}^1)=\gamma _{2n+1}^2$, where we express a right-handed Dehn twist $t_{\gamma }$ along $\gamma $ by $\gamma $ in this figure.}\label{fig_proof_zeta_prod}
\end{figure}

\subsection{Main theorem for the balanced superelliptic handlebody groups}\label{section_main_thm_smod}
Recall that 
\begin{itemize}
\item $\widetilde{s}_i=\widetilde{h}_{2i}\widetilde{h}_{2i-1}\widetilde{t}_i^{-1}$,\quad $\widetilde{r}_i=\widetilde{h}_{2i}^{-1}\widetilde{h}_{2i-1}=t_{\delta _{i}^1}t_{\eta _{i}^1}^{-1}t_{\delta _{i}^2}t_{\eta _{i}^2}^{-1}\cdots t_{\delta _{i}^{k-1}}t_{\eta _{i}^{k-1}}^{-1}$, 
\item $\widetilde{h}_{2i-1}=t_{\gamma _{2i-1}^1}t_{\gamma _{2i}^1}t_{\gamma _{2i-1}^2}t_{\gamma _{2i}^2}\cdots t_{\gamma _{2i-1}^{k-1}}t_{\gamma _{2i}^{k-1}}t_{\gamma _{2i-1}^{k}}$, 
\item $\widetilde{h}_{2i}=t_{\gamma _{2i}^{k}}t_{\gamma _{2i+1}^{k}}t_{\gamma _{2i}^{k-1}}t_{\gamma _{2i+1}^{k-1}}\cdots t_{\gamma _{2i}^{2}}t_{\gamma _{2i+1}^{2}}t_{\gamma _{2i}^{1}}$\quad for $1\leq i\leq n$, 
\item $\widetilde{t}_j=t_{\gamma _{2j-1}^1}t_{\gamma _{2j-1}^2}\cdots t_{\gamma _{2j-1}^{k}}$ for $1\leq j \leq n+1$, and
\item $\displaystyle \widetilde{r}=t_{\mu _1^1}t_{\mu _n^k}\prod _{\substack{1\leq i\leq n\\ 2\leq l\leq k-1}}t_{\mu _i^l}\prod _{\substack{1\leq i\leq n-1\\ 1\leq l\leq k-1}}t_{\nu _i^l} \prod _{\substack{1\leq i\leq n-1\\ 1\leq l\leq k-2}}t_{\xi _i^l}^{-1}R$  
\end{itemize}
(see also Figures~\ref{fig_scc_c_il}, \ref{fig_scc_delta-eta}, \ref{fig_handlebody-homeo2}, \ref{fig_scc_nu}, and \ref{fig_rotation1}). 
The main theorem for the balanced superelliptic handlebody group is as follows. 

\begin{thm}\label{thm_pres_SH}
For $n\geq 1$ and $k\geq 3$ with $g=n(k-1)$, $\SH $ admits the presentation with generators $\widetilde{s}_i$ for $1\leq i\leq n$, $\widetilde{r}_i$ for $1\leq i\leq n$, $\widetilde{t}_{i}$ for $1\leq i\leq n+1$, and $\widetilde{r}$, and the following defining relations: 
\begin{enumerate}
\item commutative relations
\begin{enumerate}
\item $\widetilde{\alpha }_i \rightleftarrows \widetilde{\beta }_j$ \quad for $j-i\geq 2$ and $\alpha , \beta \in \{ s, r\}$,
\item $\widetilde{\alpha }_{i} \rightleftarrows \widetilde{t}_{j}$ \quad for $j\not =i, i+1$ and $\alpha \in \{ s, r\}$, 
\item $\widetilde{t}_i\rightleftarrows \widetilde{t}_{j}$ \quad for $1\leq i<j\leq n+1$, 
\item $\widetilde{s}_i \rightleftarrows \widetilde{r}$ \quad for $1\leq i\leq n$, 
\item $\widetilde{t}_i \rightleftarrows \widetilde{r}$ \quad for $1\leq i\leq n+1$, 
\end{enumerate}
\item conjugation relations
\begin{enumerate}
\item $\widetilde{\alpha }_i\widetilde{\alpha }_{i+1}\widetilde{\alpha }_i=\widetilde{\alpha }_{i+1}\widetilde{\alpha }_i\widetilde{\alpha }_{i+1}$ \quad for $1\leq i\leq n-1$ and $\alpha \in \{ s, r\}$, 
\item $\widetilde{s}_i^{\varepsilon }\widetilde{s}_{i+1}^{\varepsilon }\widetilde{r}_{i}=\widetilde{r}_{i+1}\widetilde{s}_i^{\varepsilon }\widetilde{s}_{i+1}^{\varepsilon }$ \quad for $1\leq i\leq n-1$ and $\varepsilon \in \{ 1, -1\}$,
\item $\widetilde{r}_i\widetilde{r}_{i+1}\widetilde{s}_{i}=\widetilde{s}_{i+1}\widetilde{r}_i\widetilde{r}_{i+1}$ \quad for $1\leq i\leq n-1$,
\item $\widetilde{r}_i\widetilde{r}\widetilde{s}_{i}=\widetilde{r}\widetilde{s}_{i}\widetilde{r}_{i}^{-1}$ \quad for $1\leq i\leq n$,
\item $\widetilde{s}_i^{\varepsilon }\widetilde{t}_{i}=\widetilde{t}_{i+1}\widetilde{s}_{i}^{\varepsilon }$ \quad for $1\leq i\leq n$ and $\varepsilon \in \{ 1, -1\}$,
\item $\widetilde{r}_i\widetilde{t}_{i}=\widetilde{t}_{i+1}\widetilde{r}_{i}$ \quad for $1\leq i\leq n$,
\item $\widetilde{t}_i\widetilde{s}_{i}^2\widetilde{r}_i=\widetilde{r}_i\widetilde{s}_i^2\widetilde{t}_{i+1}$ \quad for $1\leq i\leq n$,
\end{enumerate}
\item $\widetilde{r}^2=\widetilde{t}_1\widetilde{t}_2\cdots \widetilde{t}_{n+1}$, 
\item $(\widetilde{r}_1\widetilde{r}_2\cdots \widetilde{r}_n\widetilde{s}_n\cdots \widetilde{s}_2\widetilde{s}_1\widetilde{t}_{1})^k=1$, 
\item $\widetilde{t}_{n+1}\cdots \widetilde{t}_2\widetilde{t}_1\bigl( (\widetilde{s}_n\widetilde{s}_{n-1}\cdots \widetilde{s}_1)(\widetilde{s}_n\widetilde{s}_{n-1}\cdots \widetilde{s}_2)\cdots (\widetilde{s}_n\widetilde{s}_{n-1})\widetilde{s}_{n}\bigr) ^2=1$,  
\item 
\begin{enumerate}
\item $(\widetilde{r}_1\widetilde{r}_2\cdots \widetilde{r}_n\widetilde{s}_n\cdots \widetilde{s}_2\widetilde{s}_1\widetilde{t}_{1})\rightleftarrows \widetilde{\alpha }_1$ \quad for $\alpha \in \{ s, r\}$,
\item $(\widetilde{r}_1\widetilde{r}_2\cdots \widetilde{r}_n)\widetilde{t}_{n+1}=\widetilde{t}_1(\widetilde{r}_1\widetilde{r}_2\cdots \widetilde{r}_n)$,
\item $\widetilde{r}(\widetilde{r}_1\widetilde{r}_2\cdots \widetilde{r}_n\widetilde{s}_n\cdots \widetilde{s}_2\widetilde{s}_1\widetilde{t}_{1})=(\widetilde{r}_1\widetilde{r}_2\cdots \widetilde{r}_n\widetilde{s}_n\cdots \widetilde{s}_2\widetilde{s}_1\widetilde{t}_{1})^{-1}\widetilde{r}$.
\end{enumerate}
\end{enumerate}

\end{thm}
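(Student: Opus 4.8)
The plan is to apply Lemma~\ref{presentation_exact} to the extension
\[
1\longrightarrow \langle \zeta \rangle \longrightarrow \SH \stackrel{\theta }{\longrightarrow }\LH \longrightarrow 1
\]
of Lemma~\ref{lem_exact_SH_handlebody}, taking for $H$ the cyclic group $\langle \zeta \rangle \cong \Z _k=\langle \bar z\mid \bar z^{k}\rangle $ and for $Q$ the group $\LH $ with the presentation of Theorem~\ref{thm_pres_LH}, and choosing as the preimage of each generator $s_i,r_i,t_i,r$ of $\LH $ its explicit lift $\widetilde{s}_i,\widetilde{r}_i,\widetilde{t}_i,\widetilde{r}$ from Section~\ref{section_lifts}. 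This produces a presentation of $\SH $ on the generators $\zeta ,\widetilde{s}_i,\widetilde{r}_i,\widetilde{t}_i,\widetilde{r}$ with three families of relations: (A) $\zeta ^{k}=1$; (B) for each defining relator $w$ of $\LH $ the relation obtained by lifting $w$ letter by letter, which equals some power $\zeta ^{m_w}$ since $\theta $ kills it; and (C) for each generator $y$ of $\LH $ a relation $\widetilde{y}\zeta \widetilde{y}^{-1}=\zeta ^{m_y}$.

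The first step is to observe that the relator (4) of $\LH $, namely $r_1r_2\cdots r_ns_n\cdots s_2s_1t_1=1$, lifts to exactly $\zeta $ by Lemma~\ref{lem_zeta_prod}, so the corresponding relation in (B) reads $\zeta =\widetilde{r}_1\widetilde{r}_2\cdots \widetilde{r}_n\widetilde{s}_n\cdots \widetilde{s}_2\widetilde{s}_1\widetilde{t}_1$. Using this as a Tietze transformation I would eliminate the generator $\zeta $ everywhere; relation (A) then becomes precisely relation (4) of Theorem~\ref{thm_pres_SH}. Next I would evaluate the exponents $m_w,m_y$. The basic tool, already used in the proof of Lemma~\ref{lem_zeta_prod}, is that $\zeta ^{j}$ sends $\gamma _{2n+1}^{1}$ to $\gamma _{2n+1}^{j+1}$, so an element of $\langle \zeta \rangle $ is detected by its effect on that curve; hence $m_w$ is read off by applying the lifted word to $\gamma _{2n+1}^{1}$. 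For the commutative and conjugation relators (1)--(2) of $\LH $, whose lifts either have support disjoint from $\gamma _{2n+1}^{1}$ or act trivially on it, this gives $m_w=0$ at once; for the remaining ones, including relator (3) ($r^{2}=t_1\cdots t_{n+1}$) and relator (5) (the lift of the full twist), one checks from the Dehn-twist formulas of Section~\ref{section_lifts} that the lifted word fixes $\gamma _{2n+1}^{1}$, so these exponents also vanish. Thus the (B)-relations become relations (1), (2), (3), (5) of Theorem~\ref{thm_pres_SH}, relation (5) after rewriting the $\widetilde{s}$-word via the braid relations (1)(a), (2)(a) and commuting the $\widetilde{t}_i$ by (1)(c) --- the manipulation of Lemma~\ref{lem_F} run backwards.

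For the (C)-relations, the parity-preserving lifts $\widetilde{s}_i,\widetilde{r}_i,\widetilde{t}_i$ centralize $\zeta $, while $\widetilde{r}$, which carries the $\pi $-rotation $R$ reversing the axis of $\zeta $, satisfies $\widetilde{r}\zeta \widetilde{r}^{-1}=\zeta ^{-1}$. After the substitution above, the (C)-relation for $\widetilde{s}_1$ and $\widetilde{r}_1$ becomes relation (6)(a), the one for $\widetilde{r}$ becomes (6)(c), and the one for $\widetilde{t}_1$ becomes (6)(b) after moving $\widetilde{t}_1$ through the word for $\zeta $ using (1) and (2)(e). It then remains to show that the other (C)-relations --- commutation of $\zeta $ with $\widetilde{s}_i,\widetilde{r}_i$ for $i\ge 2$ and with $\widetilde{t}_j$ for $j\ge 2$ --- follow from relations (1)--(6), so that Tietze-removing them leaves exactly the presentation of Theorem~\ref{thm_pres_SH}. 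Here one propagates commutation from the base cases using $\widetilde{t}_{j+1}=\widetilde{r}_j\widetilde{t}_j\widetilde{r}_j^{-1}$ (relation (2)(f)), the fact that $\widetilde{r}^{2}=\widetilde{t}_1\cdots \widetilde{t}_{n+1}$ commutes with $\zeta $ because $\widetilde{r}$ inverts $\zeta $, relation (6)(b) to obtain commutation with $\widetilde{t}_{n+1}$, and the conjugation relations (2)(b),(c) together with the braid relations to carry the statements for $\widetilde{s}_1,\widetilde{r}_1$ to all indices. I expect this last redundancy argument, together with the verification that every exponent $m_w$ in the (B)-relations genuinely vanishes, to be the main technical obstacle: it is precisely where the concrete geometry of the chosen lifts in Section~\ref{section_lifts}, rather than the abstract extension alone, must be invoked.
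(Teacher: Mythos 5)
Your proposal follows essentially the same route as the paper's proof: Lemma~\ref{presentation_exact} applied to the extension of Lemma~\ref{lem_exact_SH_handlebody} with the explicit lifts of Section~\ref{section_lifts}, Lemma~\ref{lem_zeta_prod} to identify and eliminate $\zeta $, detection of the powers of $\zeta $ by the action on the curves $\gamma _i^1$ (Lemma~\ref{lem_image_gamma} and the remark following it), and derivation of the redundant conjugation relations of $\zeta $ with $\widetilde{s}_i,\widetilde{r}_i,\widetilde{t}_j$ from relations (1), (2), (6)(a), (b), which is the paper's Lemma~\ref{lem_rel_conj-zeta-s}. The only minor difference is that the paper verifies relation (3) as an exact identity of Dehn twists via lantern relations (Lemma~\ref{lem_lift_rel-square-r}) rather than by the curve-action detection you propose, which would also suffice.
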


We remark that $\zeta =\widetilde{r}_1\widetilde{r}_2\cdots \widetilde{r}_n\widetilde{s}_n\cdots \widetilde{s}_2\widetilde{s}_1\widetilde{t}_{1}$ by Lemma~\ref{lem_zeta_prod}. 
We can check that the next lemma holds: 

\begin{lem}\label{lem_image_gamma}
The following equations hold without orientations of curves: 
\begin{enumerate}
\item $(\widetilde{s}_i(\gamma _{2i-1}^l), \widetilde{s}_i(\gamma _{2i+1}^l))=(\gamma _{2i+1}^l, \gamma _{2i-1}^l)$ \quad for $1\leq i\leq n$ and $1\leq l\leq k$, 
\item $\widetilde{r}_i(\gamma _{2i-1}^l)=\gamma _{2i+1}^{l+1}$ \quad for $1\leq i\leq n$ and $1\leq l\leq k-1$, 
\item $\widetilde{r}(\gamma _{2i-1}^l)=\gamma _{2i-1}^{k-l+1}$ \quad for $1\leq i\leq n$ and $1\leq l\leq k$. 
\end{enumerate}
\end{lem}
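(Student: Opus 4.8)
The plan is to verify each of the three equations in Lemma~\ref{lem_image_gamma} by direct but careful computation with the explicit Dehn twist factorizations recorded at the start of Section~\ref{section_main_thm_smod}, tracking the action on the simple closed curves $\gamma_j^l$. The key point throughout is that the curves $\gamma_{2i-1}^l$ (for various $i,l$) bound disks in $H_g$ and have simple, controlled intersections with the arcs $\widetilde l_j^{l'}$ that determine isotopy classes; since Lemma~\ref{lem_image_gamma} only asks for the images without orientation, I can work at the level of these intersection patterns rather than bookkeeping signs.

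For part (1), I would use $\widetilde s_i=\widetilde h_{2i}\widetilde h_{2i-1}\widetilde t_i^{-1}$ together with the already-established fact (stated in the paragraph after Lemma~\ref{lift-s_i-r_i}) that $\widetilde s_i$ switches $\gamma_{2i-1}^l$ and $\gamma_{2i+1}^l$; this is essentially immediate from the construction, since $\widetilde s_i$ is a lift of the half-twist $s_i$ which realizes the transposition $(2i-1\ 2i+1)(2i\ 2i+2)$ of $\B$ and whose support is localized around the $i$-th and $(i+1)$-st pairs. For part (2), I would invoke $\widetilde r_i=\widetilde h_{2i}^{-1}\widetilde h_{2i-1}$ and imitate the calculation in the proof of Lemma~\ref{lift-r_i}: there it is shown that $t_{\gamma_{2i-1}^1}t_{\gamma_{2i}^1}\cdots t_{\gamma_{2i-1}^{k-1}}t_{\gamma_{2i}^{k-1}}(\gamma_{2i-1}^k)=\gamma_{2i}^k$ and more generally that the "staircase" products of Dehn twists along the $\gamma_{2i-1}^l$, $\gamma_{2i}^l$, $\gamma_{2i+1}^l$ shift the curves by one index in $l$ while moving from the $(2i-1)$-chain to the $(2i+1)$-chain; applying $\widetilde h_{2i-1}$ then $\widetilde h_{2i}^{-1}$ to $\gamma_{2i-1}^l$ should produce $\gamma_{2i+1}^{l+1}$ after chasing through Figure~\ref{fig_proof_lift_r_i}-type pictures. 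For part (3), I would use the factorization of $\widetilde r$ from Lemma~\ref{lift-r} together with the geometric facts established in its proof — namely $\widetilde r(\mu_i^l)=\mu_i^{k-l+1}$, $\widetilde r(\gamma_{2i-1}^1)=\gamma_{2i-1}^k$, and the relation between $R$ and the $\gamma_{2i}^l$ — and note that $\gamma_{2i-1}^l=\zeta^{l-1}(\gamma_{2i-1}^1)$ together with $\widetilde r\zeta\widetilde r^{-1}=\zeta^{-1}$ (since $r$ is parity-reversing, $\bar\theta(\widetilde r)$ normalizes $\langle\zeta\rangle$ by inversion) forces $\widetilde r(\gamma_{2i-1}^l)=\widetilde r\zeta^{l-1}\widetilde r^{-1}(\gamma_{2i-1}^k)=\zeta^{-(l-1)}(\gamma_{2i-1}^k)=\gamma_{2i-1}^{k-l+1}$.

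The main obstacle I anticipate is part (2): the composition $\widetilde h_{2i}^{-1}\widetilde h_{2i-1}$ is a product of roughly $4k$ Dehn twists along interlocking curves, and verifying $\widetilde r_i(\gamma_{2i-1}^l)=\gamma_{2i+1}^{l+1}$ for all $1\le l\le k-1$ requires either a clean inductive argument in $l$ or a careful picture-chase of the sort carried out in Figure~\ref{fig_proof_lift_r_i} and Figure~\ref{fig_proof_zeta_prod}. I would organize this by first isolating the sub-product of $\widetilde h_{2i-1}$ that actually moves $\gamma_{2i-1}^l$ (most of the twists in $\widetilde h_{2i-1}$ act trivially on a fixed $\gamma_{2i-1}^l$ because their curves are disjoint from it), reducing to a short word, then applying the relevant part of $\widetilde h_{2i}^{-1}$; the staircase identity already proved in Lemma~\ref{lift-r_i} should do most of the work, and the $l=k$ case is excluded precisely because $\gamma_{2i+1}^{k+1}$ would need to be read as $\gamma_{2i+1}^1$, which is the wrap-around handled separately by $\zeta$.

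The remaining two parts I expect to be essentially formal once the geometric inputs from the proofs of Lemma~\ref{lift-r_i} and Lemma~\ref{lift-r} are in hand, so the bulk of the write-up will be the figure-based verification for part (2), after which parts (1) and (3) follow in a few lines each. If a fully pictorial argument for (2) proves unwieldy, an alternative is to deduce it abstractly: $\widetilde r_i$ is a specified lift of $r_i$, and $r_i$ realizes the transposition $(2i-1\ 2i+1)(2i\ 2i+2)$ on $\B$ in a way that, lifted, must send $\gamma_{2i-1}^l$ to some lift of $\gamma_{2i+1,2i+2}$, hence to $\gamma_{2i+1}^{l'}$ for some $l'$ depending on $l$; pinning down $l'=l+1$ then only requires checking a single intersection number with one arc $\widetilde l_j^{l'}$, which localizes the computation.
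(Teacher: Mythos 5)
The paper offers no written proof of Lemma~\ref{lem_image_gamma} (it is introduced only with ``We can check that the next lemma holds''), and your plan of verifying it directly from the explicit factorizations of $\widetilde{s}_i$, $\widetilde{r}_i$, and $\widetilde{r}$, reusing the curve computations already carried out in the proofs of Lemmas~\ref{lift-s_i-r_i}, \ref{lift-r_i}, and \ref{lift-r}, is exactly the kind of check the authors intend and is sound. Your further observation that a lift of a parity-preserving (resp.\ parity-reversing) element commutes with (resp.\ inverts) $\zeta$, which reduces parts (2) and (3) to single base cases such as $\widetilde{r}_i(\gamma_{2i-1}^1)=\gamma_{2i+1}^{2}$ and $\widetilde{r}(\gamma_{2i-1}^1)=\gamma_{2i-1}^{k}$ (the latter already established in the proof of Lemma~\ref{lift-r}), is a legitimate shortcut that the paper leaves implicit.
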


Remark that by the exact sequence~\ref{exact_SH_handlebody} in Lemma~\ref{lem_exact_SH_handlebody}, if $\theta (f)=1\in \LH $ for $f\in \SH$, then there exists $0\leq l\leq k-1$ such that $f=\zeta ^l$. 
In particular, for $f\in \SH $ with $\theta (f)=1$, $f=\zeta ^l$ if and only if $f(\gamma _{i}^1)=\gamma _i^{l+1}$ for some $1\leq i\leq 2n+1$. 
We use this property in the following lemmas. 

\begin{lem}\label{lem_lift_comm-rel}
The relations~(1) (a)--(e) in Theorem~\ref{thm_pres_SH} hold in $\SH $. 
\end{lem}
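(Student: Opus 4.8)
The statement to prove is Lemma~\ref{lem_lift_comm-rel}: the five commutative relations (1)(a)--(e) of Theorem~\ref{thm_pres_SH} hold in $\SH$. The overall strategy is to use the isomorphism $\SH \cong \pi_0(\SHomH)$ together with the surjection $\theta\colon \SH \to \LH$ whose kernel is $\left<\zeta\right>$ (Lemma~\ref{lem_exact_SH_handlebody}), plus the explicit lifts constructed in Section~\ref{section_lifts}. For each relation in the list, I know the corresponding relation holds downstairs in $\LH$ (these are exactly the relations (1)(a)--(e) in Theorem~\ref{thm_pres_LH}, established via Lemma~\ref{lem_comm_rel}). Since the generators $\widetilde{s}_i,\widetilde{r}_i,\widetilde{t}_i,\widetilde{r}$ are lifts of $s_i,r_i,t_i,r$, applying $\theta$ to, say, $\widetilde\alpha_i\widetilde\beta_j\widetilde\alpha_i^{-1}\widetilde\beta_j^{-1}$ gives $1\in\LH$, so this element equals $\zeta^m$ for some $0\le m\le k-1$. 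The task reduces to pinning down $m=0$ in each case.

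\textbf{Key steps.} First I would handle (1)(a),(b),(c) --- the relations among $\widetilde s$'s, $\widetilde r$'s, and $\widetilde t$'s with far-apart indices --- by a support argument: using the explicit descriptions of $\widetilde s_i,\widetilde r_i$ as products of Dehn twists along the curves $\gamma_i^l,\delta_i^l,\eta_i^l$ and of $\widetilde t_j$ along the $\gamma_{2j-1}^l$, one checks the supports are disjoint when the indices are suitably separated; disjoint supports give genuine commutativity on the nose, with no $\zeta^m$ ambiguity. For (1)(d) ($\widetilde s_i\rightleftarrows\widetilde r$) and (1)(e) ($\widetilde t_i\rightleftarrows\widetilde r$), the supports of $\widetilde r$ (which involves the rotation $R$ and twists along $\mu,\nu,\xi$ curves) need not be disjoint from those of $\widetilde s_i$ or $\widetilde t_i$, so here I would instead argue that the commutator lies in $\ker\theta=\left<\zeta\right>$ and then evaluate on a test curve. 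Concretely, for $f=[\widetilde s_i,\widetilde r]$ or $f=[\widetilde t_i,\widetilde r]$ one has $\theta(f)=1$, so $f=\zeta^m$; by the criterion stated after Lemma~\ref{lem_image_gamma}, $f=\zeta^m$ iff $f(\gamma_j^1)=\gamma_j^{m+1}$ for some $j$. Choosing $j$ so that $\gamma_j^1$ is disjoint from (or minimally interacts with) one of the two factors --- e.g.\ $\gamma_{2i-1}^1$ which bounds a disk and is untouched by $\widetilde t_i$ for the relevant indices --- and using Lemma~\ref{lem_image_gamma}(1)--(3) to track the image curve, one computes $f(\gamma_j^1)=\gamma_j^1$, forcing $m=0$.

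\textbf{Main obstacle.} The delicate part is (1)(d) and (1)(e): establishing that the $\zeta$-power is trivial rather than merely that the relation holds modulo $\left<\zeta\right>$. This requires a careful choice of test curve and honest bookkeeping of how $\widetilde r$ (with its $\pi$-rotation component $R$ and the auxiliary twists) moves that curve, using Lemma~\ref{lem_image_gamma}(3) and the fact that $R$ permutes the $\mu,\nu,\xi,\gamma$ families in a controlled way. I would select the curve $\gamma_{2i-1}^1$, which bounds a disk in $H_g$ and on which $\widetilde t_i$, $\widetilde t_{i+1}$ act trivially and $\widetilde s_i$ acts by Lemma~\ref{lem_image_gamma}(1), so that both orders of composition visibly send $\gamma_{2i-1}^1$ to the same curve with the same ``layer'' index, giving $m=0$. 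An alternative, possibly cleaner route for (1)(e) is to observe $\widetilde t_i\in\SH$ is a product of Dehn twists along disk-bounding curves $\gamma_{2i-1}^l$ and that $\widetilde r$ permutes this set of curves (again Lemma~\ref{lem_image_gamma}(3)), so the conjugate $\widetilde r\,\widetilde t_i\,\widetilde r^{-1}$ is a product of twists along the $\widetilde r$-images, and a direct comparison with $\widetilde t_i$ (using that $\widetilde r$ reverses layer order but $\widetilde t_i$ is a product over all layers) yields equality; this avoids invoking the $\zeta$-power argument at all.
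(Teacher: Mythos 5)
Your proposal matches the paper's proof: relations (1)(a)--(c) are handled exactly as you say, by disjointness of the supports of the explicit lifts, and (1)(d)--(e) are proved by noting the commutator lies in $\ker\theta=\left<\zeta\right>$ and then evaluating it on the test curve $\gamma_{2i-1}^1$ via Lemma~\ref{lem_image_gamma} (for (1)(e) the paper simply uses that $\widetilde{t}_i$ fixes the curves $\gamma_{2j-1}^l$), which forces the $\zeta$-power to be trivial. This is essentially the same argument, including the choice of test curve, so no further comparison is needed.
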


\begin{proof}
The relations~(1) (a)--(c) in Theorem~\ref{thm_pres_SH} are obtained from the disjointness of their supports.  
For the relations~(1) (d), we have $\widetilde{s}_i^{-1}\widetilde{r}^{-1}\widetilde{s}_i\widetilde{r}(\gamma _{2i-1}^1)=\widetilde{s}_i^{-1}\widetilde{r}^{-1}\widetilde{s}_i(\gamma _{2i-1}^{k})=\widetilde{s}_i^{-1}\widetilde{r}^{-1}(\gamma _{2i+1}^{k})=\widetilde{s}_i^{-1}(\gamma _{2i+1}^{1})=\gamma _{2i-1}^{1}$ by Lemma~\ref{lem_image_gamma}. 
Hence, $\widetilde{s}_i^{-1}\widetilde{r}^{-1}\widetilde{s}_i\widetilde{r}=1$ in $\SH $. 
Since $\widetilde{t}_i$ fixes the curve $\gamma _{2j-1}^l$, we have $\widetilde{t}_i^{-1}\widetilde{r}^{-1}\widetilde{t}_i\widetilde{r}=1$ in $\SH $
Therefore, we have completed the proof of Lemma~\ref{lem_lift_comm-rel}.  
\end{proof}

\begin{lem}\label{lem_lift_conj-rel}
The relations~(2) (a)--(g) in Theorem~\ref{thm_pres_SH} hold in $\SH $. 
\end{lem}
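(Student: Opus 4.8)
The plan is to verify each of the seven conjugation relations (2)(a)--(g) of Theorem~\ref{thm_pres_SH} directly in $\SH$, exploiting the criterion recalled just above: for $f\in\SH$ with $\theta(f)=1$, one has $f=\zeta^l$ if and only if $f(\gamma_i^1)=\gamma_i^{l+1}$ for some $1\le i\le 2n+1$, and in particular $f=1$ iff $f$ fixes one such curve. Since $\theta\colon\SH\to\LH$ is a homomorphism and the corresponding relations (2)(a)--(g) of Theorem~\ref{thm_pres_LH} hold in $\LH$ by Lemma~\ref{lem_conj_rel}, each ``commutator-type'' expression $W=\widetilde{\alpha}_i\widetilde{\alpha}_{i+1}\widetilde{\alpha}_i(\widetilde{\alpha}_{i+1}\widetilde{\alpha}_i\widetilde{\alpha}_{i+1})^{-1}$ (and similarly for the others) satisfies $\theta(W)=1$, so $W=\zeta^l$ for some $0\le l\le k-1$; it then suffices to show $l=0$ by evaluating $W$ on a well-chosen curve among the $\gamma_i^1$.

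First I would handle the braid-type relations (2)(a), (b), (c): these involve only the $\widetilde{s}_i,\widetilde{r}_i$, whose supports I can read off from Lemmas~\ref{lift-s_i-r_i} and~\ref{lift-r_i} (lifts of $h_j$'s, supported near the $\gamma$-curves with indices $2i-1,2i,2i+1$, resp. $2i+1,2i+2,2i+3$). For (2)(a) with $\alpha\in\{s,r\}$ I would pick a curve $\gamma_j^1$ disjoint from all relevant supports --- e.g. $\gamma_{2i-1}^1$ or $\gamma_{2i+3}^1$ depending on which side --- and check that both sides of the relation send it to the same curve, using Lemma~\ref{lem_image_gamma}; since $\theta$ of the relation holds, the two sides agree up to $\zeta^l$, and matching the action on that curve forces $l=0$. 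The same strategy applies to (2)(b) and (2)(c): here I would track $\widetilde{s}_i^{\pm1},\widetilde{r}_i$ acting on, say, $\gamma_{2i-1}^1$ or $\gamma_{2i+3}^1$, using (1) of Lemma~\ref{lem_image_gamma} for the $\widetilde{s}$'s and (2) for the $\widetilde{r}$'s (remembering the index shift $l\mapsto l+1$ mod $k$ for $\widetilde{r}_i$).

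For the relations (2)(e), (f), (g) involving the $\widetilde{t}_i$, the key point is that each $\widetilde{t}_j=t_{\gamma_{2j-1}^1}\cdots t_{\gamma_{2j-1}^k}$ is a product of Dehn twists along curves bounding disks in $H_g$ and is supported in a neighbourhood of $a_j$; in particular $\widetilde{t}_j$ fixes each $\gamma_{2i-1}^l$ setwise (it fixes $\gamma_{2i-1}^l$ for $i\ne j$ by disjointness, and for $i=j$ because the $\gamma_{2j-1}^l$ are among the twisting curves and are pairwise disjoint). So for (2)(e), applying $\theta$ shows $\widetilde{s}_i^\varepsilon\widetilde{t}_i\widetilde{s}_i^{-\varepsilon}\widetilde{t}_{i+1}^{-1}=\zeta^l$; evaluating on $\gamma_{2i+3}^1$ (disjoint from all supports, since $\widetilde{s}_i$ touches only indices $2i-1,2i,2i+1$ and $\widetilde{t}_i,\widetilde{t}_{i+1}$ touch $a_i,a_{i+1}$) gives $l=0$. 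For (2)(f) I would rewrite it as $\widetilde{r}_i\widetilde{t}_i\widetilde{r}_i^{-1}=\widetilde{t}_{i+1}$ after noting the relation is equivalent to this up to things already established; then evaluate on a suitable $\gamma_j^1$. For (2)(g), which rearranges to $\widetilde{r}_i^{-1}\widetilde{s}_i^2\widetilde{t}_i=\widetilde{t}_{i+1}\widetilde{s}_i^2\widetilde{r}_i^{-1}$ (as in the proof of Lemma~\ref{lem_rel_6d}), the same ``$\theta$ plus evaluate on a disjoint curve'' argument applies.

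The main obstacle I expect is relation (2)(d), $\widetilde{r}_i\widetilde{r}\widetilde{s}_i=\widetilde{r}\widetilde{s}_i\widetilde{r}_i^{-1}$, because $\widetilde{r}$ is the most complicated lift (Lemma~\ref{lift-r}: a long product of Dehn twists along the $\mu,\nu,\xi$ curves composed with the $\pi$-rotation $R$), and its support is not disjoint from those of $\widetilde{r}_i,\widetilde{s}_i$. Here I would not try to compute the twist factorisations directly; instead I would apply $\theta$ to reduce to $\zeta^l$, and then evaluate the relation on $\gamma_{2i-1}^1$, using only the ``named'' actions in Lemma~\ref{lem_image_gamma}: $\widetilde{s}_i$ swaps $\gamma_{2i-1}^l\leftrightarrow\gamma_{2i+1}^l$, $\widetilde{r}_i$ sends $\gamma_{2i-1}^l\mapsto\gamma_{2i+1}^{l+1}$, and $\widetilde{r}$ sends $\gamma_{2i-1}^l\mapsto\gamma_{2i-1}^{k-l+1}$. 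Chasing $\gamma_{2i-1}^1$ through both sides and comparing (carefully tracking the $l\leftrightarrow k-l+1$ reflection interleaved with the $l\mapsto l+1$ shift) should pin down $l=0$; the subtlety is that I may need the action on $\gamma_{2i+1}$-curves too, for which I would derive the analogue of Lemma~\ref{lem_image_gamma}(3), $\widetilde{r}(\gamma_{2i+1}^l)=\gamma_{2i+1}^{k-l+1}$ (or the appropriate variant), from the same reasoning used to prove that lemma. Once all seven are checked on curves, Lemma~\ref{lem_lift_conj-rel} follows.
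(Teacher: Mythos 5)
Your proposal is correct and is essentially the paper's own argument: project by $\theta$ to see that each relator lies in $\ker\theta=\left<\zeta\right>$, then detect the power of $\zeta$ by tracing a curve through the word via Lemma~\ref{lem_image_gamma} (the paper does exactly this, including the rewriting of relation~(2)~(d) as $\widetilde{r}\widetilde{s}_{i}\widetilde{r}_{i}\widetilde{s}_{i}^{-1}\widetilde{r}^{-1}\widetilde{r}_i=1$ and the chase $\gamma_{2i-1}^1\mapsto\gamma_{2i+1}^2\mapsto\gamma_{2i+1}^{k-1}\mapsto\cdots\mapsto\gamma_{2i-1}^1$). The only small caveats are that your ``disjoint curve'' choices such as $\gamma_{2i+3}^1$ do not exist at the boundary indices (e.g.\ $i=n$, or $n=1$), where one instead traces $\gamma_{2i-1}^1$ through both sides as the paper does and as your mechanism already allows, and that the action $\widetilde{r}(\gamma_{2i+1}^l)=\gamma_{2i+1}^{k-l+1}$ you propose to derive is already contained in Lemma~\ref{lem_image_gamma}~(3) for $i<n$ since $2i+1=2(i+1)-1$.
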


\begin{proof}
Since $\widetilde{t}_i$ fixes a curve $\gamma _{2j-1}^l$, as an argument similar to the bottom of the proof of Lemma~\ref{lem_lift_comm-rel}, the relations~(2) (e)--(g) in Theorem~\ref{thm_pres_SH} hold in $\SH $. 
The relations~(2) (a)--(c) in Theorem~\ref{thm_pres_SH} also hold in $\SH $ by using Lemma~\ref{lem_image_gamma}. 
For instance, for the relation~(2) (a) when $\alpha =r$, we have $\widetilde{r}_{i+1}^{-1}\widetilde{r}_i^{-1}\widetilde{r}_{i+1}^{-1}\widetilde{r}_i\widetilde{r}_{i+1}\widetilde{r}_i(\gamma _{2i-1}^1)=\widetilde{r}_{i+1}^{-1}\widetilde{r}_i^{-1}\widetilde{r}_{i+1}^{-1}\widetilde{r}_i(\gamma _{2i+3}^3)=\widetilde{r}_{i+1}^{-1}\widetilde{r}_i^{-1}\widetilde{r}_{i+1}^{-1}(\gamma _{2i+3}^3)=\widetilde{r}_{i+1}^{-1}(\gamma _{2i-1}^1)=\gamma _{2i-1}^1$. 
Finally, the relations~(2) (d) in Theorem~\ref{thm_pres_SH} is equivalent to the relation $\widetilde{r}\widetilde{s}_{i}\widetilde{r}_{i}\widetilde{s}_{i}^{-1}\widetilde{r}^{-1}\widetilde{r}_i=1$. 
Then we have $\widetilde{r}\widetilde{s}_{i}\widetilde{r}_{i}\widetilde{s}_{i}^{-1}\widetilde{r}^{-1}\widetilde{r}_i(\gamma _{2i-1}^1)=\widetilde{r}\widetilde{s}_{i}\widetilde{r}_{i}\widetilde{s}_{i}^{-1}\widetilde{r}^{-1}(\gamma _{2i+1}^2)=\widetilde{r}\widetilde{s}_{i}\widetilde{r}_{i}\widetilde{s}_{i}^{-1}(\gamma _{2i+1}^{k-1})=\widetilde{r}\widetilde{s}_{i}\widetilde{r}_{i}(\gamma _{2i-1}^{k-1})=\widetilde{r}\widetilde{s}_{i}(\gamma _{2i+1}^{k})=\widetilde{r}(\gamma _{2i-1}^{k})=\gamma _{2i-1}^{1}$. 
Therefore, the relations(2) (d) in Theorem~\ref{thm_pres_SH} holds in $\SH $ and we have completed the proof of Lemma~\ref{lem_lift_conj-rel}.  
\end{proof}

\begin{lem}\label{lem_lift_rel-square-r}
The relation~(3) in Theorem~\ref{thm_pres_SH} holds in $\SH $. 
\end{lem}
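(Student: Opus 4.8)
The strategy is the one established in the two preceding lemmas: the relation $r^2 = t_1 t_2 \cdots t_{n+1}$ holds in $\LH$ (Lemma~\ref{lem_lift_W}), and $\theta$ sends $\widetilde{r}\mapsto r$, $\widetilde{t}_j\mapsto t_j$. Hence $\theta(\widetilde{r}^2(\widetilde{t}_1\widetilde{t}_2\cdots\widetilde{t}_{n+1})^{-1}) = 1$ in $\LH$, so by the exact sequence~\eqref{exact_SH_handlebody} of Lemma~\ref{lem_exact_SH_handlebody} there is an integer $0\leq l\leq k-1$ with $\widetilde{r}^2(\widetilde{t}_1\widetilde{t}_2\cdots\widetilde{t}_{n+1})^{-1} = \zeta^l$. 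By the criterion recalled just before Lemma~\ref{lem_lift_comm-rel}, it then suffices to evaluate this element on one of the curves $\gamma_i^1$ and check that it returns $\gamma_i^1$ (forcing $l=0$).

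First I would pick a convenient test curve, say $\gamma_1^1$. Each $\widetilde{t}_j = t_{\gamma_{2j-1}^1}t_{\gamma_{2j-1}^2}\cdots t_{\gamma_{2j-1}^k}$ is a product of Dehn twists along curves bounding disks in $H_g$; the curves $\gamma_{2j-1}^l$ for $j\geq 2$ are disjoint from $\gamma_1^1$, and the twists $t_{\gamma_1^l}$ all fix $\gamma_1^1$ since $\gamma_1^1$ is one of the twisting curves (and the $\gamma_1^l$ are mutually disjoint). Therefore $\widetilde{t}_1\widetilde{t}_2\cdots\widetilde{t}_{n+1}(\gamma_1^1)=\gamma_1^1$. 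Next, applying Lemma~\ref{lem_image_gamma}(3) twice gives $\widetilde{r}^2(\gamma_1^1) = \widetilde{r}(\gamma_1^{k-1+1}) = \widetilde{r}(\gamma_1^k) = \gamma_1^{k-k+1} = \gamma_1^1$. Combining, $\widetilde{r}^2(\widetilde{t}_1\widetilde{t}_2\cdots\widetilde{t}_{n+1})^{-1}(\gamma_1^1) = \widetilde{r}^2(\gamma_1^1) = \gamma_1^1$, so $l=0$ and the relation holds in $\SH$.

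The only mild subtlety is making precise the claim that $\widetilde{t}_1\widetilde{t}_2\cdots\widetilde{t}_{n+1}$ fixes $\gamma_1^1$; this is immediate from the explicit factorizations of the $\widetilde{t}_j$ recorded at the start of Section~\ref{section_main_thm_smod} together with the mutual disjointness of the curves $\gamma_{2j-1}^l$ noted in Section~\ref{section_bscov} and Figure~\ref{fig_scc_c_il}. Indeed, since the relation~(3) to be proved has all generators fixing $\gamma_1^1$ except for $\widetilde{r}$ (whose action on $\gamma_1^l$ is known precisely), one could equally evaluate on $\gamma_1^l$ for any $l$; the computation $\widetilde{r}^2(\gamma_1^l) = \widetilde{r}(\gamma_1^{k-l+1}) = \gamma_1^l$ goes through verbatim. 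No step here is a genuine obstacle; this lemma is a direct application of the Birman--Hilden correspondence for $\SH$ and the curve-tracking calculus already assembled in Lemmas~\ref{lem_lift_W} and~\ref{lem_image_gamma}.
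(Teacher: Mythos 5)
Your argument is correct, but it is a genuinely different route from the paper's. You run the same curve-tracking scheme the paper uses for relations (1) and (2): since $r^2=t_1t_2\cdots t_{n+1}$ already holds in $\LH $ (Lemma~\ref{lem_lift_W}, relation~(3) of Theorem~\ref{thm_pres_LH}) and $\theta (\widetilde{r})=r$, $\theta (\widetilde{t}_j)=t_j$, the element $\widetilde{r}^2(\widetilde{t}_1\cdots \widetilde{t}_{n+1})^{-1}$ lies in $\ker \theta =\left< \zeta \right>$, and evaluating on $\gamma _1^1$ via Lemma~\ref{lem_image_gamma}~(3) (together with the observation that all twisting curves of the $\widetilde{t}_j$ are disjoint from or equal to $\gamma _1^1$) forces the power of $\zeta $ to be trivial; each step checks out and there is no circularity, since Lemma~\ref{lem_image_gamma} is verified directly from the explicit form of $\widetilde{r}$ rather than from the relation being proved. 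The paper instead proves the relation by a direct computation: it writes $\widetilde{r}^2$ out using the explicit factorization of $\widetilde{r}$ and $R^2=\mathrm{id}$ (with $R(\xi _i^l)={\xi _i^\prime }^{k-l-1}$), regroups the Dehn twists, and applies the lantern relations $t_{\mu _{i}^{l+1}}t_{\mu _{i+1}^{l+1}}t_{\nu _i^l}t_{\nu _i^{l+1}}t_{\xi _i^l}^{-1}t_{{\xi _i^\prime }^l}^{-1}=t_{\gamma _{2i+1}^{l+1}}$ to identify the result with $\widetilde{t}_1\widetilde{t}_2\cdots \widetilde{t}_{n+1}$ on the nose. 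Your approach is shorter and uniform with the surrounding lemmas, at the cost of leaning on the unproved-but-asserted Lemma~\ref{lem_image_gamma} and on the exact sequence; the paper's computation is independent of that lemma and exhibits the lifted relation explicitly as an identity of products of Dehn twists, which is of some independent interest.
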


\begin{proof}
Let ${\xi _i^\prime }^{l}$ for $1\leq i\leq n-1$ and $1\leq l\leq k-2$ be a simple closed curve on $\Sigma _g$ as in Figure~\ref{fig_scc_xi^prime}. 
Remark that $R(\xi _i^l)={\xi _i^\prime }^{k-l-1}$. 
By the definition of $\widetilde{r}$, we have
\begin{eqnarray*}
\widetilde{r}^2&=&\biggl( t_{\mu _1^1}t_{\mu _n^k}\Bigl( \prod _{\substack{1\leq i\leq n\\ 2\leq l\leq k-1}}t_{\mu _i^l}\Bigr) \Bigl( \prod _{\substack{1\leq i\leq n-1\\ 1\leq l\leq k-1}}t_{\nu _i^l}\Bigr) \Bigl( \prod _{\substack{1\leq i\leq n-1\\ 1\leq l\leq k-2}}t_{\xi _i^l}^{-1}\Bigr) R\biggr) ^2\\
&=& t_{\mu _1^1}t_{\mu _n^k}\Bigl( \prod _{\substack{1\leq i\leq n\\ 2\leq l\leq k-1}}t_{\mu _i^l}\Bigr) \Bigl( \prod _{\substack{1\leq i\leq n-1\\ 1\leq l\leq k-1}}t_{\nu _i^l}\Bigr) \Bigl( \prod _{\substack{1\leq i\leq n-1\\ 1\leq l\leq k-2}}t_{\xi _i^l}^{-1}\Bigr) \\
&&\cdot  t_{\mu _1^k}t_{\mu _n^1}\Bigl( \prod _{\substack{1\leq i\leq n\\ 2\leq l\leq k-1}}t_{\mu _i^l}\Bigr) \Bigl( \prod _{\substack{1\leq i\leq n-1\\ 1\leq l\leq k-1}}t_{\nu _i^l}\Bigr) \Bigl( \prod _{\substack{1\leq i\leq n-1\\ 1\leq l\leq k-2}}t_{{\xi _i^\prime }^l}^{-1}\Bigr) \\
&=& t_{\mu _1^1}t_{\mu _1^k}t_{\mu _n^1}t_{\mu _n^k}\Bigl( \prod _{\substack{1\leq i\leq n\\ 2\leq l\leq k-1}}t_{\mu _i^l}^2\Bigr) \Bigl( \prod _{\substack{1\leq i\leq n-1\\ 1\leq l\leq k-1}}t_{\nu _i^l}^2\Bigr) \Bigl( \prod _{\substack{1\leq i\leq n-1\\ 1\leq l\leq k-2}}t_{\xi _i^l}^{-1}t_{{\xi _i^\prime }^l}^{-1}\Bigr) \\
&=& (t_{\mu _1^1}\cdots t_{\mu _1^k})(t_{\mu _n^1}\cdots t_{\mu _n^k})\Bigl( \prod _{\substack{1\leq i\leq n-1}}t_{\nu _i^1}t_{\nu _i^{k-1}}\Bigr) \\
&&\cdot \Bigl( \prod _{\substack{1\leq i\leq n-1\\ 1\leq l\leq k-2}}t_{\mu _{i}^{l+1}}t_{\mu _{i+1}^{l+1}}t_{\nu _i^l}t_{\nu _i^{l+1}}\Bigr) \Bigl( \prod _{\substack{1\leq i\leq n-1\\ 1\leq l\leq k-2}}t_{\xi _i^l}^{-1}t_{{\xi _i^\prime }^l}^{-1}\Bigr) \\
&=& \widetilde{t}_1\widetilde{t}_{n+1}\Bigl( \prod _{\substack{1\leq i\leq n-1}}t_{\gamma _{2i+1}^1}t_{\gamma _{2i+1}^{k}}\Bigr) \Bigl( \prod _{\substack{1\leq i\leq n-1\\ 1\leq l\leq k-2}}t_{\mu _{i}^{l+1}}t_{\mu _{i+1}^{l+1}}t_{\nu _i^l}t_{\nu _i^{l+1}}t_{\xi _i^l}^{-1}t_{{\xi _i^\prime }^l}^{-1}\Bigr) .
\end{eqnarray*}
By the lantern relatons, we have $t_{\mu _{i}^{l+1}}t_{\mu _{i+1}^{l+1}}t_{\nu _i^l}t_{\nu _i^{l+1}}t_{\xi _i^l}^{-1}t_{{\xi _i^\prime }^l}^{-1}=t_{\gamma _{2i+1}^{l+1}}$ for $1\leq i\leq n-1$ and $1\leq l\leq k-2$. 
Therefore, the relation $\widetilde{r}^2=\widetilde{t}_1\widetilde{t}_2\cdots \widetilde{t}_{n+1}$ holds in $\SH $ and we have completed the proof of Lemma~\ref{lem_lift_rel-square-r}.  
\end{proof}

\begin{figure}[h]
\includegraphics[scale=0.85]{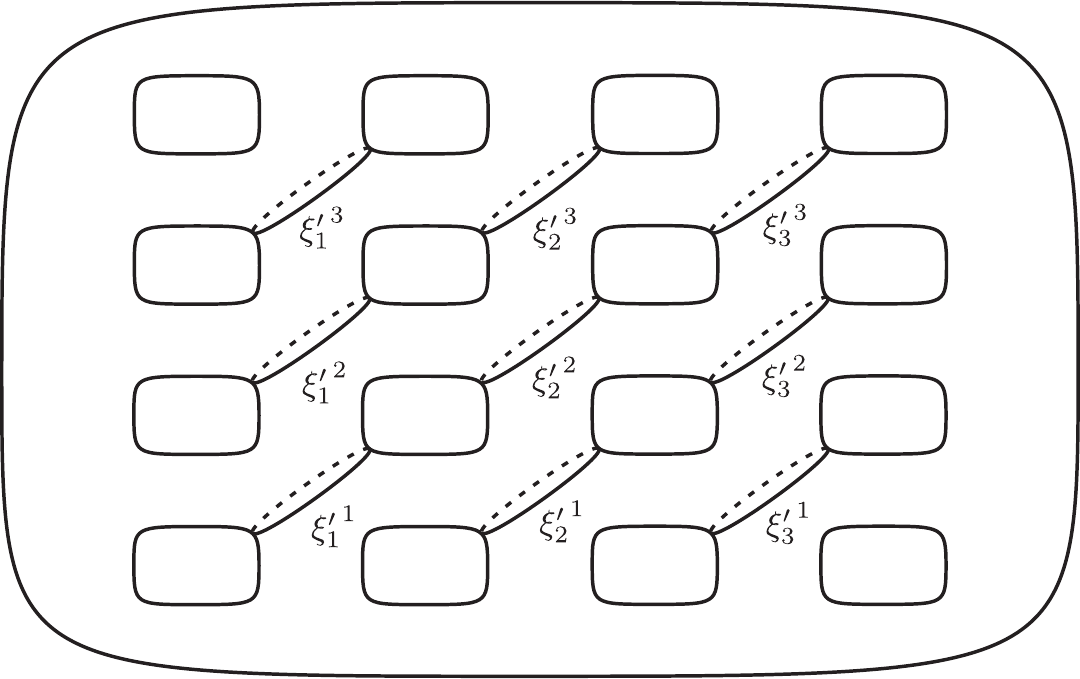}
\caption{Simple closed curve ${\xi _i^\prime }^{l}$ for $1\leq i\leq n-1$ and $1\leq l\leq k-2$ when $n=5$ and $k=5$.}\label{fig_scc_xi^prime}
\end{figure}

\begin{lem}\label{lem_rel_conj-zeta-s}
The commutative relations $\zeta \rightleftarrows \widetilde{\alpha }_i$ for $\alpha \in \{ s, r\}$ and $2\leq i\leq n$ and $\zeta \rightleftarrows \widetilde{t}_i$ for $1\leq i\leq n+1$ are obtained from the relations~(1), (2), and (6) (b) in Theorem~\ref{thm_pres_SH} and $\zeta =\widetilde{r}_1\widetilde{r}_2\cdots \widetilde{r}_{n}\widetilde{s}_n\cdots \widetilde{s}_2\widetilde{s}_1\widetilde{t}_{1}$. 
\end{lem}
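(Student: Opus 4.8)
The plan is to work with the factorisation $\zeta=A\,B\,\widetilde{t}_1$, where $A=\widetilde{r}_1\widetilde{r}_2\cdots\widetilde{r}_n$ and $B=\widetilde{s}_n\widetilde{s}_{n-1}\cdots\widetilde{s}_1$ (this is just a rewriting of $\zeta=\widetilde{r}_1\widetilde{r}_2\cdots\widetilde{r}_n\widetilde{s}_n\cdots\widetilde{s}_2\widetilde{s}_1\widetilde{t}_1$), and to verify each commutation by transporting the relevant generator first through $B$ and then through $A$. First I would record, using only the relations (1) and (2) of Theorem~\ref{thm_pres_SH}, the following shift identities; each is the familiar ``a Coxeter-type word conjugates a standard generator to a neighbour'' computation, carried out by moving the generator one letter at a time past $A$ or $B$, freely past any letter whose index differs by at least $2$ and using a single braid-type relation at the one slot where the two indices interact:

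\begin{itemize}
\item $B\,\widetilde{s}_i=\widetilde{s}_{i-1}\,B$ and $A\,\widetilde{r}_m=\widetilde{r}_{m+1}\,A$ for $2\le i\le n$ and $1\le m\le n-1$, from (1) (a) and the braid relation (2) (a);
\item $B\,\widetilde{r}_i=\widetilde{r}_{i-1}\,B$ for $2\le i\le n$, from (1) (a) and the $\varepsilon=-1$ form of (2) (b);
\item $A\,\widetilde{s}_m=\widetilde{s}_{m+1}\,A$ for $1\le m\le n-1$, from (1) (a) and (2) (c);
\item $B\,\widetilde{t}_i=\widetilde{t}_{i-1}\,B$ for $2\le i\le n+1$ and $B\,\widetilde{t}_1=\widetilde{t}_{n+1}\,B$, from (1) (b) and (2) (e);
\item $A\,\widetilde{t}_m=\widetilde{t}_{m+1}\,A$ for $1\le m\le n$, from (1) (b) and (2) (f).
\end{itemize}

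With these in hand the commutations assemble at once. For $2\le i\le n$ and $\alpha\in\{s,r\}$, the generator $\widetilde{\alpha}_i$ commutes with $\widetilde{t}_1$ by (1) (b), so $\zeta\widetilde{\alpha}_i=A\,B\,\widetilde{\alpha}_i\,\widetilde{t}_1=A\,\widetilde{\alpha}_{i-1}\,B\,\widetilde{t}_1=\widetilde{\alpha}_i\,A\,B\,\widetilde{t}_1=\widetilde{\alpha}_i\,\zeta$, the down-shift past $B$ and the up-shift past $A$ cancelling. For $\widetilde{t}_i$ with $2\le i\le n+1$, using in addition that the $\widetilde{t}$'s pairwise commute by (1) (c), one gets $\zeta\widetilde{t}_i=A\,B\,\widetilde{t}_i\,\widetilde{t}_1=A\,\widetilde{t}_{i-1}\,B\,\widetilde{t}_1=\widetilde{t}_i\,A\,B\,\widetilde{t}_1=\widetilde{t}_i\,\zeta$ by the same mechanism. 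Finally, for $\widetilde{t}_1$ the wrap-around $B\,\widetilde{t}_1=\widetilde{t}_{n+1}\,B$ gives $\zeta\widetilde{t}_1=A\,\widetilde{t}_{n+1}\,B\,\widetilde{t}_1$, and here—and only here—relation (6) (b), namely $A\,\widetilde{t}_{n+1}=\widetilde{t}_1\,A$, is invoked to conclude $\zeta\widetilde{t}_1=\widetilde{t}_1\,A\,B\,\widetilde{t}_1=\widetilde{t}_1\,\zeta$.

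The only genuine (though still routine) work is the bookkeeping behind the shift identities: tracking which letters of $A$ or $B$ the moving generator passes freely, applying the correct $\varepsilon$-version of (2) (b) or the mixed relation (2) (c) at exactly the one interacting position, and checking the index ranges so that the argument never runs off the end of the word. That last point is precisely why $\widetilde{s}_1$ and $\widetilde{r}_1$ do not appear in the statement—transport through $B$ has no room to shift them down—and they are instead handled by relation (6) (a). No idea beyond the braid calculus already used in Section~\ref{section_proof_lmod} is needed.
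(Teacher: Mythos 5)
Your proposal is correct and follows essentially the same route as the paper: the general commutations are routine consequences of the relations (1) and (2) (which the paper leaves as "we can check"), and your treatment of $\widetilde{t}_1$ --- shifting it through $\widetilde{s}_n\cdots\widetilde{s}_1$ via (2)(e) and then invoking (6)(b) to pass it through $\widetilde{r}_1\cdots\widetilde{r}_n$ --- is exactly the computation the paper writes out. Your explicit shift identities through $A$ and $B$ merely make the paper's implicit bookkeeping precise, and the cited relations ((2)(a)--(c), (e), (f), with the correct $\varepsilon$-forms) all check out.
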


\begin{proof}
Recall that $\zeta =\widetilde{r}_1\widetilde{r}_2\cdots \widetilde{r}_{n}\widetilde{s}_n\cdots \widetilde{s}_2\widetilde{s}_1\widetilde{t}_{1}$ by Lemma~\ref{lem_zeta_prod}. 
We can check that the relations $\zeta \rightleftarrows \widetilde{\alpha }_i$ for $\alpha \in \{ s, r\}$ and $2\leq i\leq n$ and $\zeta \rightleftarrows \widetilde{t}_2$ for $1\leq i\leq n+1$ are obtained from the relations~(1), (2), and (6) in Theorem~\ref{thm_pres_SH}. 
For the relation $\zeta \rightleftarrows \widetilde{t}_1$, we have $(\widetilde{r}_1\widetilde{r}_2\cdots \widetilde{r}_{n}\underline{\widetilde{s}_n\cdots \widetilde{s}_2\widetilde{s}_1\widetilde{t}_{1}})\widetilde{t}_{1}\overset{\text{(2)(e)}}{\underset{}{=}}\underline{(\widetilde{r}_1\widetilde{r}_2\cdots \widetilde{r}_{n}\widetilde{t}_{n+1}}\widetilde{s}_n\cdots \widetilde{s}_2\widetilde{s}_1)\widetilde{t}_{1}\overset{\text{(6)(b)}}{\underset{}{=}}\widetilde{t}_{1}(\widetilde{r}_1\widetilde{r}_2\cdots \widetilde{r}_{n}\widetilde{s}_n\cdots \widetilde{s}_2\widetilde{s}_1\widetilde{t}_{1})$. 
Therefore, we have completed the proof of Lemma~\ref{lem_rel_conj-zeta-s}. 
\end{proof}

\begin{proof}[Proof of Theorem~\ref{thm_pres_SH}]
We apply Lemma~\ref{presentation_exact} to the exact sequence~\ref{exact_SH_handlebody} in Lemma~\ref{lem_exact_SH_handlebody} and the presentation for $\LH $ in Theorem~\ref{thm_pres_LH}. 
By Lemmas~\ref{lift-t_{i,i+1}}, \ref{lift-s_i-r_i}, and \ref{lift-r}, the generators of the presentation for $\SH $ are $\widetilde{s}_i$, $\widetilde{r}_i$ for $1\leq i\leq n$, $\widetilde{t}_{j}$ for $1\leq j\leq n+1$, $\widetilde{r}$, and $\zeta $. 
By Lemma~\ref{lem_zeta_prod}, we have $\zeta =\widetilde{r}_1\widetilde{r}_2\cdots \widetilde{r}_{n}\widetilde{s}_n\cdots \widetilde{s}_2\widetilde{s}_1\widetilde{t}_{1}$. 

By Lemmas~\ref{lem_lift_comm-rel}, \ref{lem_lift_conj-rel}, and \ref{lem_lift_rel-square-r}, the relations~(1), (2), and (3) in Theorem~\ref{thm_pres_SH}, that are lifts of the relations~(1), (2), and (3) in Theorem~\ref{thm_pres_LH} with respect to $\theta $. 
Since $\widetilde{r}_1\widetilde{r}_2\cdots \widetilde{r}_{n}\widetilde{s}_n\cdots \widetilde{s}_2\widetilde{s}_1\widetilde{t}_{1}=\zeta $, the relation~(4) in Theorem~\ref{thm_pres_SH} 
holds in $\SH $ and coincides with the relation $\zeta ^k=1$. 
By Lemma~\ref{lem_image_gamma}, we have $\left( \widetilde{t}_{n+1}\cdots \widetilde{t}_2\widetilde{t}_1\bigl( (\widetilde{s}_n\widetilde{s}_{n-1}\cdots \widetilde{s}_1)(\widetilde{s}_n\widetilde{s}_{n-1}\cdots \widetilde{s}_2)\cdots (\widetilde{s}_n\widetilde{s}_{n-1})\widetilde{s}_{n}\bigr) ^2\right) (\gamma _{2n+1}^l)=\gamma _{2n+1}^l$. 
Hence the relation~(5) in Theorem~\ref{thm_pres_SH} holds in $\SH $. 

The conjugation of $\zeta =\widetilde{r}_1\widetilde{r}_2\cdots \widetilde{r}_{n}\widetilde{s}_n\cdots \widetilde{s}_2\widetilde{s}_1\widetilde{t}_{1}$ by $\widetilde{\alpha }_i$ for $\alpha \in \{ s,r \}$ and $1\leq i\leq n$ (resp. $\widetilde{t}_i$ for $1\leq i\leq n+1$) coincides with $\zeta $ by Lemma~\ref{lem_image_gamma}. 
These conjugation (commutative) relations are obtained from the relations~(1), (2), (6) (a), and (b) in Theorem~\ref{thm_pres_SH} by Lemma~\ref{lem_rel_conj-zeta-s}. 
Finally, we can check that $\left( \widetilde{r}(\widetilde{r}_1\widetilde{r}_2\cdots \widetilde{r}_{n}\widetilde{s}_n\cdots \widetilde{s}_2\widetilde{s}_1\widetilde{t}_{1})\widetilde{r}^{-1}\right) (\gamma _{2n+1}^{1})=\gamma _{2n+1}^{k}$ by Lemma~\ref{lem_image_gamma}. 
 Therefore, the relation~(6)~(c) holds in $\SH $ and we have completed the proof of Theorem~\ref{thm_pres_SH}. 
\end{proof}

\subsection{The first homology groups of the balanced superelliptic handlebody groups}\label{section_abel-smod}

In this section, we will prove Theorem~\ref{thm_abel_smod}.  
For conveniences, we denote the equivalence class in $H_1(\SH )$ of an element $h$ in $\SH $ by $h$. 

\begin{proof}[Proof of Theorem~\ref{thm_abel_smod}]
The relations~(2) (a), (b), and (c) in Theorem~\ref{thm_pres_SH} are equivalent to the relations $\widetilde{s}_i=\widetilde{s}_{i+1}$ and $\widetilde{r}_i=\widetilde{r}_{i+1}$ for $1\leq i\leq n-1$ in $H_1(\SH )$. 
The relation~(2) (d) in Theorem~\ref{thm_pres_SH} is equivalent to the relation $\widetilde{r}_i^2=1$ for $1\leq i\leq n$ in $H_1(\SH )$. 
The relations~(2) (e), (f), (g), and (6) (b) in Theorem~\ref{thm_pres_SH} are equivalent to the relation $\widetilde{t}_{i}=\widetilde{t}_{i+1}$ for $1\leq i\leq n$ in $H_1(\SH )$. 
Up to these relations, the relations~(3), (4), (5), and (6) (c) in Theorem~\ref{thm_pres_SH} are equivalent to the relations $\widetilde{r}^2\widetilde{t}_1^{-n-1}=1$, $\widetilde{t}_1^k=(\widetilde{r}_1\widetilde{s}_1)^{-kn}$, $\widetilde{t}_1^{n+1}\widetilde{s}_1^{n(n+1)}=1$, and $\widetilde{t}_1^2=(\widetilde{r}_1\widetilde{s}_1)^{-2n}$ in $H_1(\SH )$, respectively. 
When $k$ is odd, the relation $\widetilde{t}_1^k=(\widetilde{r}_1\widetilde{s}_1)^{-kn}$ is equivalent to the relation $\widetilde{t}_1=(\widetilde{r}_1\widetilde{s}_1)^{-n}$ by the relation $\widetilde{t}_1^2=(\widetilde{r}_1\widetilde{s}_1)^{-2n}$. 
Thus by an argument similar to the proof of Theorem~\ref{thm_abel_smod}, we have $H_1(\SH )\cong \Z [\widetilde{s}_1]\oplus \Z _2[\widetilde{r}_1]\oplus \Z _2[X]$ for odd $k$, where $X=\widetilde{r}(\widetilde{r}_1\widetilde{s}_1)^{\frac{n(n+1)}{2}}$. 

Assume that $k\geq 4$ is even. 
As a presentation for an abelian group, we have
\begin{eqnarray*}
&&H_1(\SH )\\
&\cong &\left< \widetilde{s}_1, \widetilde{r}_1, \widetilde{t}_1, \widetilde{r} \middle| \widetilde{r}_1^2=1, \widetilde{r}^2\widetilde{t}_1^{-n-1}=1, \widetilde{t}_1^2(r_1s_1)^{2n}=1, \widetilde{t}_1^{n+1}\widetilde{s}_1^{n(n+1)}=1\right> \\
&\cong &\left< \widetilde{s}_1, \widetilde{r}_1, \widetilde{t}_1, \widetilde{r} \middle| \widetilde{r}_1^2=1, \widetilde{r}^2\widetilde{t}_1^{-n-1}=1, \widetilde{t}_1^2s_1^{2n}=1, \widetilde{t}_1^{n+1}\widetilde{s}_1^{n(n+1)}=1\right> \\
&\cong &\left< \widetilde{s}_1, \widetilde{r}_1, \widetilde{t}_1, \widetilde{r}, X \middle| \widetilde{r}_1^2=1, \widetilde{r}^2\widetilde{t}_1^{-n-1}=1, (\widetilde{t}_1s_1^{n})^2=1, \widetilde{t}_1^{n+1}\widetilde{s}_1^{n(n+1)}=1, X=\widetilde{t}_1\widetilde{s}_1^{n}\right> \\
&\cong &\left< \widetilde{s}_1, \widetilde{r}_1, \widetilde{r}, X \middle| \widetilde{r}_1^2=1, \widetilde{r}^2X^{-n-1}\widetilde{s}_1^{n(n+1)}=1, X^2=1, X^{n+1}=1\right> .
\end{eqnarray*}
When $n$ is odd, the last presentation is equivalent to the following presentations:  
\begin{eqnarray*}
&&\left< \widetilde{s}_1, \widetilde{r}_1, \widetilde{r}, X \middle| \widetilde{r}_1^2=1, \widetilde{r}^2\widetilde{s}_1^{n(n+1)}=1, X^2=1\right> \\ 
&\cong &\left< \widetilde{s}_1, \widetilde{r}_1, \widetilde{r}, X, Y \middle| \widetilde{r}_1^2=1, \widetilde{r}^2\widetilde{s}_1^{n(n+1)}=1, X^2=1, Y=\widetilde{r}\widetilde{s}_1^{\frac{n(n+1)}{2}}\right> \\ 
&\cong &\left< \widetilde{s}_1, \widetilde{r}_1, X, Y \middle| \widetilde{r}_1^2=1, Y^2=1, X^2=1 \right> \\ 
&\cong &\Z [\widetilde{s}_1]\oplus \Z _2[\widetilde{r}_1]\oplus \Z _2[X]\oplus \Z _2[Y],
\end{eqnarray*}
where $X=\widetilde{t}_1\widetilde{s}_1^{n}$ and $Y=\widetilde{r}\widetilde{s}_1^{\frac{n(n+1)}{2}}$. 
When $n$ is even, since $X=1$ in $H_1(\SH )$, we have 
\begin{eqnarray*}
H_1(\SH )
&\cong &\left< \widetilde{s}_1, \widetilde{r}_1, \widetilde{r} \middle| \widetilde{r}_1^2=1, \widetilde{r}^2\widetilde{s}_1^{n(n+1)}=1 \right> \\
&\cong &\left< \widetilde{s}_1, \widetilde{r}_1, \widetilde{r} \middle| \widetilde{r}_1^2=1, \widetilde{r}^2\widetilde{s}_1^{n(n+1)}=1 \right> \\
&\cong &\left< \widetilde{s}_1, \widetilde{r}_1, \widetilde{r}, Y\middle| \widetilde{r}_1^2=1, \widetilde{r}^2\widetilde{s}_1^{n(n+1)}=1, Y=\widetilde{r}\widetilde{s}_1^{\frac{n(n+1)}{2}} \right> \\
&\cong &\left< \widetilde{s}_1, \widetilde{r}_1, Y\middle| \widetilde{r}_1^2=1, Y^2=1 \right> \\
&\cong &\Z [\widetilde{s}_1]\oplus \Z _2[\widetilde{r}_1]\oplus \Z _2[Y],
\end{eqnarray*}
where $Y=\widetilde{r}\widetilde{s}_1^{\frac{n(n+1)}{2}}$. 
Therefore, we have completed the proof of Theorem~\ref{thm_abel_smod}. 
\end{proof}



\par
{\bf Acknowledgement:} The authors would like to express their gratitude to Susumu Hirose for helpful discussions and comments. 
The first author was supported by JSPS KAKENHI Grant Numbers JP19K23409 and 21K13794.

\end{document}